\def\elsartstyle{%
    \def\normalsize{\@setfontsize\normalsize\@xiipt{14.5}}
    \def\small{\@setfontsize\small\@xipt{13.6}}
    \let\footnotesize=\small
    \def\large{\@setfontsize\large\@xivpt{18}}
    \def\Large{\@setfontsize\Large\@xviipt{22}}
    \skip\@mpfootins = 18\p@ \@plus 2\p@
    \normalsize
}
\newtheorem{theorem}{Theorem}
\newtheorem{corollary}{Corollary}
\newtheorem{proposition}{Proposition}
\newtheorem{definition}{Definition}[section]
\newtheorem{lemma}{Lemma}[section]
\newtheorem{example}{Example}[section]
\newtheorem{claim}{Claim}[section]
\newcommand{\dto}{|}
\newcommand{\dof}{\backslash}
\newcommand{\cto}{.}
\newcommand{\cof}{/}
\begin{document}

\begin{frontmatter}

\title{Decomposition of Quaternary Signed-Graphic Matroids\thanks{This research has been funded by the European Union (European Social Fund - ESF) 
and Greek national funds through the Operational Program ``Education and Lifelong Learning'' of the National Strategic Reference 
Framework (NSRF) - Research Funding Program: Thalis. Investing in knowledge society through the European Social Fund.}}

\author{Leonidas Pitsoulis}
\ead{pitsouli@auth.gr}

\author{Eleni-Maria Vretta}
\ead{emvretta@auth.gr}

\address{Department of Electrical and Computer Engineering \\
Aristotle University of Thessaloniki \\ 54124 Thessaloniki, Greece}

\date{October 2015}

\begin{abstract}
In this work we provide a decomposition theorem for the class of  quaternary and non-binary signed-graphic matroids. This generalizes previous results
for binary signed-graphic matroids in~\cite{PapPit:2013} and graphic matroids in~\cite{Tutte:1959}, and it provides the theoretical basis for a recognition
algorithm. 
\end{abstract}

\begin{keyword}
signed-graphic matroids, decomposition, recognition algorithms
\end{keyword}

\end{frontmatter}

\section{Introduction} \label{sec_introduction}

Some of the most celebrated theorems in matroid theory are characterization theorems for specific classes of matroids. We could classify these theorems into two main 
categories: (i) {\em excluded-minor} theorems and (ii) {\em decomposition} theorems. Excluded-minor theorems provide a list of forbidden minors for a class
of matroids while decomposition theorems provide a set of operations which decompose matroids of a given class into main building blocks. 
A representative example would be the case of regular matroids where we have their excluded-minor characterization by Tutte in~\cite{Tutte:1958}, 
as well as their decomposition by Seymour in~\cite{Seymour:1980}. 
In general, decomposition theorems for matroids are more difficult to obtain than excluded-minor characterizations and have important implications, such as
polynomial time recognition algorithms for the associated classes of matroids.
There is a handful of  recognition algorithms for matroids available, and these algorithms constitute the basic ingredient for recognizing classes of matrices 
representing these matroids. Specifically, the recognition 
algorithm for graphic matroids by Tutte in~\cite{Tutte:1960} provided the first practical and easily implementable polynomial-time recognition algorithm for network 
matrices~\cite{BixCunn:1980},  while the recognition of regular matroids by Seymour in~\cite{Seymour:1980} provided the only known polynomial-time recognition 
algorithm for totally unimodular matrices (see~\cite{Schrijver:1986}). 
Both classes of matrices are considered to be very important for optimization and integer programming problems since they are associated 
with integral polyhedra. 
Moreover, there is also the project by Geelen, Gerards and Whittle~\cite{GeGeW:07} which is currently taking place, 
and it will generalize the Graph Minors Theory developed by Robertson and Seymour, to representable matroids over finite fields. Upon completion the results of this 
project would
imply that for representable matroids over finite fields, we could test in polynomial time whether a given matroid contains another matroid as a minor. 
Therefore we could say that in theory, an excluded-minor characterization for a class of representable matroids would imply the existence of a
recognition algorithm for that class. However it is known from our experience with the Graph Minors Theory, that even if the project is completed, it would be 
far from an actual recognition algorithm for any given matroid class, since the algorithmic obstacles would be most likely immense.

In this work we will provide a decomposition theorem for signed-graphic matroids which are representable over the quaternary field but not on the binary field. Utilizing the
results of Pagano in~\cite{Pagano:1998} where he characterizes the signed graphic representations of quaternary matroids, we will provide structural results for such 
singed graphs, and based upon these we will develop the necessary ingredients that will form the decomposition characterization for the associated matroid class. This theorem
generalizes previous decomposition theorems for binary signed-graphic matroids in~\cite{PapPit:2013} and graphic matroids in~\cite{Tutte:1959}.

This work is organised as follows. In the next section, we give some preliminaries mainly for signed graphs and the associated matroids along with some known decomposition results which are used in this work. After the prelimaries, the special structure of the signed graphs representing $GF(4)$-representable signed-graphic matroids along with extensions of the bridge theory for matrois are utilized in order to provide a characterization in terms of decomposition for the examined class of matroids.

\section{Preliminaries}

\subsection{Graphs} \label{subsec_graphs}
Our main reference for graph theory is the book of Diestel~\cite{Diestel:05} and the works of Zaslavsky~\cite{Zaslavsky:1982} while for matroid theory the book of Oxley~\cite{Oxley:2011} and the book of Pitsoulis~\cite{Pitsoulis:2014}.

A graph $G:=(V,E)$ is defined as a finite set of vertices $V$ and a set of
edges $E\subseteq V\cup V^{2}$ where identical elements are allowed. 
Therefore, there are four types of edges: $e=\{u, v \}$ is called
a \emph{link}, $e=\{v, v \}$ a \emph{loop}, $e=\{v\}$ a \emph{half-edge}, 
while $e=\emptyset$ is a \emph{loose edge}. 
The set of vertices and the set of edges
of a graph $G$ are denoted by by $V(G)$ and $E(G)$, respectively. 

The \emph{deletion of an edge} $e$ from $G$ is the subgraph defined as 
$G\dof e := (V(G), E(G)-e)$.
The \emph{deletion of a vertex} $v$ from $G$ is defined as the deletion of all edges incident with $v$ and 
the deletion of $v$ from $V(G)$. \emph{Identifying}  two vertices $u$ and $v$ is the operation 
where we replace $u$ and $v$ with a new vertex $v'$ in both $V(G)$ and $E(G)$. 
The \emph{contraction of a link} $e=\{u,v\}$ is the subgraph denoted by
$G/e$ which results from $G$ by identifying $u,v$ in $G\dof e$.
The \emph{contraction of a half-edge} $e=\{v\}$ or a \emph{loop} $e=\{v\}$ is the subgraph denoted by
$G/e$ which results from the removal of $\{v\}$ and all half-edges and loops incident to it, while all other links
incident to $v$ become half-edges at their other end-vertex. Contraction of a loose-edge is the same as deletion.
A graph $G'$ is called a \emph{minor} of $G$ if it is obtained from a sequence of deletions and contractions 
of edges and deletions of vertices of $G$. For some $X\subseteq E(G)$ the subgraph \emph{induced} by $X$ is denoted 
by $G[X]$. For $S\subseteq E(G)$, we say that the subgraph $H$ of $G$ is the 
\emph{deletion} of $G$ \emph{to} $S$, denoted by $H=G\dto{S}$, if  $E(H)=S$ and $V(H)$ is the set of end-vertices of 
all edges in $S$. Clearly for set $S\subseteq E(G)$, $G\dto{S}$ is the graph obtained from $G\dof ({E(G)-S})$ by 
deleting the isolated vertices (if any). Moreover, for $S\subseteq E(G)$, a subgraph $K$ of $G$ is 
the \emph{contraction} of $G$ \emph{to} $S$, denoted by $K=G\cto{S}$, if $K$ is the graph obtained from $G/(E(G)-S)$ 
by deleting the isolated vertices (if any). 

Let $G$ be a $2$-connected graph. The graph obtained from $G$ by splitting $v\in V(G)$ into two
vertices $v_{1}, v_{2}$, adding a new edge $\{v_1, v_2\}$, and distributing the edges incident to $v$ among
$v_1$ and $v_2$ such that $2$-connectivity is maintained, is called an \emph{expansion} of $G$ at $v$.
The operation of \emph{twisting} (see \cite[Page 148]{Oxley:2011}), is defined as follows. Let $G_1$ and
$G_2$ be two disjoint graphs with at least two vertices $(u_1,v_1)$ and
$(u_2,v_2)$, respectively. Let $G$ be the graph obtained from $G_1$ and $G_2$  
by identifying $u_1$ with $u_2$ to a vertex $u\in{V(G)}$ and $v_1$ with $v_2$
to a vertex  $v\in{V(G)}$. If we identify, instead, $u_1$ with $v_2$ and $v_1$ with $u_2$ then we obtain
a graph $G'$ which is called  a \emph{twisted} graph of $G$ \emph{about}
$\{u,v\}$. The subgraphs $G_1$ and $G_2$ of $G$ and $G'$ are called the \emph{twisting parts} of the twisting.

Any partition $\{T,U\}$ of $V(G)$ for nonempty $T$ and $U$, defines a \emph{cut} of $G$ denoted by $E(T,U)\subseteq{E(G)}$
as the set of links incident to a vertex in $T$ and a vertex in $U$. A cut of the form
$E(v,V(G)-v)$ is called the \emph{star} of vertex $v$. 
There are several definitions of  connectivity in graphs that have appeared in the literature.  
In this paper we will employ the  \emph{Tutte $k$-connectivity} which we will refer to as $k$-connectivity, due to the fact that the
connectivity of a graph and its corresponding graphic matroid coincide under this definition.  
For $k\geq 1$, a \emph{$k$-separation} of a connected graph $G$ is a partition $\{A,B\}$ of the edges 
such that $\min\{|A|,|B|\}\geq{k}$ and $|V({G\dto A}) \cap V({G\dto B})|=k$. 
The connectivity number of a graph $G$ is defined as $\lambda(G)=min\{k:\text{G has a k-separation} \}$, and we say that $G$ is k-connected for any $k \leq \lambda(G)$. Thus, a k-connected graph is also l-connected for $l=0,\ldots, k-1$. If $G$ does not have a k-separation for any $k \geq 0$, then $\lambda(G)=\infty$. A vertical k-separation of $G$ is a k-separation $\{A,B\}$ where $V(A) \dof V(B) \neq \emptyset$ and $V(B) \dof V(A) \neq \emptyset$. A separation or vertical separation $\{A,B\}$ is said to be connected or to have connected parts when $G[X]$ and $G[Y]$ are both connected.
A \emph{block} is defined as a maximally 2-connected subgraph of $G$. 
Loops and half-edges are always blocks in a graph, since they are 2-connected (actually they are
infinitely connected) and they cannot be part of a 2-connected component because they induce
a 1-separation. 


\subsection{Signed graphs} \label{subsec_signed_graphs}

A \emph{signed graph} is defined as $\Sigma = (G,\sigma)$ where $G$ is a graph called the \emph{underlying graph} and
$\sigma$ is a sign function $\sigma:E(G)\rightarrow \{\pm 1\}$, where 
$\sigma(e) =-1$ if $e$ is a half-edge and  $\sigma(e) =+1$ if $e$ is a loose-edge. 
Therefore, a signed graph is a graph where the  edges are labelled as  positive or  negative, while all the half-edges
are negative and all the loose-edges are positive. 
We denote by $V(\Sigma)$ and $E(\Sigma)$ the vertex set and edge set of a signed graph $\Sigma$, respectively. 

All operations on signed graphs may be defined through a corresponding operation on the underlying graph and
the sign function. In the following definitions assume that we have a signed graph $\Sigma=(G,\sigma)$. 
The operation of \emph{switching} at a vertex $v$ results
in a new signed graph $(G,\bar{\sigma})$  where $\bar{\sigma}(e) = - \sigma (e)$ for each link $e$ incident
to $v$, while $\bar{\sigma}(e) = \sigma (e)$ for all other edges. Two signed graphs are \emph{switching equivalent} if there exist switchings that transform the one to the other. \emph{Deletion} of a vertex $v$ is defined as
$\Sigma \backslash v := (G\backslash v, \sigma)$. \emph{Deletion} of an edge $e$ is defined as
$\Sigma \dof e = (G\dof e, \sigma)$. The \emph{contraction} of an edge $e$ consists of three cases: 
\begin{enumerate}
\item if $e$ is a positive loop, then $\Sigma \cof e = (G\dof e, \sigma)$.
\item if $e$ is a half-edge, negative loop or a positive link, then $\Sigma \cof e = (G\cof e, \sigma)$. 
\item if $e$ is a negative link, then $\Sigma \cof e = (G\cof e, \bar{\sigma})$  where $\bar{\sigma}$ is a 
      switching at either one of the end-vertices of $e$.
\end{enumerate} 
The \emph{expansion} at a vertex $v$, results in a signed graph $(\bar{G},\bar{\sigma})$, where $\bar{G}$ is the expansion of $G$ at $v$, and
$\bar{\sigma}$ is the same as $\sigma$ except for the new edge so created by the expansion, which is given a positive sign. All remaining notions used for a signed graph are as defined for graphs (as applied to its underlying graph). For example, for some $S\subseteq{E(\Sigma)}$ we have that $\Sigma[S]=(G[S],\sigma)$, $\Sigma$ is $k$-connected if and only if $G$ is $k$-connected. 

The \emph{sign of a cycle} is the product of the signs of its edges, so we have a 
\emph{positive cycle} if the number of negative edges in the cycle is  even, otherwise the cycle is a \emph{negative cycle}.
Both negative loops and half-edges are negative cycles with a single edge.
A signed graph is called \emph{balanced} if it contains no negative cycles.  A connected signed graph containing exactly one cycle is called a \emph{negative $1$-tree} if the cycle is negative. Furthermore, we define the \emph{b-star} of a vertex $v$ of a signed graph $\Sigma$, denoted by $st_{\Sigma}(v)$, as the set of edges having $v$ as an end-vertex and are not positive loops. 
A vertex $v\in V(\Sigma)$ is called a \emph{balancing vertex} if $\Sigma \dof v$ is balanced. 

For $k \geq 1$, a \emph{k-biseparation} of a signed graph $\Sigma$ is a partition $\{A,B\}$ of $E(\Sigma)$ such that $min\{|A|,|B|\}\geq k$ that satisfies one of the following three properties: $|V(G[A]) \cap V(G[B])|=k+1$ and both $\Sigma[A], \Sigma[B]$ are balanced. $|V(G[A]) \cap V(G[B])|=k$ and exactly one of $\Sigma[A], \Sigma[B]$ is balanced. $|V(G[A]) \cap V(G[B])|=k-1$ and both $\Sigma[A], \Sigma[B]$ are unbalanced. A \emph{vertical k-biseparation} of $\Sigma$ is a k-biseparation $\{A,B\}$ that has $V(A) \dof V(B) \neq \emptyset$ and $V(B) \dof V(A) \neq \emptyset$. A connected signed graph is called \emph{k-biconnected} when it has no l-biseparation for $l=0,\ldots, k-1$. 

The following two results appear in \cite{Zaslavsky:1982} and allow us to perform switchings at the vertices of a balanced signed graph in order to make all its edges positive.

\begin{proposition}[Zaslavsky~\cite{Zaslavsky:1982}] \label{switchings}
Two signed graphs on the same underlying graph are switching equivalent if and only if they have the same list of balanced cycles.
\end{proposition}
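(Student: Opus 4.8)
The plan is to prove the two implications separately, organising everything around the \emph{difference signature} $\tau := \sigma_1\sigma_2$, where $\Sigma_1=(G,\sigma_1)$ and $\Sigma_2=(G,\sigma_2)$ are the two signed graphs on the common underlying graph $G$. The starting observation is that, for any cycle $C$, one has $\prod_{e\in C}\sigma_1(e)=\prod_{e\in C}\sigma_2(e)$ exactly when $\prod_{e\in C}\tau(e)=+1$. Hence $\Sigma_1$ and $\Sigma_2$ share the same list of balanced cycles if and only if $(G,\tau)$ is balanced, and this reformulation guides the converse direction.

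For the forward implication I would show that switching preserves the sign of every cycle. Switching at a vertex $v$ flips the sign of exactly the links incident to $v$, and any cycle $C$ meets $v$ in either zero or two of its edges, while loops, half-edges and loose edges are never affected by switching. Thus each switching flips an even number of signs along $C$, leaving $\prod_{e\in C}\sigma(e)$ invariant. Consequently switching-equivalent signed graphs have identical collections of balanced (and hence negative) cycles.

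For the converse I would assume $\Sigma_1$ and $\Sigma_2$ have the same balanced cycles and work one connected component at a time, since both switchings and cycles stay within a component. Fix a spanning tree $T$ of the component. First I would show that any signing can be switched so that every edge of $T$ becomes positive: rooting $T$ and deciding, vertex by vertex, whether to switch there is a constraint system with no cyclic dependencies, so it is always solvable. Applying this to $\sigma_1$ and $\sigma_2$ with the \emph{same} tree $T$ yields switched signatures $\sigma_1',\sigma_2'$ that are positive on all of $T$. For a non-tree link $e$, the fundamental cycle $C_e$ formed by $e$ and the tree path between its ends satisfies $\sigma_i'(e)=\prod_{f\in C_e}\sigma_i'(f)$, all other factors being $+1$; since the cycle sign is a switching invariant and the two graphs agree on balanced cycles, $\sigma_1'(e)=\sigma_2'(e)$. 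The signs of loops, half-edges and loose edges are fixed by definition or forced by the single-edge-cycle hypothesis and are untouched by switching, so they agree as well. Hence $\sigma_1'=\sigma_2'$, and $\Sigma_1,\Sigma_2$ are both switching equivalent to the common signed graph $(G,\sigma_1')$, proving they are switching equivalent.

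The routine bookkeeping — the even-intersection count in the forward direction and the matching of loops and half-edges — is straightforward. The step I expect to demand the most care is the solvability of the tree constraint system together with the passage to fundamental cycles: one must verify that making $T$ positive is always achievable by switchings and that the remaining signs are then forced by the fundamental cycles, which is precisely where the acyclicity of $T$ and the switching-invariance of cycle signs do the real work.
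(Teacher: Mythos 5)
Your proof is correct. There is nothing in the paper to compare it against: the proposition is imported verbatim from Zaslavsky~\cite{Zaslavsky:1982} with no proof given, and your argument --- normalize both signatures on a common spanning tree by switchings (solvable because the root-to-vertex potential along $T$ has no cyclic dependencies), then force agreement on each non-tree link via the sign of its fundamental cycle, which is a switching invariant by the even-intersection count at a switched vertex --- is precisely the standard proof of Zaslavsky's result. The edge cases are also handled consistently with this paper's conventions: loops are untouched by switching and their signs are forced by the single-edge balanced cycles, while half-edges and loose edges have fixed signs by the definition in Section~2.2, so concluding $\sigma_1'=\sigma_2'$ and hence switching equivalence (switchings being involutive) is sound.
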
 

\begin{corollary}[Zaslavsky~\cite{Zaslavsky:1982}]
A signed graph is balanced if and only if it is switching equivalent to a graph with all positive edges and without half-edges.
\end{corollary}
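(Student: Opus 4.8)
The plan is to derive both implications directly from Proposition~\ref{switchings}, together with two elementary observations: switching at a vertex leaves the underlying graph unchanged and preserves the sign of every cycle, and a single-edge negative cycle (a half-edge or a negative loop) is neither a link, so it is untouched by any switching. I will also use that in a signed graph all of whose edges are positive every cycle has sign $+1$, and is therefore balanced.

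For the ($\Rightarrow$) direction I would start from a balanced signed graph $\Sigma=(G,\sigma)$. Since a half-edge (and likewise a negative loop) is by definition a negative cycle, balancedness forces $G$ to contain no half-edges, so the all-positive signed graph $\Sigma^{+}=(G,\sigma^{+})$ with $\sigma^{+}(e)=+1$ for every edge is well defined on the same underlying graph. In $\Sigma^{+}$ every cycle is positive, so $\Sigma^{+}$ is balanced; and $\Sigma$ is balanced by hypothesis. Hence $\Sigma$ and $\Sigma^{+}$ share the same underlying graph and the same (complete) list of balanced cycles, so Proposition~\ref{switchings} yields that they are switching equivalent. As $\Sigma^{+}$ has all positive edges and no half-edges, this is exactly the required conclusion.

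For the ($\Leftarrow$) direction, suppose $\Sigma$ is switching equivalent to a signed graph $\Sigma'$ that has all positive edges and no half-edges. Every cycle of $\Sigma'$ then has sign $+1$, so the list of balanced cycles of $\Sigma'$ consists of all its cycles; in particular $\Sigma'$ is balanced. By Proposition~\ref{switchings}, $\Sigma$ and $\Sigma'$ have the same underlying graph and the same list of balanced cycles. Since the set of all cycles is determined by the common underlying graph and the balanced ones coincide, the negative cycles of $\Sigma$ coincide with those of $\Sigma'$, of which there are none; hence $\Sigma$ is balanced.

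The argument is short because Proposition~\ref{switchings} does the real work; the only point that needs care is the bookkeeping of single-edge cycles. Specifically, one must observe that switching flips the signs only of links, and therefore can neither create nor destroy half-edges, so the phrase ``without half-edges'' is genuinely a property transferred by switching equivalence rather than something arranged after the fact. Making this explicit is what guarantees both that the all-positive representative $\Sigma^{+}$ exists in the forward direction and that $\Sigma$ inherits the absence of negative single-edge cycles in the reverse direction.
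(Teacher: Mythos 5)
Your proof is correct, and it follows exactly the route the paper intends: the statement is presented as a corollary of Proposition~\ref{switchings} (with the proof itself left to Zaslavsky's paper), and your argument is precisely that derivation, applying the proposition to $\Sigma$ and the all-positive signed graph on the same underlying graph in one direction and transferring balancedness back through switching equivalence in the other. Your extra bookkeeping about half-edges and single-edge negative cycles being unaffected by switching is a sound and worthwhile explicit check, not a deviation from the paper's approach.
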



\subsection{Signed-graphic matroids} \label{subsec_signed_graphic_matroids}

We assume that the reader is familiar with basic notions in matroid theory (see first chapters of~\cite{Oxley:2011}), and in particular with the
circuit axiomatic definition of a matroid and the notions of duality, connectivity, 
representability and minors. Given a matrix $A$ and a graph $G$, $M[A]$ and
$M(G)$ denote the vector and graphic matroids, respectively. For a matroid $M$
we denote by  $E(M)$ be the ground set, $\mathcal{C}(M)$ the family of circuits 
while $M^*$ is the dual matroid of $M$. The prefix `co-' dualizes the term mentioned and the asterisk dualizes the symbol
used. 

The following definition  for the matroid of a signed graph or \emph{signed-graphic matroid} is used in this work. 
\begin{theorem}[Zaslavsky~\cite{Zaslavsky:1982}] \label{th_sgg2}
Given a signed graph $\Sigma$ let $\mathcal{C} \subseteq 2^{E(\Sigma)}$ be the family of edge sets 
inducing a subgraph in $\Sigma$ which is either:
\begin{itemize}
\item[(i)] a positive cycle, or
\item[(ii)]  two vertex-disjoint negative cycles connected by a path which has no common vertex with the cycles apart from 
its end-vertices, or 
\item[(iii)] two negative cycles which have exactly one common vertex.
\end{itemize}
Then $M(\Sigma)=(\mathcal{C}, E(\Sigma))$ is a matroid on $E(\Sigma)$ with circuit family $\mathcal{C}$. 
\end{theorem}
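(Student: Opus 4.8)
The plan is to realise $M(\Sigma)$ as a vector matroid, so that the matroid axioms come for free and the theorem reduces to identifying the minimal dependent sets of columns. Over a field of characteristic different from $2$, say $\mathbb{R}$, I would form the \emph{signed incidence matrix} $A_\Sigma$ whose rows are indexed by $V(\Sigma)$ and whose columns are indexed by $E(\Sigma)$: a positive link $\{u,v\}$ contributes the column $\mathbf{e}_u-\mathbf{e}_v$, a negative link $\{u,v\}$ the column $\mathbf{e}_u+\mathbf{e}_v$, a half-edge or negative loop at $v$ the column $\mathbf{e}_v$, and a positive loop the zero column. The claim to prove is then $M[A_\Sigma]=(\mathcal{C},E(\Sigma))$; since $M[A_\Sigma]$ is automatically a matroid, establishing that its circuit family is exactly $\mathcal{C}$ proves the statement.

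The technical core is a rank formula: for $S\subseteq E(\Sigma)$ I would show $\operatorname{rank} A_\Sigma[S]=|V(\Sigma[S])|-b(S)$, where $b(S)$ is the number of \emph{balanced} connected components of $\Sigma[S]$. As rank is additive over components it suffices to treat one connected component on vertex set $W$. If that component is balanced, Proposition~\ref{switchings} and its corollary let me switch it to an all-positive graph with no half-edges; switching corresponds to negating rows of $A_\Sigma$, which preserves the vector matroid, and the resulting matrix is an ordinary graphic incidence matrix of rank $|W|-1$. If the component is unbalanced, I would compute the left kernel: any $y$ with $y^{\mathsf T}A_\Sigma[S]=0$ satisfies $y_u=\sigma(\{u,v\})\,y_v$ along every link and $y_v=0$ at every half-edge or negative loop, and propagating these relations around a negative cycle forces $y_w=-y_w$, hence $y=0$, so the rank is the full $|W|$. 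This is the step I expect to carry the real weight, though the linear algebra is short.

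With the rank formula in hand the circuits drop out. A set $S$ is independent precisely when each component of $\Sigma[S]$ attains $|E|=\operatorname{rank}$, i.e.\ is a tree (balanced, acyclic) or a \emph{negative $1$-tree} (unbalanced with its unique cycle negative). A circuit is a minimal dependent set, and it must be connected, since otherwise one of its components would already be dependent and smaller. A connected dependent $\Sigma[S]$ is either balanced, in which case dependence means $|E|>|V|-1$ and minimality forces it to be a single balanced cycle, namely a \emph{positive cycle} (type~(i)); or it is unbalanced, in which case minimality forces $|E|=|V|+1$, a connected subgraph of cyclomatic number two.

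The last case is where I expect the only genuine subtlety. A connected graph with $|E|=|V|+1$ is combinatorially a theta graph, a figure-eight (two cycles meeting in one vertex), or a dumbbell (two cycles joined by an internally disjoint path). A theta graph carries three cycles whose signs multiply to $+1$ by parity, so it always contains a \emph{positive} cycle; that positive cycle is a strictly smaller dependent set, so a theta can never be minimal. This leaves the figure-eight and the dumbbell, and minimality likewise forces each of their two cycles to be negative, since a positive cycle among them would again be a smaller circuit. These are precisely types~(iii) and~(ii). Having shown that the minimal dependent sets of $M[A_\Sigma]$ are exactly the members of $\mathcal{C}$, we conclude that $\mathcal{C}$ is the circuit family of $M[A_\Sigma]$, so $M(\Sigma)$ is a matroid with the stated circuits.
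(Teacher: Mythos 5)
The paper itself never proves this statement: Theorem~\ref{th_sgg2} is imported verbatim from Zaslavsky~\cite{Zaslavsky:1982} and serves as the working definition of $M(\Sigma)$, so there is no in-paper argument to match yours against; the relevant comparison is with Zaslavsky's original proof. Your proposal is correct, and it is the classical representation-theoretic route: realize $M(\Sigma)$ as $M[A_\Sigma]$ over $\mathbb{R}$, establish $\operatorname{rank} A_\Sigma[S]=|V(\Sigma[S])|-b(S)$ componentwise (switching is row negation, reducing a balanced component to the graphic incidence matrix of rank $|W|-1$; for an unbalanced component the left-kernel relations $y_u=\sigma(e)y_v$ propagated around a negative cycle force $y_w=-y_w$, hence $y=0$ in characteristic $\neq 2$), and then read off circuits from $|C|=r(C)+1$: a balanced circuit is a single positive cycle, and an unbalanced one is connected of cyclomatic number two, where the theta case is killed because the signs of its three cycles multiply to $+1$, so it always contains a strictly smaller dependent positive cycle, leaving exactly the tight and loose handcuffs with both cycles negative. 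Two small ellipses are worth repairing but are not genuine gaps: a connected graph with $|E|=|V|+1$ is a theta, figure-eight, or dumbbell only after minimality has excluded pendant trees (minimum degree at least two, with joints counted as cycles of length one), and the reverse containment --- that every member of $\mathcal{C}$ is itself minimally dependent --- deserves an explicit sentence, though it follows immediately from your independence characterization because every proper subset of a positive cycle or handcuff has all components trees or negative $1$-trees. The trade-off against Zaslavsky's own approach is real: he verifies the circuit (or rank) axioms abstractly, which requires no field and extends to frame matroids of arbitrary biased graphs, many of which are not representable at all; your argument buys the matroid axioms for free from linear algebra, but works only because signed graphs happen to be representable over every field of characteristic other than two --- ample for this paper, where only signed graphs are in play.
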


Note that $M(\Sigma)$ is also known as the  {\em frame matroid} of the signed graph $\Sigma$. 
The subgraphs of $\Sigma$ induced by the edges corresponding to a circuit of $M(\Sigma)$ are called the 
\emph{circuits} of $\Sigma$. The circuits 
of $\Sigma$ described by (ii) and (iii) of Theorem~\ref{th_sgg2} are also called {\em tight} and {\em loose handcuffs} 
respectively (see Figure~\ref{fig_circuits_signed_graphs}). 
\begin{figure}[hbtp]
\begin{center}
\mbox{
\subfigure[Positive cycle]
{
\includegraphics*[scale=0.23]{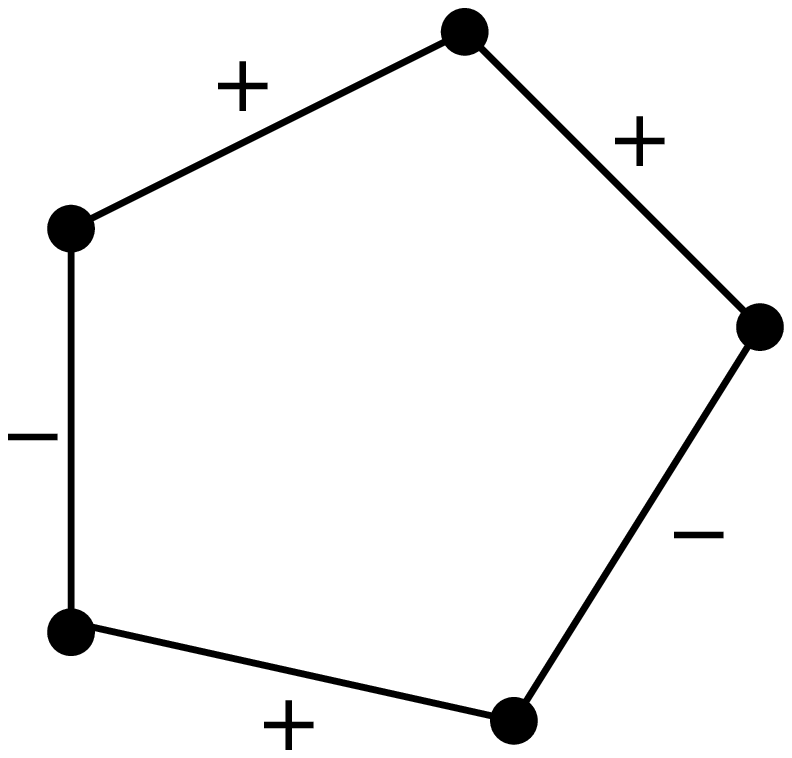}
}\quad
\subfigure[Tight handcuff]
{
\includegraphics*[scale=0.23]{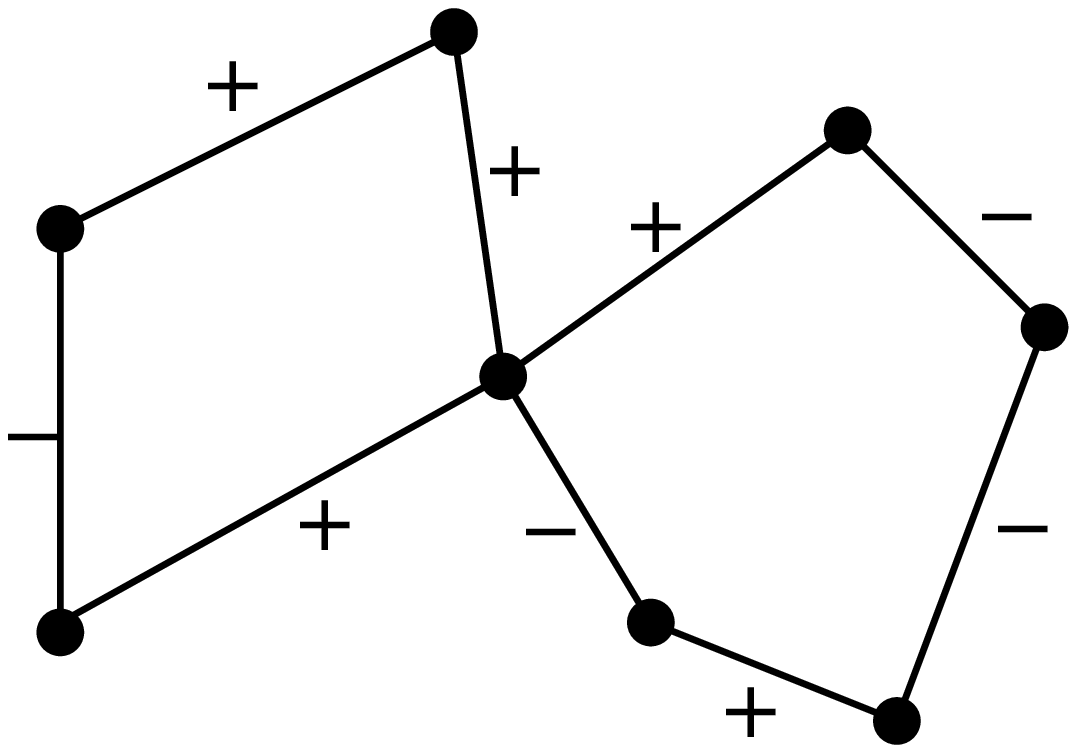}
}\quad
\subfigure[Loose handcuff]
{
\includegraphics*[scale=0.23]{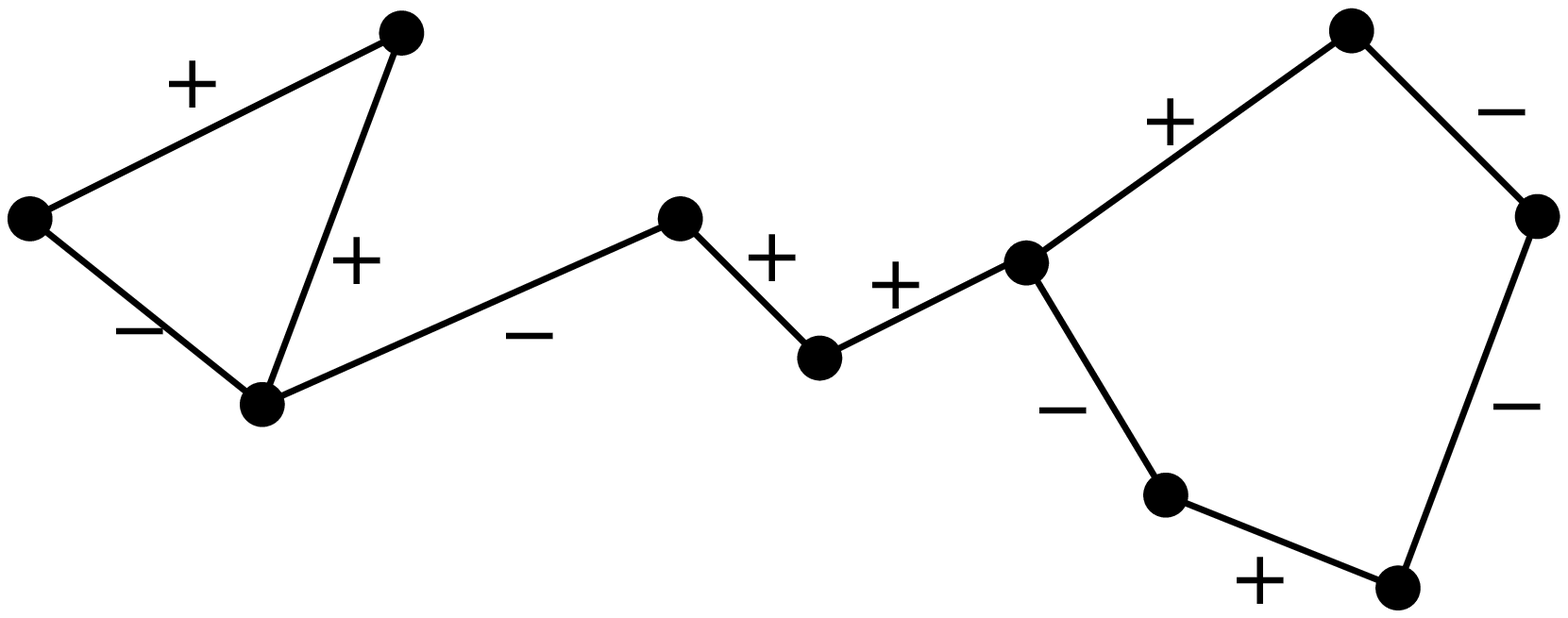}
}}
\end{center}
\caption{Circuits in a signed graph $\Sigma$.}
\label{fig_circuits_signed_graphs}
\end{figure}

Given a matroid $M$ and some set $X\subseteq E(M)$ the \emph{deletion} and \emph{contraction} of $X$ from $M$ will be denoted
by $M\dof X$ and $M/X$ respectively.  If $N$ is a minor of $M$, that is  $N=M\dof X / Y$ for disjoint $X,Y\subseteq E(M)$, we will write $M \succeq N$. 
For a matter of convenience in the analysis that will follow
we also employ the complement notions of deletion and contraction, 
that is the \emph{deletion to} a set $X\subseteq E(M)$ is defined as 
\[
M\dto X = M\dof (E(M) - X), 
\]
while the \emph{contraction to} a set $X\subseteq E(M)$ is defined as 
\[
M\cto X = M/(E(M) - X).
\]
There is an equivalence of the aforementioned matroid operations with respect to the associated 
signed-graphic operations of deletion and contraction
as indicated by Theorem~\ref{th_minsig}. 
\begin{theorem}[Zaslavsky~\cite{Zaslavsky:1982}] \label{th_minsig}
Let $\Sigma$ be a signed graph and $S\subseteq E(\Sigma)$. Then 
$M(\Sigma\dof{S})=M(\Sigma)\dof{S}$ and $M(\Sigma/S)=M(\Sigma)/S$.
\end{theorem}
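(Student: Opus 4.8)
The plan is to check, in each case, that the matroid produced by the signed-graphic operation has the same ground set and the same circuit family as the matroid produced by the corresponding matroid operation; since a matroid is determined by its circuits, and Theorem~\ref{th_sgg2} describes $\mathcal{C}(M(\Sigma))$ explicitly as the positive cycles and the tight/loose handcuffs of $\Sigma$, this suffices. I would handle deletion first and contraction second. For both I would reduce to a single edge by induction on $|S|$; this requires first checking that deleting or contracting a set $S$ in $\Sigma$ agrees with performing the operation one edge at a time, which follows directly from the definitions of Section~\ref{subsec_signed_graphic_matroids} (the only point needing care is the switching involved when contracting a negative link).

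For the deletion statement the argument is essentially immediate. Both $\Sigma \dof S$ and $M(\Sigma) \dof S$ have ground set $E(\Sigma) - S$, so it remains to compare circuits. Being a positive cycle, a tight handcuff, or a loose handcuff is an internal structural property of the subgraph an edge set induces, and is therefore unaffected by removing edges that the set does not contain. Hence the positive cycles and handcuffs of $\Sigma \dof S$ are exactly those of $\Sigma$ contained in $E(\Sigma) - S$; these are in turn exactly the circuits of $M(\Sigma)$ disjoint from $S$, which are precisely the circuits of $M(\Sigma) \dof S$. Thus $\mathcal{C}(M(\Sigma \dof S)) = \mathcal{C}(M(\Sigma) \dof S)$.

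For contraction, having reduced to a single edge $e$, I would split into cases according to the type of $e$ and match $\mathcal{C}(M(\Sigma/e))$ against the circuits of $M(\Sigma)/e$, which are the minimal nonempty members of $\{\,C - e : C \in \mathcal{C}(M(\Sigma))\,\}$. If $e$ is a positive loop it is a loop of $M(\Sigma)$, so $M(\Sigma)/e = M(\Sigma) \dof e$ while $\Sigma/e = \Sigma \dof e$, and the claim follows from the deletion case. If $e$ is a positive link $\{u,v\}$, contraction identifies $u$ and $v$; here I would verify that deleting $e$ from a positive cycle or handcuff through $e$ and identifying $u,v$ yields a positive cycle or handcuff of $\Sigma/e$, and conversely that each circuit of $\Sigma/e$ lifts back, the sign bookkeeping being trivial since $e$ is positive. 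If $e$ is a negative link, the definition sets $\Sigma/e = (G/e,\bar\sigma)$ with a switching at an end-vertex of $e$; by Proposition~\ref{switchings} switching preserves the list of balanced cycles and hence fixes $M(\Sigma)$ on its ground set, so this case reduces to the positive-link case applied to the switched graph.

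The remaining and genuinely delicate case is that of a half-edge or negative loop $e$ at a vertex $v$, which is where the main obstacle lies. Contracting such an $e$ ``grounds'' $v$: each link incident to $v$ becomes a half-edge at its other end-vertex. The difficulty is that contraction does not preserve the circuit type. For instance, a loose handcuff consisting of $e$ together with a second negative cycle $D$ meeting $e$ only at $v$ forces $D = (\{e\}\cup D) - e$ to become a circuit of $\Sigma/e$; and indeed, grounding $v$ turns the two edges of $D$ incident to $v$ into half-edges at their other end-vertices, so that $D$ becomes a path joining two half-edges, that is, a tight handcuff of $\Sigma/e$. I would therefore enumerate systematically how each of the three circuit types of $M(\Sigma)$ meeting $e$ is transformed, confirm that the resulting edge sets are precisely the positive cycles and handcuffs of $\Sigma/e$, and verify minimality so that no circuit is spuriously created or lost. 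This case analysis, together with the careful verification that the ground sets agree, is where essentially all of the work is concentrated.
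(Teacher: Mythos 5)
The paper contains no proof of this statement: it is quoted directly from Zaslavsky~\cite{Zaslavsky:1982}, so there is no internal argument to compare yours against. Judged on its own merits, your outline is the standard proof — and essentially Zaslavsky's own: fix the ground sets, compare the circuit families via Theorem~\ref{th_sgg2}, reduce to a single edge by commutativity of the operations, dispatch deletion immediately, and split contraction into the cases positive loop, positive link, negative link (reduced to the positive-link case by switching, using Proposition~\ref{switchings} and item (ii) of Proposition~\ref{prop_samematroid}), and joint. Your identification of the joint case as the locus of the real work, and your sample computation (a loose handcuff $\{e\}\cup D$ turning $D$ into a path joining two half-edges, i.e.\ a tight handcuff of $\Sigma/e$), are both correct.

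Two cautions. First, the positive-link bookkeeping is less ``trivial'' than you claim: a positive cycle $D$ passing through both end-vertices of $e$ but avoiding $e$ splits in $\Sigma/e$ into two cycles meeting at the identified vertex, and $D$ survives as a circuit on both sides exactly when each arc of $D$ closes up with $e$ into a negative cycle (so that $D$ becomes a loose handcuff of $\Sigma/e$); if one arc closes up positively, the minimal set $C-e$ is that arc, and $D$ is correctly destroyed. Your ``lift back and verify minimality'' step covers this, but it belongs in the same delicate category you reserve for joints. Second, and more seriously for your promised ground-set check: under the paper's own definition, contracting a half-edge or negative loop $e$ at $v$ removes \emph{all} half-edges and loops at $v$. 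Yet any other joint $f$ at $v$ satisfies $\{e,f\}\in\mathcal{C}(M(\Sigma))$ (two negative one-edge cycles sharing exactly the vertex $v$ form a loose handcuff), so $f$ is a loop of $M(\Sigma)/e$ and must persist in the ground set of $\Sigma/e$ — in Zaslavsky's formulation it becomes a loose edge — and likewise a positive loop at $v$ must persist as a loose edge rather than vanish. With the preliminaries' definition taken literally, $E(M(\Sigma/e))$ would be strictly smaller than $E(M(\Sigma)/e)$ whenever such edges exist. This is a defect of the definition as transcribed in Section~\ref{subsec_graphs} rather than of your strategy, but it is exactly the discrepancy your final ``ground sets agree'' verification must repair, not merely confirm.
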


The following two propositions provide necessary conditions under which certain operations on a signed graph
do not alter its matroid and under which a signed-graphic matroid is graphic. Proofs  can be found in, or easily derived from, the 
results in~\cite{SliQin:07,Zaslavsky:1982, Zaslavsky:1991a}.
\begin{proposition} \label{prop_samematroid}
Let $\Sigma$ be a signed graph. If $\Sigma'$: 
\begin{itemize}
\item[(i)] is obtained from $\Sigma$ by replacing any number of negative loops by half-edges and vice versa, or
\item[(ii)] is obtained from $\Sigma$ by switchings, or
\item[(iii)] is the twisted graph of $\Sigma$ about $(u,v)$ with $\Sigma_1, \Sigma_2$  the 
twisting parts of $\Sigma$,  where $\Sigma_1$ (or $\Sigma_2$) is balanced or all of its negative cycles contain $u$ and $v$, 
\end{itemize}
then $M(\Sigma) = M(\Sigma')$. 
\end{proposition}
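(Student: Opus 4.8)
The plan is to use that two matroids on the same ground set are equal precisely when they share the same family of circuits. Each of the operations (i)--(iii) fixes the edge set $E(\Sigma)$, so in every case it is enough to show that the circuit family $\mathcal{C}$ of Theorem~\ref{th_sgg2} is identical for $\Sigma$ and $\Sigma'$. Since every circuit is a positive cycle, a tight handcuff, or a loose handcuff, and the two kinds of handcuffs are built from negative cycles, the task in each case splits into checking that (a) the edge sets forming cycles agree for $\Sigma$ and $\Sigma'$, (b) the sign of every such cycle is preserved, and (c) the incidence pattern among negative cycles that yields handcuffs is preserved. Operation~(i) is immediate: a negative loop at $v$ and a half-edge at $v$ are, by definition, both single-edge negative cycles attached at $v$, so interchanging them alters neither the cycles and their signs nor the vertices of attachment, giving (a)--(c) at once. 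Operation~(ii) leaves the underlying graph fixed, so (a) and (c) are automatic, while (b) is exactly Proposition~\ref{switchings}: switching at $v$ flips the signs of the two edges that any cycle uses at $v$, leaving the product around each cycle unchanged.

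The content of the proposition lies in operation~(iii). Write $\Sigma_1,\Sigma_2$ for the twisting parts, whose only common vertices are $u$ and $v$, and assume without loss of generality that $\Sigma_1$ is the part satisfying the hypothesis. I would first verify the cycle correspondence (a)--(b). A cycle lying entirely in one part is governed by that part's internal structure, which twisting leaves intact, so it survives with the same edges and the same sign; a cycle meeting both parts is the union of a $u$--$v$ path in $\Sigma_1$ and a $u$--$v$ path in $\Sigma_2$, and after re-identifying $u_1$ with $v_2$ and $v_1$ with $u_2$ these two paths again close into a cycle on the identical edge set, so its sign is unchanged as well. Hence the positive cycles of $\Sigma$ and $\Sigma'$ coincide, which settles circuits of type~(i), and the negative cycles coincide as edge sets too.

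It remains to match the handcuffs, and this is where the hypothesis enters and where the real difficulty lies. Twisting can change nothing about a cycle except which of $u,v$ it meets, and even this is invisible for any cycle passing through \emph{both} $u$ and $v$: if a negative cycle $C_1$ of $\Sigma_1$ contains both $u$ and $v$, then for any negative cycle $C_2$ of $\Sigma_2$ the number of vertices of $\{u,v\}$ common to $C_1$ and $C_2$ equals $|V(C_2)\cap\{u,v\}|$ in both $\Sigma$ and $\Sigma'$ (the re-identification only swaps the roles of $u$ and $v$), so the sharing pattern that distinguishes a loose handcuff from a tight one or from a non-circuit is preserved. The only dangerous configuration is therefore a negative cycle of $\Sigma_1$ meeting exactly one of $u,v$; but the hypothesis forbids precisely this, since either $\Sigma_1$ is balanced and has no negative cycle at all, or every negative cycle of $\Sigma_1$ contains both $u$ and $v$. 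The plan is then to run through the ways a tight or loose handcuff can be distributed among $\Sigma_1$, $\Sigma_2$, and (for a tight handcuff) the connecting path, and to use this dichotomy to show in each case that vertex-disjointness of the two negative cycles, or their sharing of a single vertex, is maintained; the same argument applied in the other direction (twisting being an involution about $\{u,v\}$ under which $\Sigma_1$ retains its property) rules out the appearance of new circuits. The main obstacle is exactly this case analysis: one must check every placement of the two negative cycles and, for tight handcuffs, of the joining path relative to $u$ and $v$, and confirm that the ``$\Sigma_1$ balanced, or all its negative cycles through both $u$ and $v$'' hypothesis suffices in each. As an alternative to this direct verification, the identity $M(\Sigma)=M(\Sigma')$ under twisting can also be deduced from the structural treatment of frame matroids in~\cite{SliQin:07,Zaslavsky:1991a}.
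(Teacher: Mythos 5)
Your proposal is correct in substance, but it necessarily takes a different route from the paper, because the paper contains no proof of Proposition~\ref{prop_samematroid} at all: it merely states that proofs ``can be found in, or easily derived from'' the results in~\cite{SliQin:07,Zaslavsky:1982,Zaslavsky:1991a}. Your direct verification via the circuit description of Theorem~\ref{th_sgg2} is therefore a self-contained alternative to the paper's proof-by-citation, and it buys transparency: parts (i) and (ii) are immediate (for (ii) your appeal to Proposition~\ref{switchings} is exactly right, since switching preserves every cycle's sign while fixing the underlying graph), and for (iii) you isolate the correct mechanism. Since the twisting parts meet only in $\{u,v\}$, any cycle or connecting path of a handcuff that crosses between the parts decomposes into $u$--$v$ subpaths of $\Sigma_1$ and $\Sigma_2$, and these re-close after the identifications $u_1 = v_2$, $v_1 = u_2$ on the same edge set with the same signs; hence the only data twisting can disturb is \emph{which single vertex} of $\{u,v\}$ a subgraph of $\Sigma_1$ meets, and the hypothesis (that $\Sigma_1$ is balanced or every negative cycle of $\Sigma_1$ contains both $u$ and $v$) excludes precisely the asymmetric configurations that would break a handcuff. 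One caveat: your case analysis for the handcuffs is announced rather than executed, so to make the argument complete you should explicitly check the two configurations that are not covered by symmetry alone, namely tight handcuffs whose connecting path traverses $\Sigma_1$ (the path decomposes as $P_2' \cup P_1 \cup P_2''$ with $P_1$ a $u$--$v$ path of $\Sigma_1$, and the concatenation survives the swap of attachments) and loose handcuffs formed by a negative cycle of $\Sigma_2$ through exactly one of $u,v$ together with a negative cycle of $\Sigma_1$ (which by hypothesis contains both $u$ and $v$, so the pair still shares exactly one vertex after the twist). Your closing observation that twisting is an involution under which $\Sigma_1$ retains the hypothesis correctly disposes of the reverse inclusion, so no new circuits can arise; with the indicated verifications written out, your argument stands on its own, whereas the paper's version requires the reader to extract the statement from Zaslavsky's treatment of frame matroids.
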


\noindent
In view of $(i)$ from Proposition~\ref{prop_samematroid}, from now on we will refer to the negative loops and half-edges as \emph{joints}. 
\begin{proposition} \label{prop_sgn_graphic}
Let $\Sigma$ be a signed graph. If $\Sigma$: 
\begin{itemize}
\item[(i)] is balanced, or 
\item[(ii)] has no negative cycles other than joints, or
\item[(iii)] has a balancing vertex,
\end{itemize}
then $M(\Sigma)$ is graphic. 
\end{proposition}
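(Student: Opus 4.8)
The plan is to treat each case by exhibiting an ordinary graph $G'$ whose cycle matroid equals $M(\Sigma)$, first normalising $\Sigma$ by Proposition~\ref{prop_samematroid} so that $G'$ is transparent, and then checking through the circuit description of Theorem~\ref{th_sgg2} that positive cycles, tight handcuffs and loose handcuffs correspond bijectively to the cycles of $G'$ under the natural identification of edges. Case (i) is immediate: by the Corollary following Proposition~\ref{switchings} a balanced $\Sigma$ is switching-equivalent to a signed graph $\Sigma'$ with all edges positive and no half-edges, so $M(\Sigma)=M(\Sigma')$ by Proposition~\ref{prop_samematroid}(ii); since $\Sigma'$ has no negative cycles, Theorem~\ref{th_sgg2} leaves only positive cycles as circuits, and these are precisely the cycles of the underlying graph, whence $M(\Sigma)$ is graphic.

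For case (ii), I would first use Proposition~\ref{prop_samematroid}(i) to replace every half-edge by a negative loop, so that the only negative cycles are negative loops. Deleting these loops leaves a signed graph whose only possible negative cycles would be non-joint ones excluded by hypothesis, hence it is balanced, and I switch it to all-positive by the Corollary above (switchings flip both ends of a loop and so leave negative loops unchanged). I then form $G'$ by adjoining a single new vertex $v_0$ and replacing each negative loop at a vertex $u$ by a link $\{u,v_0\}$, retaining the same ground set under the obvious bijection. A direct check shows that a positive cycle of $\Sigma$ becomes a cycle of $G'$ avoiding $v_0$; a loose handcuff (two negative loops at one vertex $u$) becomes a digon through $u$ and $v_0$; and a tight handcuff (negative loops at $u,w$ joined by a positive path) becomes the cycle through $v_0,u,\ldots,w$. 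Conversely every cycle of $G'$ through $v_0$ enters and leaves via two such links and so pulls back to a handcuff, giving $M(\Sigma)=M(G')$.

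For case (iii), let $v$ be a balancing vertex, so $\Sigma\setminus v$ is balanced and every negative cycle of $\Sigma$ meets $v$; in particular every joint lies at $v$, since a joint elsewhere would be a negative cycle in the balanced $\Sigma\setminus v$, and after Proposition~\ref{prop_samematroid}(i) I may take all joints at $v$ to be negative loops. Switching at vertices other than $v$ makes every edge of $\Sigma\setminus v$ positive, after which I classify the edges at $v$ by sign. I then build $G'$ by splitting $v$ into $v_1,v_2$, routing each positive $v$-edge to $v_1$, each negative $v$-edge to $v_2$, and replacing each joint at $v$ by a link $\{v_1,v_2\}$. Because a negative cycle through $v$ uses exactly one negative $v$-edge it becomes a $v_1$--$v_2$ path in $G'$, while a positive cycle through $v$ uses two $v$-edges of equal sign and remains a cycle; no tight handcuff can occur, since two vertex-disjoint negative cycles cannot both meet $v$, whereas a loose handcuff is a pair of negative cycles meeting only at $v$ and becomes a single cycle through both $v_1$ and $v_2$. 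The same bookkeeping read in reverse yields $M(\Sigma)=M(G')$.

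The routine reductions—switching the balanced part to all-positive and interchanging joints—are harmless by Proposition~\ref{prop_samematroid} and the Corollary to Proposition~\ref{switchings}, so the only genuine work lies in the circuit correspondence. The main obstacle is the reverse direction: verifying that every cycle of $G'$ pulls back to an actual signed-graphic circuit in the sense of Theorem~\ref{th_sgg2}, with tight and loose handcuffs correctly distinguished and the tight case ruled out in (iii), and that the added vertex or the split of $v$ creates no spurious circuits. The degenerate configurations (parallel joints giving multiple $\{u,v_0\}$ or $\{v_1,v_2\}$ links, positive loops at $v$, and cycles of $G'$ meeting $v_0$ or meeting both $v_1$ and $v_2$) are exactly where care is required, but each is settled by the circuit list of Theorem~\ref{th_sgg2} together with the minor compatibility of Theorem~\ref{th_minsig}.
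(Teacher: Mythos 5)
Your proposal is correct, and it supplies in full the argument that the paper itself omits: the paper offers no proof of Proposition~\ref{prop_sgn_graphic}, deferring to the cited results of Zaslavsky and Slilaty--Qin, and merely records the intended constructions in the remark that follows the statement. Your case (i) is exactly the standard switching argument. For case (ii), your apex construction (adjoin $v_0$ and turn each joint at $u$ into a link $\{u,v_0\}$) is the correct formulation; note that the paper's accompanying claim that $M(\Sigma)=M(G)$ in this case is only tenable under an unstated convention for half-edges, since with two negative loops at a single vertex $M(\Sigma)\cong U_{1,2}$ while the cycle matroid of the underlying graph consists of two loops, so your more careful version is actually preferable. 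For case (iii), your splitting of the balancing vertex $v$ into $v_1$ (carrying the positive star) and $v_2$ (carrying the negative star), with joints at $v$ becoming $\{v_1,v_2\}$ links, is the complete Zaslavsky construction; the paper's remark, as literally written (``adding a new vertex $v$ and replacing any joint by a link joining its end-vertex with $v$''), only accounts for joints and omits the splitting, which is indispensable whenever $v$ is incident with negative links (a triangle with one negative edge already shows the remark's construction fails), so here your write-up corrects as well as completes the paper's sketch. Your supporting observations --- every negative cycle passes through $v$, hence all joints sit at $v$, tight handcuffs are impossible, and the unique common vertex of a loose handcuff must be $v$ --- are all sound; the one step to spell out in a final version is the reverse direction in case (iii), namely that a cycle of $G'$ through both $v_1$ and $v_2$ decomposes into two internally disjoint $v_1$--$v_2$ paths, each pulling back to a negative cycle or joint at $v$, which is precisely the verification you flagged and which does go through by the circuit list of Theorem~\ref{th_sgg2}.
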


\noindent
In the first two cases of Proposition~\ref{prop_sgn_graphic} we also have $M(\Sigma) = M(G)$. For the
third case, there exists a graph $G'$ obtained from $G$ by adding a new vertex $v$ and replacing any
joint by a link joining its end-vertex with $v$ such that $M(\Sigma) = M(G')$. 
Also a straightforward result which is a  direct consequence of  Proposition~\ref{prop_sgn_graphic} is 
that if $\Sigma$ is a B-necklace then $M(\Sigma)$ is graphic, since
the vertices of attachment in a B-necklace are balancing vertices. 

For a matroid $M$ and a positive integer $k$, a partition $(A,B)$ of $E(M)$ is a k-separation of $M$ if $min\{|A|,|B|\}\geq k$ and $r(A)+r(B) \leq r(M) +k-1$. The connectivity number of matroid $M$ is defined as $\lambda(M)=min\{k:\text{M has a k-separation for} ~k \geq 1\}$, while if $M$ does not have a k-separation for any $k \geq 1$ then $\lambda (M) =\infty$. We say that a matroid $M$ is k-connected for any $1 \leq k \leq \lambda(M)$. A k-separation $(A,B)$ is called \emph{exact} when $r(A)+r(B) = r(M) +k-1$. If $(A,B)$ is a k-separation of a signed-graphic matroid $M(\Sigma)$ such that $\Sigma[A], \Sigma[B]$ are connected, then $(A,B)$ is called \emph{connected k-separation} or k-separation with connected parts. 

The following three results that appear in \cite{SliQin:07}, \cite{SliQin:07c} determine the connectivity of a signed-graphic matroid in relation to the k-biconnectivity of its signed-graphic representation.

\begin{corollary}[Pagano~\cite{SliQin:07}] \label{2-bic}
If $\Sigma$ is a connected and unbalanced signed graph with at least three vertices then $M(\Sigma)$ is 2-connected iff $\Sigma$ is vertically 2-biconnected, has no balanced loops and has no balancing set of rank one.
\end{corollary}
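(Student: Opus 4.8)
The plan is to reduce the statement to the standard fact that a matroid is $2$-connected exactly when it has no $1$-separation, and then to translate the $1$-separations of $M(\Sigma)$ into the three listed structural features of $\Sigma$. For a $1$-separation $(A,B)$ the defining inequality $r(A)+r(B)\leq r(M(\Sigma))+1-1$ together with submodularity $r(A)+r(B)\geq r(A\cup B)+r(A\cap B)=r(M(\Sigma))$ forces the equality $r(A)+r(B)=r(M(\Sigma))$, so it suffices to understand which nonempty partitions achieve it. The computational backbone is the rank formula $r_{M(\Sigma)}(S)=|V(\Sigma[S])|-b(S)$, where $b(S)$ counts the balanced connected components of $\Sigma[S]$ (a balanced component spans like a tree and contributes rank $n_i-1$, while an unbalanced connected component carries a negative cycle and contributes full rank $n_i$); this is immediate from Theorem~\ref{th_sgg2}. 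Since $\Sigma$ is connected and unbalanced, $r(M(\Sigma))=|V(\Sigma)|=n$, and for any partition $(A,B)$ with both blocks nonempty we have $V(A)\cup V(B)=V(\Sigma)$ by connectivity, so inclusion--exclusion on vertices gives the key identity
\[
r(A)+r(B)-r(M(\Sigma))=|V(A)\cap V(B)|-b(A)-b(B).
\]
Hence $(A,B)$ is a $1$-separation if and only if $|V(A)\cap V(B)|=b(A)+b(B)$, and here $|V(A)\cap V(B)|\geq 1$ by connectivity.

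For the ``if'' direction, argued contrapositively, I would show each forbidden structure produces such a partition. A balanced (positive) loop $e$ is a matroid loop, so since $|E(\Sigma)|\geq 2$ (guaranteed by the three-vertex hypothesis) the pair $(\{e\},E(\Sigma)-e)$ is a $1$-separation. A balancing set $X$ of rank one makes $\Sigma[E(\Sigma)\setminus X]$ balanced; substituting $r(X)=1$ and this balance into the identity verifies it, so $(X,E(\Sigma)\setminus X)$ is a $1$-separation -- this is the genuinely non-vertical obstruction, exemplified by the single negative edge of an otherwise positive triangle. Finally, a vertical $l$-biseparation with $l\in\{0,1\}$ satisfies, by the very numerics in its definition, the prescribed relation between $|V(A)\cap V(B)|$ and the balanced/unbalanced status of its two parts, which is exactly the identity above; hence it too yields a $1$-separation.

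For the ``only if'' direction I would start from a $1$-separation $(A,B)$, so $|V(A)\cap V(B)|=b(A)+b(B)$, and split on verticality. If the separation is vertical, then reading the value of $|V(A)\cap V(B)|$ (which equals $b(A)+b(B)\in\{1,2\}$ once the parts are connected, since connectivity severely limits the number of balanced components) against the balanced/unbalanced status of the two parts matches exactly one of the three defining cases of a $1$-biseparation, exhibiting a vertical $1$-biseparation. If it is not vertical, then after swapping we may assume $V(A)\subseteq V(B)$, whence $V(B)=V(\Sigma)$, $\Sigma[B]$ spans $\Sigma$, and the identity collapses to $r(A)=b(B)$; when $\Sigma[B]$ is connected and balanced this forces $r(A)=1$ and makes $A$ a rank-one balancing set, while when $\Sigma[B]$ is connected and unbalanced it forces $r(A)=0$, so that $A$ is a set of matroid loops, i.e.\ balanced loops.

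The main obstacle is precisely this last classification when the parts $\Sigma[A],\Sigma[B]$ are \emph{disconnected}: the biseparation definition attaches a single balanced/unbalanced flag to each part, whereas the rank formula counts balanced components one at a time, so I must reconcile $b(A),b(B)$ with those flags and show that a disconnected separation still refines to one of the three named obstructions -- typically by passing to a well-chosen (e.g.\ minimal) $1$-separation so that the parts are forced into the clean connected configurations above. Handling this refinement, together with the bookkeeping of shared and isolated vertices and the $l=0$ versus $l=1$ split (invoking the three-vertex hypothesis to discard the degenerate small cases), is where the real care is required; everything else is the routine verification sketched above.
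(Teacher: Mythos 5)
The paper itself offers no proof of this statement: it is quoted verbatim as a known result of Pagano (via Slilaty--Qin), so there is no in-paper argument to compare yours against, and I can only assess your proposal on its merits and against the structural tools the paper does import. On the merits, your plan is sound. The rank formula $r(S)=|V(\Sigma[S])|-b(S)$ is the correct frame-matroid rank, the identity $r(A)+r(B)-r(M(\Sigma))=|V(A)\cap V(B)|-b(A)-b(B)$ follows exactly as you say, and the submodularity bound $|V(A)\cap V(B)|\geq b(A)+b(B)$ is doing more work for you than you acknowledge: in each case of the biseparation definition it \emph{forces} the parts to have the right number of components (e.g.\ $|V(A)\cap V(B)|=2$ with both parts balanced forces $b(A)=b(B)=1$, hence connected parts), so the flag-versus-component-count reconciliation you worry about is automatic in the direction ``forbidden structure $\Rightarrow$ $1$-separation''. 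One bookkeeping slip: your ``if'' and ``only if'' labels are interchanged (showing that each forbidden structure yields a $1$-separation is the contrapositive of ``$M(\Sigma)$ $2$-connected $\Rightarrow$ conditions hold'', not the converse), but both implications are present, so this is cosmetic.

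The one genuinely deferred step --- refining a $1$-separation with disconnected parts --- does close, and by exactly the mechanism you gesture at. If $A_1$ is a component of $\Sigma[A]$, then subadditivity gives $r(A_1)+r\bigl(B\cup(A\setminus A_1)\bigr)\leq r(A)+r(B)=r(M(\Sigma))$, so components can be moved across freely while preserving the $1$-separation; since every component of the complement of a connected part must meet that part (by connectivity of $\Sigma$), two such moves always produce a $1$-separation with \emph{both} parts connected, after which your non-vertical analysis terminates in precisely the three obstructions. Your instinct that the non-vertical branch is essential is correct: for the triangle with one negative edge, $M(\Sigma)\cong U_{3,3}$ is disconnected but no vertical $1$-biseparation exists, and it is the negative edge as a rank-one balancing set that witnesses the failure. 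Alternatively, you could shortcut the refinement entirely via Theorem~\ref{th_Zasl11}: if $M(\Sigma)$ is disconnected, an elementary separator is an outer block or a B-necklace block, which is connected by construction and leads immediately to a vertical $1$-biseparation, a balanced loop, or a rank-one balancing set. So: correct approach, completable as sketched, with the caveat that the component-moving argument (or Zaslavsky's separator theorem) must actually be written out, and the direction labels fixed.
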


\begin{proposition}[Slilaty and Qin~\cite{SliQin:07c}] \label{k-bisep} 
Let $\Sigma$ be a connected and unbalanced signed graph. \\
(1) If $(X,Y)$ is a k-biseparation of $\Sigma$, then $(X,Y)$ is a k-separation of $M(\Sigma)$. \\
(2) If $(X,Y)$ is an exact k-separation of $M(\Sigma)$ with connected parts, then $(X,Y)$ is a k-biseparation of $\Sigma$.
\end{proposition}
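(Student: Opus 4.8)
The plan is to reduce both statements to the rank formula for the frame matroid of a signed graph. Recall that for a signed graph $\Sigma$ and an edge set $S \subseteq E(\Sigma)$, the rank of $S$ in $M(\Sigma)$ equals
\[
r(S) = |V(\Sigma[S])| - b(S),
\]
where $b(S)$ denotes the number of connected components of $\Sigma[S]$ that are balanced; this holds because a connected balanced component on $v$ vertices contributes a spanning tree's worth $v-1$ to the rank, whereas a connected unbalanced component on $v$ vertices contributes $v$ (a spanning tree together with one edge closing a negative cycle). Since $\Sigma$ is connected and unbalanced, applying this with $S = E(\Sigma)$ gives $r(M(\Sigma)) = n$, where $n := |V(\Sigma)|$. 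Writing $V_X := V(G[X])$ and $V_Y := V(G[Y])$, connectedness of $\Sigma$ forces $V_X \cup V_Y = V(\Sigma)$ (every vertex is incident to an edge), so that
\[
r(X) + r(Y) = \big(|V_X| - b(X)\big) + \big(|V_Y| - b(Y)\big) = n + |V_X \cap V_Y| - b(X) - b(Y).
\]

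For part (1) I would verify the defining inequality of a $k$-separation, namely $r(X)+r(Y) \le r(M(\Sigma)) + k - 1 = n + k -1$, which by the displayed identity is equivalent to $|V_X \cap V_Y| - b(X) - b(Y) \le k-1$. I would then run through the three clauses of a $k$-biseparation. If both parts are balanced then every component of each is balanced, so $b(X), b(Y) \ge 1$, while $|V_X \cap V_Y| = k+1$; hence the quantity is at most $(k+1) - 2 = k-1$. If exactly one part is balanced then that part contributes at least $1$ to the count while $|V_X \cap V_Y| = k$, again giving at most $k-1$. If both parts are unbalanced then $b(X), b(Y) \ge 0$ and $|V_X \cap V_Y| = k-1$, so the quantity is at most $k-1$. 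Together with $\min\{|X|,|Y|\} \ge k$ from the definition of biseparation, this shows $(X,Y)$ is a $k$-separation. Observe that this direction needs no connectedness hypothesis on the parts, since disconnected parts only raise $b(X)+b(Y)$ and thereby only decrease $r(X)+r(Y)$.

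For part (2) I would read the same identity backwards. Exactness means $r(X) + r(Y) = n + k - 1$, so the identity yields $|V_X \cap V_Y| = k - 1 + b(X) + b(Y)$. Because the parts are now assumed connected, each of $b(X)$ and $b(Y)$ equals $0$ or $1$ according as the corresponding part is unbalanced or balanced. The three possibilities for the pair $(b(X), b(Y))$ then match the three clauses of the biseparation definition exactly: both balanced gives $|V_X \cap V_Y| = k+1$ with both parts balanced; exactly one balanced gives $|V_X \cap V_Y| = k$ with exactly one part balanced; both unbalanced gives $|V_X \cap V_Y| = k-1$ with both parts unbalanced. Since $\min\{|X|,|Y|\} \ge k$ carries over from the $k$-separation, $(X,Y)$ is a $k$-biseparation.

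The only genuine content, and hence the step I would treat most carefully, is establishing the rank formula $r(S) = |V(\Sigma[S])| - b(S)$ together with the bookkeeping that $\Sigma[S]$ records no isolated vertices, so that $V_X \cup V_Y = V(\Sigma)$ and the identity for $r(X)+r(Y)$ holds with no stray terms. Everything after that is a finite case check driven by whether each part is balanced. I would also highlight why connectedness is indispensable in part (2) but dispensable in part (1): it is precisely connectedness that pins $b(X)$ and $b(Y)$ to the set $\{0,1\}$, so that the single exact rank equation determines which clause of the biseparation definition applies; in part (1) the inequality is instead robust to the larger values of $b$ that disconnected parts can produce.
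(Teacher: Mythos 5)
Your proof is correct. The paper itself offers no proof of this proposition --- it is imported verbatim from Slilaty and Qin~\cite{SliQin:07c} --- and your argument via the rank formula $r(S)=|V(\Sigma[S])|-b(S)$ for the frame matroid, followed by the case analysis pinning $(b(X),b(Y))\in\{0,1\}^2$ under connectedness of the parts, is essentially the standard argument used in that cited source, including the correct observation that part (1) survives without connected parts while part (2) needs both exactness and connectedness.
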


\begin{theorem}[Slilaty, Qin~\cite{SliQin:07c}] \label{3biconnected}
If $\Sigma$ is a connected and unbalanced signed graph with at least three vertices, then $M(\Sigma)$ is 3-connected iff $\Sigma$ is vertically 3-biconnected, simple, and has no balancing bond of rank one or two.
\end{theorem}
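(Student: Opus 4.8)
The plan is to prove both implications by translating between separations of $M(\Sigma)$ and biseparations of $\Sigma$, using Proposition~\ref{k-bisep} as the bridge, Corollary~\ref{2-bic} to dispose of the connectivity-at-most-two behaviour, and the frame-matroid rank identity $r(M(\Sigma[X])) = |V(\Sigma[X])| - b(X)$, where $b(X)$ is the number of balanced components of $\Sigma[X]$. Since $\Sigma$ is connected and unbalanced, $b(\Sigma)=0$ and $r(M(\Sigma)) = |V(\Sigma)|$.

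\textbf{Forward direction} ($M(\Sigma)$ 3-connected $\Rightarrow$ the three graph conditions). Because 3-connectivity implies 2-connectivity, Corollary~\ref{2-bic} already gives that $\Sigma$ is vertically 2-biconnected, has no balanced loops, and has no balancing set of rank one. I would upgrade vertical 2-biconnectivity to vertical 3-biconnectivity by contraposition: a vertical 2-biseparation of $\Sigma$ is, by Proposition~\ref{k-bisep}(1), a 2-separation of $M(\Sigma)$, contradicting 3-connectivity. Simplicity of $\Sigma$ follows since a 3-connected matroid is simple (the degenerate small cases do not occur here, as $\Sigma$ has at least three vertices and is unbalanced), so $M(\Sigma)$ has no loops and no parallel pairs; reading this back through Theorem~\ref{th_sgg2} excludes positive loops and balanced digons, i.e.\ $\Sigma$ is simple. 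Finally, if $\Sigma$ had a balancing bond $B$ with $r(B)\leq 2$, then $\Sigma\setminus B$ is balanced and the partition $(B,E-B)$ satisfies $r(B)+r(E-B)-r(M)+1 = r(B)-b(E-B)+1 \leq r(B) \leq 2$, exhibiting a separation of order at most two and contradicting 3-connectivity.

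\textbf{Reverse direction} (the three conditions $\Rightarrow M(\Sigma)$ 3-connected). I would argue by contraposition: if $M(\Sigma)$ is not 3-connected it has a $k$-separation with $k\in\{1,2\}$. For $k=1$, Corollary~\ref{2-bic} forces $\Sigma$ to fail vertical 2-biconnectivity (contradicting vertical 3-biconnectivity), or to have a balanced loop (contradicting simplicity), or to have a balancing set of rank one, which I would promote to a balancing bond of rank one to contradict the third hypothesis. For $k=2$, take the separation exact and reduce to one whose parts $\Sigma[A],\Sigma[B]$ are connected, so that $b_A,b_B\in\{0,1\}$. Exactness, $r(A)+r(B)=r(M)+1$, together with $|V(A)|+|V(B)|=|V|+|V(A)\cap V(B)|$, then yields $|V(A)\cap V(B)| = 1 + b_A + b_B$, and the three values $b_A+b_B\in\{0,1,2\}$ reproduce exactly the three defining cases of a 2-biseparation (this is Proposition~\ref{k-bisep}(2)). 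A vertical such biseparation contradicts vertical 3-biconnectivity; a non-vertical one forces the vertex set of one part inside the other's, exposing the crossing edges as a bond, which in the balanced cases is a balancing bond of rank at most two (contradicting the third hypothesis), the residual degenerate configurations being excluded by simplicity.

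\textbf{Main obstacle.} The rank bookkeeping and the direct application of Proposition~\ref{k-bisep} are routine. The real difficulty is the reduction in the $k=2$ case to a separation with \emph{connected} parts: when $\Sigma[A]$ or $\Sigma[B]$ is disconnected, $b_A+b_B$ may exceed $2$ and the exact separation need not be a biseparation. One must either reroute edges across the partition, using the switching and twisting invariance of Proposition~\ref{prop_samematroid} to restore connectivity while preserving the separation order, or show directly that a disconnected separation already exhibits a balancing bond of small rank. Making this reduction rigorous, together with the precise dictionary between non-vertical biseparations and balancing bonds of rank one or two, and in particular reconciling the ``balancing set of rank one'' of Corollary~\ref{2-bic} with the ``balancing bond'' of the statement, is where the bulk of the careful case analysis lies.
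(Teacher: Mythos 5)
The paper itself offers no proof of this statement --- it is quoted verbatim from Slilaty and Qin~\cite{SliQin:07c} --- so your attempt can only be judged on its own terms, and on those terms it has a genuine gap, which to your credit you name yourself. Your forward direction is essentially sound: Corollary~\ref{2-bic} plus Proposition~\ref{k-bisep}(1) handle vertical 3-biconnectivity, reading simplicity of $M(\Sigma)$ back through Theorem~\ref{th_sgg2} excludes positive loops and balanced digons, and the rank computation $r(Y)+r(E-Y)=r(M)+r(Y)-1$ for a balancing bond $Y$ of rank at most two is correct. But the reverse direction is not a proof: Proposition~\ref{k-bisep}(2) applies only to \emph{exact} $k$-separations with \emph{connected} parts, and you never establish that an arbitrary 2-separation of $M(\Sigma)$ can be converted into one. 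Shifting a connected component of $\Sigma[A]$ across the partition preserves the order of the separation (the frame rank is additive over components of the induced subgraphs) but can violate the cardinality condition $\min\{|A|,|B|\}\geq 2$, and the degenerate configurations that then arise --- a side that is a single coloop, a positive loop, a balanced digon, or two joints at one vertex --- are exactly where the hypotheses ``simple'' and ``no balancing bond of rank one or two'' must be invoked. Deferring this as ``the bulk of the careful case analysis'' defers the actual content of the theorem; in Slilaty and Qin's paper this reduction occupies dedicated lemmas rather than being a routine preliminary.

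Moreover, one of your two proposed repairs would fail as stated. Rerouting edges via twisting (Proposition~\ref{prop_samematroid}(iii)) replaces $\Sigma$ by a different signed graph $\Sigma'$ with $M(\Sigma')=M(\Sigma)$, but the three conditions in the theorem are assertions about the \emph{given} $\Sigma$ and are not twisting-invariant: a twist can create or destroy vertical biseparations. So in the contrapositive you would only conclude that some representation $\Sigma'$ fails a condition, which does not contradict the hypotheses on $\Sigma$. A second loose end sits in the same region: to convert the non-vertical 2-biseparations (and the ``balancing set of rank one'' of Corollary~\ref{2-bic}) into balancing \emph{bonds}, you need a minimal balancing subset whose deletion leaves $\Sigma$ connected; this holds when the balanced side of the separation is spanning and connected, which again presupposes the very connected-parts reduction you have not carried out. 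Until that reduction is done directly --- without twisting, by analyzing the component shuffle and its degenerate outcomes --- the right-to-left implication remains unproved.
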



\subsection{Bonds and cocircuits} \label{subsec_bonds_circuits}

With the following
theorem we characterize the sets of edges in a signed graph $\Sigma$ which correspond to circuits
of $M^{*}(\Sigma)$. 
\begin{theorem}[Zaslavsky~\cite{Zaslavsky:1982}] \label{thrm_bonds}
Given a signed graph $\Sigma$ and its corresponding matroid $M(\Sigma)$, $Y\subseteq E(\Sigma)$ is
a cocircuit of $M(\Sigma)$ if and only if $Y$ is a minimal set of edges 
whose deletion increases the number of balanced components of $\Sigma$.
\end{theorem}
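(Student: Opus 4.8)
The plan is to reduce the statement to the standard matroid fact that the cocircuits of $M(\Sigma)$ are exactly the minimal sets $Y\subseteq E(\Sigma)$ whose complement fails to span, i.e.\ those with $r_{M(\Sigma)}(E(\Sigma)\dof Y) < r(M(\Sigma))$, and then to rewrite this rank drop in terms of balanced components via the rank formula for frame matroids. First I would recall the duality fact that $Y$ is independent in $M^{*}(\Sigma)$ if and only if $E(\Sigma)\dof Y$ is spanning in $M(\Sigma)$; hence $Y$ is codependent precisely when $r_{M(\Sigma)}(E(\Sigma)\dof Y) < r(M(\Sigma))$, and a cocircuit is by definition a minimal such $Y$.

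The core step is the rank formula $r_{M(\Sigma)}(S) = |V(\Sigma)| - b(S)$, where $b(S)$ denotes the number of balanced components of the spanning subgraph $(V(\Sigma),S)$, with isolated vertices counted as balanced components. To establish it I would use the circuit description in Theorem~\ref{th_sgg2}: an independent set contains no positive cycle and no tight or loose handcuff, so each of its connected components is either a tree or a tree plus one edge completing a negative cycle, i.e.\ a negative $1$-tree. Consequently a maximal independent subset of a balanced component on $V_c$ vertices is a spanning tree ($|V_c|-1$ edges), while in an unbalanced component one may additionally close a negative cycle ($|V_c|$ edges). Here I would invoke the Zaslavsky corollary that a balanced signed graph is switching equivalent to an all-positive graph, so that within a balanced component every cycle is positive and no edge can be adjoined to a spanning tree without creating a circuit. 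Summing over components yields $r = \sum_c\bigl(|V_c| - [\,c\ \text{balanced}\,]\bigr) = |V(\Sigma)| - b$.

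Applying the formula to $S = E(\Sigma)\dof Y$, and using Theorem~\ref{th_minsig} to identify $r_{M(\Sigma)}(E(\Sigma)\dof Y)$ with the rank of the frame matroid of $\Sigma\dof Y$, the rank-drop condition becomes $|V(\Sigma)| - b(\Sigma\dof Y) < |V(\Sigma)| - b(\Sigma)$, that is $b(\Sigma\dof Y) > b(\Sigma)$. Since this equivalence holds for \emph{every} $Y$, the family of codependent sets is literally equal to the family of sets whose deletion increases the number of balanced components of $\Sigma$; therefore their minimal members coincide, and the minimal codependent sets are exactly the minimal sets of the latter type. This is the asserted characterization of cocircuits, and minimality transfers for free because the two set families are identical.

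I expect the main obstacle to be the rank formula rather than the final chain of equivalences. The delicate points are the treatment of unbalanced components, where one must verify both that closing a negative cycle genuinely adds an independent element and that no second edge can be adjoined without forming a handcuff, together with the bookkeeping conventions for isolated vertices and for the fact that deletion in a signed graph preserves the entire vertex set. Once the rank formula is in hand, the remainder is the routine translation between duality, spanning sets, and rank.
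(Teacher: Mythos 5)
The paper states this theorem as a quoted result of Zaslavsky~\cite{Zaslavsky:1982} and gives no proof of its own, so there is no internal proof to compare against. Your argument is correct and is essentially the standard (indeed Zaslavsky's) route: the frame-matroid rank formula $r_{M(\Sigma)}(S)=|V(\Sigma)|-b(S)$, established from the circuit description of Theorem~\ref{th_sgg2} (components of independent sets are trees or negative $1$-trees), combined with the duality fact that cocircuits are exactly the minimal sets whose removal drops the rank, with the delicate points you flag---closing a negative cycle in an unbalanced component, isolated vertices counting as balanced components, and edge deletion preserving the vertex set---handled correctly.
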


The sets of edges defined in Theorem~\ref{thrm_bonds} are called \emph{bonds} of 
a signed graph.
In analogy with the different types of circuits a signed-graphic matroid has, 
bonds can also be classified into different types according to the signed
graph obtained upon their deletion. Specifically, for a 
given connected and unbalanced signed graph $\Sigma$, the deletion of a bond $Y$ 
results in a signed graph $\Sigma\dof{Y}$  with exactly one
balanced component due to the minimality of $Y$. 
Thus,  $\Sigma\dof{Y}$ may be a balanced connected graph
in which case we call $Y$ a \emph{balancing  bond} or it may consist of one balanced component and some unbalanced
components. In the latter case, if the balanced component is a vertex,
i.e. the balanced component is empty of edges, then we say that
$Y$ is a \emph{star bond}, while in the case that the balanced component is not empty of edges  $Y$
can be either an \emph{unbalancing bond} or a \emph{double bond}.
Specifically, if the balanced component is not empty of edges and there is no edge in $Y$ such that both of its end-vertices are vertices of the balanced component, then $Y$ is an unbalancing bond. On the other hand, if there exists at least one edge of $Y$ whose both end-vertices are vertices of the balanced component then $Y$ is a double bond (see Figure~\ref{fig_bonds_signed_graphs}).

\begin{figure}[hbtp]
\begin{center}
\mbox{
\subfigure[balancing bond]
{
\includegraphics*[scale=0.29]{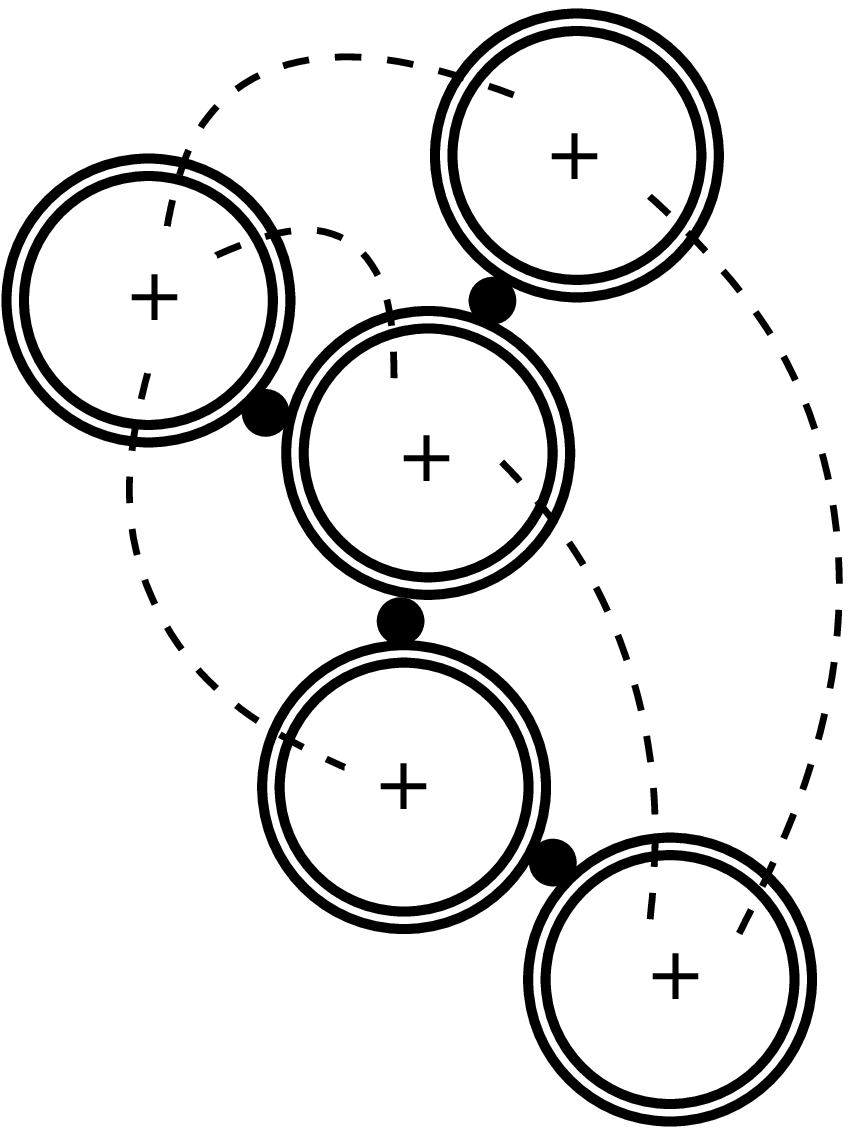}
}\quad
\subfigure[star bond]
{
\includegraphics*[scale=0.29]{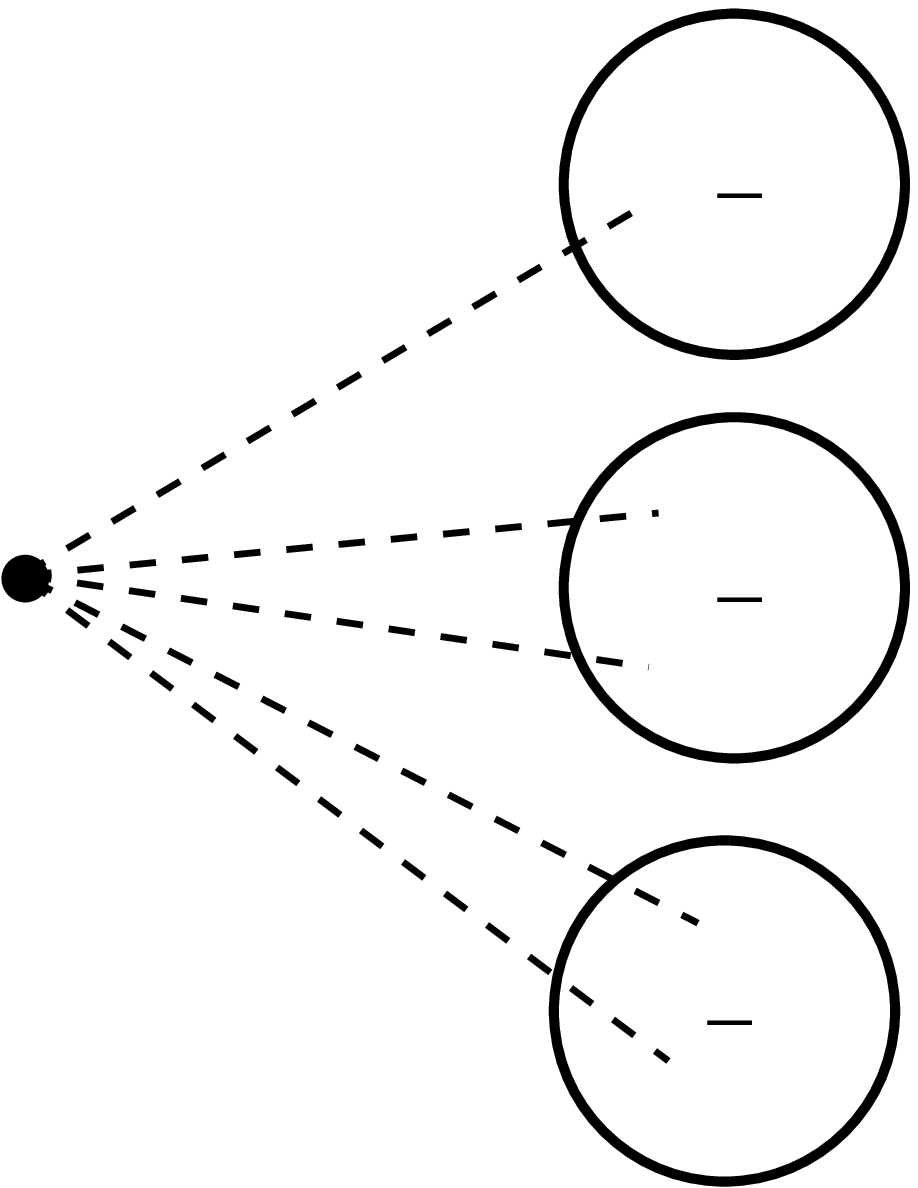}
}\quad
\subfigure[unbalancing bond]
{
\includegraphics*[scale=0.29]{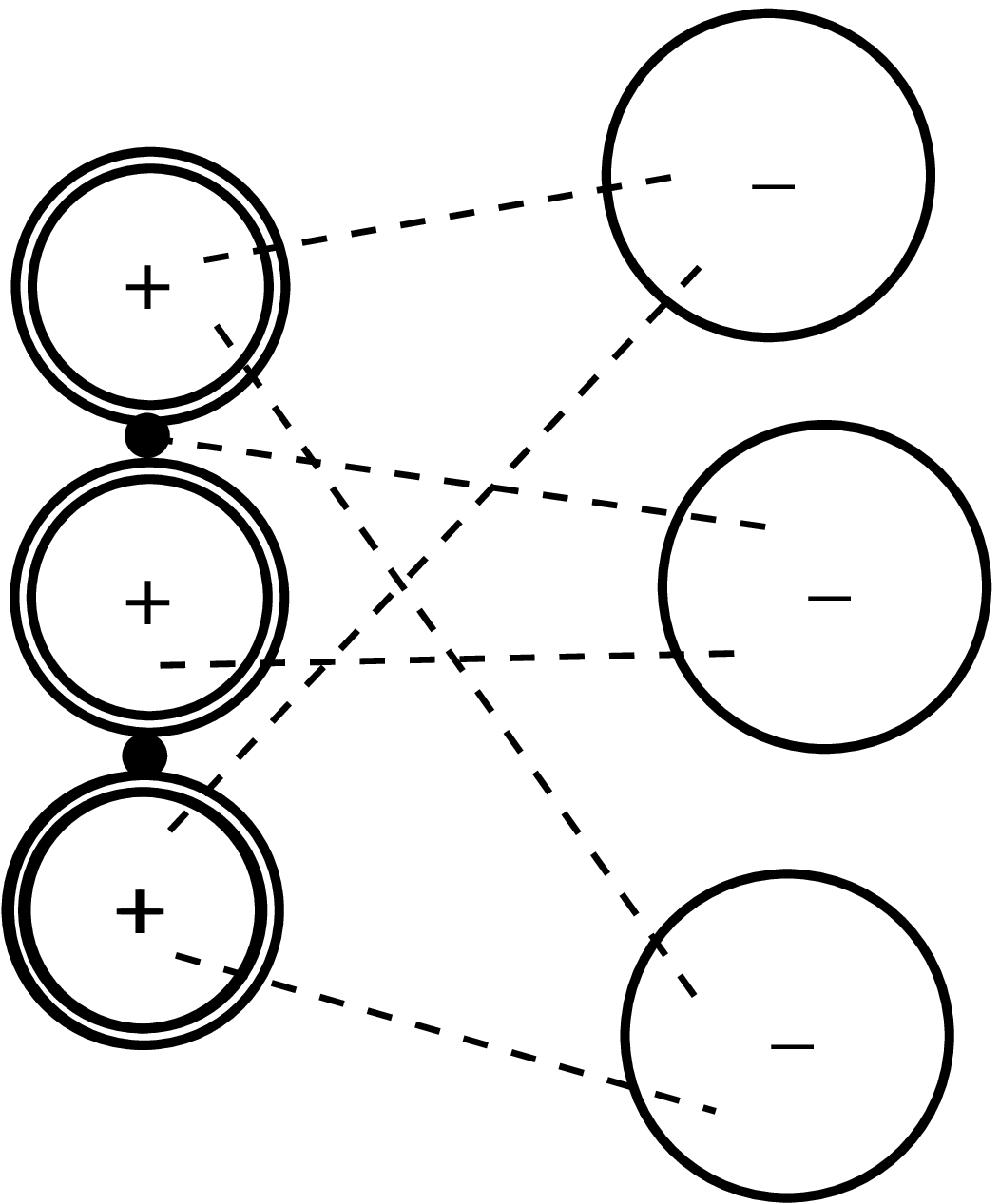}
}\quad
\subfigure[double bond]
{
\includegraphics*[scale=0.29]{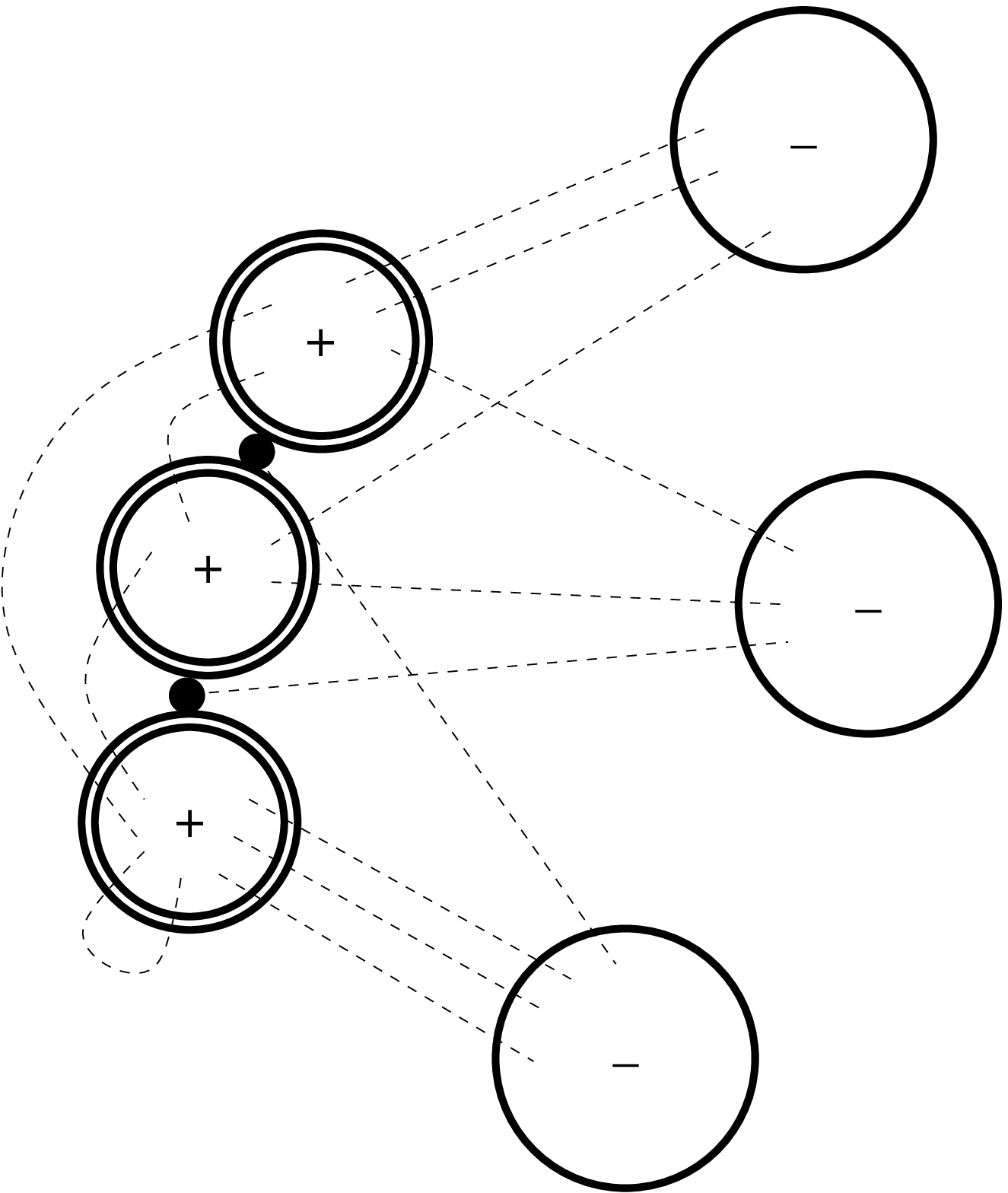}
}} \caption{Bonds in Signed Graphs}
\label{fig_bonds_signed_graphs}
\end{center}
\end{figure}

A further classification of bonds is based on whether the matroid
$M(\Sigma)\dof{Y}$ is connected or not. 
In the case that $M(\Sigma)\dof{Y}$ is disconnected we call $Y$ as \emph{separating bond}
of $\Sigma$, otherwise we say that $Y$ is a \emph{nonseparating bond}. 

In~\cite{Zaslavsky:1982,Zaslavsky:1991a}
the edge sets of a signed graph which correspond to elementary separators in the associated signed-graphic
matroid are determined. 
Before we present this result in Theorem~\ref{th_Zasl11} we have to provide some necessary definitions.    
An \emph{inner block}  of $\Sigma$ is a block that 
is unbalanced or lies on a path between two unbalanced 
blocks. Any other block is called \emph{outer}. 
The \emph{core} of $\Sigma$  is the union of all inner blocks. 
A \emph{B-necklace} is a special type of 2-connected unbalanced signed graph, 
which is composed of maximally 2-connected balanced 
subgraphs $\Sigma_i$ joined in a cyclic fashion as illustrated in Figure~\ref{fig_necklace}. Moreover, the unique common vertex between consecutive 
subgraphs in a B-necklace is called \emph{vertex of attachment}.   Note that in 
Figure~\ref{fig_necklace} as well as in the other figures that follow, a single line boundary depicts a connected graph while 
a double line boundary is used to depict a block, 
where in each case a positive (negative) sign is used to indicate whether the connected or 2-connected
component is balanced (unbalanced). 
Observe that any negative cycle in a B-necklace has to contain at least one edge from each $\Sigma_i$. 
\begin{figure}[h] 
\begin{center}
\centering
\psfrag{S1}{\footnotesize $\Sigma_1$}
\psfrag{S2}{\footnotesize $\Sigma_2$}
\psfrag{S3}{\footnotesize $\Sigma_3$}
\psfrag{S4}{\footnotesize $\Sigma_4$}
\psfrag{S5}{\footnotesize $\Sigma_5$}
\psfrag{Si}{\footnotesize $\Sigma_i$}
\psfrag{Sn}{\footnotesize $\Sigma_n$}
\includegraphics*[scale=0.3]{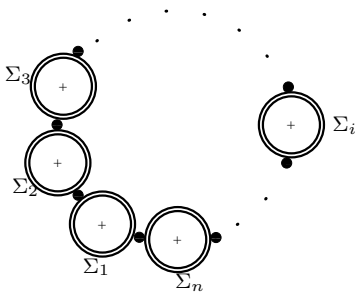}
\end{center}
\caption{A B-necklace.}
\label{fig_necklace}
\end{figure}

In the lemma that follows, a structural property of B-necklaces is shown. 
\begin{lemma}\label{th_bneck}
The expansion of a B-necklace, with at least three blocks,  at any
vertex is a B-necklace. 
\end{lemma}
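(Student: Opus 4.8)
The plan is to show that expanding a B-necklace at a vertex $v$ preserves both the cyclic block structure and the balanced/unbalanced pattern that defines a B-necklace. The expansion operation splits $v$ into $v_1, v_2$, adds a new positive link $e=\{v_1,v_2\}$, and distributes the edges formerly incident to $v$ between $v_1$ and $v_2$ so that $2$-connectivity is maintained. Since a B-necklace is a cyclic chain of maximally $2$-connected balanced blocks $\Sigma_1,\ldots,\Sigma_n$ joined at vertices of attachment, the vertex $v$ at which we expand is either a vertex of attachment (shared by two consecutive blocks) or an interior vertex of a single block $\Sigma_i$. I would treat these two cases separately, since the block-diagram is affected differently in each.

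\textbf{Case analysis by the location of $v$.} First suppose $v$ lies in the interior of a single block $\Sigma_i$, that is, $v$ is not a vertex of attachment. Then all edges incident to $v$ belong to $\Sigma_i$, so the expansion acts entirely within $\Sigma_i$: the new positive edge $e$ and the split vertex stay inside that block, while the other blocks $\Sigma_j$ ($j\neq i$) are untouched. The expanded subgraph $\Sigma_i'$ is again $2$-connected by the defining requirement of expansion, and it remains balanced because the only new edge is positive and balance is a property of the cycle signs (invoking the sign-of-a-cycle definition and Proposition~\ref{switchings}); any new cycle through $e$ can be switched so that $e$ is positive, leaving the balanced-cycle list of the block unchanged apart from the added positive edge. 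Hence the cyclic arrangement of $n$ balanced blocks is preserved and the result is a B-necklace. Second, suppose $v$ is a vertex of attachment shared by $\Sigma_i$ and $\Sigma_{i+1}$. Now the edges incident to $v$ split between these two blocks, and the key point is to choose the distribution consistently with the necklace structure: I would argue that to maintain $2$-connectivity the split must place all of $\Sigma_i$'s edges at $v$ on one side and all of $\Sigma_{i+1}$'s on the other, so that the new positive edge $e$ becomes a connecting edge between the two blocks. Here is where the hypothesis of \emph{at least three blocks} is used: it guarantees the cycle of blocks has enough length that separating $v$ into $v_1,v_2$ and inserting $e$ yields either a merged balanced block containing $e$, or equivalently one can absorb $e$ into one of the two incident balanced blocks, keeping every block balanced and the overall chain cyclic.

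\textbf{Verifying the B-necklace conditions.} In either case the three things to check are: (a) the expanded graph is $2$-connected and unbalanced, (b) it decomposes into maximally $2$-connected balanced subgraphs joined cyclically, and (c) the new vertices of attachment are correctly identified. Condition (a) follows because expansion preserves $2$-connectivity by definition, and the expanded graph still contains a negative cycle (every negative cycle of a B-necklace must meet each $\Sigma_i$, and such a cycle either avoids $v$ or can be rerouted through $e$, which is positive and so does not change the cycle's sign). For (b) I would verify that the new positive edge $e$ does not create a negative cycle, so no block loses its balance, and that maximality of the $2$-connected balanced subgraphs is preserved — the insertion of a positive link either extends a block or sits as a bridge between consecutive attachment vertices.

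\textbf{The main obstacle} I expect is the attachment-vertex case: one must show that the edge-distribution forced by the $2$-connectivity requirement of the expansion is compatible with the cyclic block structure, rather than, say, splitting a single block into two non-adjacent pieces that would break the necklace. This is precisely where the \emph{at least three blocks} hypothesis is essential, and I would make the argument rigorous by analyzing how the vertices of attachment adjacent to $v$ constrain the two sides of the split, using the fact that each $\Sigma_i$ is itself maximally $2$-connected so its internal connectivity cannot be severed by moving edges from a single vertex to the new edge $e$.
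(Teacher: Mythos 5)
Your attachment-vertex case contains the decisive error. You claim that the $2$-connectivity requirement in the definition of expansion \emph{forces} the pure split, with all edges of $\Sigma_i$ at $v$ going to one of $v_1,v_2$ and all edges of $\Sigma_{i+1}$ to the other. This is false: if $G$ is $2$-connected, then \emph{every} distribution in which each of $v_1,v_2$ receives at least one old edge maintains $2$-connectivity. Indeed, for $u\notin\{v_1,v_2\}$ contracting the new positive edge $e$ in $G'\dof u$ yields $G\dof u$, which is connected, and contraction preserves the number of components; for $u=v_1$ (or $v_2$) the graph $G'\dof u$ is $G\dof v$ together with $v_2$ attached to it by at least one edge. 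So mixed distributions, giving each of $v_1,v_2$ some edges of $\Sigma_i$ and some of $\Sigma_{i+1}$, are legitimate expansions, and your case analysis never treats them — yet they are the substance of the lemma. A mixed expansion at an attachment vertex merges $\Sigma_i$, $\Sigma_{i+1}$ and $e$ into a single subgraph $U$, and one must show $U$ is a balanced, maximal $2$-connected subgraph: every cycle of $U$ projects, upon identifying $v_1$ with $v_2$, to a cycle or a pair of edge-disjoint cycles of $\Sigma_i\cup\Sigma_{i+1}$ through $v$, hence is positive — \emph{provided} no cycle of $U$ wraps around the necklace. This is exactly where the hypothesis of at least three blocks enters: for $n\geq 3$ some block lies entirely outside $U$, so no cycle of $U$ can wrap, whereas for $n=2$ the merged $U$ is the whole (unbalanced) graph and the conclusion genuinely fails. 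Your proposal instead attaches the three-block hypothesis to a nonexistent constraint ("the distribution forced by $2$-connectivity"), so the proof as structured breaks on every mixed expansion.

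There are smaller gaps as well. In the interior-vertex case, your balance argument only considers cycles through $e$; the expanded block can contain cycles through both $v_1$ and $v_2$ that avoid $e$, which correspond to a figure-eight (two edge-disjoint cycles of $\Sigma_i$ meeting at $v$) and are positive because each factor is — this case must be handled, and no switching is involved since $e$ is positive by the definition of expansion for signed graphs. Your statement that $\Sigma_i'$ is $2$-connected "by the defining requirement of expansion" conflates $2$-connectivity of the whole graph with that of the block; the claim is true, but only via the contraction argument above applied inside $\Sigma_i$, using that $v$ is interior so each of $v_1,v_2$ retains an edge of $\Sigma_i$. Finally, maximality of the new blocks requires the converse of the paper's observation about negative cycles: every cycle of a B-necklace containing edges of two distinct blocks passes through all blocks and is negative (all attachment-to-attachment paths inside a balanced block have the same sign, so all wrap-around cycles have a common sign, which must be negative since the necklace is unbalanced); you assert maximality without articulating this. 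For what it is worth, the paper states Lemma~\ref{th_bneck} without proof, so your attempt must stand on its own — and as written it does not, precisely at the mixed-distribution step.
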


In the following theorem, the elementary separators of a signed-graphic matroid are characterized with respect to the edge set of the 
corresponding signed
 graph. 
\begin{theorem}[Zaslavsky~\cite{Zaslavsky:1991a}] \label{th_Zasl11}
Let $\Sigma$ be a connected signed graph. The elementary separators of 
$M(\Sigma)$ are the edge sets of each outer block and the core,  
except that when the core is a B-necklace each block in the
B-necklace is also an elementary separator. 
\end{theorem}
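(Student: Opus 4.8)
The plan is to read off the elementary separators as the connected components of $M(\Sigma)$, i.e.\ as the classes of the equivalence relation on $E(\Sigma)$ generated by ``$e$ and $f$ lie in a common circuit''; equivalently, $X\subseteq E(\Sigma)$ is a union of separators exactly when no circuit of $M(\Sigma)$ meets both $X$ and $E(\Sigma)\dof X$. Accordingly, for each piece $P$ named in the statement I would establish two things: (a) $P$ is a separator, so that no circuit crosses its boundary, and (b) $M(\Sigma)\dto P$ is a \emph{connected} matroid, so that $P$ cannot be split further and is therefore an elementary separator. By Theorem~\ref{th_minsig} we have $M(\Sigma)\dto P=M(\Sigma[P])$, so (b) is a statement about the signed subgraph $\Sigma[P]$ alone. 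Throughout I would use the circuit description of Theorem~\ref{th_sgg2}: every circuit is a positive cycle or a tight/loose handcuff, and the two cycles of any handcuff are negative.

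First I would dispose of the outer blocks. An outer block $B$ is balanced and, in the block--cut-vertex tree, is separated from every unbalanced block by a cut vertex, with only balanced material beyond it. For (a): a positive cycle cannot meet both $E(B)$ and its complement, since it would traverse a separating vertex twice; and no handcuff can use an edge of $B$, because a negative cycle cannot enter a balanced branch and return through the same attachment vertex, while the connecting path of a handcuff is simple and internally disjoint from its cycles. Hence no circuit crosses $\partial B$. For (b): $B$ is balanced and $2$-connected, so by Proposition~\ref{prop_sgn_graphic}, $M(\Sigma[E(B)])=M(B)$ is a connected graphic matroid. Thus each outer block is a single elementary separator, and deleting all of them leaves exactly the core $\Omega$, which is connected and unbalanced. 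The same balanced-branch argument shows no circuit crosses the boundary between $\Omega$ and the outer blocks, so $E(\Omega)$ is again a union of separators; the real question is how $E(\Omega)$ itself splits.

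Here I would invoke the connectivity dictionary of Corollary~\ref{2-bic} together with Proposition~\ref{k-bisep}: $M(\Omega)$ is disconnected precisely when $\Omega$ carries a vertical $1$-biseparation (the degenerate obstructions of Corollary~\ref{2-bic} are ruled out for a core, since a balanced loop is itself an outer block and a rank-one balancing set would again exhibit a balanced branch). I would then examine the three types of $1$-biseparation. A split at a cut vertex of $\Omega$ has \emph{both} sides unbalanced---each side of a core cut vertex must contain an unbalanced block, since a simple path realising a balanced inner block as lying between two unbalanced blocks cannot traverse the cut vertex twice---so it is neither a type-$2$ nor a type-$3$ biseparation; and a type-$3$ split would disconnect the underlying graph. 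Consequently the only way $M(\Omega)$ can be disconnected is through a type-$1$ biseparation: a partition of $E(\Omega)$ into two \emph{balanced} parts meeting in exactly two vertices.

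The heart of the matter is thus the equivalence: such a type-$1$ vertical $1$-biseparation exists if and only if $\Omega$ is a B-necklace, and in that case the elementary separators are exactly its blocks $\Sigma_i$. The necklace direction is transparent once one records that every negative cycle of a B-necklace meets every $\Sigma_i$ and hence passes through all $\ge 2$ vertices of attachment; so any two negative cycles share at least two vertices, whence a B-necklace admits \emph{no} handcuff at all (tight handcuffs need vertex-disjoint negative cycles, loose handcuffs need negative cycles meeting in exactly one vertex). The circuits of $M(\Omega)$ are then precisely the positive cycles, each confined to a single $\Sigma_i$, and since each $\Sigma_i$ is balanced and $2$-connected the common-circuit classes are exactly the sets $E(\Sigma_i)$, each a connected graphic matroid. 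For the converse---that a core admitting two balanced parts meeting in two vertices must be globally a B-necklace---I would induct on the number of inner blocks, peeling off a balanced part attached at two vertices, using Proposition~\ref{prop_samematroid}(iii) to keep the matroid unchanged and Lemma~\ref{th_bneck} to preserve the necklace structure under the attendant expansions, until the cyclic arrangement of balanced blocks is exposed. This converse is the main obstacle: it is where the global cyclic structure must be reconstructed from a single local two-vertex split. The remaining cases are comparatively routine, the non-necklace cores having connected matroids by the dichotomy just established (via Corollary~\ref{2-bic} when $\Omega$ is $2$-connected, and via the cut-vertex analysis otherwise), and the small cases with fewer than three vertices being checked directly.
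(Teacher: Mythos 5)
Your overall skeleton---identify elementary separators as the classes of the ``common circuit'' relation, verify that outer blocks are balanced, uncrossed by circuits, and have connected graphic restrictions, then reduce the core $\Omega$ to a biseparation analysis---is a reasonable shape, and your handling of the outer blocks and of the cut-vertex/type-2/type-3 cases is essentially right (each branch of $\Omega$ at a cut vertex does contain an unbalanced block). Note, however, that the paper offers no proof to compare against: Theorem~\ref{th_Zasl11} is imported verbatim from Zaslavsky~\cite{Zaslavsky:1991a}. Judged on its own merits, your proposal has a genuine error at its pivot. The claimed dichotomy ``$M(\Omega)$ is disconnected precisely when $\Omega$ carries a \emph{vertical} $1$-biseparation'' is false, and your parenthetical dismissal of the degenerate obstructions in Corollary~\ref{2-bic} is where it breaks. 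Take $\Omega$ to be a long positive cycle $C$ together with a single negative chord $n=\{u,v\}$ with $u,v\in V(C)$. This is $2$-connected, unbalanced, entirely a core, and has arbitrarily many vertices, so it escapes your ``fewer than three vertices'' escape hatch. Every cycle through $n$ is negative, so $n$ is a coloop and $M(\Omega)=M(C)\oplus U_{1,1}$ is disconnected; yet the only balanced--balanced split is $(\{n\},E(C))$, which is \emph{not} vertical since $V(\{n\})\subseteq V(C)$. Moreover $\{n\}$ is exactly a balancing set of rank one---the obstruction you claimed ``would exhibit a balanced branch.'' It exhibits no balanced branch; it is the degenerate B-necklace with a single-edge bead ($C$ and $n$ glued cyclically at $u,v$), and the same phenomenon occurs for unbalanced parallel classes. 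These non-vertical splits are precisely why Corollary~\ref{2-bic} carries its extra hypotheses; they must be folded into the necklace case, not ruled out, so your reduction must drop verticality and treat single-edge and parallel-class beads explicitly.

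Second, the converse you yourself flag as ``the main obstacle''---a balanced--balanced two-vertex split of a core forces the whole core to be a B-necklace---is not actually proved, and the tools you name do not do the job: Proposition~\ref{prop_samematroid}(iii) preserves the matroid while \emph{changing} the graph, so it cannot certify that the original $\Sigma$ is a necklace, and Lemma~\ref{th_bneck} concerns expansions, not the peeling step you need. What the induction actually requires is: (a) the split forces $\Omega$ to be $2$-connected (two unbalanced blocks would each have to be cut by the split, forcing at least three shared vertices); (b) every negative cycle passes through both shared vertices $u,v$; (c) the block decomposition of each of $\Sigma[A]$, $\Sigma[B]$ is a \emph{path} of balanced $2$-connected pieces from $u$ to $v$ (a pendant branch would give $\Sigma$ a cut vertex); and (d) the concatenated cyclic chain consists of \emph{maximal} balanced $2$-connected subgraphs, which requires choosing or refining the split canonically, since in a necklace a union of consecutive beads also yields a valid split. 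Finally, a smaller unaddressed point: to apply Proposition~\ref{k-bisep}(2) you need an exact $1$-separation with \emph{connected} parts, which must be constructed when $M(\Omega)$ is disconnected (circuits of $M(\Sigma)$ induce connected subgraphs, so elementary separators do; one then needs an argument that some separator has connected complement). As written, the proposal's central equivalence is incorrect and its crux is a sketch, so it does not constitute a proof of Theorem~\ref{th_Zasl11}.
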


\noindent
Let $B$ be an elementary separator of $M(\Sigma)$. The subgraph ${B}$ of $\Sigma$ is called a \emph{separate} of $\Sigma$. 

The minimal subset of edges of a double bond $Y$ of a connected signed graph $\Sigma$ that disconnects $\Sigma$ shall be called the \emph{unbalancing part} of the double bond, while the remaining set of edges will be the \emph{balancing part} of $Y$. Equivalently, the unbalancing part of the double bond contains the edges of $Y$ which have exactly one endvertex in the balanced component of $\Sigma \dof Y$ while the balancing part of $Y$ consists of edges of $Y$ which have both their endvertices in the balanced component of $\Sigma \dof Y$. 

In a jointless connected and unbalanced signed graph $\Sigma$, the existence of a double bond induces the existence of an unbalancing bond. Specifically, when $Y$ is a double bond in $\Sigma$, there is an edge that has both endvertices in the balanced component of $\Sigma \dof Y$. Then the set of edges of $\Sigma$ having a common endvertex with that edge contains an unbalancing bond. This is stated in the following lemma.

\begin{lemma} \label{doubleinducesunbalancing}
In a jointless connected and unbalanced signed graph for every double bond there is an unbalancing bond.
\end{lemma}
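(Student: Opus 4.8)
The plan is to follow the localization indicated after the statement: use an edge of the balancing part to isolate a small balanced piece, and cut it off. Throughout I may switch freely, since by Proposition~\ref{switchings} switching preserves the balance of every cycle and hence the whole family of bonds characterised in Theorem~\ref{thrm_bonds}.

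First I would record the data supplied by the double bond. Let $Y$ be the given double bond, let $B$ be the unique balanced component of $\Sigma\dof Y$, on vertex set $V_B$, and let the remaining components lie on $V_R:=V(\Sigma)\dof V_B$. Because $Y$ is a \emph{double} bond, two things hold simultaneously: some edge $e=\{u,v\}$ of $Y$ has both endpoints in $V_B$, and $V_R$ carries at least one unbalanced component, hence a negative cycle $N$ with $V(N)\subseteq V_R$ and therefore $u,v\notin V(N)$. Switching at the vertices of $V_B$ makes $B$ all-positive; as $B$ is connected and spans $V_B$, the cycle formed by $e$ and a $u$–$v$ path in $B$ is then negative, so $B+e$ is unbalanced and $e$ is a genuine chord.

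Next I would write down the candidate. Consider the cut $\delta(\{u,v\})$ of all edges meeting exactly one of $u,v$; this is contained in $st_\Sigma(u)\cup st_\Sigma(v)$, the set named in the statement, and it does not contain $e$. Deleting $\delta(\{u,v\})$ leaves $\{u,v\}$ attached only through $e$, so $\Sigma[\{u,v\}]$ is split off as a component carrying the edge $e$, while the negative cycle $N$, being disjoint from $\{u,v\}$, survives in the complementary part. Thus the deletion produces both a component containing $e$ and an unbalanced component containing $N$, so by Theorem~\ref{thrm_bonds} it strictly increases the number of balanced components and $\delta(\{u,v\})$ contains a bond. I would then pass to a minimal balance-increasing subset $Y'\subseteq\delta(\{u,v\})$; by Theorem~\ref{thrm_bonds} this $Y'$ is a bond, and because it lies inside a cut around $\{u,v\}$ it has no edge with both endpoints on the side containing $\{u,v\}$. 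Since $e$ is never deleted, $u$, $v$ and $e$ remain together in one balanced component, which is therefore non-empty of edges; hence $Y'$ is neither a star bond nor a double bond, i.e. it is the desired unbalancing bond.

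The main obstacle is precisely this last classification step: certifying that the extracted minimal set is a \emph{genuine} unbalancing bond and not a degenerate one. Two degeneracies must be excluded. If some component of $\Sigma$ after deleting the vertices $u,v$ were balanced, minimality could reabsorb or strand $\{u,v\}$ and turn $Y'$ into a star bond; this is exactly what the negative cycle $N\subseteq V_R$ rules out, by keeping the complementary side unbalanced. The more delicate point is a possible $u$–$v$ edge parallel to $e$ of the opposite sign: such an edge makes $\Sigma[\{u,v\}]$ an unbalanced digon and destroys the balanced piece we are cutting off. I expect this to be the crux, to be dealt with by first reducing to the case in which no such parallel occurs (using that $\Sigma$ is simple in the relevant connectivity regime, cf.\ Theorem~\ref{3biconnected}, or by absorbing parallels through switching), so that $\Sigma[\{u,v\}]=\{e\}$ is a bona fide non-trivial balanced component and $Y'$ is unbalancing.
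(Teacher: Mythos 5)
Your main line coincides with the paper's own argument: pick an edge $e=\{u,v\}$ of the balancing part, form the cut $H=\delta(\{u,v\})$, and pass to a minimal balance-increasing subset, splitting the cases according to whether $\Sigma-\{u,v\}$ is connected and whether its components are balanced. The genuine gap is exactly the one you flag at the end and then defer: an edge parallel to $e$ of opposite sign. Neither of your proposed escapes works. Switching preserves the sign of every cycle (Proposition~\ref{switchings} is precisely the statement that switching classes are determined by the balanced cycles), so a negative digon on $\{u,v\}$ can never be ``absorbed'' by switching. And the lemma is stated for arbitrary jointless connected unbalanced signed graphs, with no simplicity or biconnectivity hypothesis, so Theorem~\ref{3biconnected} is not available; moreover, even where it applies, simplicity of a signed graph does not exclude a negative digon, whose two edges are not parallel elements of $M(\Sigma)$. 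A secondary, smaller soft spot: after passing to the minimal subset $Y'$, the balanced component certified by Theorem~\ref{thrm_bonds} need not be the one containing $e$ --- it can be a balanced component of $\Sigma-\{u,v\}$, and if that component is a single vertex, $Y'$ is a star bond; your ``reabsorb or strand'' remark does not dispose of this case, which the paper treats by explicitly selecting a balanced component $B_j$ (and itself overlooks the single-vertex subcase).

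The parallel-edge case is not a removable technicality: it defeats the claimed localization outright. Take $V(\Sigma)=\{u,v,a,b\}$ with edges $e=\{u,v\}$ negative, $e'=\{u,v\}$ positive, $f=\{u,a\}$, and $g_1=\{a,b\}$ positive, $g_2=\{a,b\}$ negative. This $\Sigma$ is jointless, connected and unbalanced, and $Y=\{e,f\}$ is a double bond: $\Sigma\setminus Y$ has balanced component $\{u,v\}$ carrying $e'$, with $e$ internal to it. The only circuit of $M(\Sigma)$ is the loose handcuff on all five edges (the two negative digons joined by $f$), so $M(\Sigma)\cong U_{4,5}$ and the bonds are exactly the ten pairs of edges: $\{e,e'\}$ and $\{g_1,g_2\}$ are star bonds, $\{e,g_i\}$ and $\{e',g_i\}$ are balancing bonds, and $\{e,f\},\{e',f\},\{f,g_1\},\{f,g_2\}$ are double bonds. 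Hence there is no unbalancing bond at all --- in particular none inside $st_\Sigma(u)\cup st_\Sigma(v)=\{e,e',f\}$ --- so the statement itself fails as written, and no reduction ``to the simple case'' can close your gap without adding a hypothesis that excludes negative digons meeting the balanced component (or the extra connectivity present in the lemma's downstream applications, where $M(\Sigma)$ is internally $4$-connected, which $U_{4,5}$ is not). You should know that the paper's own proof makes the same silent assumption, asserting that ``the edge $e$ is a balanced component in $\Sigma\setminus H$'' without excluding a parallel edge of opposite sign; so you have correctly located the weak point of the argument, but naming the crux without repairing it leaves the proposal, like the paper's proof, incomplete.
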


\begin{proof}
Consider a jointless connected and unbalanced signed graph $\Sigma$ and a double bond $Y$ in $\Sigma$. Then $\Sigma \dof Y$ consists of one balanced component denoted by $\Sigma_{2}$ and some unbalanced components $S_{1}, \ldots, S_{n}$. Consider an edge $e=\{v_{1},v_{2}\}$ of $Y$ which belongs to the balancing part of $Y$ and the partition $(\{v_{1},v_{2}\}, V(\Sigma) \dof \{v_{1},v_{2}\})$ of $V(\Sigma)$. We shall denote with $H$ all the edges of $E(\Sigma)$ which have one endvertex in $\{v_{1},v_{2}\}$ and the other in $V(\Sigma) \dof \{v_{1},v_{2}\}$. Assume first that the subgraph induced by $V(\Sigma) \dof \{v_{1},v_{2}\}$ is connected. Then the later is unbalanced since it contains at least one unbalanced component of $\Sigma \dof Y$ as a subgraph. Consider without loss of generality $v_{1}$ from $\{v_{1},v_{2}\}$ and a vertex $v$ in $V(\Sigma) \dof \{v_{1},v_{2}\}$. Since $\Sigma$ is connected each $v_{1}v$-path contains an edge of $H$. The edge $e$ is a balanced component in $\Sigma \dof H$ and the deletion of $H$ from $\Sigma$ increases the number of balanced components. Moreover, the subgraph of $\Sigma$ which is obtained from $\Sigma$ by deleting all edges of $H$ apart from one is connected so $H$ is minimal with respect to the property of increasing the balanced components. All edges of $H$ have one endvertex in the balanced component $\Sigma[\{v_{1},v_{2}\}]$ of $\Sigma \dof H$ and the other in the unbalanced component $\Sigma[V(\Sigma)\dof \{v_{1},v_{2}\}]$. Thereby $H$ is an unbalancing bond in $\Sigma$. Otherwise the subgraph induced by $V(\Sigma) \dof \{v_{1},v_{2}\}$ is disconnected (see Figure~\ref{db_ub}) and consists of some connected components denoted by $B_{1}, \ldots, B_{m}$. Suppose that all $B_{i}, i=1,\ldots,m$ are unbalanced. Then $\Sigma \dof H$ consists of one balanced component $\Sigma[e]$ and $B_{i}, i=1,\ldots,m$. $H$ is minimal with respect of increasing the balanced components and all its edges have one endvertex in $\Sigma[e]$ and the other in some $B_{i}$. Therefore $H$ is an unbalancing bond. Otherwise there is at least one $B_{j}, j \in \{1,\ldots,m\}$ which is balanced. Consider a vertex $v' \in V(B_{j})$ and without loss of generality $v_{1}$ from $\{v_{1},v_{2}\}$. Each $v'v_{1}$-path contains an edge of $H$. The proper subset of edges of $H$ whose one endvertex belongs in $V(B_{j})$ and the other in $\{v_{1},v_{2}\}$ is a minimal set in $\Sigma$ whose deletion increases the number of balanced components. Thus, it is an unbalancing bond in $\Sigma$.
\end{proof}

\begin{figure}[hbtp]
\begin{center}
\psfrag{v1}{\footnotesize $v_1$}
\psfrag{v2}{\footnotesize $v_2$}
\psfrag{e}{\footnotesize $e$}
\includegraphics*[scale=0.31]{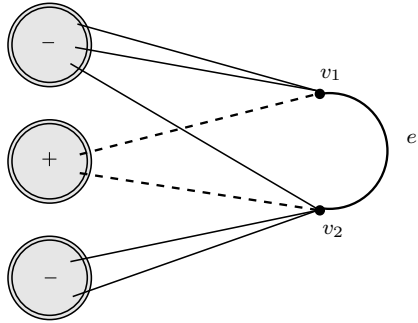}
\end{center} 
\caption{a double bond induces an unbalancing bond}
\label{db_ub}
\end{figure}

Let $Y$ be a cocircuit of a matroid $M$. For any bridge $B$ of $Y$ in $M$, we denote by $\pi(M,B,Y)$ the family of all minimal non-null subsets of $Y$ which are intersections of cocircuits of $M \cto (B \cup Y)$ (cf. \cite{Pitsoulis:2014}). In \cite{Tutte:1965}, Tutte proved that if $Y$ is a cocircuit of a binary matroid $M$ then the members of $\pi(M,B,Y)$ are disjoint and their union is $Y$. By [\cite{PapPit:2013}, Corollary~(2.14)] we derive that $\pi(M,B,Y)= \mathcal{C}^{*}(M \cto (B \cup Y) \dto Y)$. Hence the cocircuits of $M \cto (B \cup Y) \dto Y$ partition $Y$. This does not hold for quaternary matroids. We quote a counterexample where $\Sigma$ is a cylindrical signed graph whose signed-graphic matroid is quaternary non-binary, $Y$ is an a double bond in $\Sigma$ and $\pi(M(\Sigma),B,Y) \neq \mathcal{C}^{*}(M(\Sigma) \cto (B \cup Y) \dto Y)$ for some bridge $B$ of $Y$. 

\begin{figure}[h]
\begin{center}
\includegraphics*[scale=0.45]{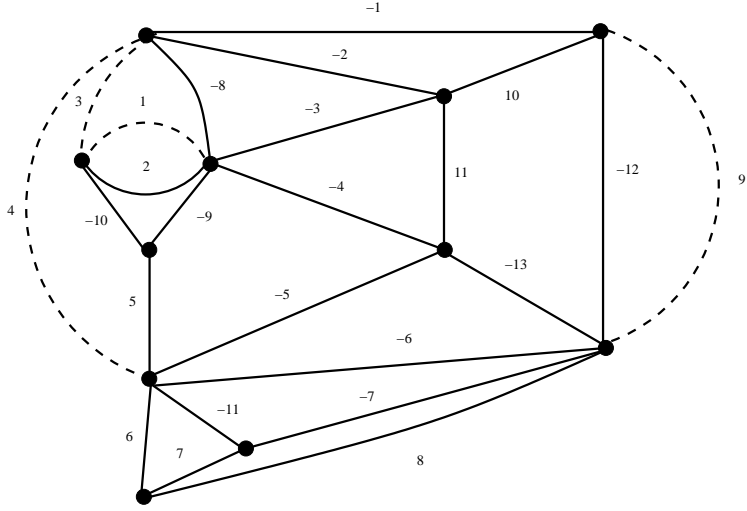}
\end{center} 
\caption{Cylindrical signed graph whose $M(\Sigma)$ is 2-connected, cylindrical2conn}
\label{cyl2conn}
\end{figure}

\begin{example}
Let $\Sigma$ be the 2-connected cylindrical signed graph in Figure~\ref{cyl2conn}. A solid line depicts an edge of positive sign while a dashed line depicts an edge of negative sign. Y=\{-1, -2, -3, -4, -5, -6, -7, 8, 9\} is a nongraphic cocircuit of $M(\Sigma)$ and a double bond in $\Sigma$. The bridges of $Y$ in $M(\Sigma)$ are $B_{1}=\{1,2,3,4,5,-8,-9,-10\}$, $B_{2}=\{6,7,-11\}$ and $B_{3}=\{10,11,-12,-13\}$. As concerns $B_{1}$, $\pi(M(\Sigma),B_{1},Y)=\{\{-1,-2\},\{-3,-4\},\{-5,-6,-7,8\},\{9\}\}= \mathcal{C}^{*}(M(\Sigma) \cto (B_{1} \cup Y) \dto Y)$. Next we present bridges $B_{2}$ and $B_{3}$ in detail. The cocircuits of $M(\Sigma).(B_{2} \cup Y)$ are 

\begin{eqnarray*}
C^{*}_{1} &=&  \{ -7, -11, 7 \} \\
C^{*}_{2} &=&  \{ -7, -11, 6, 8 \} \\
C^{*}_{3} &=&  \{ -1, -2, -3, -4, -5, -6, -11, 6, 9 \} \\
C^{*}_{4} &=&  \{  7, -11, 6, 8 \} \\
C^{*}_{5} &=&  \{ 7, -2, -3, -4, -5, -6, 8, 9, 6, -1 \} \\
C^{*}_{6} &=&  \{ 7, 8, 6, -7 \} \\
C^{*}_{7} &=&  \{ 7, -2, -3, -4, -5, -6, 8, -11, 9, -1 \} \\
C^{*}_{8} &=&  \{ 7, -2, -3, -4, -5, -6, -7, 9, 6, -1 \} \\
C^{*}_{9} &=&  \{ 8, -2, -3, -4, -5, -6, -7, 9, -1 \} 
\end{eqnarray*}

\begin{figure}[hb]
\begin{center}
\mbox{
\subfigure[Bridge $B_{1}$]
{
\includegraphics*[scale=0.28]{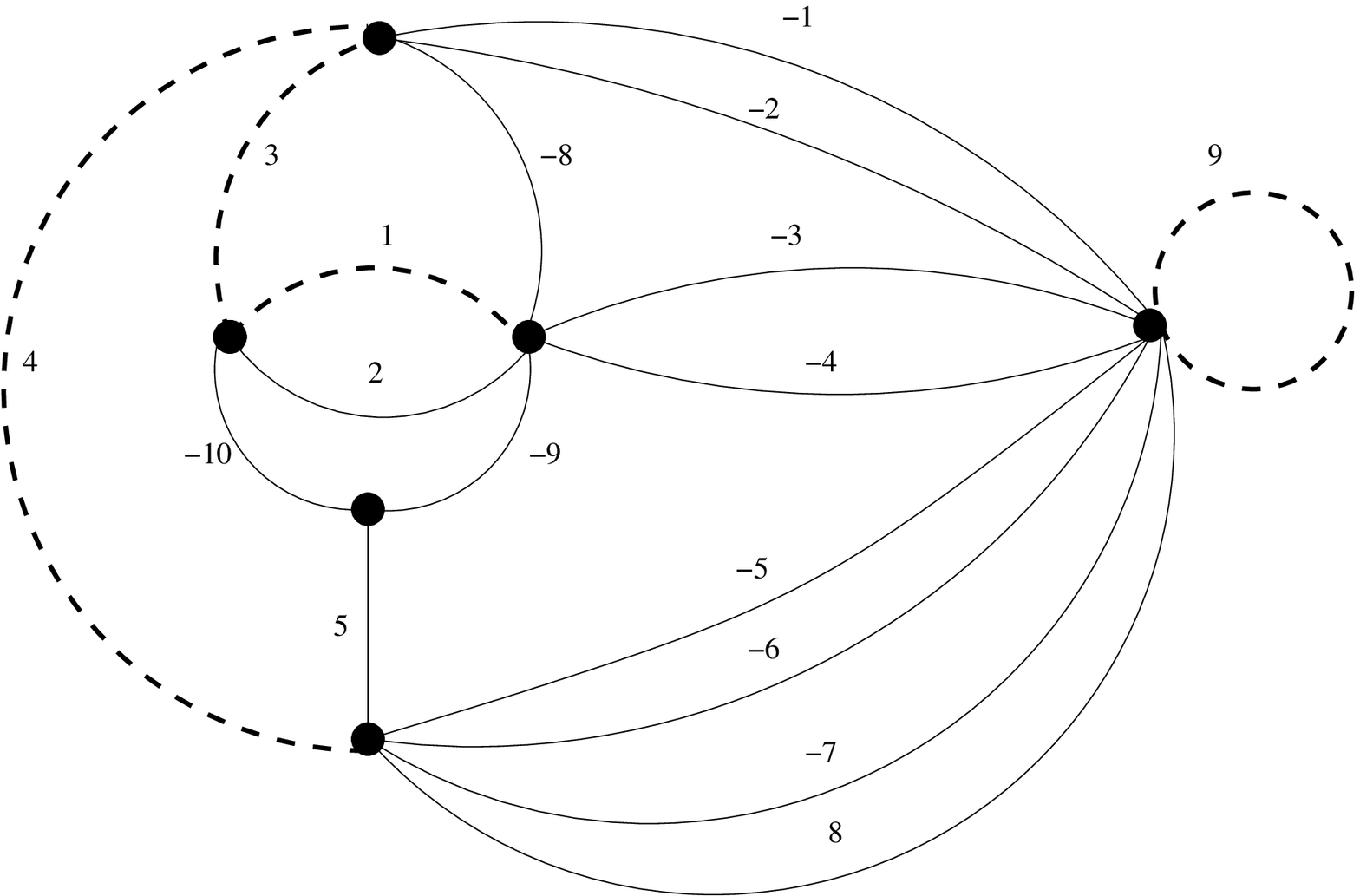}
}\quad
\subfigure[Bridge $B_{2}$]
{
\includegraphics*[scale=0.38]{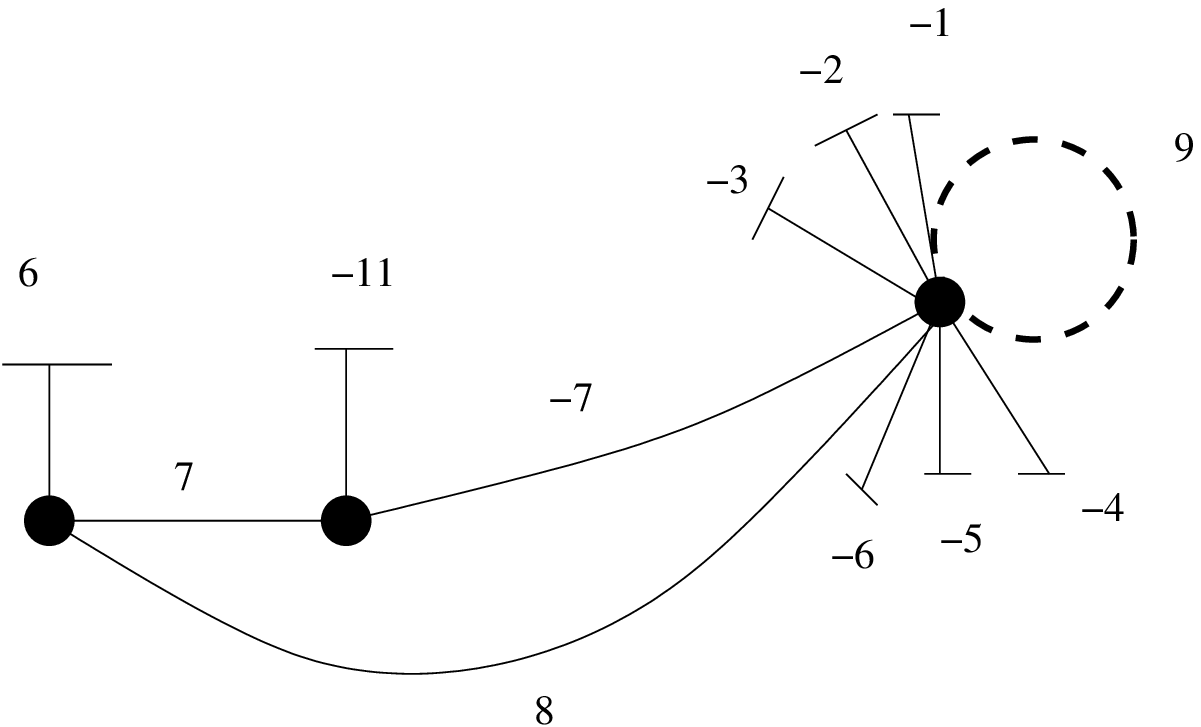}
}\quad
\subfigure[Bridge $B_{3}$]
{
\includegraphics*[scale=0.50]{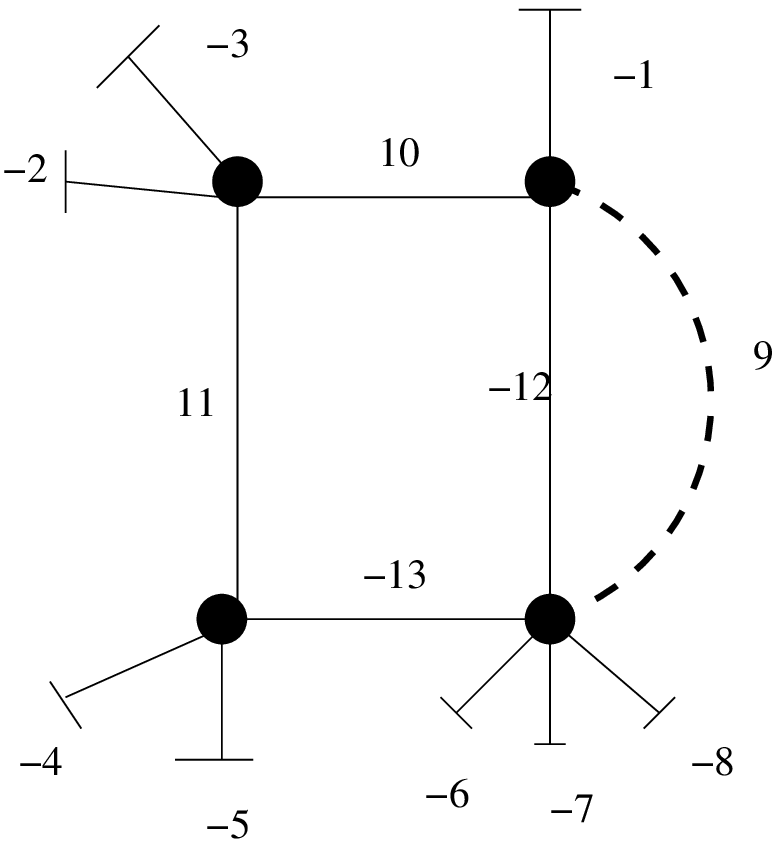}
}}
\end{center}
\caption{The $\Sigma .(B_{i} \cup Y) $ for $i=1,2,3$}
\label{Y-comp}
\end{figure}

Thus, the family of all the minimal nonempty subsets of Y which are intersections of cocircuits  of $M(\Sigma).(B_{2} \cup Y)$ is 
\begin{center}
 $\pi (M, B_{2}, Y) =\{\{-1, -2, -3, -4, -5, -6, 9 \},\{-7\}, \{8\} \}$ 
\end{center} 
The bonds of $\Sigma \cto (B_{2} \cup Y) |Y$ (see Figure \ref{Y-comp}) are the cocircuits:
\begin{center}
$\mathcal{C}^*(M(\Sigma) \cto (B_{2} \cup Y) | Y)= \{\{-7\}, \{8\},\{-1,-2,-3,-4,-5,-6,9\}\}$
\end{center}
We observe that there is partition of Y induced by the members of $\mathcal{C}^*(M(\Sigma) \cto (B_{2} \cup Y) | Y)$ and 
\begin{center}
$\pi (M, B_{2}, Y) = \mathcal{C}^*(M(\Sigma) \cto (B_{2} \cup Y) | Y)$.
\end{center}

The cocircuits of $M(\Sigma).(B_{3} \cup Y)$ are
\begin{eqnarray*}
C^{*}_{1} &=& \{ -1, -2, -3, -4, -5, -6, -7, -12, -13, 8 \} \\
C^{*}_{2} &=& \{ -1, -2, -3, -4, -5, -12, -13, 9 \} \\
C^{*}_{3} &=& \{ -2, -3, -4, -5, -13, 10 \} \\
C^{*}_{4} &=& \{ -4, -5, -13, 11 \} \\
C^{*}_{5} &=& \{ 8, -6, -7, -12, -13, 9 \} \\
C^{*}_{6} &=& \{ 8, 10, -6, -7, -12, -13, -1\} \\
C^{*}_{7} &=& \{ 9, 10, -12, -1 \} \\
C^{*}_{8} &=& \{ 10, -3, 11, -2 \} \\
C^{*}_{9} &=& \{ 8, 10, -6, -7, 9, -13, -1 \} \\
C^{*}_{10} &=& \{ 9, 8, -5, -6, -7, -12, 11, -4 \} \\
C^{*}_{11} &=& \{ 8, 10, -4, -5, -6, -7, 9, 11, -1 \} \\
C^{*}_{12} &=& \{ 9, 8, -3, -4, -5, -6, -7, -12, 10, -2 \} \\
C^{*}_{13} &=& \{ 8, 10, -4, -5, -6, -7, -12, 11, -1 \} \\
C^{*}_{14} &=& \{ 8, -2, -3, 11, -6, -7, -12, -13, -1 \} \\
C^{*}_{15} &=& \{ 9, -2, -3, 11, -12, -1 \} \\
C^{*}_{16} &=& \{ 8, -2, -3, 11, -6, -7, 9, -13, -1 \} \\
C^{*}_{17} &=& \{ 8, -2, -3, -4, -5, -6, -7, 9, -1 \} \\
C^{*}_{18} &=& \{ 8, -2, -3, -4, -5, -6, -7, -12, 10, -1 \} \\
C^{*}_{19} &=& \{ 8, -2, -3, -4, -5, -6, -7, -12, 11, -1 \}
\end{eqnarray*}
 
Thus, $\pi (M, B_{3}, Y) =\{\{-6,-7,8\}, \{-4,-5\}, \{-1\}, \{9\}, \{-2,-3\}\}$ and the bonds of $\Sigma . (B_{3} \cup Y) |Y$ (see figure~\ref{2-csg_B3|Y}) are the cocircuits

\begin{center}
$\mathcal{C}^*(M(\Sigma) \cto (B_{3} \cup Y) | Y)= \{\{-1,-6,-7,8\}, \{-1,9\},\{-2,-3\},\{-4,-5\},\{-6,-7,8,9\}\}$
\end{center}

\begin{figure}[ht]
\begin{center}
\centering
\includegraphics*[scale=0.40]{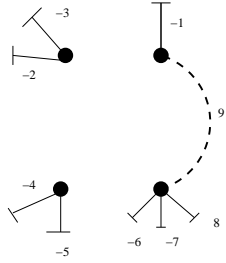}
\end{center}
\caption{$\Sigma . (B_{3} \cup Y) |Y$}
\label{2-csg_B3|Y}
\end{figure}

We observe that there is partition of Y induced by the members of $\pi (M, B_{3}, Y)$ but 
\begin{center}
$\pi (M, B_{3}, Y) \neq \mathcal{C}^*(M(\Sigma) \cto (B_{3} \cup Y) | Y)$.
\end{center}

\end{example}


\subsection{Known decomposition results} \label{subsec_known_results}
In \cite{PapPit:2013} the following decomposition result for the case of binary signed-graphic matroids is given.
\begin{theorem}[Papalamprou, Pitsoulis~\cite{PapPit:2013}]\label{thrm_decomposition}
Let $M$ be a connected binary matroid and $Y\in \mathcal{C}^{*}(M)$ be a bridge-separable cocircuit such that 
$M \dof Y$ is not  graphic. Then the $Y$-components of $M$ are graphic except for one which is signed-graphic
if and only if $M$ is signed-graphic.
\end{theorem}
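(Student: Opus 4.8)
The plan is to prove the two implications of the equivalence separately, and to regard the synthesis direction (decomposition of the $Y$-components $\Rightarrow$ $M$ signed-graphic) as the substantial one, deriving the analysis direction ($M$ signed-graphic $\Rightarrow$ decomposition) from the structural results of Zaslavsky recorded above. Throughout I would use that $M$ is binary, so by Tutte's theorem in~\cite{Tutte:1965} the members of $\pi(M,B,Y)$ are disjoint with union $Y$, and by [\cite{PapPit:2013}, Corollary~(2.14)] we have $\pi(M,B,Y) = \mathcal{C}^{*}(M \cto (B \cup Y) \dto Y)$; thus each bridge supplies a well-defined partition of $Y$ that records how the cocircuit is ``seen'' locally.

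For the analysis direction I would write $M = M(\Sigma)$ for a connected signed graph $\Sigma$. Since $M \dof Y = M(\Sigma \dof Y)$ is not graphic, Proposition~\ref{prop_sgn_graphic} forces $\Sigma$ to be unbalanced with genuine negative cycles and no balancing vertex, and by Theorem~\ref{thrm_bonds} the cocircuit $Y$ is a bond, so $Y$ is not a balancing bond and $\Sigma \dof Y$ consists of exactly one balanced component together with some unbalanced components. I would then identify the bridges of $Y$ with the separates of $\Sigma \dof Y$ via Theorem~\ref{th_Zasl11}, and compute each $Y$-component as $M \cto (B \cup Y) = M(\Sigma \cto (B \cup Y))$. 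The crux here is to show that, under the stated hypotheses, the unbalanced (non-graphic) structure is confined to a single separate: contracting the remaining bridges collapses the rest of $\Sigma$ to a vertex that serves as a balancing vertex for every balanced $Y$-component, so that Proposition~\ref{prop_sgn_graphic}(iii) makes each of these graphic, while the unique $Y$-component still carrying two independent negative cycles survives as signed-graphic but non-graphic.

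For the synthesis direction I would argue by induction on the number of bridges, reconstructing $\Sigma$ by gluing the $Y$-components along the shared cocircuit $Y$. Realizing the graphic $Y$-components by graphs $G_i$ and the single non-graphic one by a signed graph $\Sigma_0$, I would amalgamate them by identifying the copies of $Y$ and the attachment vertices prescribed by the partitions $\pi(M,B_i,Y) = \mathcal{C}^{*}(M \cto (B_i \cup Y) \dto Y)$, and assign signs so that $Y$ becomes a bond of the resulting signed graph $\Sigma$ of the same bond type. Using Theorem~\ref{th_minsig} together with the circuit description of Theorem~\ref{th_sgg2}, I would verify $M(\Sigma) = M$ by checking that the positive cycles and the tight and loose handcuffs of $\Sigma$ are exactly the circuits of $M$, with the unbalanced anchor $\Sigma_0$ supplying all negative cycles; Proposition~\ref{prop_samematroid} would be invoked to absorb the switching and twisting freedom so that the glued object is determined up to matroid equivalence.

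The hard part will be precisely this amalgamation: proving that the local representations are mutually compatible and glue to a single signed graph whose frame matroid is $M$. This is exactly where the hypothesis that $Y$ is bridge-separable is essential, since it is what guarantees that the vertex identifications forced by the partitions $\pi(M,B_i,Y)$ can be made consistently across all bridges and that a coherent global sign assignment exists — the signed-graphic analogue of Tutte's reconstruction of a graph from the bridges of a bond. Two points need particular care: that attaching a graphic component to the partially built signed graph keeps the result signed-graphic rather than destroying representability, which relies on only one component being unbalanced; and that the one non-graphic, unbalanced piece can always be placed as the core so that $Y$ retains its bond type throughout the induction.
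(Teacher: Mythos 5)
The statement you were asked to prove is quoted in this paper from~\cite{PapPit:2013} and is not proved here at all, so the relevant comparison is with the strategy of that cited work, which the present paper mirrors in its quaternary development (Section~\ref{sec_main_result}). Your analysis direction is essentially sound: bridges of $Y$ correspond to separates of $\Sigma\dof Y$ by Theorem~\ref{th_Zasl11}, $Y$ is a bond by Theorem~\ref{thrm_bonds}, and the balanced $Y$-components become graphic via Proposition~\ref{prop_sgn_graphic}(iii) after contraction. But you omit the fact that actually confines the non-graphic structure to a single component: a binary non-graphic signed-graphic matroid has a \emph{tangled} representation, i.e.\ $\Sigma$ has no two vertex-disjoint negative cycles and no balancing vertex, whence $\Sigma\dof Y$ has at most one unbalanced component. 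Your phrase about the surviving component ``carrying two independent negative cycles'' is off in the binary setting, where vertex-disjoint negative cycle pairs are excluded; non-graphicness of that component comes from its core being neither balanced, nor joints-only, nor possessing a balancing vertex.

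The genuine gap is in your synthesis direction. You defer the entire amalgamation to the remark that bridge-separability ``guarantees that the vertex identifications forced by the partitions $\pi(M,B_i,Y)$ can be made consistently,'' but that consistency is precisely the theorem to be proved, not a property one can read off the hypothesis. What makes the gluing well defined in~\cite{PapPit:2013}, following Tutte's graphic prototype~\cite{Tutte:1959}, is a star-normalization lemma: when no two bridges of $Y$ overlap, the representing (signed) graph of the relevant minor can be chosen so that $Y$ is the star of a single vertex --- the present paper proves exactly such a statement in the quaternary case, and its proof there is long and delicate (twisting arguments, minimality of the balanced component, case analysis on avoiding cocircuits). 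Without it, ``identifying the copies of $Y$'' is not well defined, since different representations distribute the edges of $Y$ over several vertices. Moreover, your induction that attaches $Y$-components one at a time is structurally problematic: an intermediate minor such as $M\cto(B_1\cup B_2\cup Y)$ need not inherit bridge-separability or the graphicness hypotheses, so the induction does not obviously close. The cited proof avoids this by aggregating the bridges into the two avoiding classes $U^{+},U^{-}$ at once, invoking Tutte's decomposition theorem as a black box on $M\cto(U^{+}\cup Y)$ to obtain a graph in which $Y$ is a vertex star, handling $M\cto(U^{-}\cup Y)$ by the structural lemmas, and then performing a single splice. As written, your outline in effect undertakes to re-derive Tutte's bridge theory rather than use it, while leaving its decisive lemma unproved.
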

In this work we will utilise the above result along with the following result appearing in~\cite{SliQin:07} in order to decompose the class of quaternary signed-graphic matroids.
\begin{theorem}[Pagano~\cite{SliQin:07}] \label{th_SliDe}
If $\Sigma$ is connected and $M(\Sigma)$ is quaternary, then either
\begin{itemize}
\item[(1)] $M(\Sigma)$ is binary,
\item[(2)] $\Sigma\backslash{J_{\Sigma}}$ has a balancing vertex,
\item[(3)]$\Sigma\backslash{J_{\Sigma}}$ is cylindrical,
\item[(4)]$\Sigma\backslash{J_{\Sigma}}\cong{T_6}$, or
\item[(5)] $\Sigma\backslash{J_{\Sigma}} = Y_1 \oplus_{k} Y_2$ for $k\in{\{1, 2, 3\}}$ where each $M(Y_i)$ is quaternary.
\end{itemize}
Also, if $\Sigma$ is a connected signed graph that satisfies one of (1)-(4), then $M(\Sigma)$ is quaternary.
\end{theorem}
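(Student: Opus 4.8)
The plan is to prove the two implications separately, beginning with the easier converse: each of the conditions (1)--(4) forces $M(\Sigma)$ to be quaternary. Case (1) is immediate, since $GF(2)$ is a subfield of $GF(4)$ and every binary matroid is therefore quaternary. For (2)--(4) the point is that the sign group $\{\pm 1\}$ does not embed in the cyclic group $GF(4)^{*}$ of order three, so a $GF(4)$-representation cannot be read off the signs directly and must instead be built from the prescribed geometry. For (2), if $\Sigma\backslash J_{\Sigma}$ has a balancing vertex then Proposition~\ref{prop_sgn_graphic}(iii) supplies a graph $G'$ with $M(\Sigma\backslash J_{\Sigma})=M(G')$; taking the (binary, hence quaternary) incidence representation of $G'$ and appending one unit column, scaled by a primitive element of $GF(4)$, for each joint, I would verify through Theorem~\ref{th_sgg2} that the linear circuits match the positive cycles and the tight and loose handcuffs. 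For (3) the annular drawing of a cylindrical $\Sigma\backslash J_{\Sigma}$ lets me assign coordinates coherently around the two boundary cycles so that exactly the essential cycles become independent, and for (4) the matroid $M(T_{6})$ is represented by a single explicit matrix whose minors can be checked by hand; in both cases the joints are adjoined as unit columns as in (2).

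For the forward direction, suppose $M(\Sigma)$ is quaternary; I may assume it is non-binary, for otherwise (1) holds. Writing $\Sigma'=\Sigma\backslash J_{\Sigma}$, the matroid $M(\Sigma')$ is a deletion minor of $M(\Sigma)$ by Theorem~\ref{th_minsig} and is therefore again quaternary, and since the conclusions (2)--(5) speak only of $\Sigma'$ it suffices to analyse this jointless connected signed graph. If $\Sigma'$ has a balancing vertex we are in case (2), so assume it does not; in particular $M(\Sigma')$ is non-binary. The next step is a connectivity reduction. If $M(\Sigma')$ admits an exact $k$-separation with connected parts for some $k\in\{1,2,3\}$, then by Proposition~\ref{k-bisep}(2) it is a $k$-biseparation $\{X,Y\}$ of $\Sigma'$, and this biseparation exhibits $\Sigma'$ as a signed-graphic $k$-sum $Y_{1}\oplus_{k}Y_{2}$. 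Because the class of quaternary matroids is closed under $k$-sums and their inverse decompositions for $k\le 3$, each $M(Y_{i})$ inherits quaternary-ness, which is exactly conclusion (5).

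It remains to treat a signed graph $\Sigma'$ that is jointless, connected, non-binary quaternary, without a balancing vertex, and not decomposable by any such small separation. By Theorem~\ref{3biconnected} the absence of an exact $k$-separation with connected parts for $k\le 3$ forces $\Sigma'$ to be vertically $3$-biconnected, simple, and free of balancing bonds of rank one or two. For such a highly connected signed graph I would invoke Pagano's characterization of the signed-graphic representations of quaternary matroids (the structural input announced in the introduction) to constrain the mutual position of the negative cycles: the goal is to show that they can all be realised as the essential cycles of an annular drawing, i.e. $\Sigma'$ is cylindrical, yielding (3), while the finitely many high-connectivity configurations that escape this description reduce, after locating the core by Theorem~\ref{th_Zasl11}, to the single sporadic exception $T_{6}$, yielding (4).

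The main obstacle is precisely this final classification: showing that high biconnectivity together with $GF(4)$-representability forces the cylindrical (annular) embedding. This is the step that cannot be obtained from the connectivity lemmas alone, because it requires translating the algebraic hypothesis that $M(\Sigma')$ is $GF(4)$-representable into the purely combinatorial statement that every pair of negative cycles interacts as it would on a cylinder. I expect to rely essentially on Pagano's representation results to make this translation, and to dispose of the remaining bounded-size exceptional configurations by direct computation, thereby isolating $T_{6}$ as the unique irreducible non-cylindrical, non-graphic quaternary signed graph.
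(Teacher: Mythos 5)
You should first note that the paper contains no proof of this statement to compare against: Theorem~\ref{th_SliDe} is quoted verbatim from the literature (Pagano's thesis, via Slilaty and Qin) and is used in the paper as a black box. Your proposal must therefore stand on its own, and it does not, because its decisive step is circular. The entire content of the forward direction is the claim that a jointless, connected, quaternary, non-binary signed graph with no balancing vertex and no $k$-sum decomposition for $k\in\{1,2,3\}$ must be cylindrical or isomorphic to $T_6$; at exactly this point you write that you would ``invoke Pagano's characterization of the signed-graphic representations of quaternary matroids'' --- but that characterization \emph{is} Theorem~\ref{th_SliDe}, the statement being proved. Everything surrounding this step is routine bookkeeping (binary implies quaternary because linear independence is preserved under the field extension $GF(2)\subseteq GF(4)$; $M(\Sigma\backslash J_{\Sigma})$ is quaternary as a deletion minor by Theorem~\ref{th_minsig}; exact connected $k$-separations translate to $k$-biseparations by Proposition~\ref{k-bisep}), and none of it substitutes for the genuinely hard structural analysis, which in Pagano's work is a long argument constraining the arrangement of negative cycles in highly biconnected signed graphs whose frame matroids avoid the $GF(4)$-excluded configurations. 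Your closing paragraph candidly concedes this, which confirms the gap rather than filling it.

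There are two further problems. In case (5) you assert that ``the class of quaternary matroids is closed under $k$-sums and their inverse decompositions for $k\le 3$.'' This is not an off-the-shelf fact: $3$-sums in Seymour's sense are a binary notion, and for the signed-graph sums $Y_1\oplus_k Y_2$ one must actually argue that each $M(Y_i)$ is isomorphic to a minor of $M(\Sigma\backslash J_{\Sigma})$ (after which minor-closure of $GF(4)$-representability applies); stated for arbitrary quaternary matroids your closure claim is unsupported and, since conclusion (5) itself asserts that each $M(Y_i)$ is quaternary, it again verges on assuming the conclusion. Second, in the converse direction your constructions for (2)--(4) are too vague to be checkable: over $GF(4)$ the characteristic is $2$, so $-1=+1$ and the sign group $\{\pm 1\}$ cannot be encoded by scalars in the naive way; representations must instead exploit the order-three group $\{1,\omega,\omega^2\}$, and appending ``unit columns scaled by a primitive element'' to a binary representation of the graph $G'$ of Proposition~\ref{prop_sgn_graphic}(iii) does not obviously produce a matrix whose circuits are the positive cycles and handcuffs of Theorem~\ref{th_sgg2}. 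Likewise, for the cylindrical case the ``coherent coordinates around the two boundary cycles'' are never defined. These verifications are precisely the substance of Pagano's constructions, and carrying them out explicitly is what a complete proof of the converse would require.
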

In the sections that follow we shall examine separately each of the classes mentioned in the above result starting from the class of cylindrical signed graphs.

\section{Matroids of cylindrical signed graphs: Structural results} \label{sec_structural_cylindrical_matroids}

Let $P$ denote the plane. Consider a cylindrical signed graph $\Sigma$ and its planar embedding with at most two negative faces denoted also by $\Sigma$. Let $C$ be a cycle of $\Sigma$ of arbitrary sign that bounds a disc $D$. Let $H$ be the subgraph of $\Sigma$ which lies in the interior of $D$. The connected components of $P \dof H$ are the faces of $H$ with respect to $C$. The infinite face is called the outer face of $H$ and all the rest are the inner faces of $H$. An inner face of $H$ with respect to $C$ is said to be contained in $C$. Two faces are \emph{incident} if they share at least one edge and they are \emph{vertex-disjoint} if they have no vertex in common.

In the rest of the paper we assume that $\Sigma$ is 2-connected. Most results can be easily extended to the 1-connected case by some technical modifications. In the result for graphs that follows we show that in a 2-connected plane graph every face contains a face that is incident to its cycle-boundary. Using this result in the next three lemmas it is proved that a negative face is contained in every negative cycle, that any positive cycle contains an even number of negative faces and that the number of the negative faces in a 2-connected plane signed graph is even.

\begin{lemma} \label{lem_PCF}
In a planar 2-connected graph $G$, with outer cycle $C_{0}$ and $C$ a nonempty path of $C_{0}$ there is a face $F$ in $G$ (other than the outer) such that $E(C) \cap E(F) \neq \emptyset $ and $G[ E(C) \cap E(F) ]$ is a path.
\end{lemma}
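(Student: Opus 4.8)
The plan is to induct on the number of edges $k=|E(C)|$ of the path $C$. Throughout I will use two standard facts about a $2$-connected plane graph: every face is bounded by a cycle, and every edge of the outer cycle $C_0$ lies on the boundary of exactly one inner (bounded) face. The base case $k=1$ is immediate: if $C=\{e\}$ and $F$ is the unique inner face with $e\in E(F)$, then $E(C)\cap E(F)=\{e\}$ induces a single edge, which is a path.

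For the inductive step write $C=v_0e_1v_1\cdots e_kv_k$, let $F_1$ be the inner face incident with $e_1$, and let $Z_1=\partial F_1$ be its boundary cycle. Traversing $Z_1$ from $v_0$ along $e_1$, let $e_1,\dots,e_j$ be the maximal initial run of edges that also belong to $C$. If $E(F_1)\cap E(C)$ is already a single subpath of $C$ (in particular if $j=k$), then $F=F_1$ satisfies the lemma. Otherwise $E(F_1)\cap E(C)$ splits into at least two subpaths, so $Z_1$ leaves $C$ at $v_j$ (with $1\le j\le k-1$) along an interior edge and later returns to $C$. Let $A$ be the arc of $Z_1$ running from $v_j$ back to $v_0$ and avoiding $e_1$, let $v_r$ be the first vertex of $C$ that $A$ meets after $v_j$, and let $A'$ be the subarc of $A$ from $v_j$ to $v_r$; by choice of $v_r$, the arc $A'$ meets $C$ only in its endpoints and contains no edge of $C$.

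Two structural claims then drive the recursion. First, $r>j$: if $v_r=v_0$ then $A'=A$ would be internally disjoint from $C$, making $E(F_1)\cap E(C)=\{e_1,\dots,e_j\}$ a single subpath (contrary to the case assumption), while if $0<r<j$ then $A'\cup(e_{r+1}\cdots e_j)$ would be a cycle contained in, hence equal to, $Z_1$, which is absurd since it omits $e_1$. Thus $C'=e_{j+1}\cdots e_r$ is a nonempty subpath of $C$ with $1\le |E(C')|=r-j\le k-1$. Second, $\widehat Z:=A'\cup C'$ is a simple cycle (because $A'$ is internally disjoint from $C$) bounding a closed disc $R$ with $F_1\cap R=\emptyset$ and with the remaining edges $e_1,\dots,e_j,e_{r+1},\dots,e_k$ of $C$ lying outside $R$; this uses that along the shared arc $A'$ the face $F_1$ sits on the side of $\widehat Z$ opposite to $C'$. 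Letting $G_R$ be the subgraph of $G$ induced by $\overline R$, its outer cycle is $\widehat Z$ and $C'$ is a subpath of it, so the induction hypothesis gives an inner face $F'$ of $G_R$ meeting $C'$ in a path. Since the bounded faces of $G_R$ are exactly the faces of $G$ contained in $R$, the face $F'$ is an inner face of $G$; and as $F'\subseteq R$ while the other edges of $C$ lie outside $R$, we obtain $E(F')\cap E(C)=E(F')\cap E(C')$, a path, completing the induction.

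The step I expect to be the main obstacle is justifying the recursive call, namely that $G_R$ is again $2$-connected with $\widehat Z$ as its outer cycle, so that the induction hypothesis genuinely applies. I plan to prove this with a Jordan-curve argument: any edge of $G$ incident with a vertex strictly inside $\widehat Z$ must have its other endpoint in $\overline R$, so a cut vertex of $G_R$ would already separate an interior portion of $R$ from $\widehat Z$ within $G$, contradicting the $2$-connectivity of $G$. The other delicate point is the side determination in the second claim above—identifying $R$ as the component of the complement of $\widehat Z$ not containing $F_1$—which is where the rotational order of the edges around $v_j$ and $v_r$ in the fixed planar embedding must be used carefully.
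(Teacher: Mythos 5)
Your proposal is correct, but it takes a genuinely different route from the paper's. The paper starts from an arbitrary face $F$ with $E(C)\cap E(F)\neq\emptyset$ and, when $G[E(C)\cap E(F)]$ is not a path, deletes $E(F)\cap E(C_0)$ from $G$ and recurses into the resulting connected components, effectively inducting on the number of faces meeting $C$ until a ``minimum component'' is itself the desired face; several steps there (why the components inherit the hypotheses, why the minimal component is a face) are left essentially unjustified. You instead fix the inner face $F_1$ at the first edge of $C$, extract the arc $A'$ of $\partial F_1$ together with the subpath $C'$, and recurse inside the disc $R$ bounded by $\widehat Z=A'\cup C'$, inducting on $|E(C)|$; your two structural claims are sound (for $r>j$, a cycle properly contained in the cycle $Z_1$ is impossible; the recursion strictly decreases the path length since $j\geq 1$), and the reduction is well-founded in a checkable way. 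On your two flagged points: both go through, and the side determination is in fact easier than you anticipate --- no rotation systems are needed, because $C\subseteq C_0$ and the outer face is unbounded, so the entire outer cycle lies in the closed exterior of $\widehat Z$; this simultaneously puts the edges of $C\setminus C'$ outside $R$ and, since $e_1\in\partial F_1$ with $e_1\notin\widehat Z$ lying in the closed exterior, forces $F_1\cap R=\emptyset$. Your Jordan-curve argument for the $2$-connectivity of $G_R$ is the standard one and works: an edge at a vertex interior to $\widehat Z$ cannot cross $\widehat Z$, so for any vertex $u$ of $G_R$ and any potential cutvertex $v$, a $u$--$\widehat Z$ path in $G-v$ stays in $G_R-v$ until it first meets $\widehat Z$, and $\widehat Z-v$ is connected. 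In sum, your approach buys a fully rigorous proof with a clean induction measure and explicit localization of the subproblem (the bounded faces of $G_R$ are exactly the faces of $G$ inside $R$), whereas the paper's deletion-and-components scheme is shorter but substantially less precise.
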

\begin{proof}
Let $F$ be a face in $G$ such that $E(C) \cap E(F) \neq \emptyset$, since any edge of $C$ is adjacent to exactly two faces. If $G[ E(C) \cap E(F) ]$ is a path it holds. Otherwise we delete $E(F) \cap E(C_{0})$ from $G$. Since we delete disconnected paths from $C_{0}$, more than one components are formed in $G$ that contain the remaining faces of $G$. Moreover, since we delete $G[ E(C) \cap E(F) ]$ disconnected paths from $C$, some components will contain the paths of $C$ which do not belong to $F$. Let $K$ be the components of $G$ such that $E(K) \cap E(C) \neq \emptyset $, then $G[ E(C) \cap E(F) ]$ is a path. Because if after the deletion of the disconnected paths of $C$, contained in $E(F) \cap E(C_{0})$, the common edges of $C$ and the components are not paths then $C$ is not a path.  We continue inductively, as long as there are still edges of $C$. Thus there are faces by planarity and components $K$ that contain them. In a component $K$, we choose either a face $F$ such that $G[ E(C) \cap E(F) ]$ is a path, or after the deletion of the disconnected paths $G[ E(F) \cap E(C_{0}) ]$ we choose a new component $K$ with fewer faces. We end up with a minimum component $K$ regarding the number of faces which have common edges with $C$. Thus, the minimum component $K$ is a face $F$ that has common edges with $C$ such that $G[ E(C) \cap E(F) ]$ is a path. If not, then $C$ is not a path. 
\end{proof}
 
\begin{lemma} \label{lem_PNCF}
In a planar signed graph, every negative cycle contains a negative face. 
\end{lemma}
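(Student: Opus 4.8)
The plan is to induct on the number of faces contained in the negative cycle $C$. Fix the planar embedding and let $C$ be a negative cycle bounding a disc $D$; write $F_1,\dots,F_m$ for the faces of $\Sigma$ contained in $C$, i.e. those lying in the interior of $D$. Since $\Sigma$ is $2$-connected, each such face is bounded by a cycle, so its sign $\sigma(\partial F_i)=\prod_{e\in E(\partial F_i)}\sigma(e)$ is well defined and the notion of a \emph{negative face} makes sense. If the interior of $D$ contains no edges, then $D$ is a single face bounded by $C$, which is therefore negative, settling the base case. Otherwise I would apply Lemma~\ref{lem_PCF} to the subgraph drawn inside $D$, with $C$ playing the role of the outer boundary, to obtain an interior face $F$ whose intersection with $C$ is a nonempty path $Q=\Sigma[E(C)\cap E(F)]$.

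The key computation is the behaviour of signs under ``pushing $C$ across $F$''. If $F$ is negative we are done, so assume $F$ is positive. Because $C$ and $\partial F$ meet exactly in the path $Q$, their symmetric difference $C'=E(C)\triangle E(\partial F)$ is again a single cycle, bounding the smaller disc $D'\subset D$ obtained by deleting the region of $F$. Since every sign lies in $\{\pm1\}$ we have $\sigma(e)^2=1$, and hence
\[
\sigma(C')=\prod_{e\in E(C)\triangle E(\partial F)}\sigma(e)=\Big(\prod_{e\in E(C)}\sigma(e)\Big)\Big(\prod_{e\in E(\partial F)}\sigma(e)\Big)=\sigma(C)\,\sigma(\partial F).
\]
With $\sigma(C)=-1$ and $\sigma(\partial F)=+1$ this gives $\sigma(C')=-1$, so $C'$ is again negative. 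As $C'$ contains strictly fewer faces than $C$ (the face $F$ has been removed), the induction hypothesis yields a negative face contained in $C'$, and since $D'\subset D$ that face is contained in $C$ as well.

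The main obstacle is the bookkeeping that makes Lemma~\ref{lem_PCF} applicable and guarantees $C'$ is a single cycle rather than a disjoint union of cycles: this is exactly why the conclusion of Lemma~\ref{lem_PCF} that $\Sigma[E(C)\cap E(F)]$ is a \emph{path} (and not an arbitrary edge set) is essential, since only then does the symmetric difference of the two facial cycles remain a cycle. A self-contained alternative that sidesteps the induction is the parity count $\prod_{i=1}^{m}\sigma(\partial F_i)=\sigma(C)$, obtained by observing that each interior edge lies on exactly two of the $F_i$ (contributing $\sigma(e)^2=1$) while each edge of $C$ lies on exactly one; since $\sigma(C)=-1$, an odd number of the $F_i$ are negative, so at least one negative face is contained in $C$.
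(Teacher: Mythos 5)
Your main inductive argument is essentially the paper's own proof, streamlined: the paper likewise invokes Lemma~\ref{lem_PCF} to obtain a face $F$ meeting $C$ in a path and then runs a case analysis on the signs of $P=E(C)\cap E(F)$, $H=E(F)\setminus E(C)$ and $K=E(C)\setminus E(F)$, which your single identity $\sigma(C')=\sigma(C)\,\sigma(\partial F)$ for $C'=E(C)\triangle E(\partial F)$ compresses into one line. One step is stated too strongly, however: from $E(C)\cap E(F)$ being a path it does \emph{not} follow that $C'$ is a single cycle, because $\partial F$ may touch $C$ at vertices outside that path. For instance, take $C=v_1v_2v_3v_4v_5v_6$ with chords $v_1v_3$ and $v_1v_5$ drawn inside; the face bounded by $v_1,v_3,v_4,v_5$ meets $C$ in the path $v_3v_4v_5$, yet the symmetric difference is two triangles sharing $v_1$. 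This does not sink your induction: $C'$ is always an edge-disjoint union of cycles drawn inside $D$, the product of their signs is still $\sigma(C)\sigma(\partial F)=-1$, so at least one constituent cycle is negative and contains strictly fewer faces, and you recurse on that cycle instead. You should say this explicitly rather than claim $C'$ is a cycle; note that the paper's own write-up (forming the cycle from $H$ and $K$) glosses over exactly the same point, so this is a repairable blemish shared with the original, not a fatal flaw.

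Your closing parity count is the genuinely different route, and the stronger one. Observing that each interior edge of the disc lies on exactly two of the faces $F_1,\dots,F_m$ while each edge of $C$ lies on exactly one (using $2$-connectedness so that face boundaries are cycles and no edge is a bridge) gives $\prod_{i=1}^m\sigma(\partial F_i)=\sigma(C)$ directly. This is precisely the product formula the paper only records as a remark after Lemma~\ref{lem_COE}, and it proves in one stroke not just the present lemma but Lemma~\ref{lem_COE} itself (negative cycles contain an odd number of negative faces, positive cycles an even number), which the paper establishes separately by a theta-graph induction. If you lead with the double-counting argument, you get both results with no induction, no appeal to Lemma~\ref{lem_PCF}, and none of the symmetric-difference bookkeeping.
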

\begin{proof}
Let $C$ be a negative cycle in a planar signed graph. If $|f_{C}|=1$ it holds. Let $F$ be a face contained in $C$ such that their common edges form a path. By Lemma~\ref{lem_PCF} consider the case  where the path $C$ is the cycle. We shall denote with $P= E(C) \cap E(F)$, $H=E(F) \setminus E(C)$ and $K=E(C) \setminus E(F)$. If for the  edges $ (E(C) \cap E(F))^{-} $ contained in $P$,  $| (E(F) \cap E(C))^{-}| $ odd then $P$ is a negative path denoted $P^{-}$. If $ E^{-}(F) \setminus E^{-}(C) $ contained in $H$ it holds that $| E^{-}(F) \setminus E^{-}(C)| $ even then $H$ is a positive path denoted $H^{+}$. Thus $F$ which consists of $P^{-}$ and $H^{+}$ is a negative face and there is nothing more to prove. If $| (E^{-}(F) \setminus E^{-}(C))|$ is odd then $H$ is a negative path, denoted $H^{-}$, therefore $F$ consisting of $P^{-}$ and $H^{-}$ is a positive face. As a result $| (E^{-}(C) \setminus E^{-}(F))|$ contained in $K$ is even and $K$ is a positive path denoted $K^{+}$. The cycle which is formed by $H^{-}$ and $K^{+}$ is negative.
    
Respectively, we assume $P^{+}$ and we have either $| (E^{-}(F) \setminus E^{-}(C))|$ odd, $H^{-}$, and $F$ is a negative face or $| (E^{-}(F) \setminus E^{-}(C))|$ even, $H^{+}$, and $F$ is a positive face. $| (E^{-}(C) \setminus E^{-}(F))|$ contained in $K$ is odd and $K$ is a negative path denoted $K^{-}$. As a result the cycle which is formed by $H^{+}$ and $K^{-}$ is negative. The number of faces is finite and inductively we come across either a negative face or a negative cycle.
\end{proof}

\begin{lemma} \label{lem_COE}
In a planar signed graph, every negative cycle contains an odd number of negative faces whereas every positive cycle contains an even number.
\end{lemma}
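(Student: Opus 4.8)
The plan is to reduce both assertions to the single sign identity
\[
\sigma(C)=\prod_{i=1}^{k}\sigma(F_{i}),
\]
where $F_{1},\dots,F_{k}$ are the faces contained in the cycle $C$ and $\sigma(\cdot)$ denotes the sign. Granting this identity, the lemma is immediate: the right-hand side equals $(-1)^{n(C)}$, where $n(C)$ is the number of negative faces contained in $C$, so $n(C)$ is odd precisely when $\sigma(C)=-1$, i.e. when $C$ is negative, and $n(C)$ is even precisely when $C$ is positive.

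The conceptual reason for the identity is a parity count over edges. Each edge $e$ lying in the interior of the disc $D$ bounded by $C$ is shared by exactly two of the contained faces $F_{i}$, so it contributes $\sigma(e)^{2}=1$ to the product $\prod_{i}\sigma(F_{i})$; each edge of $C$ belongs to exactly one contained face, contributing $\sigma(e)$. Hence $\prod_{i=1}^{k}\sigma(F_{i})=\prod_{e\in E(C)}\sigma(e)=\sigma(C)$, which is the identity.

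To make this rigorous I would argue by induction on $|f_{C}|$, peeling off one face at a time exactly as in the proof of Lemma~\ref{lem_PNCF}. The base case $|f_{C}|=1$ holds since then $C$ bounds the single face $F$ and $\sigma(C)=\sigma(F)$. For the inductive step, apply Lemma~\ref{lem_PCF} (to the cycle $C$ regarded as the boundary of $D$) to obtain a contained face $F$ with $P:=E(C)\cap E(F)$ a path; set $K:=E(C)\setminus E(F)$ and $H:=E(F)\setminus E(C)$. Then $C':=K\cup H$ is again a cycle, it contains exactly the faces of $C$ other than $F$, and from $E(C)=P\cup K$, $E(F)=P\cup H$, $E(C')=K\cup H$ one gets $\sigma(C')=\sigma(C)\,\sigma(F)$. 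Since $n(C)=n(C')+1$ when $F$ is negative and $n(C)=n(C')$ otherwise, and the parity of $\sigma(C')$ is controlled by $n(C')$ by the inductive hypothesis, a short case distinction on whether $F$ is positive or negative yields the claim for $C$.

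The main obstacle is the bookkeeping hidden in the inductive step: one must verify that $C'=K\cup H$ is a single cycle and that the faces it contains are precisely those contained in $C$ with $F$ removed. This is exactly what the path condition on $G[E(C)\cap E(F)]$ supplied by Lemma~\ref{lem_PCF} guarantees, since replacing the subpath $P$ of $C$ by the complementary boundary path $H$ of $F$ absorbs $F$ into the outer region while keeping the boundary a simple closed curve. Care is also needed when $C$ is not the outer cycle of $\Sigma$: there one applies Lemma~\ref{lem_PCF} inside the disc $D$, with $C$ playing the role of the outer cycle, so that the induction stays within the subgraph lying in $D$.
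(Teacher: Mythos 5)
Your headline argument---the product identity $\sigma(C)=\prod_{i}\sigma(F_i)$ obtained by double counting edges---is itself a complete proof and takes a genuinely different route from the paper. The paper proves Lemma~\ref{lem_COE} by induction on the number of contained faces, choosing an interior path whose endvertices lie on $V(C)$, so that a theta graph splits $C$ into two sub-cycles with fewer faces each, and then running a case analysis on the signs of the two pieces. Your double count (each edge strictly inside the disc lies on exactly two contained faces and contributes $\sigma(e)^2=1$; each edge of $C$ lies on exactly one and contributes $\sigma(e)$) yields the parity statement in one stroke, with no induction and no case analysis. In the $2$-connected setting the paper adopts for this section, every edge lies on exactly two distinct faces of $\Sigma$, and for an edge interior to the disc both of these faces are contained in $C$, so the count is sound; in general one should count edges with the multiplicity of the face-boundary walk, which only strengthens the identity since repeated edges contribute squares. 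This is arguably cleaner than the paper's argument, and it is the version I would promote to the official proof.

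Your ``rigorous'' inductive implementation, however, has a genuine gap at the step asserting that $C':=K\cup H$ is again a cycle, and that this ``is exactly what the path condition supplied by Lemma~\ref{lem_PCF} guarantees.'' It is not: the condition that $P=E(C)\cap E(F)$ is a path says nothing about the \emph{internal} vertices of $H=E(F)\setminus E(C)$, which may lie on $C$. Concretely, take $C$ the hexagon $1\,2\,3\,4\,5\,6$ with interior edges $\{1,4\}$ and $\{2,4\}$, and let $F$ be the triangular face on $1,2,4$. Then $P=\{\{1,2\}\}$ is a path, as Lemma~\ref{lem_PCF} permits, but $H$ is the path $2$--$4$--$1$ whose internal vertex $4$ lies on $C$; in $K\cup H$ the vertex $4$ has degree~$4$, and $K\cup H$ is the edge-disjoint union of the triangle $2\,3\,4$ and the quadrilateral $4\,5\,6\,1$---the region left after removing $F$ pinches at $4$ and is not a disc, so the boundary is not a simple closed curve. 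The argument is repairable: the sign of the closed boundary walk $K\cup H$ (as an edge multiset) still equals $\sigma(C)\sigma(F)$, the constituent cycles each contain fewer faces, and the faces other than $F$ partition among them, so strengthening the induction hypothesis to edge-disjoint unions of cycles (summing parities over the constituents) closes the hole. Alternatively, discard the peeling induction entirely and rely on your double-counting identity, which avoids the issue.
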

\begin{proof}
Let $C$ be a cycle. We apply induction on the number of faces contained in it. If $C$ contains one face it holds. For the induction hypothesis, we assume that it holds for every $C$ which has fewer faces than $n$, i.e. every negative cycle with fewer faces than n contains an odd number of negative faces whereas every positive cycle contains an even number. We will show that it holds for every $C$ with $n$ faces. Since $C$ contains $n$ faces there is a path contained in it whose endvertices only belong to $V(C)$. Thus a theta graph is formed by the outer cycle and the path. If $C$ is negative, it is divided into a positive and a negative cycle which by the induction hypothesis have even and odd number of negative faces respectively. Thus $C$ has odd number of negative faces. If $C$ is positive, it is divided either into two positive or two negative cycles. In any case, by the induction hypothesis $C$ has even number of negative faces. 
\end{proof}
 
From the above result, it is evident that, in a planar signed graph, the sign of a cycle is equal to the product of the signs of the faces contained in it.

\begin{lemma} \label{allfaceseven}
The number of negative faces in a 2-connected planar signed graph is even.
\end{lemma}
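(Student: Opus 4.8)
The plan is to reduce the statement to the single fact, recorded in the remark following Lemma~\ref{lem_COE}, that in a planar signed graph the sign of a cycle equals the product of the signs of the faces it contains. Since $\Sigma$ is assumed $2$-connected and (by the running assumption of this section) embedded in the plane, the boundary of the outer face is a cycle $C_{0}$, and every other face of $\Sigma$ is an inner face contained in $C_{0}$. The sign of the outer face is, by definition, the sign of its boundary cycle $C_{0}$.

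Applying the remark after Lemma~\ref{lem_COE} to $C_{0}$, the sign of $C_{0}$ equals the product of the signs of all the inner faces. Hence the product of the signs of all faces of $\Sigma$ (the inner faces together with the outer face) equals the product of the inner-face signs multiplied by the sign of the outer face, that is, the sign of $C_{0}$ multiplied by itself, which is $+1$. Since the product of the signs over all faces equals $+1$, an even number of faces carry the sign $-1$; that is, the number of negative faces is even. The same conclusion can be reached directly by double counting: every edge of a $2$-connected plane graph lies on exactly two face boundaries, so in the product of all face signs each edge sign occurs exactly twice and cancels, again forcing the product to be $+1$.

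The step I would scrutinise most is the topological input that the outer face of a $2$-connected plane graph is bounded by a cycle, so that it has a well-defined sign and Lemma~\ref{lem_COE} (stated for cycles) genuinely applies to it. This is precisely where $2$-connectivity enters, and it is the reason the section assumes $\Sigma$ is $2$-connected; without it the outer boundary need not be a single cycle. Beyond that, the only care required is bookkeeping: confirming that the faces contained in $C_{0}$ are exactly all faces other than the outer one, so that no face is omitted from or double-counted in the product.
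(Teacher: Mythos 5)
Your proof is correct and takes essentially the same route as the paper: the paper's own proof applies Lemma~\ref{lem_COE} to the outer cycle and notes that the unbounded face carries the sign of $C_0$, doing by two-case parity counting exactly what your product-of-face-signs formulation packages multiplicatively. Your closing double-counting remark (each edge of a $2$-connected plane graph lies on exactly two face boundaries, so the product of all face signs is $+1$) is a nice independent shortcut that bypasses Lemma~\ref{lem_COE} altogether, though the paper does not use it.
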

\begin{proof}
Let us first examine the case in which the planar signed graph is $2$-connected. In the subcase that the outer cycle is negative then, by Lemma~\ref{lem_COE}, it contains an odd number of faces. The unbounded face of that graph is also negative and therefore there is an even number of negative faces. The subcase that the outer cycle is positive follows similarly.

Let us now consider the case in which a planar signed graph $\Sigma$ is connected and consists of several blocks. Take a connected subgraph of $\Sigma$ consisting of two blocks $B_1$ and $B_2$. The outer cycles of $B_1$ and $B_2$ may be positive or negative. In all (four) cases, using Lemma~\ref{lem_COE} and taking into account the sign of the unbounded face, it can be shown that the number of negative faces is an even number $f$. Now let us describe a procedure: let $B_3$ be another block such that $B_1\cup{B_2}\cup{B_3}$ is connected. Then, if the outer cycle of $B_3$ is positive then it contains an even number of negative faces while the unbounded face does not change sign; while if  the outer cycle of $B_3$ is negative then it contains an odd number of negative faces while the unbounded face does change sign. In both cases we have that  in the subgraph $B_1\cup{B_2}\cup{B_3}$  there exist an even number of negative face. By applying the same procedure so that all blocks of $\Sigma$ are considered, tne result follows.  
\end{proof}

Using the fact that a cylindrical signed graph can have at most two negative faces:
\begin{corollary}
In a 2-connected cylindrical signed graph, there are zero or two negative faces.
\end{corollary}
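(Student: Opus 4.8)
The plan is to combine the parity result already established in Lemma~\ref{allfaceseven} with the defining structural property of cylindrical signed graphs. First I would fix, once and for all, a planar embedding of the given $2$-connected cylindrical signed graph $\Sigma$ that realizes the cylindrical structure, namely one for which the number of negative faces is at most two; this is precisely the fact that the statement invites us to use. Working with this single fixed embedding is the only point that requires a little care, since both the upper bound and the parity argument must refer to the same set of faces.

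With that embedding in hand, the argument is immediate. Because $\Sigma$ is $2$-connected and planar, Lemma~\ref{allfaceseven} applies directly and tells us that the total number of negative faces of the embedding is even. The remaining step is purely arithmetic: the number of negative faces is a nonnegative integer that is simultaneously even and at most $2$, and the only such integers are $0$ and $2$. Hence the number of negative faces of $\Sigma$ is exactly $0$ or exactly $2$, which is the assertion of the corollary.

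I do not expect any genuine obstacle here, as the statement is an immediate specialization of Lemma~\ref{allfaceseven} rather than an independent result. The only thing worth stating explicitly is that the parity of Lemma~\ref{allfaceseven} and the bound of at most two negative faces must be read off the same embedding; once a single cylindrical embedding is chosen, the two facts combine without any further work.
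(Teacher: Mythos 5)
Your proof is correct and follows exactly the paper's (implicit) argument: the paper derives the corollary directly from Lemma~\ref{allfaceseven} together with the defining fact that a cylindrical signed graph admits an embedding with at most two negative faces. Your extra care about reading both facts off the same fixed embedding is a reasonable point of hygiene but does not change the substance.
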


The above structural properties of cylindrical signed graphs enabled us to derive results regarding the associated signed-graphic matroids. Specifically, the number of negative faces and their adjacency in a planar embedding of a cylindrical signed graph declares whether the associated signed-graphic matroid is binary.

\begin{lemma} \label{lem_cybin}
In a cylindrical signed graph $\Sigma$, if there are no negative faces or the two negative faces are not vertex-disjoint, $M(\Sigma)$ is binary.
\end{lemma}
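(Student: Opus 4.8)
The plan is to treat the two cases of the hypothesis separately and, in each, to exhibit a structural property of $\Sigma$ that forces $M(\Sigma)$ to be graphic via Proposition~\ref{prop_sgn_graphic}; the conclusion then follows from the standard fact that every graphic matroid is binary. Throughout I use that $\Sigma$ is $2$-connected (as assumed in this section), so that every face boundary is a cycle, and I use the dichotomy supplied by the preceding corollary, namely that a $2$-connected cylindrical signed graph has either zero or two negative faces.

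First I would dispose of the case where $\Sigma$ has no negative faces. By the observation following Lemma~\ref{lem_COE}, the sign of any cycle of $\Sigma$ equals the product of the signs of the faces contained in it; if every face is positive, then every cycle is positive, and hence $\Sigma$ is balanced. Proposition~\ref{prop_sgn_graphic}(i) then yields that $M(\Sigma)$ is graphic, and therefore binary.

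For the main case, suppose $\Sigma$ has exactly two negative faces $F_{1}$ and $F_{2}$ that are not vertex-disjoint, and fix a common vertex $v \in V(F_{1}) \cap V(F_{2})$. I would prove that $v$ is a balancing vertex, i.e. that $\Sigma \dof v$ is balanced. Let $C$ be any negative cycle. By Lemma~\ref{lem_COE}, $C$ contains an odd number of negative faces, so since $F_{1}$ and $F_{2}$ are the only ones, $C$ contains exactly one of them, say $F_{1}$, in the disc it bounds, while $F_{2}$ lies in the complementary region. The crucial step is a planarity argument: regarding $C$ as a Jordan curve, the open region of each face of $\Sigma$ lies wholly in the interior or wholly in the exterior of $C$, so $F_{1}$ is interior and $F_{2}$ exterior. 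If $v \notin V(C)$, then a sufficiently small punctured neighbourhood of $v$ meets only one side of $C$; yet $F_{1}$ and $F_{2}$ both occupy angular sectors at $v$, and these would then lie on the same side of $C$, contradicting that $F_{1}$ is interior and $F_{2}$ exterior. Hence $v \in V(C)$. Since this holds for every negative cycle, $\Sigma \dof v$ contains no negative cycle and is balanced, so $v$ is a balancing vertex. By Proposition~\ref{prop_sgn_graphic}(iii), $M(\Sigma)$ is graphic, and hence binary.

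The main obstacle is making the planarity argument of the second case fully rigorous: one must invoke that in a $2$-connected plane graph each face boundary is a cycle, that such a cycle separates the plane so that every other face lies entirely inside or entirely outside it, and that the faces incident to a fixed vertex occupy the angular sectors around that vertex. Once these topological facts are in hand the contradiction is immediate, and no further difficulty is expected, since establishing either balancedness (first case) or a balancing vertex (second case) reduces the statement directly to Proposition~\ref{prop_sgn_graphic}.
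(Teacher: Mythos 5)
Your proof is correct, but it takes a genuinely different route from the paper's. The paper argues by contradiction using the excluded-structure characterization of binary signed-graphic matroids: if $M(\Sigma)$ were non-binary, $\Sigma$ would contain two vertex-disjoint negative cycles, and Lemma~\ref{lem_PNCF} would then place a negative face inside each of them, yielding two vertex-disjoint negative faces and contradicting the hypothesis. You instead prove the strictly stronger conclusion that $M(\Sigma)$ is graphic: in the no-negative-face case via balance and Proposition~\ref{prop_sgn_graphic}(i), and in the two-face case by showing that a common vertex $v$ of the two negative faces is a balancing vertex --- your Jordan-curve argument forces every negative cycle through $v$, since the cycle has exactly one negative face in its interior (Lemma~\ref{lem_COE}) and the other in its exterior, while both faces occupy sectors at $v$ --- and then Proposition~\ref{prop_sgn_graphic}(iii); binarity follows because graphic matroids are binary. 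Note that your Case~2 is in substance the paper's proof of the \emph{next} lemma (Lemma~\ref{lem_cygra}), where the same balancing-vertex idea appears; the paper there simply asserts ``by planarity'' that $V(C_1)\cap V(C_2)$ lies on every negative cycle, and your sector argument makes that step explicit. What each approach buys: the paper's proof is shorter but leans on the nontrivial external fact that a non-binary signed-graphic matroid contains two vertex-disjoint negative cycles (Zaslavsky's characterization of binary signed-graphic matroids, not proved in this paper), whereas yours needs only the paper's own Proposition~\ref{prop_sgn_graphic} plus standard plane-graph topology, and effectively establishes Lemmata~\ref{lem_cybin} and~\ref{lem_cygra} in one stroke --- at the cost of the more delicate topological bookkeeping you flag yourself, all of which is standard for $2$-connected plane graphs.
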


\begin{proof}
If $\Sigma$ has no negative faces then from Lemma~\ref{lem_COE}, $\Sigma$ is balanced and $M(\Sigma)$ is binary. In case $\Sigma$ has two negative faces, assume on the contrary that $M(\Sigma)$ is not binary. Then $\Sigma$ has two vertex-disjoint negative cycles. By Lemma~\ref{lem_PNCF},  the negative faces within the boundaries of these negative cycles are vertex-disjoint; a contradiction. 
\end{proof}

\begin{lemma} \label{lem_cygra}
In a cylindrical signed graph $\Sigma$, if there are no negative faces or the two negative faces are not vertex-disjoint, $M(\Sigma)$ is graphic.
\end{lemma}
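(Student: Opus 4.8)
The plan is to reduce the statement to Proposition~\ref{prop_sgn_graphic} by showing that, under the hypothesis, $\Sigma$ is either balanced or possesses a balancing vertex. Since $\Sigma$ is a $2$-connected cylindrical signed graph, by the preceding corollary it has either zero or two negative faces, and these are precisely the two configurations allowed by the hypothesis (in the two-face case the faces share a vertex). I would treat these two cases separately.

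If $\Sigma$ has no negative faces, then by the remark following Lemma~\ref{lem_COE} the sign of every cycle equals the product of the signs of the faces it contains, so every cycle is positive and $\Sigma$ is balanced. Proposition~\ref{prop_sgn_graphic}(i) then gives that $M(\Sigma)$ is graphic (indeed $M(\Sigma)=M(G)$).

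The substantive case is that of two negative faces $F_1,F_2$ sharing a common vertex $v$. Here I would prove that $v$ is a balancing vertex, i.e. that every negative cycle of $\Sigma$ passes through $v$; Proposition~\ref{prop_sgn_graphic}(iii) then finishes the proof. Suppose, for contradiction, that some negative cycle $C$ avoids $v$, and let $D$ be the disc bounded by $C$. By Lemma~\ref{lem_COE} the cycle $C$ contains an odd number of negative faces, and since $\Sigma$ has only the two negative faces $F_1,F_2$, it contains exactly one of them, say $F_1$. As $F_1$ is contained in $C$ and $v\in V(F_1)$ while $v\notin V(C)$, the vertex $v$ lies in the open interior of $D$. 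But $F_2$ is also incident to $v$, and a face incident to a vertex lying strictly inside $D$ cannot cross the edge-boundary $C$, so $F_2$ is contained in $D$ as well; hence $C$ contains both $F_1$ and $F_2$, i.e. two negative faces. This contradicts Lemma~\ref{lem_COE}, which forces an odd number. Therefore no negative cycle avoids $v$, so $\Sigma\dof v$ is balanced and $v$ is a balancing vertex.

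I expect the main obstacle to be the topological step in the two-face case: justifying rigorously that once $v$ sits strictly inside the disc $D$ bounded by $C$, every face incident to $v$---in particular $F_2$---is also contained in $D$. This requires arguing that a face is a connected planar region which cannot cross the edge set $C$, together with care about degenerate configurations such as $F_1$ and $F_2$ sharing an edge or several vertices, and the tacit assumption (inherent to the cylindrical, planar setting) that $\Sigma$ is jointless, so that no negative cycle is a lone joint located away from $v$. Everything else is a direct appeal to Lemma~\ref{lem_COE} and Proposition~\ref{prop_sgn_graphic}.
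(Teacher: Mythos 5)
Your proof is correct and takes essentially the same approach as the paper: the no-face case via Lemma~\ref{lem_COE} and balance, and in the two-face case showing that a common vertex of the negative faces is a balancing vertex, using Lemma~\ref{lem_COE} to conclude that any negative cycle contains exactly one of the two negative faces and planarity to force the common vertex onto the cycle, finishing with Proposition~\ref{prop_sgn_graphic}(iii). The only cosmetic difference is that you run the planarity step in contrapositive form, deriving a parity contradiction (a vertex $v$ strictly inside the disc bounded by $C$ would drag the second negative face inside as well, giving an even count), whereas the paper argues directly that $V(C_1)\cap V(C_2)\subseteq V(C)$ since $C$ contains one negative face but not the other.
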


\begin{proof}
If $\Sigma$ has no negative faces then from Lemma~\ref{lem_COE}, $\Sigma$ is balanced and $M(\Sigma)$ is graphic. Otherwise the two negative faces, say $C_{1}$ and $C_{2}$, are not vertex-disjoint. Let $V=V(C_{1}) \cap V(C_{2})$ and consider a negative cycle $C$ of $\Sigma$, where $C\neq{C_1,C_2}$. By Lemma~\ref{lem_COE}, $C$ contains exactly one from $C_1$ and $C_2$; say $C_1$. By planarity combined with the fact that $C$ does not contain $C_2$ gives us that $V\subseteq{V(C)}$. All the vertices in ${V}$, which by hypothesis is non-empty, are also vertices of any negative cycle of $\Sigma$. Therefore, any vertex in $V$ is a balancing vertex of $\Sigma$ which implies that $M(\Sigma)$ is graphic. 
\end{proof}

\begin{lemma}\label{lem_cycyci}
In a cylindrical signed graph $\Sigma$, if the two negative faces are vertex-disjoint, $M(\Sigma)$ nonbinary.
\end{lemma}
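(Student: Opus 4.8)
The plan is to prove nonbinarity by exhibiting a $U_{2,4}$-minor of $M(\Sigma)$, since a matroid is nonbinary exactly when it has a $U_{2,4}$-minor (Tutte; see~\cite{Oxley:2011}). The two vertex-disjoint negative faces, being bounded by cycles of the $2$-connected plane graph, give two vertex-disjoint negative cycles $C_1$ and $C_2$ of $\Sigma$. As $\Sigma$ is $2$-connected, Menger's theorem provides two vertex-disjoint paths $R_1,R_2$ joining $C_1$ to $C_2$ and meeting $C_1\cup C_2$ only at their end-vertices; write $x_i\in V(C_1)$ and $y_i\in V(C_2)$ for the ends of $R_i$, so that $x_1\neq x_2$ and $y_1\neq y_2$. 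I would first restrict to the subgraph $\Sigma'$ spanned by $E(C_1)\cup E(C_2)\cup E(R_1)\cup E(R_2)$, which by Theorem~\ref{th_minsig} corresponds to a minor of $M(\Sigma)$ and in which $C_1,C_2$ are still vertex-disjoint negative cycles.

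Next I would contract $\Sigma'$ down to two vertices: contract all but one edge of $C_1$ to a single negative loop $a$ at a vertex $u$, all but one edge of $C_2$ to a negative loop $b$ at a vertex $v$, and all but one edge of each $R_i$ to a single edge $e_i$ between $u$ and $v$; by Theorem~\ref{th_minsig} the resulting frame matroid $M(\Gamma)$ is a minor of $M(\Sigma')$. Here $\Gamma$ has two vertices, two negative loops $a,b$, and a parallel pair $e_1,e_2$. A direct check from Theorem~\ref{th_sgg2} shows that if the digon $\{e_1,e_2\}$ is \emph{negative}, then every $2$-subset of $E(\Gamma)$ is independent while each of the four $3$-subsets is a circuit (either a tight handcuff $\{a,b,e_i\}$, or a loose handcuff formed by the negative digon and a loop), so $M(\Gamma)\cong U_{2,4}$; whereas if $\{e_1,e_2\}$ is \emph{positive} it is itself a circuit and $M(\Gamma)$ is the binary matroid $U_{2,3}$ with a parallel element. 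Everything therefore reduces to arranging the contractions so that the digon comes out negative.

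This sign control is the step I expect to be the main obstacle, and it is exactly what separates the statement from the (false) naive assertion that any two vertex-disjoint negative cycles force nonbinarity. The sign of the digon $\{e_1,e_2\}$ equals the sign of the cycle $Z=R_1\cup\gamma_2\cup R_2\cup\gamma_1$, where $\gamma_1$ is the $x_1x_2$-arc of $C_1$ surviving in the contracted tree (the arc not containing the edge kept as the loop $a$) and $\gamma_2$ is the analogous arc of $C_2$; this is because switching and contraction preserve cycle signs, so the digon inherits the sign of the cycle of $\Sigma'$ that projects to it. Now $x_1,x_2$ split $C_1$ into two arcs whose signs multiply to $\sigma(C_1)=-1$, hence of \emph{opposite} sign, and likewise for $C_2$; since $x_1\neq x_2$ and $y_1\neq y_2$ each arc is nonempty. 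Consequently, by choosing in which arc of $C_1$ (and of $C_2$) to leave the loop-edge, I can prescribe the signs of $\gamma_1$ and $\gamma_2$ independently and force $\sigma(Z)=\sigma(R_1)\sigma(R_2)\sigma(\gamma_1)\sigma(\gamma_2)=-1$, while the leftover edge remains a negative loop in every case, as contracting all but one edge of a negative cycle always yields a negative loop. With the digon made negative we obtain $M(\Gamma)\cong U_{2,4}$ as a minor of $M(\Sigma)$, so $M(\Sigma)$ is nonbinary.
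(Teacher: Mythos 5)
Your proof is correct, but it takes a genuinely different --- and far more self-contained --- route than the paper. The paper's entire proof is the single sentence that two vertex-disjoint negative faces yield two vertex-disjoint negative cycles, ``thus $M(\Sigma)$ nonbinary'': it treats the implication from two vertex-disjoint negative circles to nonbinarity as a known black box (a classical fact for suitably connected signed graphs, going back to Zaslavsky's work on binary biased graphs and Pagano's thesis), with the standing assumption of Section~3 that $\Sigma$ is $2$-connected doing silent work. You instead prove that implication from scratch by exhibiting a $U_{2,4}$-minor, and your caveat about the naive version is justified: without a connectivity hypothesis the implication is false --- for two negative loops joined by a single link, the only circuit of the frame matroid is the tight handcuff consisting of all three edges, so the matroid is $U_{2,3}$ and binary even though the two negative cycles are vertex-disjoint. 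Your sign-control step is the real content and it checks out: Menger's theorem applies because $\Sigma$ is $2$-connected and each negative face is bounded by a cycle on at least two vertices, so $x_1\neq x_2$ and $y_1\neq y_2$; the two $x_1x_2$-arcs of $C_1$ have sign product $\sigma(C_1)=-1$, hence opposite signs, so the choice of which arc retains the loop edge prescribes $\sigma(\gamma_1)$, and independently $\sigma(\gamma_2)$, forcing the digon negative; and since the switchings performed when negative links are contracted preserve cycle signs, the digon indeed inherits $\sigma(Z)$ while the kept edges of $C_1,C_2$ become negative loops. The identification of the resulting two-vertex signed graph's frame matroid with $U_{2,4}$ via Theorem~\ref{th_sgg2} (two tight handcuffs $\{a,e_i,b\}$, two loose handcuffs $\{a,e_1,e_2\}$ and $\{b,e_1,e_2\}$, no positive cycle, no circuit of size two) is also correct, and Theorem~\ref{th_minsig} converts the graph operations into matroid minors as you claim. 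What the paper's appeal buys is brevity; what your argument buys is an explicit $U_{2,4}$ certificate and a transparent account of exactly where $2$-connectivity --- absent from the lemma's statement but imposed globally earlier in the section --- is actually used.
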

\begin{proof}
Since there are two vertex-disjoint negative faces, there are two vertex-disjoint negative cycles thus $M(\Sigma)$ nonbinary.
\end{proof}

The following theorem determines the number of negative faces and their adjacency in a planar embedding of a cylindrical signed graph whose corresponding signed-graphic matroid is graphic. Combining Lemmata~\ref{lem_cygra} and~\ref{lem_cycyci} the following result is easily obtained.

\begin{theorem} \label{th_cygicr}
In a cylindrical signed graph $\Sigma$, there are no negative faces or the two negative faces are not vertex disjoint iff $M(\Sigma)$ graphic.
\end{theorem}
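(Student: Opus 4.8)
The plan is to establish the biconditional by proving each implication separately, relying on the two preceding lemmata together with the corollary on the number of negative faces. Throughout, the standing assumption that $\Sigma$ is 2-connected will do real work.

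First I would dispatch the forward implication, which costs nothing: if $\Sigma$ has no negative faces or its two negative faces are not vertex-disjoint, then Lemma~\ref{lem_cygra} states precisely that $M(\Sigma)$ is graphic.

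For the converse I would argue by contrapositive, showing that if the face condition fails then $M(\Sigma)$ is not graphic. The negation of ``$\Sigma$ has no negative faces or the two negative faces are not vertex-disjoint'' is ``$\Sigma$ has negative faces and they are vertex-disjoint.'' Here the 2-connectivity hypothesis is essential: by the corollary following Lemma~\ref{allfaceseven}, a 2-connected cylindrical signed graph has either zero or two negative faces, so ``$\Sigma$ has negative faces'' forces exactly two, and the negated condition then places us squarely in the case of two vertex-disjoint negative faces. Lemma~\ref{lem_cycyci} now gives that $M(\Sigma)$ is nonbinary.

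Finally I would invoke the standard fact that every graphic matroid is binary (indeed regular), so a nonbinary matroid cannot be graphic; hence the failure of the face condition implies $M(\Sigma)$ is not graphic, which is exactly the contrapositive of the converse. Combining the two directions yields the stated equivalence. There is no genuinely hard step here, since the lemmata do all the substantive work; the only point demanding care is the appeal to 2-connectivity to rule out the single-negative-face possibility, which is precisely what makes the even-parity corollary applicable and lets ``$\Sigma$ has negative faces'' collapse to ``$\Sigma$ has two vertex-disjoint negative faces.''
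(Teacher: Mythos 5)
Your proposal is correct and follows essentially the same route as the paper, which obtains the theorem by ``combining Lemmata~\ref{lem_cygra} and~\ref{lem_cycyci}'': the forward direction is Lemma~\ref{lem_cygra} verbatim, and the converse is the contrapositive via Lemma~\ref{lem_cycyci} together with the standard fact that graphic matroids are binary. Your explicit appeal to the parity corollary under the standing 2-connectivity assumption, to collapse ``has negative faces'' to ``has exactly two vertex-disjoint negative faces,'' is a point the paper leaves implicit but is exactly the intended reading.
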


\subsection{Structural results for cylindrical signed graphs} \label{subsec_structural_cylindrical_graphs}

The next results describe structural characteristics of a special planar embedding of cylindrical signed graphs. 

\begin{claim}
Every cylindrical signed graph which has a planar embedding with a negative face has also a planar embedding with a negative outer face.
\end{claim}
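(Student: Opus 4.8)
The plan is to use the freedom one has in selecting the unbounded face of a planar graph, together with the fact that the sign of a face depends only on its boundary cycle and not on which face of the drawing is designated as outer. Concretely, I would produce the desired embedding by pushing a negative face ``to infinity'' via stereographic projection, leaving the combinatorial embedding otherwise untouched.

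First I would fix a planar embedding $\Pi$ of $\Sigma$ that has a negative face $F$; since $\Sigma$ is $2$-connected, the boundary of $F$ is a cycle $C_F$, and the sign of $F$ is exactly the sign of $C_F$, namely the product $\prod_{e \in E(C_F)} \sigma(e)$, which is negative. Next I would transfer $\Pi$ to the sphere $S^2$: realise $\Sigma$ on $S^2$ so that $\Pi$ is recovered by stereographic projection from the north pole $N$, with the unbounded face of $\Pi$ corresponding to the spherical face containing $N$. On the sphere the faces, together with their boundary cycles, are precisely those of $\Pi$, and no face is distinguished. I would then rotate the sphere so that $N$ lies in the interior of the spherical region corresponding to $F$ and project stereographically once more. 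This produces a planar embedding $\Pi'$ of the \emph{same} signed graph $\Sigma$ in which $F$ is now the unbounded face while every facial cycle is unchanged.

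It remains to check that $F$ is still negative in $\Pi'$, and this is where the only real content lies. Because the passage $\Pi \to S^2 \to \Pi'$ merely relocates the point at infinity and performs no Whitney twist, it preserves the entire collection of facial cycles; in particular the boundary cycle of $F$ is the same $C_F$ in both embeddings. Since $\sigma$ is fixed on $E(\Sigma)$ independently of the embedding, the sign of $F$ is still $\prod_{e \in E(C_F)} \sigma(e)$, which is negative, so $\Pi'$ is a planar embedding of $\Sigma$ with a negative outer face. The main obstacle to guard against is inadvertently invoking a re-embedding that alters the face set, i.e. a genuine Whitney flip, which could change face signs; the stereographic argument sidesteps this precisely by keeping the combinatorial embedding fixed and varying only the choice of outer face. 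The hypotheses that $\Sigma$ is cylindrical and $2$-connected enter only to guarantee that $\Sigma$ is planar and that each face is bounded by a single cycle, so that the phrase ``sign of a face'' is well defined throughout.
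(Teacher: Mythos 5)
Your proposal is correct and follows essentially the same route as the paper: the paper's proof also copies the embedding onto the sphere, places the north pole in the interior of a negative face, and applies stereographic projection to obtain an embedding with that face as the (negative) outer face. Your additional verification that the facial cycles, and hence the face signs, are preserved under this relocation of the point at infinity is a sound elaboration of a step the paper leaves implicit.
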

\begin{proof}
Assume a planar embedding of a cylindrical signed graph with a negative face. Moreover, assume that the outer face of it is positive. Copy the planar embedding onto the sphere so that the north pole lies in the interior of one of the negative faces. Then apply the Riemann stereographic projection. Thus, there is another planar embedding where the boundary of the negative face is the outer. 
\end{proof}

When no confusion arises, we will refer to the faces of a planar embedding of a cylindrical signed graph as faces of the cylindrical signed graph. Moreover, in all proofs concerning cylindrical signed graphs with non-binary signed-graphic matroids we shall always consider their planar embedding where the outer face is negative. 

\begin{claim} \label{connectedcomponents}
If $\Sigma$ is a 2-connected cylindrical signed graph with $M(\Sigma)$ quaternary non-binary and $Y$ is an non-balancing bond in $\Sigma$ then 
\begin{enumerate}[(i)]
\item $\Sigma \dof Y$ consists of one balanced and one unbalanced connected component, 
\item $\Sigma \dof Y$ has one unbalanced separate.
\end{enumerate}
\end{claim}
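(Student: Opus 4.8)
The plan is to exploit the embedding structure that the preceding lemmas force on $\Sigma$. Since $\Sigma$ is $2$-connected, cylindrical and $M(\Sigma)$ is non-binary, by Lemma~\ref{lem_cycyci} and Theorem~\ref{th_cygicr} the fixed planar embedding has exactly two vertex-disjoint negative faces $F_1,F_2$, which I take with $F_1$ the (negative) outer face. By Lemma~\ref{lem_COE} a cycle of $\Sigma$ is negative if and only if it separates $F_1$ from $F_2$ in the plane; in particular any two vertex-disjoint negative cycles are nested around $F_1$ and $F_2$. Throughout I use that $Y$, being a bond, is by Theorem~\ref{thrm_bonds} a minimal edge set whose deletion increases the number of balanced components, so $\Sigma\dof Y$ has exactly one balanced component $\Sigma_b$ together with one or more unbalanced components.

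For (i), I would first show no edge of $Y$ joins two distinct unbalanced components: if $e\in Y$ joined unbalanced components $U_i,U_j$, then $\Sigma\dof(Y\dof e)$ would still have $\Sigma_b$ as a balanced component (restoring $e$ only merges two unbalanced pieces), contradicting minimality of $Y$. Hence every edge of $Y$ is incident with $\Sigma_b$, and each unbalanced component is attached to $\Sigma$ only through $\Sigma_b$. Next, since $\Sigma_b$ is balanced it has no negative cycle, hence no cycle separating $F_1$ from $F_2$, so $F_1$ and $F_2$ lie in a common face $\Phi$ of $\Sigma_b$, which is the outer face of $\Sigma_b$. Any unbalanced component $U$ is connected and disjoint from $\Sigma_b$, hence lies in a single face of $\Sigma_b$, and as it contains an $F_1$--$F_2$-separating cycle this face must be $\Phi$. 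Suppose now there were two unbalanced components $U_1,U_2$ with negative cycles $C_1\subseteq U_1$, $C_2\subseteq U_2$; being vertex-disjoint they are nested, say $C_2$ inside $C_1$. Then $U_2$ lies inside $C_1$ while every vertex of $\Sigma_b$ lies outside $C_1$, so any edge of $Y$ joining $\Sigma_b$ to $U_2$ would have to cross $C_1\subseteq U_1$ in the planar embedding, which is impossible. This contradicts $U_2$ being attached to $\Sigma$ only through $\Sigma_b$, so there is a unique unbalanced component and $\Sigma\dof Y=\Sigma_b\sqcup U$.

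For (ii), the separates of $\Sigma\dof Y$ are those of $\Sigma_b$ and those of $U$; all separates of the balanced graph $\Sigma_b$ are balanced, so the unbalanced separates can only come from $U$. By Theorem~\ref{th_Zasl11} the outer blocks of $U$ are balanced and the only candidate for an unbalanced separate is the core of $U$, which is a single elementary separator \emph{unless} it is a B-necklace. Since a B-necklace has a graphic (hence binary) matroid by Proposition~\ref{prop_sgn_graphic}, it suffices to show $U$ contains two vertex-disjoint negative cycles. One of them comes for free from the argument in (i): no edge of $Y$ and no vertex of $\Sigma_b$ lies inside the separating cycle, so the entire boundary $\partial F_2$ of the inner negative face lies in $U$, giving a negative cycle of $U$. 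Moreover $U$ is planar and unbalanced, so by Lemma~\ref{allfaceseven} it has an even, hence (being unbalanced) at least two, negative faces; a short face-merging count shows these are exactly the face $G_2=F_2$ and the face $G_1\supseteq F_1$, because if $F_1,F_2$ fell in the same face of $U$ that face would contain an even number of negative faces of $\Sigma$ and $U$ would be balanced.

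The main obstacle is the last point of (ii): proving that the outer negative cycle $\partial G_1$ of $U$ can be taken vertex-disjoint from $\partial F_2$, equivalently that $U$ has no balancing vertex forcing all its negative cycles through one point (which would permit the core to be a handcuff- or necklace-type configuration). I would dispatch this using the $2$-connectivity of $\Sigma$: a shared vertex $w$ of $G_1$ and $G_2$ would separate the $F_2$-side from the $F_1$-side within $\Phi$, and since $\Sigma_b$ and all bond edges are confined outside the separating cycle, $2$-connectivity of $\Sigma$ cannot be restored across $w$, a contradiction; the few remaining degenerate configurations are eliminated by inspecting the block structure directly to confirm the core is not of necklace form. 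Thus the separation-counting in (i) is routine once minimality and planarity are invoked, whereas the reduction in (ii) — from ``one unbalanced separate'' to ``two vertex-disjoint negative cycles in $U$'' and then to the $2$-connectivity argument excluding a balancing vertex of $U$ — is the real crux.
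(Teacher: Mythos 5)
Your part (i) is actually more careful than the paper's own proof: you correctly notice that the negative cycles $C_1\subseteq U_1$ and $C_2\subseteq U_2$ must both enclose the unique inner negative face and hence be nested --- a case the paper's two-line argument (which simply asserts, via Lemma~\ref{lem_PNCF}, that distinct unbalanced components yield \emph{distinct} inner negative faces) passes over in silence. But your resolution of the nested case rests on an unjustified step, namely ``$U_2$ lies inside $C_1$ while every vertex of $\Sigma_b$ lies outside $C_1$.'' Nothing you proved forces $\Sigma_b$ outside $C_1$: from ``$F_1$ and $F_2$ lie in the outer face of $\Sigma_b$'' you may conclude that no cycle of $\Sigma_b$ encloses $F_2$, but $\Sigma_b$ can perfectly well sit in the annulus between the nested cycles $C_2$ and $C_1$, enclosed by $C_1$ without any of its own cycles enclosing $F_2$; then the $Y$-edges from $\Sigma_b$ to $U_2$ cross nothing. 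Concretely, take $\Sigma$ with outer square $abcd$ (edge $ab$ negative), inner triangle $xyz$ (edge $zx$ negative) bounding the inner negative face, a single edge $uv$ in the annulus, and $Y=\{ua,ux,vb,vy\}$ with $ua$ negative, all other edges positive. This $\Sigma$ is $2$-connected and cylindrical with exactly two vertex-disjoint negative faces (the outer face and the triangle's interior), so $M(\Sigma)$ is quaternary non-binary; $Y$ is a non-balancing (indeed unbalancing) bond; and yet $\Sigma\dof Y$ has \emph{two} unbalanced components (square and triangle) and two unbalanced separates. So the nested configuration cannot be excluded by planarity and $2$-connectivity alone, and any repair must import hypotheses the claim does not state (in the paper's later applications, internal $4$-connectivity of $M(\Sigma)$, which fails in this example because of the exact $3$-separation splitting off the triangle side with both parts of size six).

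Your part (ii) is off track in two further respects. First, the paper's proof of (ii) is literally ``the same argument'' as (i) --- two unbalanced separates would each contain a negative cycle, hence allegedly two distinct inner negative faces besides the negative outer one --- so it has the same nesting blind spot and is defeated by the same configuration; your attempt at least engages with the real difficulty, but it cannot close it. Second, your stronger programme of showing that $U$ contains two vertex-disjoint negative cycles, so that its core is not a B-necklace, argues the wrong implication: if the core of $U$ \emph{were} a B-necklace, Theorem~\ref{th_Zasl11} says every separate in it is a balanced block of the necklace, which would give \emph{zero} unbalanced separates, not two; excluding the necklace therefore does not by itself deliver ``exactly one unbalanced separate.'' Finally, the concluding $2$-connectivity argument against a shared vertex $w$ of the two boundary cycles is left entirely schematic (``the few remaining degenerate configurations are eliminated by inspecting the block structure''), and, for the reason exhibited above, no inspection of block structure can complete it under the stated hypotheses.
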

\begin{proof}
For (i) by Theorem 7 $\Sigma$ has a planar embedding with two vertex disjoint negative faces. Consider the planar embedding which has a negative outer face. Since $Y$ is a non-balancing bond in $\Sigma$, $\Sigma \dof Y$ has exactly one balanced component. Assume that the unbalanced components of $\Sigma \dof Y$ are two or more. Each one has a negative cycle. Thus by lemma~\ref{lem_PNCF}, there are two or more distinct negative faces different from the outer which is a contradiction to the assumption that the planar embedding has exactly two negative faces. For (ii) the argument is the same. 
\end{proof}
 
The following three results present structural characteristics of the planar embedding of a cylindrical signed graph with a negative outer face that a double bond imposes. It is shown that any negative face can contain at most one edge of the balancing part of a double bond and that the balancing part of a double bond consists only of one edge. Moreover, a technical result is proved according to which the unique edge of the balancing part of a double bond has a common endvertex with an edge of its unbalancing part.
 
\begin{claim} \label{balancingpartY}
If $\Sigma$ is a 2-connected cylindrical signed graph with $M(\Sigma)$ connected quaternary non-binary, $\Sigma'$ is the planar embedding of $\Sigma$ with two negative faces and $Y$ is a nongraphic cocircuit and a double bond of $\Sigma'$, then a negative face of $\Sigma'$ can contain exactly one edge of the balancing part of $Y$. 
\end{claim}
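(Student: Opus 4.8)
The plan is to argue by contradiction: suppose some negative face $F$ of $\Sigma'$, bounded by the negative cycle $C$, carries two edges $e_1,e_2$ of the balancing part of $Y$ on its boundary. I will derive that $C$ is in fact a positive cycle, contradicting that it bounds a negative face.

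First I would fix the combinatorial data around $Y$. Since $Y$ is a double bond it is non-balancing, so Claim~\ref{connectedcomponents} gives that $\Sigma'\dof Y$ has exactly one balanced component $\Sigma_{2}$ and one unbalanced component, both connected; and by the definition of the balancing part, both end-vertices of $e_1$ and of $e_2$ lie in $\Sigma_{2}$. The key local fact is that each $e_i$ is a \emph{negative chord} of $\Sigma_{2}$: because $Y$ is a bond, i.e.\ a minimal edge set whose deletion increases the number of balanced components (Theorem~\ref{thrm_bonds}), the set $Y-e_i$ must not already create a balanced component, hence $\Sigma_{2}\cup\{e_i\}=(\Sigma'\dof(Y-e_i))$ restricted to $V(\Sigma_2)$ is unbalanced. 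As $\Sigma_{2}$ is balanced I may switch it so that all its edges are positive (the corollary to Proposition~\ref{switchings}); in this frame every $\Sigma_{2}$-edge is positive, while each $e_i$ must then be negative, for otherwise $\Sigma_{2}\cup\{e_i\}$ would remain balanced.

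Next I would localise $C$ to $\Sigma_{2}$. Each arc of $C$ lying off $Y$ is connected and therefore sits inside a single component of $\Sigma'\dof Y$; and since $\Sigma_{2}$ is balanced it carries no negative face (the product formula of Lemma~\ref{lem_COE}), so the negative face $F$ meeting the chord $e_1$ should be a face of the subgraph $\Sigma_{2}\cup\{\text{balancing edges}\}$ drawn inside the embedding with negative outer face. Granting this localisation, $C=e_1\cup P\cup e_2\cup Q$ with $P,Q\subseteq\Sigma_{2}$, so in the switched frame the sign of $C$ equals $\sigma(e_1)\sigma(e_2)=(-1)(-1)=+1$; as switching preserves cycle signs (Proposition~\ref{switchings}), $C$ is positive, contradicting that it bounds the negative face $F$.

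The hard part is exactly this localisation step, together with upgrading the conclusion from ``no two balancing edges'' to ``at most one''. The sign computation rules out an \emph{even} number of balancing edges on $C$, but by itself leaves open an odd number $\ge 3$. To exclude that I would invoke the global constraint that a $2$-connected cylindrical signed graph with non-binary matroid has \emph{exactly} two negative faces, which moreover are vertex-disjoint. Concretely, I would start from $\Sigma_{2}$ (all positive, hence no negative face) and reintroduce the balancing edges as negative chords one at a time, tracking the number of negative faces: inserting a negative chord inside a positive face creates two negative faces, whereas inserting it inside a negative face replaces that face by one positive and one negative face. Maintaining the invariant ``two negative faces, each carrying a single balancing edge'' forces every further chord to split off a positive face that absorbs the extra balancing edges, so that each of the two negative faces of $\Sigma'$ ends up with exactly one balancing edge. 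Making this face-counting argument and the planar localisation fully rigorous — rather than the straightforward sign bookkeeping — is where I expect the real difficulty to lie.
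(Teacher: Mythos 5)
Your setup is sound and matches the paper's: you invoke Claim~\ref{connectedcomponents} to get one balanced component $\Sigma_2$, you switch $\Sigma_2$ all-positive, and your observation that minimality of the bond then \emph{forces} every balancing edge to be negative is a clean (arguably cleaner) version of the paper's switching step. But your central contradiction is vacuous. Once the boundary cycle $C$ of the negative face is localised to $\Sigma_2\cup Y_2$, with $\Sigma_2$ positive and $Y_2$ negative, the sign of $C$ is $(-1)^{k}$ where $k$ is the number of balancing edges on it; so a negative face automatically carries an \emph{odd} number of balancing edges, and ``exactly two'' never occurs. Your $e_1,e_2$ argument therefore rules out nothing that parity does not already rule out, and — as you yourself concede — the entire content of the claim is the exclusion of odd counts $k\geq 3$, which you defer to the chord-insertion sketch. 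That sketch is where the genuine gap lies: ``maintaining the invariant forces every further chord to split off a positive face'' is an assertion of the conclusion, not an argument, and it cannot be repaired by sign bookkeeping alone. The face-sign calculus is purely local, while the obstruction to $k=3$ is global: one can draw a plane signed graph consisting of a positive subgraph plus three negative chords in which a negative face carries all three chords and the total count of negative faces is still two (such configurations are only excluded here because they violate bond minimality, connectivity of $\Sigma_2$, or the vertex-disjointness of the two negative faces from Theorem~\ref{th_cygicr} — none of which your insertion invariant tracks). In particular your sketch never uses the unbalanced component at all, and that is precisely the ingredient that cannot be omitted.

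The paper's proof supplies the missing idea with a theta-graph argument aimed directly at odd $k\geq 3$. Since the only inner negative face lies in the core of the unbalanced component, a negative face meeting $Y_2$ must be the outer face $F$. Pick one balancing edge $e=\{v_1,v_2\}$ on $F$; the complementary $v_1v_2$-path of $F$ contains an even number ($\geq 2$) of balancing edges and is therefore positive, and since $\Sigma_2$ is connected and all-positive there is a positive $v_1v_2$-path inside $\Sigma_2$. These two paths form a positive cycle which encloses the core's negative face; by Lemma~\ref{lem_COE} a positive cycle contains an even number of negative faces, hence at least two, and together with the negative outer face this yields three negative faces, contradicting cylindricality. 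Note that this argument needs $k\geq 3$ (so that the complementary path still contains balancing edges and a second interior negative face is forced), which is exactly why the paper attacks ``more than one'' directly rather than splitting into parity cases as you do. To fix your proposal, replace the insertion induction by this kind of global separation argument; the localisation step you flag as difficult is asserted with comparable brevity in the paper and is not where the real work is.
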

\begin{proof}
Suppose that $\Sigma'$ is the planar embedding of $\Sigma$ with two negative faces that are vertex disjoint by theorem~\ref{th_cygicr} and one of them is the outer. By hypothesis $Y$ is a double bond and by claim~\ref{connectedcomponents}, $\Sigma' \dof Y$ consists of one unbalanced and one balanced component denoted by $\Sigma_{1}$ and $\Sigma_{2}$ respectively.  By proposition~\ref{switchings}, we may perform switchings and make all edges of the balanced separates of $\Sigma \dof Y$ positive. It follows that only $Y$ and the core may contain edges of negative sign. Due to the balancing part of $Y$, $Y_{2}$, being a balancing bond of the unbalanced subgraph $\Sigma_{2} \cup Y_{2}$ and $\Sigma_{2}$ having all positive edges, we can perform switchings and make all edges of $Y_{2}$ negative. Since $\Sigma$ is 2-connected all faces of $\Sigma'$ are bounded by cycles. Assume that there exists a negative face $F$ which contains more than one edges of $Y_{2}$. Obviously their number in $F$ is odd and they cannot have the same endvertices. Since every edge of $Y_{2}$ has both endvertices to $\Sigma_{2}$, $F$ is a negative cycle of $\Sigma_{2} \cup Y_{2}$. Moreover, $F$ consists of edges of $Y_{2}$ whose endvertices are connected by positive paths of $\Sigma_{2}$. Due to the fact that $\Sigma'$ has only two negative faces, the one contained in the core of $\Sigma' \dof Y$ and the outer, $F$ is clearly the outer. Consider a common negative edge $e=\{v_{1},v_{2}\}$ of $Y_{2}$ and $F$. There is a positive $v_{1}v_{2}$-path of $\Sigma_{2}$ that belongs to the outer face of $\Sigma' \dof e$. The latter path and the cycle-boundary of $F$ form a theta graph. The $v_{1}v_{2}$-path of $F$, not $e$, is positive since it contains an even number of edges of $Y_{2}$. Thus, the cycle formed by $v_{1}v_{2}$-path of $\Sigma_{2}$ and the $v_{1}v_{2}$-path of $F$ is positive. Therefore by lemma~\ref{lem_COE} it contains at least two negative faces, since it contains also the face of the core. This is a contradiction because the outer is a third negative face. 
\end{proof}

\begin{proposition} \label{uniqueedge}
If $\Sigma$ is a 2-connected cylindrical signed graph with $M(\Sigma)$ connected quaternary non-binary and $Y$ is a nongraphic cocircuit and a double bond of $\Sigma$, then the balancing part of $Y$ contains one edge.
\end{proposition}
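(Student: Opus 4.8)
The plan is to argue by contradiction, assuming the balancing part $Y_2$ of $Y$ contains at least two edges, and to reduce the situation to the one already handled in Claim~\ref{balancingpartY}. As in the proof of that claim, I would first fix the planar embedding $\Sigma'$ of $\Sigma$ in which the outer face is negative; by Theorem~\ref{th_cygicr} the non-binarity of $M(\Sigma)$ forces $\Sigma'$ to have exactly two negative faces and forces them to be vertex-disjoint, namely the outer face $F_{\mathrm{out}}$ and a unique interior face $F_{\mathrm{in}}$. By Claim~\ref{connectedcomponents}, $\Sigma'\dof Y$ splits into one unbalanced component $\Sigma_1$, which carries $F_{\mathrm{in}}$ in its core, and one balanced component $\Sigma_2$. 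Using Proposition~\ref{switchings} I would switch so that every edge of $\Sigma_2$ is positive and every edge of $Y_2$ is negative, exactly as in Claim~\ref{balancingpartY}. Since $Y$ is a double bond we already have $Y_2\neq\emptyset$, so the whole content of the proposition is to rule out $|Y_2|\geq 2$.

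The key reduction is to show that every edge of $Y_2$ is incident with the \emph{same} negative face, namely $F_{\mathrm{out}}$. Two facts drive this. First, no edge of $Y_2$ can be incident with $F_{\mathrm{in}}$: the boundary of $F_{\mathrm{in}}$ is contained in the core of $\Sigma_1$, whose vertex set is disjoint from $V(\Sigma_2)$, whereas by definition every edge of $Y_2$ has both endvertices in $\Sigma_2$. Second, every edge $e=\{u,v\}\in Y_2$ is incident with some negative face; for this I would use that, $\Sigma_2$ being connected and all-positive, $u$ and $v$ are joined by a positive path $Q\subseteq\Sigma_2$, so that $e\cup Q$ is a negative cycle. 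By Lemma~\ref{lem_PNCF} and Lemma~\ref{lem_COE} this cycle encloses an odd number of negative faces, and since the outer face is never enclosed by a cycle, it encloses exactly $F_{\mathrm{in}}$; this pins down on which side of $e$ the face $F_{\mathrm{in}}$ lies and forces the face on the opposite side of $e$ to be negative, which by the first fact can only be $F_{\mathrm{out}}$. Combining the two facts, every edge of $Y_2$ borders $F_{\mathrm{out}}$.

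Once this reduction is in place the proposition is immediate: Claim~\ref{balancingpartY} states that a negative face can contain at most one edge of the balancing part, so in particular $F_{\mathrm{out}}$ contains at most one edge of $Y_2$; hence $|Y_2|\leq 1$, and together with $Y_2\neq\emptyset$ this yields $|Y_2|=1$.

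The main obstacle is the second fact above, namely proving that each edge of $Y_2$ genuinely borders $F_{\mathrm{out}}$ rather than being buried between two positive faces. The delicate point is to control the planar embedding and exclude the a priori possibility that an edge of $Y_2$ has a positive face on each of its sides; here one must exploit that its endvertices lie in the all-positive connected subgraph $\Sigma_2$, that $\Sigma'$ has only the two negative faces $F_{\mathrm{in}},F_{\mathrm{out}}$, and that these are vertex-disjoint with $F_{\mathrm{in}}$ confined to the core. I expect this step to require the same theta-graph and face-counting device used in Claim~\ref{balancingpartY}: taking two distinct edges of $Y_2$ and the positive $\Sigma_2$-paths joining their endvertices, one builds a cycle whose sign, evaluated via Lemma~\ref{lem_COE}, manufactures a negative face beyond $F_{\mathrm{in}}$ and $F_{\mathrm{out}}$, contradicting the cylindrical structure.
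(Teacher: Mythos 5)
Your setup (negative outer face, Claim~\ref{connectedcomponents}, switching $\Sigma_2$ positive and $Y_2$ negative, finishing via Claim~\ref{balancingpartY}) matches the paper, and your observation that each $e=\{u,v\}\in Y_2$ together with a path $Q\subseteq\Sigma_2$ gives a negative cycle meeting $Y_2$ only in $e$ is sound --- the paper obtains the same cycles from the minimality of the bond $Y$. But your pivotal reduction --- that every edge of $Y_2$ must lie on the boundary of $F_{\mathrm{out}}$ --- rests on a non sequitur. Knowing that the cycle $e\cup Q$ encloses $F_{\mathrm{in}}$ tells you nothing about the two faces \emph{immediately adjacent} to $e$: there is no principle forcing a negative edge to border a negative face. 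Parity only constrains the total count of negative edges on each face boundary, and the faces flanking $e$ can absorb other negative edges: the disc bounded by $e\cup Q$ contains all of $\Sigma_1$ (since $F_{\mathrm{in}}$ lies inside and $e\cup Q$ is disjoint from $V(\Sigma_1)$), so the inner face at $e$ may pick up negative edges of $Y_1$, while the outer face at $e$ may pick up \emph{another} edge of $Y_2$ --- which is exactly the configuration you cannot exclude while $|Y_2|\geq 2$ is still in play. You concede this is the main obstacle, and your proposed fallback (the theta-graph device of Claim~\ref{balancingpartY}) targets a different configuration, namely two $Y_2$-edges on one negative face, not an edge of $Y_2$ buried between two positive faces. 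In effect your reduction concentrates the entire content of the proposition into the one step left unproven.

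The paper closes this gap by never claiming a priori that the edges of $Y_2$ border $F_{\mathrm{out}}$. Instead it works per edge inside the subgraph $\Sigma_2\cup Y_2$: for each $e_i\in Y_2$ there is a negative cycle $C_i$ in $\Sigma_2\cup Y_2$ with $C_i\cap Y_2=\{e_i\}$, and Lemma~\ref{lem_PNCF} yields a negative face contained in $C_i$. Because the edges of $Y_2$ are the \emph{only} negative edges of $\Sigma_2\cup Y_2$, parity forces at least one edge of $Y_2$ onto that face's cycle-boundary, and Claim~\ref{balancingpartY} caps this at exactly one. Since the boundary of the core face is vertex-disjoint from $\Sigma_2$, each of these negative faces must be the outer face of $\Sigma$; but for $n\geq 2$ their boundaries are pairwise distinct, since each carries a different single edge of $Y_2$ --- an impossibility for the single outer face. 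The missing idea in your proposal is precisely this relativization: apply the face-counting lemmas to $\Sigma_2\cup Y_2$, where the sign pattern guarantees a $Y_2$-edge on the extracted negative face, rather than trying to control which faces of the full embedding flank a given edge.
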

\begin{proof}
Consider a planar embedding of $\Sigma$ with two negative faces that are vertex disjoint by theorem~\ref{th_cygicr} and one of them is the outer. By hypothesis $Y$ is a double bond and by claim~\ref{connectedcomponents}  $\Sigma \dof Y$ consists of one unbalanced and one balanced component denoted by $\Sigma_{1}$ and $\Sigma_{2}$ respectively. By proposition~\ref{switchings}, we may perform switchings and make all edges of the balanced separates of $\Sigma \dof Y$ positive. It follows that only $Y$ and the core may contain edges of negative sign. Due to $Y_{2}$ being a balancing bond of the subgraph $\Sigma_{2} \cup Y_{2}$ and $\Sigma_{2}$ having all positive edges, we can perform  switchings and make all edges of the balancing part of $Y$, $Y_{2}$, negative. Since $\Sigma$ is 2-connected all faces of $\Sigma$ are bounded by cycles. Assume that $Y_{2}$ contains $n$ edges with $n \geq 2$. By minimality of $Y$, each edge of $Y_{2}$ belongs to a negative cycle in $\Sigma_{2} \cup Y_{2}$ which contains only one of them. By lemma~\ref{lem_PNCF} each of the aforementioned cycles contains a negative face. Since the edges of $Y_{2}$ are the only negative edges of $\Sigma_{2} \cup Y_{2}$, they must belong to the cycle-boundaries of the negative faces. By claim~\ref{balancingpartY} the cycle-boundaries of the negative faces contain exactly one edge of $Y_{2}$. Due to the fact that $\Sigma$ has only two negative faces, the one contained in the core of $\Sigma \dof Y$ and the outer, the aforementioned faces coincide and constitute the outer face of $\Sigma$. However, the cycle-boundaries of the negative faces are distinct since each of them contains exactly one edge of $Y_{2}$ that cannot belong to any other. Thus we reach a contradiction.
\end{proof}

Consider a 2-connected cylindrical signed graph $\Sigma$ with $M(\Sigma)$ connected quaternary non-binary and $Y$ a nongraphic cocircuit and a double bond of $\Sigma$. From the above lemma we deduce that the outer negative face of a planar embedding of $\Sigma$ is formed by the unique edge of the balancing part of $Y$ and a path of the balanced component of $\Sigma \dof Y$ connecting its endvertices.

\begin{proposition} \label{commonendvertex}
If $\Sigma$ is a 2-connected cylindrical signed graph with $M(\Sigma)$ internally 4-connected quaternary non-binary and $Y$ is a nongraphic cocircuit and a double bond of $\Sigma$, then the unique edge of the balancing part of $Y$ has a common endvertex with an edge of the unbalancing part of $Y$.
\end{proposition}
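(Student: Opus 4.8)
The plan is to argue by contradiction, using the normal-form embedding supplied by the preceding results together with the translation between biseparations of $\Sigma$ and separations of $M(\Sigma)$.

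First I would fix the picture. By Theorem~\ref{th_cygicr} the non-binary hypothesis forces a planar embedding of $\Sigma$ with exactly two vertex-disjoint negative faces, and by the convention adopted after the stereographic-projection claim I take the outer face $F_{\mathrm{out}}$ to be negative. By Claim~\ref{connectedcomponents}, $\Sigma\dof Y$ splits into a balanced component $\Sigma_2$ and an unbalanced component $\Sigma_1$, the second negative face $F_{\mathrm{in}}$ lying inside $\Sigma_1$. By Proposition~\ref{uniqueedge} the balancing part of $Y$ is a single edge $e=\{v_1,v_2\}$ with $v_1,v_2\in V(\Sigma_2)$, and (as recorded in the remark following it) $F_{\mathrm{out}}$ is bounded by the cycle $C_{\mathrm{out}}=e\cup P$ with $P$ a path of $\Sigma_2$. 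Using Proposition~\ref{switchings} I switch so that $\Sigma_2$ is all-positive and $e$ is negative; then the edges of the unbalancing part $Y_1$ are exactly the edges joining $\Sigma_2$ to $\Sigma_1$. Now suppose, for contradiction, that neither $v_1$ nor $v_2$ is an endvertex of an edge of $Y_1$. Then every edge incident with $v_1$ or $v_2$ other than $e$ lies in $\Sigma_2$; equivalently, the unbalancing bond produced from $Y$ by Lemma~\ref{doubleinducesunbalancing} (the non-$e$ star of $\{v_1,v_2\}$) lies entirely inside $\Sigma_2$, so $\{v_1,v_2\}$ is insulated from $\Sigma_1$.

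Second I would exploit the face on the inner side of $e$. Let $F_e$ be the face meeting $e$ on the side opposite $F_{\mathrm{out}}$, and let $C_e=e\cup Q$ be its bounding cycle (a cycle since $\Sigma$ is $2$-connected). Because $F_{\mathrm{out}}$ and $F_{\mathrm{in}}$ are the only negative faces and are vertex-disjoint while $v_1,v_2\in V(F_{\mathrm{out}})\cap V(C_e)$, the face $F_e$ cannot be $F_{\mathrm{in}}$, hence $F_e$ is positive. Since $e$ is negative this forces the $v_1v_2$-path $Q$ to be negative; as $\Sigma_2$ is all-positive, $Q$ must leave $\Sigma_2$, and by the contrary assumption its first and last edges are in $\Sigma_2$, so $Q$ enters and leaves $\Sigma_1$ across at least two edges of $Y_1$. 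This pins down a planar ``neck'': a thin empty face $F_e$ bounded below by $e$ and above by two $\Sigma_2$-arcs emanating from $v_1$ and $v_2$, two edges of $Y_1$, and a path of $\Sigma_1$ carrying $F_{\mathrm{in}}$.

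Finally I would convert this configuration into a separation forbidden by internal $4$-connectivity. The inner part ($\Sigma_1$ together with the detour of $Q$) attaches to the outer part (which carries $e$ and the enclosing arcs of $\Sigma_2$) only through the vertices at which $Q$ re-enters $\Sigma_2$; using these, together with the insulation of $\{v_1,v_2\}$ from $\Sigma_1$, I would exhibit a vertical $k$-biseparation of $\Sigma$ with $k\le 3$ one side of which contains $C_{\mathrm{out}}$ and the other $F_{\mathrm{in}}$, both sides being large. By Proposition~\ref{k-bisep} this is a $k$-separation of $M(\Sigma)$ with $k\le 3$ and both sides of size at least four, contradicting the internal $4$-connectivity of $M(\Sigma)$ (and for $k\le 2$ already contradicting $3$-connectivity via Theorem~\ref{3biconnected}). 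The main obstacle is exactly this last conversion: turning the topological neck into a bona fide low-order biseparation. Concretely I must (i) identify the correct two or three interface vertices and check the induced edge partition really has the vertex-overlap and balanced/unbalanced pattern required by the definition of a $k$-biseparation, (ii) guarantee both parts contain at least four edges so that neither side is a permissible small (triangle or triad) $3$-separator, and (iii) dispose of the degenerate cases where $Q$ realises the minimal configuration, in which one might instead land on a balancing bond of small rank and invoke Theorem~\ref{3biconnected} directly. Everything else is bookkeeping on the fixed planar embedding.
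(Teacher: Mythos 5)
There is a genuine gap, and it sits exactly where you flagged it: the conversion of the topological ``neck'' into a low-order biseparation is not only unproven but the key claim underlying it is false in general. You assert that the inner part ($\Sigma_1$ together with the detour of $Q$) attaches to the outer part only through the vertices at which $Q$ re-enters $\Sigma_2$. But the attachment between $\Sigma_1$ and $\Sigma_2$ is realised by \emph{all} edges of the unbalancing part $Y_1$, not merely those lying on the boundary cycle $C_e=e\cup Q$ of the face $F_e$; the $\Sigma_2$-endvertices of the remaining $Y_1$-edges can be arbitrarily many distinct vertices. A cylindrical signed graph is precisely a structure of this kind -- an unbalanced core joined to a balanced ring by many ``spokes'' (compare Figure~\ref{cyl2conn}, where $Y_1$ has seven edges landing at distinct vertices of $\Sigma_2$) -- so under your contrary assumption no vertical $k$-biseparation with $k\le 3$ and both sides large need exist, and Proposition~\ref{k-bisep} has nothing to bite on. Your first two steps are sound (the normal-form embedding, the switching, and the observation that the face $F_e$ opposite the outer face at $e$ must be positive, forcing $Q$ to dip into $\Sigma_1$ through at least two edges of $Y_1$), but they only reproduce the hypothesis that $\Sigma$ is genuinely cylindrical near $e$; they do not by themselves produce a forbidden separation.

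The paper closes this gap by an entirely different, cocircuit-theoretic mechanism which your plan would need to replicate or replace. Under the contrary assumption, the unbalancing bond $Y_u$ supplied by Lemma~\ref{doubleinducesunbalancing} (the star of $\{v_1,v_2\}$ minus $e$) lies wholly inside $\Sigma_2$, and one forms a \emph{new} double bond $Y'=Y_1\cup Y_u^{x}$, where $Y_u^{x}$ is the set of $Y_u$-edges at one endvertex $x$ of $e$. Its deletion leaves $\Sigma_1$ unbalanced and a balanced component containing $\Sigma_2\cup e$ minus $Y_u^x$, it is nongraphic because $\Sigma\dof Y'$ and $\Sigma\dof Y$ have the same core, and its balancing part is exactly $Y_u^{x}$. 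Proposition~\ref{uniqueedge}, applied to $Y'$, then forces $|Y_u^{x}|=1$, and symmetrically $|Y_u^{y}|=1$, so $Y_u$ consists of exactly two edges; in this degenerate configuration $Y_u\cup\{e\}$ is a path whose interior vertices $v_1,v_2$ have degree two, which is one side of a $2$-biseparation of $\Sigma$, contradicting the $3$-biconnectivity guaranteed by Theorem~\ref{3biconnected}. In other words, the low-order separation appears only \emph{after} the bootstrap through Proposition~\ref{uniqueedge} has collapsed $Y_u$ to two edges -- it is not extractable from the planar neck directly, which is why your step (i)--(iii) programme stalls. If you want to salvage your topological route, the missing ingredient is precisely this self-referential reuse of the uniqueness of the balancing edge on a modified double bond, rather than any connectivity bound on the embedding.
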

\begin{proof}
Consider a planar embedding of $\Sigma$ with two vertex disjoint negative faces where one of them is the outer. We denote this embedding by $\Sigma'$. By hypothesis $Y$ is a nongraphic cocircuit and a double bond in $\Sigma'$. Let $\Sigma_{1}, \Sigma_{2}$ be the unbalanced and the balanced connected component of $\Sigma' \dof Y$ respectively. Moreover, let $Y_{1},Y_{2}$ be the unbalancing and the balancing part of $Y$ respectively and $e=\{x,y\}$ be the unique edge of $Y_{2}$ by proposition~\ref{uniqueedge}. By lemma~\ref{doubleinducesunbalancing}, $Y$ induces the existence of an unbalancing bond with elements the edges of $\Sigma_{2}$ incident to the endvertices of $e$, denoted by $Y_{u}$. Assume that $Y_{u}$ has no common edge with $Y_{1}$. Then $\Sigma' \dof Y_{u}$ consists of two connected components, a balanced component, the edge $e$ and an unbalanced component containing $Y_{1}$. Then $Y_{1}$ and the edges of $Y_{u}$ incident with only one endvertex of $e$, say $x$, constitute another double bond. The deletion of the aforementioned double bond from $\Sigma'$ has two connected components $\Sigma_{1}$ as unbalanced component and a balanced component consisting of $\Sigma_{2},e$ and the edges of the unbalancing bond that are incident with $y$. Since $Y$ is a nongraphic cocircuit so is the new double bond since the deletion of the latter from $\Sigma'$ and $\Sigma \dof Y$ have the same core. Since all negative cycles of $\Sigma_{2} \cup e$ contain $e$, the deletion of the edges of $\Sigma_{2}$ which are incident to $x$ or $y$ from $\Sigma_{2} \cup e$ is connected and balanced. The new double bond has a balancing part with more than one edges which is a contradiction to proposition\ref{uniqueedge}. Therefore $Y_{u}$ must consist of two edges of $\Sigma_{2}$ each incident with one endvertex of $e$. However the unbalancing bond and $e$ are the one part of a 2-biseparation of $\Sigma$ which is a contradiction since by theorem~\ref{3biconnected} $\Sigma$ is 3-biconnected with only minimal 3-biseparations.   
\end{proof}

\begin{figure}[h]
\begin{center}
\centering
\psfrag{e}{\footnotesize $e$}
\psfrag{Yu}{\footnotesize $Y_{u}$}
\psfrag{Y1}{\footnotesize $Y_{1}$}
\includegraphics*[scale=0.4]{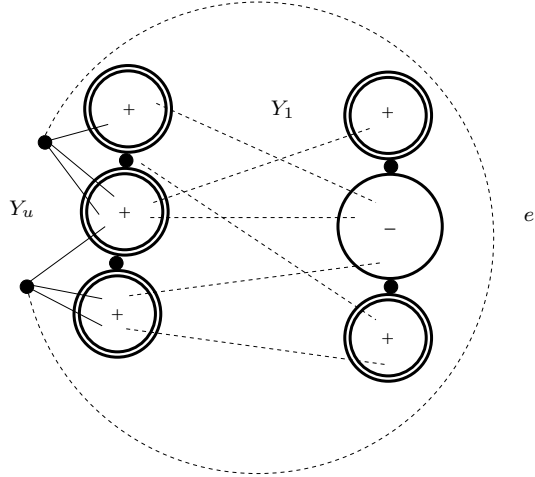}
\end{center}
\caption{$Y_{1} \cup e$ double bond of the signed graph}
\label{Y1Y2}
\end{figure}

We observe that the balancing part of a double bond may consist of one or more edges. However, from the examples we deduce that when $\Sigma$ is a 2-connected cylindrical signed graph with $M(\Sigma)$ internally 4-connected quaternary non-binary and $Y \in \mathcal{C}^{*}$ is a double bond of $\Sigma$ such that the balancing part of $Y$ has more than one elements then $Y$ is a graphic cocircuit of $M(\Sigma)$.

\section{Matroids of cylindrical signed graphs: Decomposition} \label{sec_decomposition}

Suppose that $B$ is a bridge of $Y$ in $M(\Sigma)$ and a separate in $\Sigma$. Moreover, let $\Sigma_{i}$ be the component of $\Sigma \dof Y$ such that $B \subseteq \Sigma_{i}$. Then, if $v$ is a vertex of $V(B)$, we denote by $C(B,v)$ the component of $\Sigma_{i} \dof B$ having $v$ as a vertex. We denote by $Y(B,v)$ the set of all $y \in Y$ such that exactly one end of $y$ in $\Sigma$ is a vertex of $C(B,v)$. 

It is a known fact that any face of a plane graph can become its outer. Thereby a cylindrical signed graph that has a planar embedding with a negative face, has also a planar embedding with a negative outer face. The aforementioned embedding will be considered in all of the following proofs. The next lemma derives easily from the orthogonality property of matroids, i.e., in a matroid $M$, let $C$ be a circuit and $C^{*}$ be a cocircuit, then $|C \cap C^{*}| \neq 1$. 

\begin{lemma}
The boundary of a face in a planar embedding of a $2$-connected cylindrical signed graph $\Sigma$ contains zero or two edges of an unbalancing bond $Y$ in $\Sigma$.
\end{lemma}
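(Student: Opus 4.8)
The plan is to apply the orthogonality property of matroids, which is stated just before the lemma, to the cocircuit $Y$ and to the circuit determined by the cycle-boundary of the face $F$. Since $\Sigma$ is $2$-connected, every face of the planar embedding is bounded by a cycle, and I first need to argue that this cycle-boundary is a \emph{positive} cycle of $\Sigma$, so that it corresponds to a circuit of $M(\Sigma)$ by Theorem~\ref{th_sgg2}(i). This is where the cylindrical structure enters: by the corollary following Lemma~\ref{allfaceseven}, a $2$-connected cylindrical signed graph has either zero or two negative faces, and I will work in the embedding with a negative outer face (as stipulated in the paragraph preceding the lemma). Since $Y$ is an \emph{unbalancing} bond, $\Sigma \dof Y$ has one balanced and one unbalanced component by Claim~\ref{connectedcomponents}; after switchings (Proposition~\ref{switchings}) the only negative faces available are the outer face and the single face sitting in the core. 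Thus a generic face $F$ whose boundary I wish to examine can be taken to be positive, and its boundary cycle $C_F$ is then a circuit of $M(\Sigma)$.

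\emph{First} I would fix such a face $F$ with positive boundary cycle $C_F \in \mathcal{C}(M(\Sigma))$. \emph{Then} orthogonality gives $|C_F \cap Y| \neq 1$, which immediately rules out the possibility that $F$ meets $Y$ in exactly one edge. It remains to upgrade ``$\neq 1$'' to ``exactly $0$ or $2$'', i.e. to exclude $|E(C_F) \cap Y| \geq 3$. For this I would use the planarity and the geometry of an unbalancing bond: the edges of $Y$ form a cut separating the (unique) balanced component from the unbalanced component of $\Sigma \dof Y$, and in a plane graph the edges of such a bond that lie on the boundary of a single face occur in a way constrained by the cyclic order around $F$. Concretely, traversing the boundary cycle $C_F$, each maximal arc lying in one side of the cut is delimited by exactly two edges of $Y$, and since a face boundary is a single closed curve, the edges of $Y$ on $C_F$ that actually separate the two sides must come in pairs; an unbalancing bond (as opposed to a double bond) has \emph{no} edge with both ends in the balanced component, so each $Y$-edge on $C_F$ genuinely crosses between the two sides and the total count is even. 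Combined with orthogonality forbidding the count from exceeding $2$ in the face-boundary setting, this forces the number to be $0$ or $2$.

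\emph{The main obstacle} I expect is the last step: ruling out $|E(C_F) \cap Y| \geq 4$ cleanly. Orthogonality alone only yields parity-free information ($\neq 1$), so the bound of $2$ must come from the specific structure of the embedding with exactly two negative faces and from the fact that $Y$ is an \emph{unbalancing} bond rather than a double bond. The delicate point is that a positive face-boundary meeting the cut $Y$ in four or more edges would, together with paths in the balanced and unbalanced components, create additional negative faces or an extra pair of vertex-disjoint negative cycles, contradicting the corollary to Lemma~\ref{allfaceseven} via the face-sign product established after Lemma~\ref{lem_COE}. I would make this precise by decomposing $C_F$ along its $Y$-edges into arcs alternating between $\Sigma_1$ and $\Sigma_2$ and tracking the induced negative faces, in the same spirit as the argument in Claim~\ref{balancingpartY}; the cylindrical bound of at most two negative faces then caps the number of crossings at two.
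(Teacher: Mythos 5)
There is a genuine gap at the crucial step, namely excluding four or more edges of $Y$ on a single face boundary. Your parity argument (each edge of $Y$ crosses between the two shores, so a face boundary, being a cycle, meets the cut evenly) is sound and is exactly the first half of the paper's proof; note in passing that it already subsumes orthogonality, so the detour through positive cycles as circuits of $M(\Sigma)$ buys nothing — and it is also incomplete, since the lemma applies to \emph{all} faces, including the two negative ones, whose boundaries are not circuits of $M(\Sigma)$ and which your reduction to ``a generic positive face'' silently omits. Moreover your interim sentence that orthogonality forbids the count from exceeding $2$ is false (orthogonality only excludes intersection of size exactly $1$), as you yourself concede one paragraph later. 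The real problem is the mechanism you then propose for the bound of $2$: counting negative faces against the cylindrical bound, in the spirit of Claim~\ref{balancingpartY}. This cannot work in general, because a face crossing the cut $Y$ four times creates no sign obstruction at all. After switching so that the balanced component $\Sigma_2$ is all positive (Proposition~\ref{switchings}), all edges of $Y$ on the face boundary may carry the same sign, and then any cycle you assemble from two $Y$-edges, an arc in $\Sigma_2$, and a path in $\Sigma_1$ has the sign of the $\Sigma_1$-path, which can perfectly well be positive; no third negative face or extra pair of vertex-disjoint negative cycles is forced, so the corollary following Lemma~\ref{allfaceseven} gives no contradiction.

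The paper's proof shows that the obstruction is topological, not sign-theoretic. Assuming the boundary of a face $F$ contains more than two edges of $Y$, traverse $F$ and pick consecutive $Y$-edges $y_1, y_2$ with endpoints $v_1, v_2$ in the unbalanced component $\Sigma_1$; since any two points of $F$ are joined by a simple curve avoiding the edges of the embedding, a Jordan-curve argument shows that no path in $\Sigma_1$ can connect $v_1$ to $v_2$ (the presence of a third $Y$-edge on the boundary places vertices of $\Sigma_2$ on both sides of the resulting closed curve), contradicting the connectivity of $\Sigma_1$. Equivalently, since $Y$ has connected shores it is a bond of the underlying plane graph and hence a cycle of the dual, and a cycle uses zero or two edges at each dual vertex, i.e.\ at each face. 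This argument uses no signs whatsoever, applies uniformly to positive and negative faces, and is what your proposal is missing; to repair your proof you would need to replace the negative-face count in your final paragraph by this connectivity-plus-planarity (or dual-cycle) argument.
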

\begin{proof}
Since $\Sigma$ is cylindrical and $Y$ is an unbalancing bond in $\Sigma$ then $\Sigma \dof Y$ consists of one unbalanced and one balanced connected component denoted by $\Sigma_1$ and $\Sigma_2$, respectively. It is well-known that any cycle of a graph intersects any bond in an even number of edges. Therefore, since a face may also be viewed as a cycle (see Proposition~4.2.6 in~\cite{Diestel:05}), it remains to show that the boundary of any face can not contain more than two edges of $Y$. By way of contradiction, assume that the boundary of a face $F$ in $\Sigma$ has more than two common edges with $Y$. Let us traverse $F$ starting from an edge $y_1$ of $Y$ with endpoint $v_1$ in $\Sigma_1$ while let us call $y_2$ the next edge of $Y$ that we encounter in that traversal. Let also $v_2$ be the endpoint of $y_2$ in $\Sigma_1$. By the fact that for any two points of the plane lying in $F$ there exists a simple curve joining them (without crossing any edge), we can say that there is no path connecting $v_1$ and $v_2$ in $\Sigma_1$; a contradiction, since $\Sigma_1$ is connected.
\end{proof}

\begin{lemma} \label{lem_03}
Let $Y$ be a non-balancing bond of a $2$-connected cylindrical signed graph $\Sigma$ with $M(\Sigma)$ being quaternary and non-binary. Then for each separate $B$ of the unbalanced component of $\Sigma\backslash{Y}$ there exists at most one vertex of attachment $v\in{V(B)}$ with balanced $C(B,v)$ such that $Y(B,v)$ consists of edges of different sign. 
\end{lemma}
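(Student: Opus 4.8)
The plan is to work in a fixed planar embedding of $\Sigma$ whose outer face is negative; by Theorem~\ref{th_cygicr} such an embedding has exactly two negative faces, an inner one $F_{\mathrm{in}}$ and the outer one $F_{\mathrm{out}}$, and these are vertex-disjoint. First I record the basic dictionary: by Lemma~\ref{lem_COE} a bounded cycle of $\Sigma$ is negative if and only if it encircles $F_{\mathrm{in}}$ (since $F_{\mathrm{in}}$ is the only bounded negative face), so ``negative'' means ``wraps around $F_{\mathrm{in}}$''. By Claim~\ref{connectedcomponents} the non-balancing bond $Y$ splits $\Sigma$ into one unbalanced component $\Sigma_1$ (which carries $F_{\mathrm{in}}$) and one balanced component $\Sigma_2$; being balanced, $\Sigma_2$ contains no wrapping cycle. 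Using Proposition~\ref{switchings} I switch so that $\Sigma_2$ and every balanced separate of $\Sigma_1$ --- in particular every balanced $C(B,v)$ --- is all-positive; this makes the signs of the edges of $Y$ well defined and fixes the meaning of ``$Y(B,v)$ consists of edges of different sign''.

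Next I translate the hypothesis into a statement about negative cycles. If $v$ is an attachment vertex with $C(B,v)$ balanced and $Y(B,v)$ of mixed sign, pick $y,y'\in Y(B,v)$ with opposite signs; each has one end in $C(B,v)\subseteq\Sigma_1$ and, since $y,y'\in Y$, its other end in $\Sigma_2$. Joining the $C(B,v)$-ends by a path in the all-positive graph $C(B,v)$ and the $\Sigma_2$-ends by a path in the all-positive graph $\Sigma_2$ produces a cycle $N_v$ containing exactly one negative edge, hence a negative cycle; by the dictionary $N_v$ encircles $F_{\mathrm{in}}$. Thus each qualifying attachment vertex yields a negative cycle that wraps around $F_{\mathrm{in}}$ and uses only $C(B,v)$, two edges of $Y$, and $\Sigma_2$.

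Now suppose, for contradiction, that two distinct attachment vertices $v_1\neq v_2$ of $B$ qualify, giving negative cycles $N_1,N_2$ as above. Because $v_1\ne v_2$ are attachment vertices of the single separate $B$, the branches $C(B,v_1)$ and $C(B,v_2)$ are distinct components of $\Sigma_1\dof B$ and hence vertex-disjoint, and the edge sets $Y(B,v_1),Y(B,v_2)$ are disjoint; the only possible overlap between $N_1$ and $N_2$ lies inside $\Sigma_2$. The heart of the argument is now topological. Neither $C(B,v_i)$ nor $\Sigma_2$ wraps around $F_{\mathrm{in}}$ on its own (both are balanced), so in each $N_i$ the wrapping is realised only jointly, by leaving $\Sigma_2$ through one edge of $Y(B,v_i)$ and re-entering through the other. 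I then follow $N_1$ and $N_2$ and splice them along their common part in $\Sigma_2$, together with a path of $B$ joining $v_1$ to $v_2$: controlling the wrapping numbers shows that this produces either a wrapping (hence negative) closed walk lying entirely inside the balanced graph $\Sigma_2$, contradicting Lemma~\ref{lem_COE} and the balance of $\Sigma_2$, or a bounded negative cycle disjoint from $F_{\mathrm{in}}$, which by Lemma~\ref{lem_COE} would bound a bounded negative face different from $F_{\mathrm{in}}$ --- a third negative face, contradicting the corollary that a $2$-connected cylindrical signed graph has at most two negative faces.

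The main obstacle is precisely this last splicing step: making rigorous that two independent mixed-sign attachment points force a second, independent way of wrapping around $F_{\mathrm{in}}$, and that this extra wrapping must be absorbed either by $\Sigma_2$ (impossible, as $\Sigma_2$ is balanced) or by a new bounded negative face (impossible, as there are only two negative faces). I expect to handle it by fixing an orientation of the annular region surrounding $F_{\mathrm{in}}$, recording for each qualifying $v$ the cyclic positions of the feet of $Y(B,v)$ on the boundary of the disc occupied by $\Sigma_2$, and using the preceding lemma (every face boundary meets an unbalancing bond in $0$ or $2$ edges) to pin down how the edges of $Y$ interleave; the resulting parity and interleaving bookkeeping then yields the forbidden third negative face.
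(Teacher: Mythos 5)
Your setup and your construction of the negative cycles coincide exactly with the paper's proof: the same planar embedding with two vertex-disjoint negative faces (via Theorem~\ref{th_cygicr}), the same switching normalization making $\Sigma_2$ and every balanced $C(B,v)$ all-positive, and for each qualifying attachment vertex the same cycle $N_i$ assembled from two oppositely signed edges of $Y(B,v_i)$, a positive path in $C(B,v_i)$, and a positive path in $\Sigma_2$. The genuine gap is in how you convert the two cycles into a contradiction. The paper finishes in a few lines: by Lemma~\ref{lem_PNCF} each $N_i$ contains a negative face $F_i$; $F_1\neq F_2$ because $N_1$ and $N_2$ can share vertices only inside $\Sigma_2$; and each $F_i$ is distinct from the negative face lying in the unbalanced block of the unbalanced component, so $\Sigma$ would carry three negative faces, contradicting the fact that a $2$-connected cylindrical signed graph has at most two. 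You instead propose to splice $N_1$ and $N_2$ along their common part of $\Sigma_2$ together with a $v_1v_2$-path of $B$ and to ``control the wrapping numbers'', and you explicitly defer the execution of this step (``I expect to handle it by \ldots bookkeeping''). Since that splicing step is the entire content of the lemma, the proposal as written is a plan, not a proof.

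Worse, the dichotomy you announce for the spliced walk cannot hold as stated. After your normalization each $N_i$ contains exactly one negative edge (one edge of $Y(B,v_i)$ of each sign, all other edges positive) and, in your own dictionary, wraps $F_{\mathrm{in}}$ exactly once; the two negative edges lie in the disjoint sets $Y(B,v_1)$ and $Y(B,v_2)$, so they cannot cancel in any splice. Consequently any closed walk obtained by combining $N_1$ and $N_2$ carries an even number of negative edges, hence is \emph{positive}, and has zero net wrapping around $F_{\mathrm{in}}$ --- so neither forbidden object (a negative wrapping walk, or a bounded negative cycle avoiding $F_{\mathrm{in}}$) is produced by the splice itself. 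Moreover the spliced walk necessarily traverses the $B$-path and portions of $C(B,v_1)$ and $C(B,v_2)$, all of which lie in the unbalanced component, so the first horn of your dichotomy (``a wrapping closed walk lying entirely inside the balanced graph $\Sigma_2$'') cannot arise at all. The repair is to drop the splicing and finish as the paper does: extract one negative face inside each $N_i$ via Lemma~\ref{lem_PNCF} and verify that the two faces, together with the negative face inside the unbalanced block, are pairwise distinct. A further small inaccuracy: the auxiliary lemma you invoke about face boundaries meeting a bond in $0$ or $2$ edges is stated for unbalancing bonds, whereas the $Y$ of Lemma~\ref{lem_03} is only assumed non-balancing, so it may be a star bond or a double bond and that lemma does not apply verbatim.
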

\begin{proof}
By Theorem~\ref{th_cygicr}, the $2$-connected cylindrical signed graph $\Sigma$ has a planar embedding with two vertex-disjoint negative faces. Since $Y$ is a non-balancing bond in $\Sigma$ then $\Sigma \dof Y$ consists of one balanced and one unbalanced connected component denoted by $\Sigma_1$ and $\Sigma_2$, respectively. Assume that we perform switchings so that all edges of the balanced separates of $\Sigma\backslash{Y}$ are positive. 

Suppose that $Y$ is an unbalancing bond. By way of contradiction, let $v_1$ and $v_2$ be two vertices of attachment of a bridge $B$ of $Y$ in $\Sigma_1$ such that each $C(B,v_i)$ is balanced and each $Y(B,v_i)$ consists of edges of different sign $(i=1,2)$. Let us call $y_i=(u_i,w_i)$ and $y_i'=(u_i',w_i')$ two edges of different sign in $Y(B,v_i)$, where $u_i$ and $u_i'$ are vertices in $\Sigma_1$ and $w_i$ and $w_i'$ are vertices in $\Sigma_2$. Due to the fact that $C(B,v_i)$ is balanced, there exists a positive path $P_i$ between $u_i$ and $u_i'$ in $\Sigma_i$ while, due to the fact that $\Sigma_i$ is balanced, there exists a positive path $P_i'$ between $w_i'$ and $w_2'$. Therefore, the cycle $C_i$ formed by $P_i$, $P_i'$, $y_i$ and $y_i'$ is of negative sign. Hence, by Lemma~\ref{lem_PNCF}, a negative face $F_i$ is contained in each $C_i$. Moreover, we can not have $F_1=F_2$, due to the fact that the common vertices that $C_1$ and $C_2$ may have, are vertices of $\Sigma_2$ (i.e. common vertices of $P_1'$ and $P_2'$). Finally, each of $F_1$ and $F_2$ is distinct from the negative face contained in the unbalanced block of $\Sigma_1$ since they have different boundaries. This means that $\Sigma$ had three distinct negative faces which is in contradiction with the hypothesis saying that $\Sigma$ is cylindrical. 
\end{proof}

Given any two bridges $B_1, B_2$ of $Y$ in a matroid $M$, $B_1, B_2$ are \emph{avoiding} if there exist cocircuits $C^{*}_{1} \in \mathcal{C}^{*}(M \cto (B_{1} \cup Y) \dto Y)$ and $C^{*}_{2} \in \mathcal{C}^{*}(M \cto (B_{2} \cup Y) \dto Y)$ such that $C^{*}_{1} \cup C^{*}_{2}=Y$. A cocircuit Y is called \emph{bridge-separable} if the bridges formed upon its deletion can be partitioned into two classes such that all members of the same class are avoiding with each other. 

In the following two theorems it is shown that an unbalancing bond and a double bond in a jointless cylindrical signed graph constitute cocircuits of the corresponding signed-graphic matroid with the bridge-separability property. 

\begin{theorem} \label{th_Ybridgeseparable}
Let $\Sigma$ be a $2$-connected jointless cylindrical signed graph such that $M(\Sigma)$ is quaternary and non-binary. If $Y$ is an unbalancing bond of $\Sigma$ then $Y$ is bridge-separable in $M(\Sigma)$.
\end{theorem}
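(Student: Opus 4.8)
The plan is to exploit the planar structure of cylindrical signed graphs together with the classification of the components $C(B,v)$ supplied by Lemma~\ref{lem_03}. First I would set up the standard picture: since $M(\Sigma)$ is quaternary and non-binary, by Theorem~\ref{th_cygicr} I fix the planar embedding of $\Sigma$ with two vertex-disjoint negative faces, one of which is the outer face. Deleting the unbalancing bond $Y$ yields, by the definition of unbalancing bond, one balanced component $\Sigma_2$ and one unbalanced component $\Sigma_1$ (here I use that $\Sigma$ is $2$-connected and cylindrical, so there is a single unbalanced component carrying the second negative face). I would first perform switchings, justified by Proposition~\ref{switchings}, so that all edges of the balanced separates of $\Sigma\backslash Y$ are positive; this normalizes the signs so that the only edges that can be negative lie in $Y$ or in the core of $\Sigma_1$.

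The core of the argument is to define the bipartition of the bridges of $Y$ directly from the geometry. Each bridge $B$ is a separate of $\Sigma$, hence a subgraph, and by Theorem~\ref{th_Zasl11} the bridges are the outer blocks together with either the core or, when the core is a B-necklace, its individual blocks. The plan is to use Lemma~\ref{lem_03}: for each separate $B$ of the unbalanced component there is \emph{at most one} vertex of attachment $v$ with $C(B,v)$ balanced for which $Y(B,v)$ contains edges of both signs. This ``mixed-sign'' vertex, when it exists, is the geometric witness that lets me read off which of the two negative faces a bridge points toward. I would assign each bridge to one of two classes according to which side of the cylinder its attachment to $Y$ sits on — intuitively, whether the edges of $Y$ meeting $B$ bound the inner negative face or the outer negative face. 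To make this precise I would show that for a fixed class, any two bridges $B_1,B_2$ admit cocircuits $C_1^*\in\mathcal{C}^*(M\cto(B_1\cup Y)\dto Y)$ and $C_2^*\in\mathcal{C}^*(M\cto(B_2\cup Y)\dto Y)$ whose union is all of $Y$, which is exactly the avoiding condition.

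To produce these cocircuits I would translate the matroid statement into the signed graph via Theorem~\ref{thrm_bonds} and Theorem~\ref{th_minsig}: the cocircuits of $M(\Sigma)\cto(B_i\cup Y)\dto Y$ are the bonds of the contracted signed graph restricted to $Y$. For a bridge $B_i$, contracting everything outside $B_i\cup Y$ collapses the rest of each component to a vertex, and I would exhibit a bond of the resulting graph whose trace on $Y$ consists of precisely those edges of $Y$ on one side of the negative-face structure. The key geometric input is Lemma~\ref{lem_03} again: because a bridge has at most one balanced mixed-sign attachment, the edges of $Y$ incident to $B_i$ split cleanly into a positive family and a negative family, and each family is the trace of a bond (one balancing, one unbalancing). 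I would then argue that choosing the appropriate bond for $B_1$ and the complementary one for $B_2$ forces $C_1^*\cup C_2^*=Y$, using the fact that $\Sigma$ has exactly two negative faces so every edge of $Y$ is accounted for by one of the two sides.

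The hard part will be showing that this two-sided assignment is globally consistent, i.e. that the relation ``$B_1$ and $B_2$ are avoiding'' is genuinely an equivalence-compatible partition into two classes and not merely a pairwise property. The obstacle is that avoiding is defined pairwise, so I must rule out an odd cycle of non-avoiding pairs; I expect to handle this by pinning the two classes to the two fixed negative faces and showing each bridge is unambiguously associated with exactly one of them via the sign pattern of its $Y(B,v)$ edges and Lemma~\ref{lem_03}. A secondary technical point is verifying that after contraction the relevant bonds are nonseparating enough to be \emph{minimal} cocircuits of the contracted matroid, which I would check using orthogonality (the $|C\cap C^*|\neq 1$ property already invoked above) to guarantee that each face boundary meets $Y$ in an even number of edges and hence the traces partition $Y$ correctly.
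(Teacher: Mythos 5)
Your setup (the embedding from Theorem~\ref{th_cygicr}, the switching normalization, the translation of cocircuits of $M(\Sigma).(B_i\cup Y)|Y$ into bonds of the contracted signed graph) matches the paper, but the core of your plan has a genuine gap: you never identify the correct bipartition of the bridges, and you miss the mechanism that forces the two chosen cocircuits to cover $Y$. The paper's two classes are simply the separates of the unbalanced component $\Sigma_1$ and the separates of the balanced component $\Sigma_2$ of $\Sigma\backslash Y$ (Corollary~\ref{Yclasses}); one then verifies pairwise avoidance \emph{within} each class. Your proposed classification --- assigning each bridge to ``the negative face its attachment points toward'' --- is not well defined: bridges lying in $\Sigma_2$ border no negative face at all (after switching, all their edges are positive and the outer negative face is bounded by edges of $Y$ together with paths of $\Sigma_2$, not by any single bridge), and in the paper \emph{all} separates of $\Sigma_1$, whatever their geometric position, go into one class. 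Relatedly, your worry about ``odd cycles of non-avoiding pairs'' dissolves once the partition is defined structurally up front: bridge-separability only asks you to exhibit one partition into two classes with all same-class pairs avoiding; there is no 2-coloring consistency problem to solve, and trying to derive the classes from the pairwise avoidance relation is the wrong direction.

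The second gap is quantitative. For a same-class pair $B_1,B_2$ the paper chooses vertices of attachment $v_1,v_2$ with $B_2\subseteq C(B_1,v_1)$ and $B_1\subseteq C(B_2,v_2)$, which yields the identity $Y(B_1,v_1)\cup Y(B_2,v_2)=Y$ --- flagged in a footnote as precisely the property that distinguishes unbalancing bonds from double bonds. This identity is what lets cocircuits $C_i^*\supseteq Y(B_i,v_i)$ union to all of $Y$; your substitute, ``every edge of $Y$ is accounted for by one of the two sides because there are only two negative faces,'' does not deliver it. Finally, your claim that Lemma~\ref{lem_03} makes the $Y$-edges at a bridge ``split cleanly into a positive family and a negative family, each the trace of a bond'' is false as stated: the lemma permits one mixed-sign balanced attachment $v^{\pm}$, and in the contracted graph a star at that vertex with edges of both signs is \emph{not} a bond. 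The paper's Cases~1.b and~2.b exist exactly to repair this, producing bonds of the form $Y(B_1,v_1)\cup Y^{-}(B_1,v_1^{\pm})$ and the complementary $Y(B_2,v_2^{\pm})\cup Y^{+}(B_2,v_2^{\pm})$, and the position of the unbalanced separate $B_0$ (whether it lies in $C(B_1,v_1)$, whether $v_0=v_0^{\pm}$, or whether $B_2=B_0$) must be tracked throughout, since it determines which $Y$-edges become half-edges versus signed parallel links in $\Sigma.(B_i\cup Y)|Y$. Without the component-based partition, the covering identity, and this case analysis, the proposal does not close.
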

\begin{proof}
Since $M(\Sigma)$ is non-binary and, therefore, non-graphic, we have, by Theorem~\ref{th_cygicr}, that $\Sigma$ has a planar embedding with two vertex disjoint negative faces. Consider the planar embedding of $\Sigma$ with a negative outer face denoted also by $\Sigma$ and suppose that $Y$ is an unbalancing bond. Let us call $\Sigma_1$ and $\Sigma_2$ the unbalanced and balanced component of $\Sigma\backslash{Y}$, respectively, and let $B_0$ be 
the unique unbalanced separate in $\Sigma_1$. Moreover, due to the fact that switching at vertices of $\Sigma$ do not alter $M(\Sigma)$, we can assume that all the edges in the balanced separates of $\Sigma\backslash{Y}$ are positive.  Consider any pair of bridges $B_1, B_2$ in either $M(\Sigma_1)$ or $M(\Sigma_2)$. To prove the theorem it suffices to show that
 there exist cocircuits $C_1^{*}\in{\mathcal{C}^{*}(M(\Sigma).(B_1\cup{Y})|{Y})}$ and $C_2^{*}\in{\mathcal{C}^{*}(M(\Sigma).(B_2\cup{Y})|{Y})}$ 
such that $C_1^{*}\cup{C_2^{*}}=Y$.
Let $v_1\in{V(B_1)}$ and $v_2\in{V(B_2)}$ be the vertices of attachment such that $B_2$ is contained in ${C(B_1,v_1)}$ and  $B_1$ is contained in ${C(B_2,v_2)}$, respectively. 
We have that $V(\Sigma_i)\subseteq{V(C(B_1,v_1))\cup{V(C(B_2,v_2))}}$ which implies that 
\footnote{For our purposes: This is the difference between unbalancing bonds and double bonds.} 
\begin{equation}
Y(B_1,v_1)\cup{Y(B_2,v_2)}=Y
\end{equation}

In each of the following cases we will show that there exist cocircuits $C_i^{*}\in{\mathcal{C}^{*}(M(\Sigma).(B_i\cup{Y})|{Y})}$, for $i=1,2$, such
 that $Y(B_1,v_1)\cup{Y(B_2,v_2)}\subseteq{C_1^{*}\cup{C_2^{*}}}$ which implies that $B_1$ and $B_2$ are avoiding bridges. 
If $B_1$ and $B_2$ are separates of $\Sigma_2$ they are both balanced. By the definition of contraction in signed graphs in \S\ref{subsec_signed_graphs}
since all the edges in the unbalanced component $\Sigma_1$ will be contracted, the graph $\Sigma.(B_1\cup{Y})|Y$ will consist of half-edges only attached to the
vertices of attachment of $B_1$. 
The edges $Y(B_1,v_1)$ are half-edges attached at $v_1$ in that graph, therefore $C_1^{*}=Y(B_1,v_1)\in{\mathcal{C}^{*}(M(\Sigma).(B_1\cup{Y})|{Y})}$. 
Similarly, we can find a cocircuit $C_2^{*}=Y(B_2,v_2)$.

In what follows let $B_1$ and $B_2$ be separates of $\Sigma_1$. 
By Lemma~\ref{lem_03}, for every separate $B$ of $\Sigma\backslash{Y}$ there exists at most one vertex of attachment $v$ such that $Y(B,v)$ consists of 
edges with different sign. Let by $v_1^{\pm}, v_2^{\pm}$ and $v_0^{\pm}$ denote these vertices for $B_1$, $B_2$ and $B_0$ respectively. We have the following cases:

\noindent
\underline{{\bf Case 1: $B_1,B_2\neq B_0$}}\\
The unbalanced separate $B_0$ may be contained in both $C(B_1,v_1)$ and $C(B_2,v_2)$ or in one of them. In the first case, 
$\Sigma.(B_1\cup{Y})|Y$ consists of edges with one common end-vertex while the edges of $Y(B_1,v_1)$ are half-edges in 
$\Sigma.(B_1\cup{Y})|Y$ (see Figure~\ref{fig_bridge_sep_1} where $v,w \neq v_1$). 
Therefore, irrespectively of the signs in the edges of $Y\dof Y(B_1,v_1)$  there exists a bond in $\Sigma.(B_1\cup{Y})|Y$ that contains
the half-edges $Y(B_1,v_1)$ which in turn implies that there exists a cocircuit $C_1^{*}\in{\mathcal{C}^{*}(M(\Sigma).(B_1\cup{Y})|{Y})}$ 
such that $Y(B_1,v_1)\subseteq{C_1^{*}}$. Similarly, we can find such a cocircuit $C_2^{*}$ for $B_2$.
\begin{figure}[hbtp] 
\begin{center}
\mbox{
\subfigure[signed graph $\Sigma$]
{
\psfrag{S1}{\footnotesize $\Sigma_1$}
\psfrag{S2}{\footnotesize $\Sigma_2$}
\psfrag{B1}{\footnotesize $B_1$}
\psfrag{B2}{\footnotesize $B_2$}
\psfrag{B0}{\footnotesize $B_0$}
\psfrag{v1}{\tiny $v_1$}
\psfrag{v2}{\tiny $v_2$}
\psfrag{Y}{\footnotesize $Y$}
\psfrag{Y1}{\tiny $Y(B_1,v_1)$}
\psfrag{C1}{\tiny $C(B_1,v_1)$}
\psfrag{C2}{\tiny $C(B_2,v_2)$}
\includegraphics*[scale=0.25]{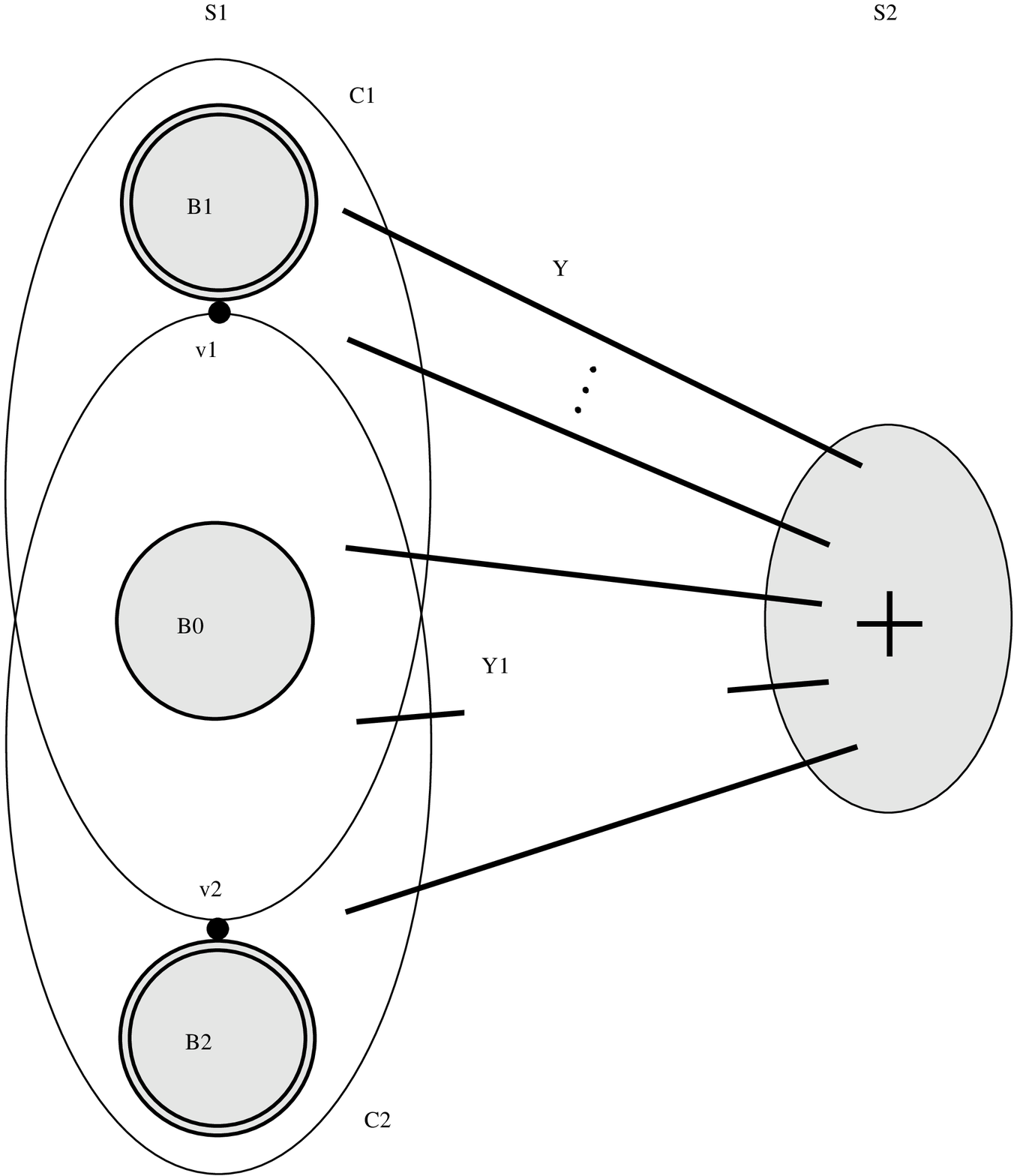}
}
\subfigure[$\Sigma.(B_1\cup{Y})|Y$]
{
\psfrag{Y1}{\tiny $Y(B_1,v_1)$}
\psfrag{Y-Y1}{\tiny $Y\dof Y(B_1,v_1) $}
\psfrag{v}{\footnotesize $v$}
\psfrag{w}{\footnotesize $w$}
\includegraphics*[scale=0.25]{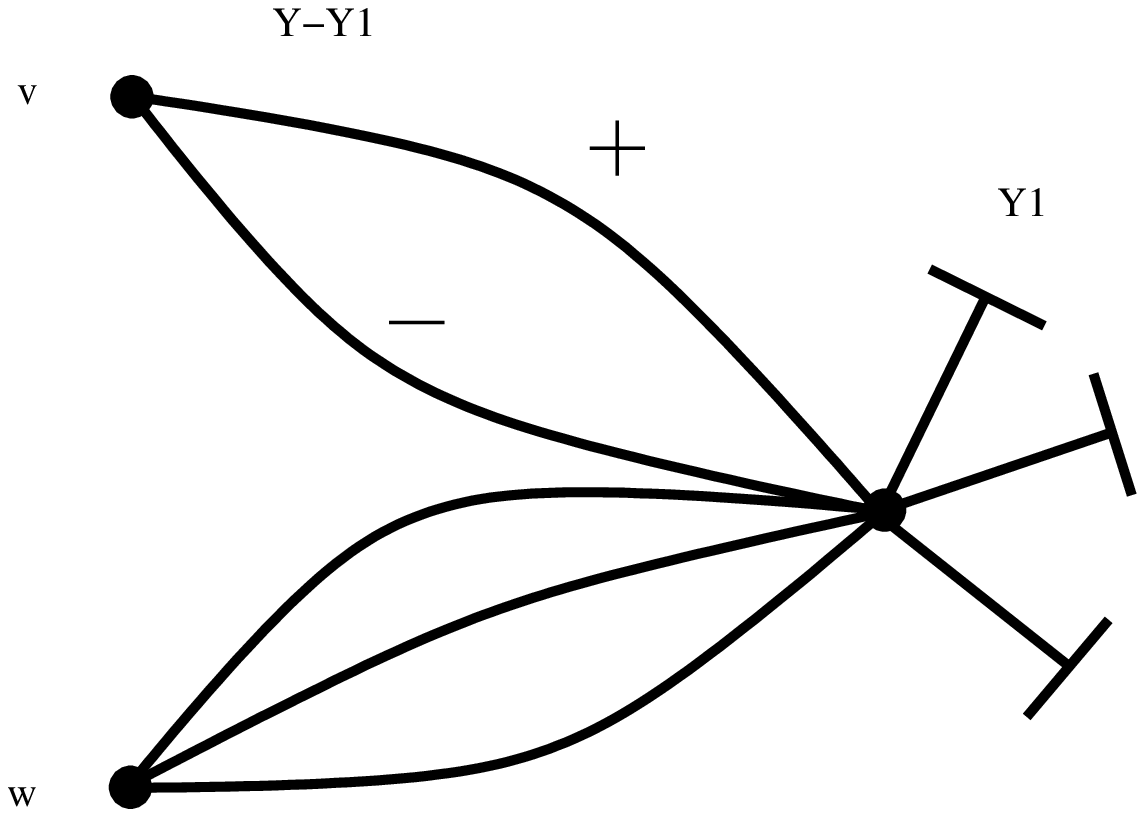}
}
}
\end{center} 
\caption{Case 1.}
\label{fig_bridge_sep_1}
\end{figure}

Consider now without loss of generality that $B_0$ is contained in $C(B_1,v_1)$ and not in $C(B_2,v_2)$. Then there must exist $v_0\in{V(B_0)}$ such that $B_1$ and $B_2$ are contained in $C(B_0,v_0)$. 
We have the following subcases. 

\noindent
\underline{\bf Case 1.a: $v_0\neq{v_0^{\pm}}$}\\
In this case either $C(B_1,v_1)$ or $C(B_2,v_2)$ is contained in $C(B_0,v_0)$; without loss of generality, consider the latter (Figure~\ref{fig_bridge_sep_2}(a)). 
Since $Y(B_2,v_2)\subseteq{Y(B_0,v_0)}$ the edges of $Y(B_2,v_2)$ have the same sign, thus, constitute a bond in  $\Sigma.(B_2\cup{Y})|Y$ (Figure~\ref{fig_bridge_sep_2}(b)), 
implying that $Y(B_2,v_2)\in{\mathcal{C}^{*}(M(\Sigma).(B_2\cup{Y})|{Y})}$. Given that $B_0$ is contained in  $C(B_1,v_1)$, the edges in $Y(B_1,v_1)$ are half-edges in 
$\Sigma.(B_1\cup{Y})|Y$ (Figure~\ref{fig_bridge_sep_2}(c)). Therefore, there exists a cocircuit $C_1^{*}\in{\mathcal{C}^{*}(M(\Sigma).(B_1\cup{Y})|{Y})}$ such that 
$Y(B_1,v_1)\subseteq{C_1^{*}}$.
\begin{figure}[hbtp] 
\begin{center}
\mbox{
\subfigure[signed graph $\Sigma$]
{
\psfrag{S1}{\footnotesize $\Sigma_1$}
\psfrag{S2}{\footnotesize $\Sigma_2$}
\psfrag{B1}{\footnotesize $B_1$}
\psfrag{B2}{\footnotesize $B_2$}
\psfrag{B0}{\footnotesize $B_0$}
\psfrag{v1}{\tiny $v_1$}
\psfrag{v2}{\tiny $v_2$}
\psfrag{v0}{\tiny $v_0$}
\psfrag{v0-}{\tiny $v_{0}^{\pm}$}
\psfrag{Y}{\footnotesize $Y$}
\psfrag{Y1}{\tiny $Y(B_1,v_1)$}
\psfrag{Y2}{\tiny $Y(B_2,v_2)$}
\psfrag{C1}{\tiny $C(B_1,v_1)$}
\psfrag{C2}{\tiny $C(B_2,v_2)$}
\psfrag{C0}{\tiny $C(B_0,v_0)$}
\includegraphics*[scale=0.25]{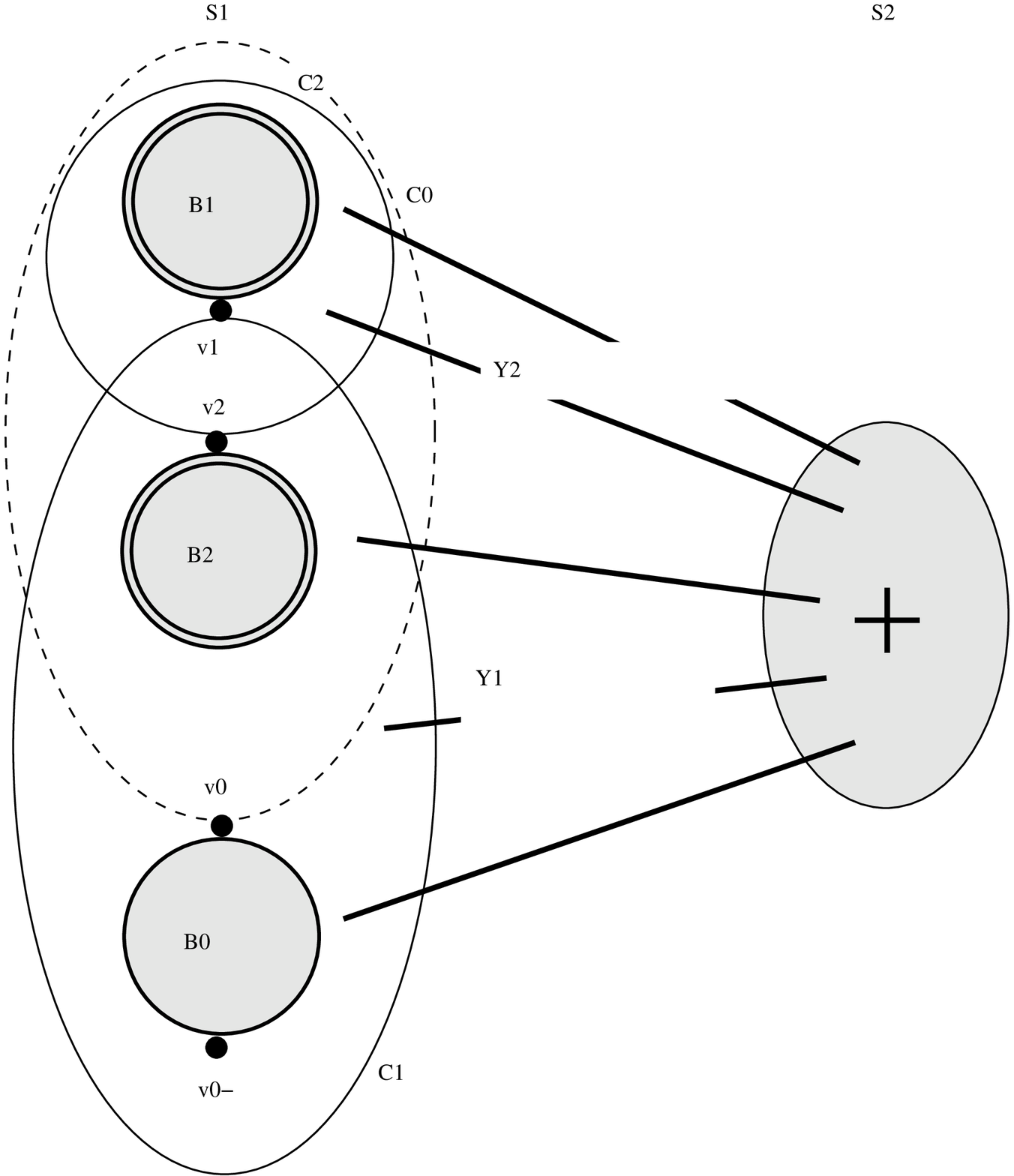}
}
\subfigure[$\Sigma.(B_1\cup{Y})|Y$]
{
\psfrag{Y1}{\tiny $Y(B_1,v_1)$}
\psfrag{Y-Y1}{\tiny $Y\dof Y(B_1,v_1) $}
\psfrag{v}{\footnotesize $v$}
\psfrag{w}{\footnotesize $w$}
\includegraphics*[scale=0.25]{bridge_sep_1_b.eps}
}
\subfigure[$\Sigma.(B_2\cup{Y})|Y$]
{
\psfrag{Y2}{\tiny $Y(B_2,v_2)$}
\psfrag{Y22}{\tiny $Y(B_2,w)$}
\psfrag{x}{\tiny $w \neq v_2$}
\includegraphics*[scale=0.25]{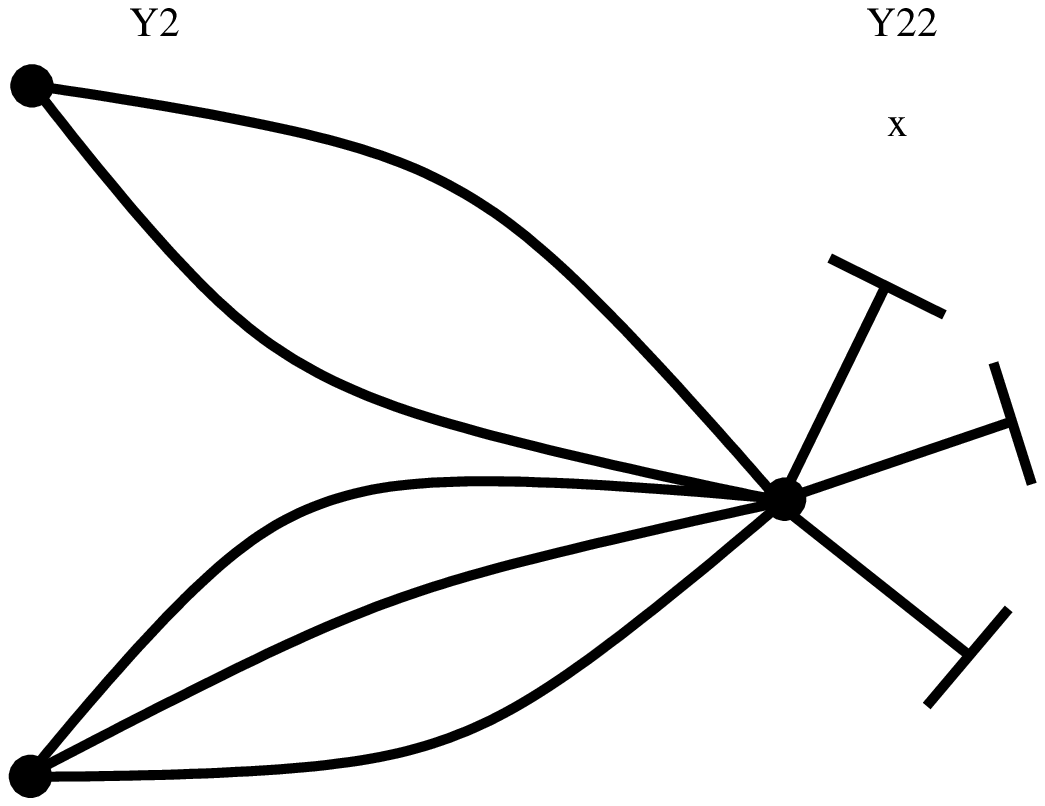}
}
}
\end{center} 
\caption{Case 1.a.}
\label{fig_bridge_sep_2}
\end{figure}

\noindent
\underline{\bf Case 1.b: $v_0 = v_0^{\pm}$}\\
Assume that $v_2\neq{v_2^{\pm}}$. Then $Y(B_2,v_2)$ consists of edges with equal signs, thus, 
$Y(B_2,v_2)\in{\mathcal{C}^{*}(M(\Sigma).(B_2\cup{Y})|{Y})}$. The edges in $Y(B_1,v_1)$ in $\Sigma.(B_1\cup{Y})|Y$ are half-edges with common end-vertex, 
therefore, $Y(B_1,v_1)\subseteq{C_1^{*}}\in{\mathcal{C}^{*}(M(\Sigma).(B_1\cup{Y})|Y)}$.

Now assume that $v_2 = v_2^{\pm}$ which means that $Y(B_2,v_2^{\pm})$ consists of edges with different sign (see Figure~\ref{fig_bridge_sep_3}). $B_{i}$, $i=1,2$ can have only one vertex in $\Sigma.(B_i\cup{Y})|{Y})$ that is incident with edges of $Y$ of different sign. Let this vertex be $v'_{1}$ for $B_{1}$ and $Y^{-}(B_{1}, v'_{1})$ be the set of edges of the same sign that can be incident only to $v'_{1}$ in $\Sigma.(B_1\cup{Y})|Y$. Without loss of generality assume them to be of negative sign. Thus, $Y(B_2,v_{2}^{\pm})=Y^{+}(B_{2},v_{2}^{\pm}) \cup Y^{-}(B_{1}, v'_{1})$ where $Y^{+}(B_{2},v_{2}^{\pm})$ is the subset of the positive edges of $Y(B_{2},v_{2}^{\pm})$. The edges in $Y(B_1,v_1)$ are half-edges in $\Sigma.(B_1\cup{Y})|Y$. Hence $Y(B_{1},v_{1}) \cup Y^{-}(B_{1}, v'_{1})={C_1^{*}} \in {\mathcal{C}^{*}(M(\Sigma).(B_1\cup{Y})|Y)}$. Let $v_2'$ be the vertex of attachment of $B_2$ such that $C(B_2,v_2')$ contains $B_0$; then, in $\Sigma.(B_2\cup{Y})|Y$, the edges in $Y(B_2,v_2')$ are 
half-edges. Moreover, the  set $Y(B_2,v_2')$ is non-empty since $\Sigma$ is $2$-connected. In  $\Sigma.(B_2\cup{Y})|Y$, from the sets of parallel edges of $Y$ only the set 
which has $v_2^{\pm}$ as a common end-vertex may consist of edges of different sign. Thus, $Y(B_2,v_2^{\pm}) \cup Y^{+}(B_{2},v_{2}^{\pm})={C_2^{*}} \in {\mathcal{C}^{*}(M(\Sigma).(B_2\cup{Y})|Y)}$.

\begin{figure}[hbtp] 
\begin{center}
\psfrag{S1}{\footnotesize $\Sigma_1$}
\psfrag{S2}{\footnotesize $\Sigma_2$}
\psfrag{B1}{\footnotesize $B_1$}
\psfrag{B2}{\footnotesize $B_2$}
\psfrag{B0}{\footnotesize $B_0$}
\psfrag{v1}{\tiny $v_1$}
\psfrag{v2-}{\tiny $v_2^{\pm}$}
\psfrag{v2'}{\tiny $v_2'$}
\psfrag{v0}{\tiny $v_0$}
\psfrag{v0-}{\tiny $v_{0}^{\pm}$}
\psfrag{Y}{\footnotesize $Y$}
\psfrag{Y1}{\tiny $Y(B_1,v_1)$}
\psfrag{Y2}{\tiny $Y(B_2,v_2^{\pm})$}
\psfrag{C1}{\tiny $C(B_1,v_1)$}
\psfrag{C2}{\tiny $C(B_2,v_2^{\pm})$}
\psfrag{C2'}{\tiny $C(B_2,v_2')$}
\psfrag{C0}{\tiny $C(B_0,v_0)$}
\includegraphics*[scale=0.28]{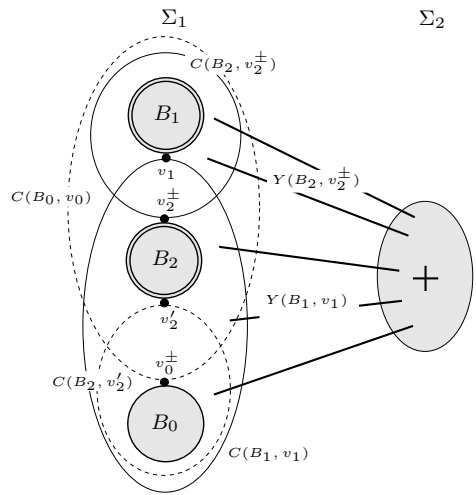}
\end{center} 
\caption{Case 1.b.}
\label{fig_bridge_sep_3}
\end{figure}

\noindent
\underline{{\bf Case 2: $B_2=B_0$}} \\
We shall consider the following cases:

\noindent
\underline{{\bf Case 2.a: $v_2\neq{v_2^{\pm}}$}}\\
In $\Sigma.(B_2\cup{Y})|Y$, the edges in $Y(B_2,v_2)$ are all those having $v_2$ as an end-vertex; therefore, 
$Y(B_2,v_2)\in{\mathcal{C}^{*}(M(\Sigma).(B_2\cup{Y})|Y)}$. In the signed graph $\Sigma.(B_1\cup{Y})|Y$, $Y(B_1,v_1)$ is the set of half-edges 
attached at a common vertex, while all the edges of $Y$ in $\Sigma.(B_1\cup{Y})|Y$ have that vertex as a common end-vertex. Therefore,
 $Y(B_1,v_1)\subseteq{C_1^{*}}\in{\mathcal{C}^{*}(M(\Sigma).(B_1\cup{Y})|Y)}$. \\
\underline{{\bf Case 2.b: $v_2=v_2^{\pm}$}}\\
Assume that $B_1$ has one balanced component $C(B_1,v_1^{\pm})$ such that the edges in $Y(B_1,v_1^{\pm})$ have different sign. Let $Y^{+}(B_i,v_i^{\pm})$ and  
$Y^{-}(B_i,v_i^{\pm})$ be the set of positive  and negative edges of  $Y(B_i,v_i^{\pm})$, respectively, for $i=1,2$ (see Figure~\ref{fig_bridge_sep_4}). 
Given that $B_2$ contains at least one
 negative face of $\Sigma$  with no edges of $Y$, the unique negative face defined by the edges of $Y(B_1,v_1^{\pm})$ has to be the outer face of $\Sigma$. This 
implies that all the edges of $Y$ with end-vertex either in $V(C(B_1,v))$ for $v\neq{v_1^{\pm},v_1}$ or in $V(C(B_1,v_1)\cap{V(C(B_2,v_2^{\pm}))}$ will have the 
same sign, say positive. This in turn implies that $Y^{-}(B_2,v_2^{\pm})=Y^{-}(B_1,v_1^{\pm})$. Examining the graphs $\Sigma.(B_i\cup{Y})|Y$ 
we have that $Y(B_1,v_1)\cup{Y^{-}(B_2,v_2^{\pm})}=Y(B_1,v_1)\cup{Y^{-}(B_1,v_1^{\pm})}\in{\mathcal{C}^{*}(M(\Sigma).(B_1\cup{Y})|Y)}$ and 
$Y^{+}(B_2,v_2^{\pm})\in{\mathcal{C}^{*}(M(\Sigma).(B_2\cup{Y})|Y)}$. Finally, if $Y(B_1,v)$ for every $v\neq{v_1}$ has edges of the same sign, the proof is the same as above. 
\begin{figure}[hbtp] 
\begin{center}
\psfrag{S1}{\footnotesize $\Sigma_1$}
\psfrag{S2}{\footnotesize $\Sigma_2$}
\psfrag{B1}{\footnotesize $B_1$}
\psfrag{B2}{\footnotesize $B_2$}
\psfrag{v1}{\tiny $v_1$}
\psfrag{v2-}{\tiny $v_2^{\pm}$}
\psfrag{v1-}{\tiny $v_{1}^{\pm}$}
\psfrag{Y1-}{\tiny $Y^{-}(B_1,v_1^{\pm})$}
\psfrag{Y1+}{\tiny $Y^{+}(B_1,v_1^{\pm})$}
\psfrag{C1}{\tiny $C(B_1,v_1)$}
\psfrag{C2}{\tiny $C(B_2,v_2^{\pm})$}
\psfrag{C1-}{\tiny $C(B_1,v_1^{\pm})$}
\includegraphics*[scale=0.31]{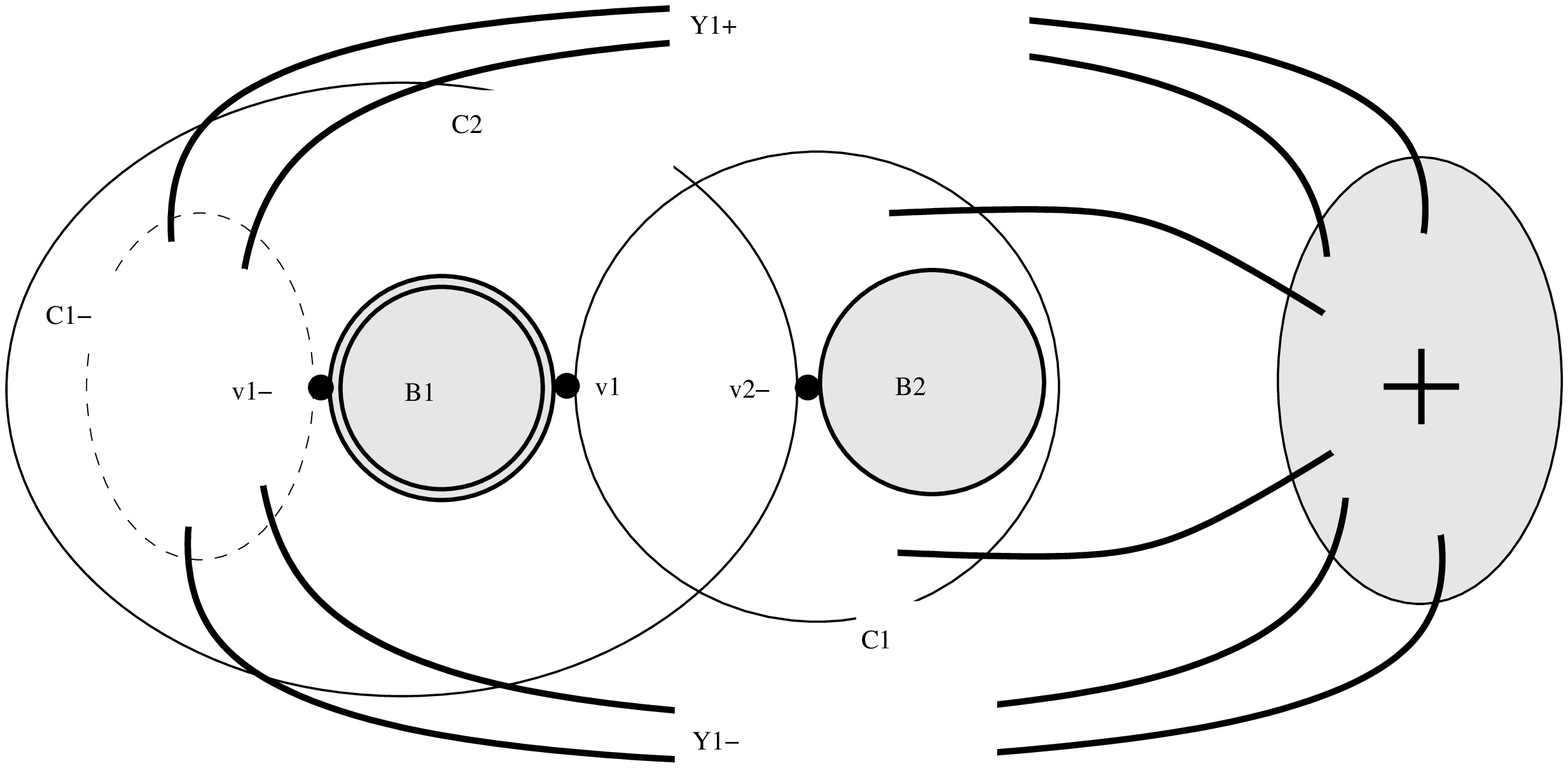}
\end{center} 
\caption{Case 2.b.}
\label{fig_bridge_sep_4}
\end{figure}

\end{proof}

\begin{corollary} \label{Yclasses}
If $Y$ is a bridge-separable cocircuit of $M(\Sigma)$ such that $\Sigma$ is 2-connected jointless and cylindrical signed graph then two possible classes $U^{-},U^{+}$ of all avoiding bridges of $Y$ contain the separates of the unbalanced and balanced connected component of $\Sigma \dof Y$ respectively.
\end{corollary}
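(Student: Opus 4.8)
The plan is to show that the partition of the bridges of $Y$ induced by the two connected components of $\Sigma\dof Y$ is precisely a partition into two classes of mutually avoiding bridges, and then to read off $U^{-}$ and $U^{+}$ directly from it. Throughout I work in the inherited setting where $M(\Sigma)$ is quaternary and non-binary and $Y$ is a non-balancing bond (the reference in the statement to ``the unbalanced connected component of $\Sigma\dof Y$'' already presupposes this).

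First I would pin down the bridges of $Y$. Since $Y$ is a non-balancing bond, Claim~\ref{connectedcomponents} gives that $\Sigma\dof Y$ has exactly one unbalanced connected component $\Sigma_{1}$ and one balanced connected component $\Sigma_{2}$, and, being connected components, they are vertex- and edge-disjoint. By Theorem~\ref{th_minsig} we have $M(\Sigma)\dof Y=M(\Sigma\dof Y)$, and since the frame matroid of a vertex-disjoint union is the direct sum of the component frame matroids, $M(\Sigma)\dof Y=M(\Sigma_{1})\oplus M(\Sigma_{2})$. The bridges of $Y$ are the elementary separators of $M(\Sigma)\dof Y$, so by Theorem~\ref{th_Zasl11} applied to each component they are exactly the separates of $\Sigma_{1}$ together with the separates of $\Sigma_{2}$. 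Because $\Sigma_{1}$ and $\Sigma_{2}$ share no vertex, each bridge lies in exactly one component; hence defining $U^{-}$ to be the separates of $\Sigma_{1}$ and $U^{+}$ to be the separates of $\Sigma_{2}$ yields a genuine two-class partition of the bridges of $Y$.

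Next I would verify that each class consists of pairwise avoiding bridges, which is exactly the content produced inside the proof of Theorem~\ref{th_Ybridgeseparable}. There, for an arbitrary pair $B_{1},B_{2}$ of bridges both taken from $M(\Sigma_{1})$, or both from $M(\Sigma_{2})$, cocircuits $C_{1}^{*}\in\mathcal{C}^{*}(M(\Sigma)\cto(B_{1}\cup Y)\dto Y)$ and $C_{2}^{*}\in\mathcal{C}^{*}(M(\Sigma)\cto(B_{2}\cup Y)\dto Y)$ with $C_{1}^{*}\cup C_{2}^{*}=Y$ were exhibited, which is precisely the definition of $B_{1}$ and $B_{2}$ being avoiding. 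So I would reorganise that case analysis by target class: the balanced--balanced argument handles every pair inside $U^{+}$, while Cases~1 and~2 (including the subcase $B_{2}=B_{0}$ with the unique unbalanced separate) handle every pair inside $U^{-}$. The key structural input reused verbatim is Lemma~\ref{lem_03}, which bounds to one the number of attachment vertices of a separate carrying mixed-sign edges of $Y$, and this is what makes each same-component pair avoiding. Consequently $\{U^{-},U^{+}\}$ witnesses bridge-separability with the classes as claimed.

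The main obstacle I anticipate is bookkeeping rather than a new idea: I must make the identification ``bridges of $Y$ $=$ separates of $\Sigma\dof Y$'' airtight through the direct-sum decomposition, and I must confirm that the avoiding constructions of Theorem~\ref{th_Ybridgeseparable} genuinely cover \emph{every} unordered pair inside a class, in particular any pair involving the single unbalanced separate $B_{0}$ and any pair of balanced separates of $\Sigma_{1}$ attached to it. Since the theorem's split is exhaustive over the location of $B_{0}$ relative to the two chosen bridges, no pair is missed, and the component-based partition is the required one. If one additionally wanted the classes to be \emph{forced} rather than merely available, one could observe that a separate of $\Sigma_{1}$ and a separate of $\Sigma_{2}$ need not avoid, but this is not needed for the stated conclusion.
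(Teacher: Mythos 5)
Your proposal is correct and follows essentially the same route as the paper: the corollary is stated there without a separate proof precisely because it is read off from the proof of Theorem~\ref{th_Ybridgeseparable}, whose case analysis (resting on Lemma~\ref{lem_03} and Claim~\ref{connectedcomponents}) exhibits avoiding cocircuits for every pair of bridges lying in the same component of $\Sigma\dof Y$. Your added bookkeeping --- identifying the bridges of $Y$ with the separates of $\Sigma_{1}$ and $\Sigma_{2}$ via $M(\Sigma)\dof Y=M(\Sigma_{1})\oplus M(\Sigma_{2})$ and Theorem~\ref{th_Zasl11} --- only makes explicit what the paper leaves implicit.
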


\begin{corollary}
If $M(\Sigma)$ is a connected quaternary, non-binary matroid and $Y$ is a nongraphic cocircuit of $M(\Sigma)$ and an unbalancing bond in $\Sigma$ such that $\Sigma \dof J_{\Sigma}$ is cylindrical then $Y$ is bridge-separable.
\end{corollary}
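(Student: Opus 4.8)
The plan is to reduce the statement to Theorem~\ref{th_Ybridgeseparable}, which already settles the $2$-connected jointless case, by first stripping the joints and then weakening the connectivity hypothesis. Write $\Sigma' := \Sigma \dof J_{\Sigma}$, so that $M(\Sigma') = M(\Sigma)\dof J_{\Sigma}$ and, by hypothesis, $\Sigma'$ is cylindrical. First I would record that $Y$ lives inside $\Sigma'$: by definition each edge of an unbalancing bond has exactly one endvertex in the balanced component of $\Sigma\dof Y$ and one in an unbalanced component, so every edge of $Y$ is a link and $Y$ contains no joint; hence $Y\subseteq E(\Sigma')$. Next I would argue that $M(\Sigma')$ is still quaternary and non-binary: were it binary, then by Theorem~\ref{th_cygicr} the cylindrical graph $\Sigma'$ would have no two vertex-disjoint negative faces and $M(\Sigma')$ would be graphic, which would force $Y$ to be a graphic cocircuit, contradicting the hypothesis that $Y$ is \emph{nongraphic}; together with Theorem~\ref{th_SliDe} this keeps us in the cylindrical case with $M(\Sigma')$ quaternary non-binary. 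Finally, using Claim~\ref{connectedcomponents} (and its connected-case analogue) the non-binariness of the unbalanced side of $\Sigma\dof Y$ is witnessed by a genuine, non-joint negative cycle, so deleting $J_{\Sigma}$ leaves that side unbalanced and $Y$ remains an unbalancing bond of $\Sigma'$.

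The second step is to show that the joints are invisible to bridge-separability. Every joint is a negative one-edge block, hence an inner block, and therefore lies in the core of $\Sigma\dof Y$, i.e.\ inside the unique unbalanced separate $B_0$; in particular $J_{\Sigma}\subseteq B_0$ and no joint creates a new bridge of $Y$, so the bridges of $Y$ in $M(\Sigma)$ and in $M(\Sigma')$ are in natural correspondence $B\mapsto B'=B\dof J_{\Sigma}$. Because $J_{\Sigma}$ is disjoint from $E(M(\Sigma))-(B_0\cup Y)$, deletion of the joints commutes with the contraction defining the relevant minor, giving $M(\Sigma)\cto(B\cup Y)\dto Y = M(\Sigma')\cto(B'\cup Y)\dto Y$ for every bridge $B$. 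Hence the families $\mathcal{C}^{*}(M(\Sigma)\cto(B\cup Y)\dto Y)$, and with them the avoiding relation between bridges, are unchanged by the joints. Thus $Y$ is bridge-separable in $M(\Sigma)$ if and only if it is bridge-separable in $M(\Sigma')$, and it suffices to prove the latter.

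It remains to establish bridge-separability in the jointless cylindrical graph $\Sigma'$. If $\Sigma'$ is $2$-connected this is exactly Theorem~\ref{th_Ybridgeseparable}, and the two avoiding classes are the separates of the unbalanced and of the balanced component of $\Sigma'\dof Y$ as in Corollary~\ref{Yclasses}. The remaining possibility is that $\Sigma'$ is connected but not $2$-connected. Here I would first note that $\Sigma'$ has no balancing vertex, since otherwise $M(\Sigma')$ would be graphic by Proposition~\ref{prop_sgn_graphic}; consequently any cut vertex of $\Sigma'$ must separate two unbalanced parts, which by Corollary~\ref{2-bic} and Proposition~\ref{k-bisep} is precisely an admissible matroid $2$-separation and is consistent with $M(\Sigma)$ being connected. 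I would then dispose of these cut vertices either by splitting $\Sigma'$ into its $2$-connected blocks along the core and applying Theorem~\ref{th_Ybridgeseparable} blockwise, or by rerunning the negative-face count of that theorem's proof with face boundaries allowed to be closed walks rather than cycles; in both routes the partition of the bridges into the separates of the unbalanced versus the balanced component of $\Sigma'\dof Y$ continues to produce the required avoiding cocircuits with $C_1^{*}\cup C_2^{*}=Y$.

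I expect this last step to be the main obstacle. The proof of Theorem~\ref{th_Ybridgeseparable} uses $2$-connectivity crucially through the fact that every face of the planar embedding is bounded by a cycle, which underlies all of its negative-face arguments and the case-by-case construction of the bond cocircuits. When $\Sigma'$ carries a cut vertex between two unbalanced parts this property fails at that vertex, and matroid connectivity of $M(\Sigma)$ does \emph{not} upgrade to graph $2$-connectivity of $\Sigma'$. So the delicate point is to verify that the face-counting and the cocircuit constructions of Theorem~\ref{th_Ybridgeseparable} survive these degenerate boundaries; this is exactly the kind of technical extension to the connected case anticipated earlier in the paper, and it is where the real work lies.
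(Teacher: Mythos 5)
Your overall strategy---strip the joints, pass to $\Sigma' = \Sigma\dof J_{\Sigma}$, and invoke Theorem~\ref{th_Ybridgeseparable}---is exactly the derivation the paper intends: it states this corollary without proof, as an immediate consequence of that theorem together with its earlier blanket remark that results extend from the $2$-connected to the merely connected case by technical modifications. But when you flesh the reduction out, two of your bridging claims are genuinely gapped. First, the joint-stripping is not innocent. The hypothesis that $Y$ is nongraphic only guarantees that \emph{one} unbalanced component of $\Sigma\dof Y$ has a nongraphic, hence genuinely (non-joint) unbalanced core: by Proposition~\ref{prop_sgn_graphic}(ii) a component whose only negative cycles are joints is graphic, but nothing in the hypotheses excludes a \emph{second} unbalanced component of $\Sigma\dof Y$ whose unbalance comes solely from joints. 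If such a component exists, it becomes balanced in $\Sigma'$, so $\Sigma'\dof Y$ has two balanced components, $Y$ fails the minimality in Theorem~\ref{thrm_bonds}, and $Y\notin\mathcal{C}^{*}(M(\Sigma'))$---Theorem~\ref{th_Ybridgeseparable} cannot even be invoked. In the same situation your bridge correspondence $B\mapsto B\dof J_{\Sigma}$ breaks: deleting joints shrinks the core, turning formerly inner balanced blocks into outer blocks, i.e.\ into new elementary separators and hence new bridges by Theorem~\ref{th_Zasl11}; and your identity $M(\Sigma)\cto(B\cup Y)\dto Y = M(\Sigma')\cto(B'\cup Y)\dto Y$ needs each joint to become a loop under the contraction, which holds only when the joint's component retains a non-joint negative cycle. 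You lean on a ``connected-case analogue'' of Claim~\ref{connectedcomponents}, but that claim is proved only for jointless $2$-connected cylindrical graphs; its analogue for a $\Sigma$ carrying joints is precisely what has to be proved here.

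Second, your non-binarity argument for $M(\Sigma')$ conflates two different matroids. That $Y$ is a nongraphic cocircuit means $M(\Sigma)\dof Y$ is nongraphic; this matroid retains the joints and is not a minor of $M(\Sigma')$, so ``$M(\Sigma')$ graphic'' does not force $Y$ to be a graphic cocircuit. Deleting joints can in fact turn a nongraphic core graphic: the signed graph on vertices $\{v,a,u\}$ with a positive and a negative $va$-edge, a positive link $au$ and a joint at $u$ has frame matroid $U_{3,4}$ (nongraphic), yet after deleting the joint every negative cycle passes through the balancing vertex $a$ and the matroid is graphic. So your hypotheses do not by themselves rule out that $\Sigma'$ is cylindrical with its two negative faces sharing a vertex, i.e.\ $M(\Sigma')$ binary and graphic by Theorem~\ref{th_cygicr}, in which case the non-binarity hypothesis of Theorem~\ref{th_Ybridgeseparable} fails; closing this requires showing that $\Sigma'$ inherits two vertex-disjoint negative faces from the nongraphic core, not the shortcut you give. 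Finally, you correctly identify that the drop from $2$-connectivity of $\Sigma'$ to mere connectivity is real work, since every face-boundary argument in Theorem~\ref{th_Ybridgeseparable} uses cycles rather than closed walks; but as you leave that step an acknowledged obstacle rather than a proof, your proposal is incomplete exactly where the paper itself is silent.
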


\begin{theorem}
Let $\Sigma$ be a 2-connected jointless cylindrical signed graph such that $M(\Sigma)$ is internally 4-connected quaternary non-binary. If $Y$ is a nongraphic cocircuit and a double bond of $\Sigma$ then $Y$ is bridge-separable in $M(\Sigma)$. 
\end{theorem}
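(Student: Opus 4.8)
The plan is to follow the proof of Theorem~\ref{th_Ybridgeseparable} as closely as possible, handling the unbalancing part of the double bond exactly as an unbalancing bond is handled there, and then dealing separately with the single edge of its balancing part. First I would use Theorem~\ref{th_cygicr} to fix a planar embedding of $\Sigma$ with two vertex-disjoint negative faces, one of them the outer face, and Claim~\ref{connectedcomponents} to write $\Sigma\backslash Y=\Sigma_1\cup\Sigma_2$ with $\Sigma_1$ the unique unbalanced connected component and $\Sigma_2$ the balanced one. By Proposition~\ref{uniqueedge} the balancing part of $Y$ is a single edge $e=\{x,y\}$ whose two endvertices lie in $\Sigma_2$, and by Proposition~\ref{commonendvertex} $e$ shares an endvertex, say $x$, with some edge $f$ of the unbalancing part $Y_1=Y\setminus\{e\}$. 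As in Theorem~\ref{th_Ybridgeseparable}, switchings allow me to assume that every edge of the balanced separates of $\Sigma\backslash Y$ is positive. I would then take the bipartition of the bridges given by Corollary~\ref{Yclasses}, namely the class $U^{-}$ of all separates of $\Sigma_1$ and the class $U^{+}$ of all separates of $\Sigma_2$, and prove that any two bridges lying in the same class are avoiding.

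For a pair $B_1,B_2$ in a common class I would fix vertices of attachment $v_1,v_2$ with $B_2\subseteq C(B_1,v_1)$ and $B_1\subseteq C(B_2,v_2)$, just as before. Since every edge of $Y_1$ has exactly one endvertex in $\Sigma_2$, the set $Y_1$ behaves precisely like the edge set of an unbalancing bond, and the contraction behaviour is identical: contracting the unbalanced component turns the edges of $Y_1$ into half-edges, and contracting the balanced component turns them into links at a common vertex. Consequently the whole case analysis of Theorem~\ref{th_Ybridgeseparable} transfers with $Y$ replaced by $Y_1$, and produces cocircuits $C_1^{*}\in\mathcal{C}^{*}(M(\Sigma).(B_1\cup Y)|Y)$ and $C_2^{*}\in\mathcal{C}^{*}(M(\Sigma).(B_2\cup Y)|Y)$ with $Y_1\subseteq C_1^{*}\cup C_2^{*}$.

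The single point at which the double bond departs from Theorem~\ref{th_Ybridgeseparable} is the edge $e$, and this is the main obstacle. There the identity $Y(B_1,v_1)\cup Y(B_2,v_2)=Y$ rested on $V(\Sigma_2)\subseteq V(C(B_1,v_1))\cup V(C(B_2,v_2))$ together with the fact that each edge of the bond has a single endvertex in $\Sigma_2$; this reasoning never reaches $e$, whose two endvertices both lie in $\Sigma_2$ and may fall inside one and the same $C(B_i,v_i)$, so that $e$ is in neither $Y(B_1,v_1)$ nor $Y(B_2,v_2)$. Hence $e$ must be put into one of the two cocircuits by a separate argument, and this is exactly the distinction flagged in the footnote to Theorem~\ref{th_Ybridgeseparable}.

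To recover $e$ I would appeal to Proposition~\ref{commonendvertex}. The endvertex $x$ lies in $\Sigma_2$ and is shared with the edge $f\in Y_1$; in whichever of the two minors $f$ is realised, the edge $e$ is incident to the same vertex, as a negative loop when $\Sigma_2$ is contracted and as the link $\{x,y\}$ otherwise, so $e$ lies in the b-star of that vertex. I would then argue that the cocircuit already chosen to contain $f$ may be enlarged to, or replaced by, a bond of the same minor that separates that vertex from the remainder and hence contains every non-positive-loop edge incident to it, in particular $e$; this upgrades $Y_1\subseteq C_1^{*}\cup C_2^{*}$ to $C_1^{*}\cup C_2^{*}=Y$ and shows $B_1,B_2$ are avoiding. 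Since this holds for every within-class pair, $Y$ is bridge-separable. The step I expect to demand the most care is checking, across all the subcases inherited from Theorem~\ref{th_Ybridgeseparable}, that $e$ can always be absorbed into an already selected cocircuit without destroying its bond structure; in particular one must rule out that $e$ splits off as an isolated one-element cocircuit $\{e\}$ that cannot be merged, and it is here that the location of $x$ supplied by Proposition~\ref{commonendvertex} and the internal $4$-connectivity hypothesis do the work.
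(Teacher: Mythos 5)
Your plan is essentially the paper's own proof: the same embedding and structural inputs (Theorem~\ref{th_cygicr}, Claim~\ref{connectedcomponents}, Proposition~\ref{uniqueedge}), the same bridge classes from Corollary~\ref{Yclasses}, reuse of the case analysis of Theorem~\ref{th_Ybridgeseparable} on the unbalancing part $Y_1$, and recovery of the unique balancing edge $e$ by placing it in a bond at the vertex where it reappears in each minor (a negative loop at the contracted $\Sigma_2$-vertex for bridges in $\Sigma_1$; a loop or link at a vertex of attachment for bridges in $\Sigma_2$), with Proposition~\ref{commonendvertex} invoked exactly as the paper invokes it in its straddling case to rule out $\{e\}$ becoming a singleton cocircuit so that $e$ can be merged into the bond containing $Y(B_1,v_1)$. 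The route and the key lemmas coincide with the paper's, so the proposal is correct in the same sense and to the same level of rigor as the published argument.
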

\begin{proof}
Consider a planar embedding of $\Sigma$ with two vertex disjoint negative faces where one of them is the outer. We denote this embedding also by $\Sigma$. By hypothesis $Y$ is a nongraphic cocircuit and a double bond in $\Sigma$. By claim~\ref{connectedcomponents} let $\Sigma_{1}, \Sigma_{2}$ be the unbalanced and the balanced connected component of $\Sigma \dof Y$ respectively. We shall denote by $B_{0}$ the unique unbalanced separate in $\Sigma_{1}$. Moreover, let $Y_{1},Y_{2}$ be the unbalancing and the balancing part of $Y$ respectively. By proposition~\ref{uniqueedge} $Y_2$ contains a unique edge $e$. We may perform switchings at vertices of $\Sigma$ and assume that all the edges of the balanced separates of $\Sigma \dof Y$ are positive. Consider any pair of bridges $B_1, B_2$ in either $M(\Sigma_1)$ or $M(\Sigma_2)$. To prove the theorem it suffices to show that
 there exist cocircuits $C_1^{*}\in{\mathcal{C}^{*}(M(\Sigma).(B_1\cup{Y})|{Y})}$ and $C_2^{*}\in{\mathcal{C}^{*}(M(\Sigma).(B_2\cup{Y})|{Y})}$ such that $C_1^{*}\cup{C_2^{*}}=Y$. The cases considered for $B_{1},B_{2}$ separates of $\Sigma_{1}$ are the same with those of the proof for an unbalancing bond. The only difference is that in all bonds which contain half-edges we include $Y_{2}$ which is a negative loop at the same vertex, where $\Sigma_{2}$ is contracted, with the half-edges in $\Sigma.(B_i\cup{Y})|{Y})$, $i=1,2$. Henceforth $B_{1}$ and $B_{2}$ are separates of $\Sigma_{2}$. Let $v_1\in{V(B_1)}$ and $v_2\in{V(B_2)}$ be the vertices of attachment such that $B_2$ is contained in ${C(B_1,v_1)}$ and  $B_1$ is contained in ${C(B_2,v_2)}$, respectively. We have that $V(\Sigma_i)\subseteq{V(C(B_1,v_1))\cup{V(C(B_2,v_2))}}$ which implies that  
\begin{equation}
Y(B_1,v_1)\cup{Y(B_2,v_2)}=Y_{1}
\end{equation}
\noindent
We distinguish two cases as regards the endvertices of the unique edge $e$ of $Y_{2}$. $e$ can have both endvertices to $B_{i}$, $i=1,2$ and $e$ has each endvertex to one of $B_i$. For the first case we assume without loss of generality that $e$ has both endvertices to ${B_2}$. \\
\noindent
\underline{{\bf Case 1: }}$e$ has both endvertices to ${B_2}$ \\
The edges of $Y(B_{1},v_{1})$ and $e$ are half-edges at $v_{1}$ in $\Sigma.(B_1\cup{Y})|{Y})$. Thus, $Y(B_{1},v_{1}) \cup e$ is a bond in $\Sigma.(B_1\cup{Y})|{Y})$ and a cocircuit ${C^{*}_{1}} \in {\mathcal{C}^{*}(M(\Sigma).(B_1\cup{Y})|Y)}$. Similarly, we can find such a cocircuit ${C^{*}_{2}}$ for $B_{2}$. \\
\noindent
\underline{{\bf Case 2: }}$e$ has each endvertex to one of ${B_1}, {B_2}$\\
Without loss of generality we will assume that $e$ has a common endvertex $v \in V(B_{1})$ with an edge of $Y_{1}$ by proposition~\ref{commonendvertex}. $Y(B_{2},v_{2})$ are half-edges at $v_{2}$ in $\Sigma.(B_1\cup{Y})|Y)$ which implies that $Y(B_{2},v_{2})=C^{*}_{2}\in {\mathcal{C}^{*}(M(\Sigma).(B_2\cup{Y})|Y)}$. In $\Sigma.(B_1\cup{Y})|Y)$ there is a bond which contains $Y(B_{1},v_{1})$, which are half-edges at $v_{1}$, $e$ and the half-edges incident at $v$. This bond is the cocircuit $C^{*}_{1}\in {\mathcal{C}^{*}(M(\Sigma).(B_1\cup{Y})|Y)}$ such that $C^{*}_{1} \cup C^{*}_{2}= Y$.
\end{proof}

Given a signed-graphic matroid $M(\Sigma)$, $Y \in \mathcal{C^{*}}(M(\Sigma))$, $B$ a bridge of $Y$ in $M(\Sigma)$ and $C^{*}_B$ a bond in $\Sigma.(B \cup Y)| Y$. We will say that $C^{*}_{B}$ \emph{determines vertex} $v \in V(B)$ if $v$ is a common endvertex of all edges in $C^{*}_B$. In the case that the edges of $C^{*}_B$ have no common endvertex in $V(B)$ then we will say that $C^{*}_B$ determines the vertex of attachment of $B$ whose corresponding component contains the core.

\begin{theorem}
Let $M$ be a connected quaternary nonbinary signed-graphic matroid whose signed-graphic representations are all cylindrical. If $Y$ is a u-cocircuit of $M$ such that no two bridges of $Y$ in $M$ overlap then there exists a 2-connected cylindrical signed graph $\Sigma$ such that $Y$ is the star of a vertex and $M=M(\Sigma)$.
\end{theorem}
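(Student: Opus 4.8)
The plan is to exhibit the required representation by reorganising a fixed cylindrical representation of $M$ around the cocircuit $Y$, using the non-overlap hypothesis to anchor every edge of $Y$ at a common vertex. First I would fix a cylindrical representation $\Sigma_0$ of $M$ and, invoking the claim that a cylindrical signed graph with a negative face admits an embedding with negative \emph{outer} face, pass to such an embedding. Since $Y$ is a u-cocircuit, it is an unbalancing bond of $\Sigma_0$, so $\Sigma_0\backslash Y$ splits into one unbalanced component $\Sigma_1$ (carrying the unique unbalanced separate $B_0$) and one balanced component $\Sigma_2$. By Theorem~\ref{th_Ybridgeseparable} the cocircuit $Y$ is bridge-separable, and by Corollary~\ref{Yclasses} its two avoiding classes $U^{-},U^{+}$ are exactly the separates of $\Sigma_1$ and of $\Sigma_2$; the extra hypothesis that no two bridges overlap upgrades this to the statement that \emph{every} pair of bridges of $Y$ avoids.

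Next I would extract, for each bridge $B$, a bond $C_B^{*}\in\mathcal{C}^{*}(M(\Sigma_0)\cto(B\cup Y)\dto Y)$ together with the vertex $v(B)\in V(B)$ that it determines (in the sense defined just before the theorem). The role of the all-pairs avoiding property is to make these local choices globally coherent: for any two bridges $B_i,B_j$ one can select bonds with $C_i^{*}\cup C_j^{*}=Y$, so the edge sets seen at the determined vertices overlap consistently across the whole family. I would turn this into a single combinatorial datum by showing that, consistently across all bridges, each edge $y\in Y$ is assigned a well-defined endpoint $p(y)=v(B)$ in the reassembled complement (the vertex determined by the bridge $B$ through which $y$ leaves $Y$), while its opposite endpoint is to be a single common vertex $v$. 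An induction on the number of bridges is the natural vehicle here: peel off one bridge, apply the inductive hypothesis to the smaller cocircuit, and re-attach using the avoiding relation to splice the determined vertex of the removed bridge onto the picture for $v$.

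With this datum in hand I would build $\Sigma$ explicitly. Because $M\backslash Y=M(\Sigma_1)\oplus M(\Sigma_2)$ by Theorem~\ref{th_minsig}, I would first realise $M\backslash Y$ by a connected \emph{unbalanced} signed graph $\Sigma_{\mathrm{rest}}$, obtained by joining $\Sigma_1$ and $\Sigma_2$ at a single cut vertex (a one-point join keeps the matroid a direct sum, since a positive cycle cannot cross a cut vertex and a handcuff needs both its negative cycles on the unbalanced side $\Sigma_1$). I would then adjoin a new vertex $v$ and attach each $y\in Y$ as a link from $v$ to $p(y)$, giving $Y=E(v,V(\Sigma)-v)$, the star of $v$; since $v$ carries no joints, $\Sigma\backslash Y$ isolates $v$ and $Y$ is a star bond. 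The verification $M(\Sigma)=M$ then separates into circuits disjoint from $Y$, which are handled by $\Sigma_{\mathrm{rest}}$ as a representation of $M\backslash Y$, and circuits meeting $Y$, which must be matched via Theorem~\ref{th_sgg2}: here one uses that a circuit through $Y$ closes up through the bridges exactly as prescribed by the determined vertices, so that the star attachment reproduces precisely the circuits of $M$.

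The hard part will be the coherence step of the second paragraph together with its use in the final verification, namely proving that non-overlapping bridges force all edges of $Y$ to be anchorable at a single vertex while every circuit of $M$ is preserved. This is the signed-graphic, cylindrical analogue of Tutte's face-boundary criterion: in dual terms, a cocircuit whose bridges do not overlap should bound a face, i.e.\ be the star of a vertex. The delicate points are the interaction between the freely twistable balanced side $\Sigma_2$ (Proposition~\ref{prop_samematroid}(iii)) and the rigid unbalanced side $\Sigma_1$, and keeping the two-negative-face structure of Theorem~\ref{th_cygicr} intact. Fortunately cylindricity of the final $\Sigma$ need not be argued directly, since by hypothesis every signed-graphic representation of $M$ is cylindrical, so that once $M(\Sigma)=M$ is established $\Sigma$ is automatically cylindrical; and $2$-connectivity of $\Sigma$ follows from connectivity of $M$ through the connectivity correspondence of Corollary~\ref{2-bic}.
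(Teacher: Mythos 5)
There is a genuine gap, and it sits exactly where you flag ``the hard part'': the coherence step of your second paragraph together with the circuit verification of your third paragraph \emph{is} the theorem, and your proposal contains no workable argument for it. The induction you suggest --- peel off one bridge, apply the inductive hypothesis, re-attach --- is not well-founded in this setting: deleting the edges of a bridge $B$ from $M$ destroys the standing hypotheses (the smaller matroid need not be connected, quaternary non-binary, or have all its signed-graphic representations cylindrical), and $Y$ need not even remain a cocircuit of $M\dof B$, since it may properly contain one; so the ``smaller cocircuit'' to which you want to recurse is not available. Worse, your synthesis step under-specifies the object you are building: you attach each $y\in Y$ as a link from the new vertex $v$ to $p(y)$, but you never assign \emph{signs} to the edges of $Y$, and in a signed graph these signs determine which cycles through $v$ are positive and hence which sets are circuits of $M(\Sigma)$ under Theorem~\ref{th_sgg2}. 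This is not a cosmetic omission: on the unbalanced side the relevant bonds of $\Sigma\cto(B_0\cup Y)\dto Y$ are the sign-refined sets $Y^{+}(B_0,v_0)$ and $Y^{-}(B_0,v_0)$ (parallel classes of a common sign, not half-edges), so the single datum $p(y)$ cannot encode what the avoiding relation actually gives you, and the claim that ``the star attachment reproduces precisely the circuits of $M$'' has no proof route from the data you extract.

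For comparison, the paper avoids free synthesis altogether and argues extremally: among all cylindrical representations of $M$ it fixes one minimizing $|E(\Sigma_2)|$, where $\Sigma_2$ is the balanced component of $\Sigma\dof Y$, and shows $E(\Sigma_2)=\emptyset$ by contradiction. Assuming $|E(\Sigma_2)|>0$, it plays the avoiding cocircuits of pairs of bridges (the vertices $v_0$, $v_1$, $w$, $v$ they \emph{determine}) against one another in a case analysis; in each case it either exhibits a $2$-separation $\{T,E(\Sigma)\dof T\}$ and performs a twisting --- matroid-preserving by Proposition~\ref{prop_samematroid}(iii), because the twisting part missing the negative inner face is balanced --- that strictly shrinks the balanced component, contradicting minimality, or it produces a third negative face contradicting cylindricity, or it contradicts the avoidance hypothesis directly. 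Modifying a representation that already satisfies $M(\Sigma)=M$ is what lets the paper sidestep entirely the circuit-by-circuit verification your construction would require, and it is also where your otherwise correct observations (bridge-separability via Theorem~\ref{th_Ybridgeseparable}, the classes of Corollary~\ref{Yclasses}, the two-negative-face structure of Theorem~\ref{th_cygicr}) are actually deployed. If you want to salvage your approach, the realistic fix is to abandon the from-scratch assembly and recast your ``coherence'' step as the paper does: use the determined vertices only to locate twistable $2$-separations in an extremal representation.
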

\begin{proof}
We choose the cylindrical signed graph $\Sigma$ representing $M$ so that the balanced component, denoted by $\Sigma_{2}$, of $\Sigma \dof Y$ has the least number of edges. Assume that $|E(\Sigma_{2})| >0$. The unbalanced component, denoted by $\Sigma_{1}$, contains the unbalanced block which implies that $|E(\Sigma_{2})| >0$. The cylindrical signed graph $\Sigma$ has a planar embedding with a negative outer face. We will denote the planar embedding of $\Sigma$ where $\Sigma_{2}$ has the least possible number of edges and the outer face is negative, also by $\Sigma$. Fix a balanced bridge of $Y$ in $M(\Sigma_{2})$, denoted by $B_{1}$ and choose the unbalanced bridge of $Y$ in $M(\Sigma_{1})$ denoted by $B_{0}$. By hypothesis, $B_{0}, B_{1}$ are avoiding thus there exist $C_{B_{0}}^{*} \in \mathcal{C}^{*}(M(\Sigma.(B_{0} \cup {Y}) | Y))$ and $C_{B_{1}}^{*} \in \mathcal{C}^{*}(M(\Sigma.(B_{1} \cup {Y}) | Y))$ such that 
\begin{equation} \label{eq:1} 
C_{B_{0}}^{*} \cup C_{B_{1}}^{*} =Y
\end{equation}
We will call the cocircuits whose union is $Y$ avoiding. The pair of avoiding cocircuits for $B_{0}, B_{1}$ is picked arbitrarily. 
Any cocircuit in $\mathcal{C}^{*}(M(\Sigma.(B_{0} \cup {Y}) | Y))$ corresponds to a set of parallel edges of the same sign incident to a vertex of $B_{0}$ in $\Sigma. (B_{0} \cup Y)|Y$. By definition $C_{B_{0}}^{*}$ determines a vertex, say $v_{0} \in V(B_{0})$, in $\Sigma$. Then $C_{B_{0}}^{*}=Y^{*}(B_{0},v_{0})$ where $Y^{*}(B_{0},v_{0})$ is either $Y^{+}(B_{0},v_{0})$ which denotes the set of positive edges of $Y$ with endvertices in $C(B_{0},v_{0})$, or $Y^{-}(B_{0},v_{0})$ which denotes the set of negative edges of $Y$ with endvertices in $C(B_{0},v_{0})$. Any cocircuit in $\mathcal{C}^{*}(M(\Sigma.(B_{1} \cup {Y}) | Y))$ corresponds to a set of half-edges incident to a vertex of $B_{1}$ in $\Sigma. (B_{1} \cup Y)|Y$. By definition $C_{B_{1}}^{*}$ determines a vertex, say $v_{1} \in V(B_{1})$, in $\Sigma$. Then $C_{B_{1}}^{*}=Y(B_{1},v_{1})$. By \eqref{eq:1} it follows that
\begin{equation} \label{eq:2}
Y^{*}(B_{0},v_{0}) \cup Y(B_{1},v_{1})=Y
\end{equation}
If there is no balanced bridge in $\Sigma_{1}$ then there is a 2-separation $\{T(v_{1}), E(\Sigma) \dof T(v_{1})\}$ with $T(v_{1})=C(B_{1},v_{1}) \cup Y(B_{1},v_{1}) \cup F(B_{0},v_{0})$ since by \eqref{eq:2} there is no edge of $Y$ with endvertices in $F(B_{0},v_{0})$ and $F(B_{1},v_{1})$. 

\begin{figure}[hbt]
\begin{center}
\psfrag{S1}{\footnotesize $\Sigma_1$}
\psfrag{S2}{\footnotesize $\Sigma_2$}
\psfrag{B0}{\footnotesize $B_0$}
\psfrag{B1}{\footnotesize $B_1$}
\psfrag{v0}{\tiny $v_0$}
\psfrag{v1}{\tiny $v_1$}
\psfrag{Y1}{\tiny $Y(B_1,v_1)$}
\psfrag{Y}{\footnotesize $Y$}
\psfrag{Y-Y1}{\tiny $Y\dof Y(B_1,v_1) $}
\psfrag{C1}{\tiny $C(B_1,v_1)$}
\psfrag{F0}{\tiny $F(B_0,v_0)$}
\psfrag{F1}{\tiny $F(B_1,v_1)$}
\includegraphics*[scale=0.31]{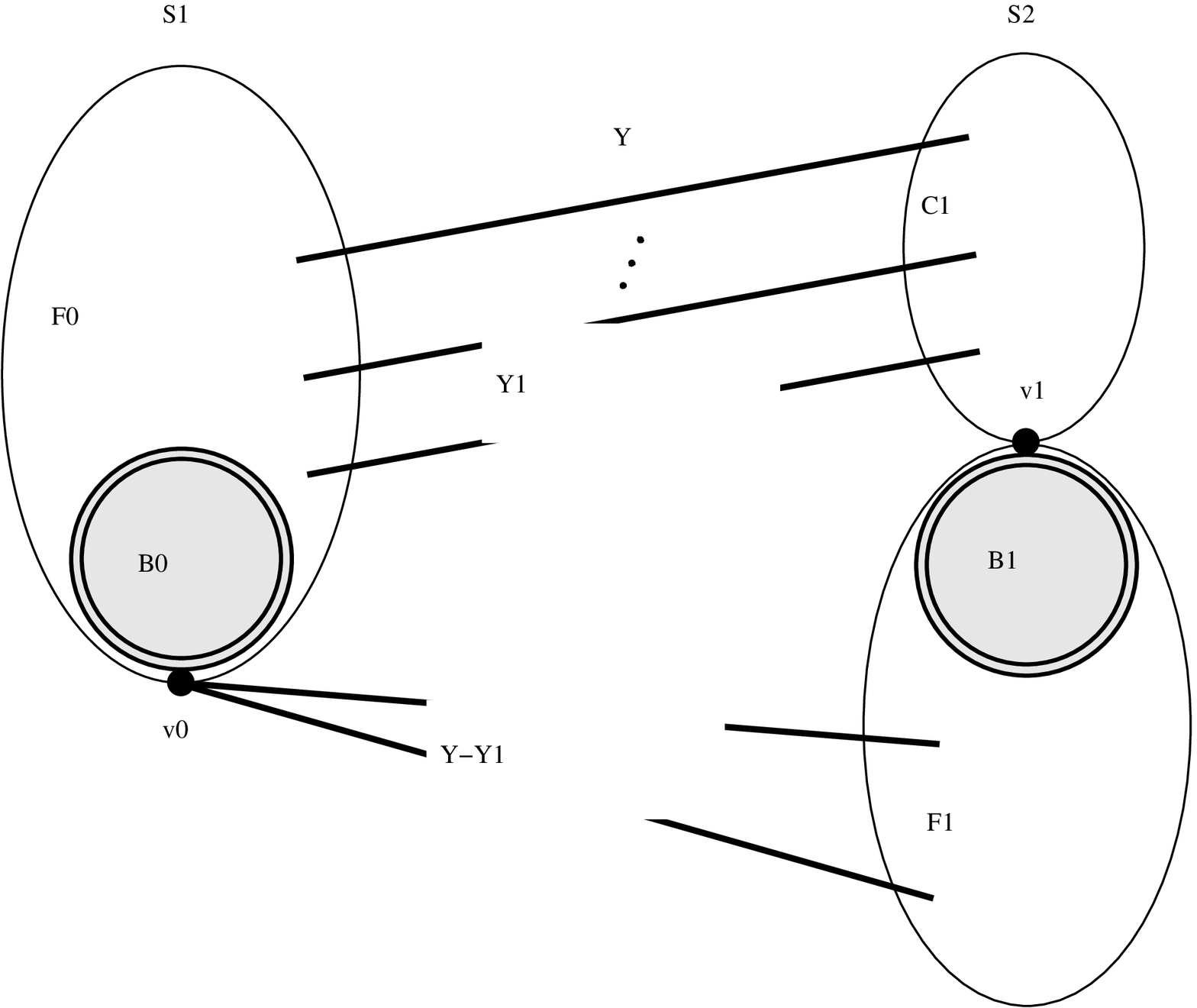}
\end{center} 
\caption{no $Bi$.}
\label{fig_star_th_1}
\end{figure}

By twisting about $v_{0},v_{1}$ we create a signed graph with balanced component $H=C(B_{1},v_{1})$ which has fewer edges than $\Sigma_{2}$, contradiction (see Figure~\ref{fig_star_th_1}). Assume that there is at least one balanced bridge $B_{i}$, $i=2, \ldots, k$ in $\Sigma_{1}$ such that $v_{0} \in V(B_{i})$. We define $F(B_{i}, v_{i})= \Sigma_{1} \dof E(C(B_{i},v_{i}))$ where $v_{i}$ is a vertex of attachment of $B_{i}$. By hypothesis $B_{i}, B_{1}$ are avoiding thus there are $C_{B_{i}}^{*} \in \mathcal{C}^{*}(M(\Sigma.(B_{i} \cup {Y}) | Y))$ and $C_{B_{1}}^{*} \in \mathcal{C}^{*}(M(\Sigma.(B_{1} \cup {Y}) | Y))$ such that 
\begin{equation} \label{eq:3}
C_{B_{i}}^{*} \cup C_{B_{1}}^{*}=Y
\end{equation}
Assume that $C_{B_{i}}^{*},C_{B_{1}}^{*}$ determine the vertices $w \in V(B_{i})$ and $v \in V(B_{1})$ in $\Sigma$, respectively. From all possible pairs of avoiding cocircuits we choose $C_{B_{i}}^{*},C_{B_{1}}^{*}$ for which $C_{B_{i}}^{*}$ determines $v_{0}$ in $\Sigma$. In case there is no such pair, we choose arbitrarily. Then we distinguish the following three cases:

\noindent
\underline{{\bf Case 1: $w \neq v_{0}$ for some $B_{i}$}} \\
Assume that there is at least one $B_{i}$, $i=2, \ldots, k$ for which there is no cocircuit $C_{B_{i}}^{*}$ which determines $v_{0}$ in $\Sigma$ and $C_{B_{i}}^{*} \cup C_{B_{1}}^{*}=Y$ for every $C_{B_{1}}^{*} \in \mathcal{C}^{*}(M(\Sigma.(B_{1} \cup {Y}) | Y))$. By assumption $C_{B_{i}}^{*}$ determines $w \neq v_{0}$ in $\Sigma$, then by definition each $Y(B_{i},v)$ consists of edges of the same sign for every $v$ vertex of attachment of $B_{i}$. There is always a cocircuit in $\mathcal{C}^{*}(M(\Sigma.(B_{i} \cup {Y}) | Y))$ which determines $v_{0}$ in $\Sigma$, say $C$. Then by hypothesis $C \cup C_{B_{1}}^{*} \subset Y$ for every $C_{B_{1}}^{*}$. In $\Sigma.(B_{i} \cup Y) | Y$, $C$ is a set of half-edges and in $\Sigma$ $C$ is the set of edges of $Y$ which have an endvertex in $C(B_{i},v_{0})$. By assumption $C_{B_{1}}^{*}$ determines $v \in V(B_{1})$ in $\Sigma$, then $C_{B_{1}}^{*}=Y(B_{1},v)$. Therefore $ Y^{*}(B_{i},v_{0}) \cup Y(B_{1},u) \subset Y$ where $u$ is any vertex of attachment of $B_{1}$. By the above relation (with $B_{i}, B_{1}, v_{0}$ fixed) there exists an edge of $Y$ with endvertices in $F(B_{i},v_{0})$ and $F(B_{1},u)$ for every $u$ (see Figure~\ref{fig_star_case1}). 

\begin{figure}[hbtp]
\begin{center}
\psfrag{S1}{\footnotesize $\Sigma_1$}
\psfrag{S2}{\footnotesize $\Sigma_2$}
\psfrag{B0}{\footnotesize $B_0$}
\psfrag{B1}{\footnotesize $B_1$}
\psfrag{Bi}{\footnotesize $B_i$}
\psfrag{v0}{\tiny $v_0$}
\psfrag{v1}{\tiny $v_1$}
\psfrag{w}{\tiny $w$}
\psfrag{v}{\tiny $v$}
\psfrag{u}{\tiny $u$}
\psfrag{Y}{\footnotesize $Y$}
\psfrag{C1}{\tiny $C(B_1,v_1)$}
\psfrag{F0}{\tiny $F(B_0,v_0)$}
\psfrag{Fi}{\tiny $F(B_i,v_0)$}
\includegraphics*[scale=0.31]{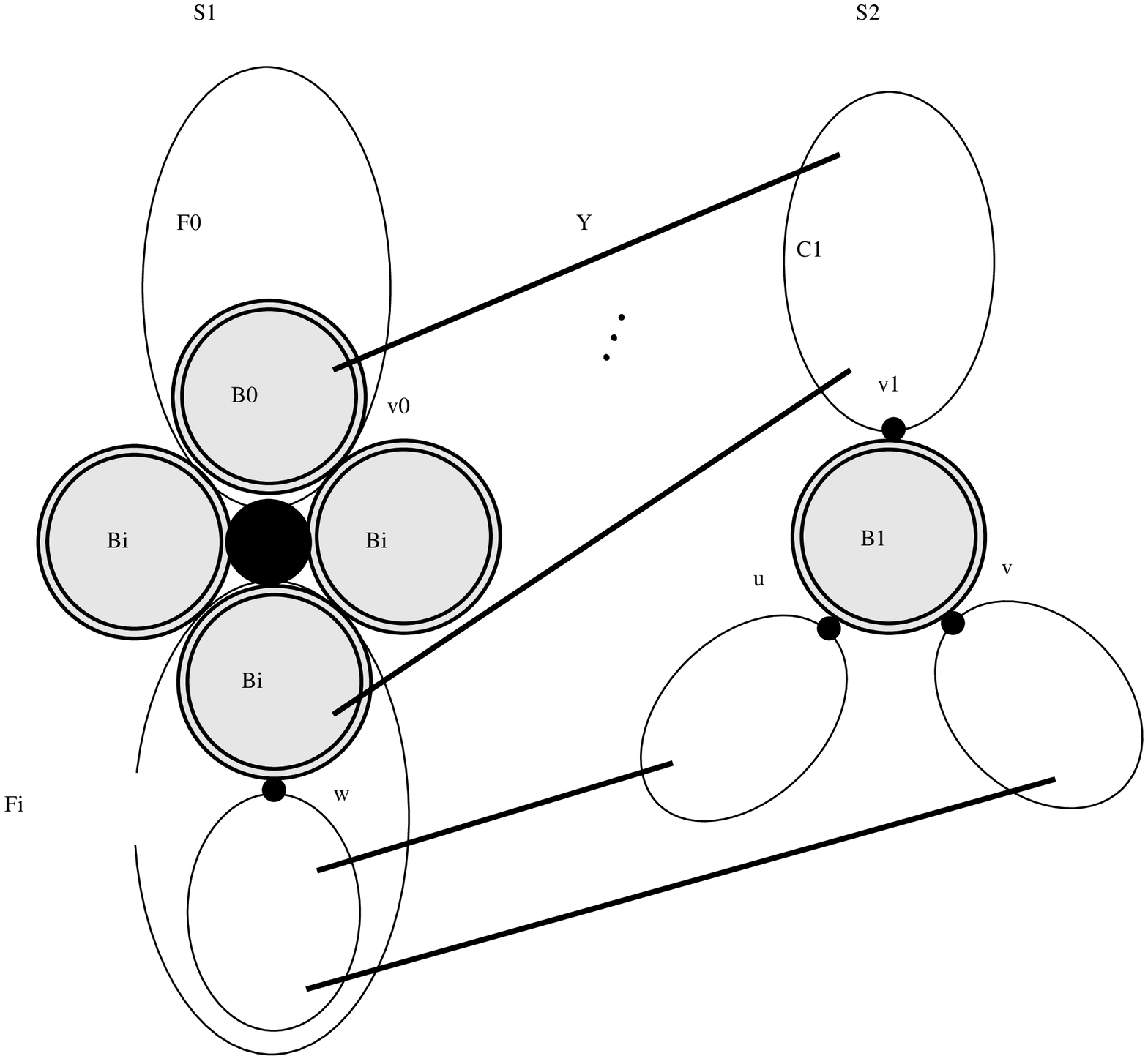}
\end{center} 
\caption{Case 1.}
\label{fig_star_case1}
\end{figure}

By hypothesis $C_{B_{i}}^{*}$ determines $w \in V(B_{i})$ in $\Sigma$ which implies that $C_{B_{i}}^{*}$ is a bond of parallel edges of the same sign incident to $w$ in $\Sigma.(B_{i} \cup {Y}) | Y$. Therefore, the edges of $C_{B_{i}}^{*}$ in $\Sigma$ are edges of $Y$ of the same sign with an endvertex in $C(B_{i},w)$ implying that $C_{B_{i}}^{*}=Y(B_{i},w)$. Then \eqref{eq:1} and \eqref{eq:3} become: 
\begin{equation*} \tag{\ref{eq:2}}
Y^{*}(B_{0},v_{0}) \cup Y(B_{1},v_{1})=Y
\end{equation*}
\begin{equation} \label{eq:4}
Y(B_{i},w) \cup Y(B_{1},v)=Y
\end{equation}

\noindent
Since $\Sigma$ is 2-connected there is an edge of $Y$ with an endvertex in $F(B_{0},v_{0})$. Then this edge has neither an endvertex in $F(B_{1},v_{1})$ by \eqref{eq:2} nor an endvertex in $F(B_{1},v)$ by \eqref{eq:4} since $F(B_{0},v_{0}) \subseteq F(B_{i},w)$. Thus, the edge has no endvertex in $\Sigma_{2}$ which is a contradiction since $Y$ is an unbalancing bond. Therefore, $v=v_{1}$ in \eqref{eq:4} for every $B_{i}$, $i=2, \ldots, k$ whose $C_{B_{i}}^{*}$ determines a vertex $w \neq v_{0}$ in $\Sigma$. By \eqref{eq:4} there is no edge of $Y$ with endvertices in $F(B_{i},w)$ and $F(B_{1},v_{1})$. Thus, every edge of $Y$ with endvertices in $F(B_{i},v_{0})$ and $F(B_{1},u)$ with $u \neq v_{1}$ has an endvertex in $C(B_{i},w)$. Assume that $C(B_{i},w)$ is a vertex. Then there is a 2-separation $\{T(v_{1}), E(\Sigma) \dof T(v_{1})\}$ with $T(v_{1})=C(B_{i},w) \cup Y(B_{i},w) \cup F(B_{1},v_{1})$.
By twisting about $w,v_{1}$ we form a new signed graph with balanced component $H=C(B_{1},v_{1})$ which has fewer edges than $\Sigma_{2}$, contradiction. Otherwise, $C(B_{i},w)$ contains at least one balanced bridge, say $B$. If there is only one edge of $Y$ with endvertex in $C(B_{i},w)$, say $x$, then there is only one vertex of attachment $u \neq v_{1}$ in $B_{1}$. Then there is a 2-separation $\{T(v_{1}), E(\Sigma) \dof T(v_{1})\}$ with $T(v_{1})=F(B_{1},v_{1}) \cup Y(B_{1},v_{1}) \cup C(B,x)$ where $C(B,x)$ is a vertex and $x$ is the vertex of attachment of $B$. By twisting about $x,v_{1}$ we form a new balanced component $H=C(B_{1},v_{1})$ which has less edges than $\Sigma_{2}$, contradiction. Otherwise, there are at least two edges of $Y$ with endvertices in $C(B_{i},w)$. These edges belong to two different cocircuits in $\mathcal{C}^{*}(M(\Sigma.(B \cup {Y}) | Y))$. Choosing either of the two cocircuits the remaining edges of $Y$ are partitioned into at least two cocircuits in $\mathcal{C}^{*}(M(\Sigma.(B_{1} \cup {Y}) | Y))$. Then there is no pair of avoiding cocircuits for $B$ and $B_{1}$ implying that $B,B_{1}$ are nonavoiding, contradiction.

\noindent
\underline{\bf{Case 2: $w=v_{0}$ and $v=v_{1}$ for all $B_{i}$}}

(i) $Y(B_{0},v_{0})$ consists of edges of the same sign i.e., $Y(B_{0},v_{0})=Y^{+}(B_{0},v_{0})$.

Then there is no balanced component $C(B_{i},v')$ with $v'$ vertex of attachment of $B_{i}$ whose associated $Y(B_{i},v')$ contains edges of $Y$ of different sign. By hypothesis $C_{B_{i}}^{*}$ determines $v_{0}$ for every $B_{i}$, $i=2, \ldots, k$ and $C_{B_{1}}^{*}$ determines $v=v_{1}$ for every $C_{B_{1}}^{*}$ so that \eqref{eq:3} holds. Then $C_{B_{i}}^{*}$ and $C_{B_{1}}^{*}$ correspond to bonds in $\Sigma.(B_{i} \cup {Y}) | Y$ and $\Sigma.(B_{1} \cup {Y}) | Y$ respectively which consist only of half-edges. Therefore, $C_{B_{i}}^{*}$ is the set of edges of $Y$ in $\Sigma$ with an endvertex in $C(B_{i},v_{0})$ i.e., $C_{B_{i}}^{*}=Y(B_{i},v_{0})$ and $C_{B_{1}}^{*}$ is the set of edges of $Y$ with an endvertex in $C(B_{1},v_{1})$ and $C_{B_{1}}^{*}=Y(B_{1},v_{1})$. Then \eqref{eq:1} and \eqref{eq:3} become:

\begin{equation*} \tag{\ref{eq:2}}
Y^{*}(B_{0},v_{0}) \cup Y(B_{1},v_{1})=Y
\end{equation*}
\begin{equation} \label{eq:5}
Y^{*}(B_{i},v_{0}) \cup Y(B_{1},v_{1})=Y
\end{equation}

\begin{figure}[hbt]
\begin{center}
\psfrag{S1}{\footnotesize $\Sigma_1$}
\psfrag{S2}{\footnotesize $\Sigma_2$}
\psfrag{B0}{\footnotesize $B_0$}
\psfrag{B1}{\footnotesize $B_1$}
\psfrag{Bi}{\footnotesize $B_i$}
\psfrag{v0}{\tiny $v_0$}
\psfrag{v1}{\tiny $v_1$}
\psfrag{Y}{\footnotesize $Y$}
\psfrag{C1}{\tiny $C(B_1,v_1)$}
\psfrag{F0}{\tiny $F(B_0,v_0)$}
\psfrag{Fi}{\tiny $F(B_i,v_0)$}
\psfrag{F1}{\tiny $F(B_1,v_1)$}
\includegraphics*[scale=0.31]{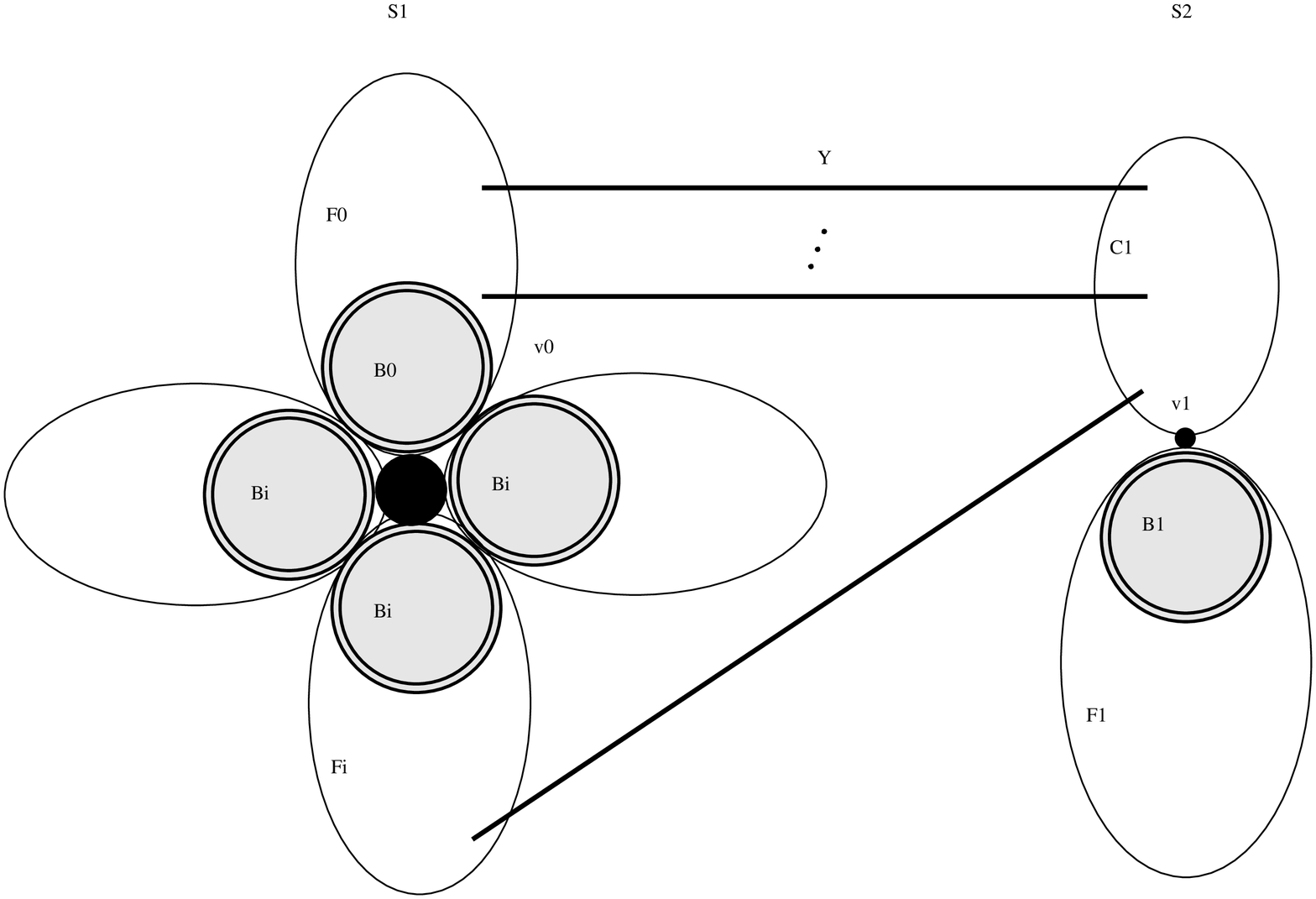}
\end{center} 
\caption{Case 2.}
\label{fig_star_case2}
\end{figure}

Since $\Sigma$ is 2-connected there exists an edge of $Y$ with an endvertex in $F(B_{1},v_{1})$ which does not have an endvertex in $F(B_{0},v_{0})$ by \eqref{eq:2} nor in $F(B_{i},v_{0})$ for every $B_{i}$ by \eqref{eq:5}. Then $Y$ has no endvertex in $\Sigma_{1}$ which is a contradiction since $Y$ is an unbalancing bond (see Figure~\ref{fig_star_case2}).

(ii) $Y(B_{0},v_{0})$ consists of edges of different sign i.e., $Y(B_{0},v_{0})=Y^{+}(B_{0},v_{0}) \cup Y^{-}(B_{0},v_{0})$.

If there is no balanced component $C(B_{i},v')$ with $v'$ vertex of attachment of $B_{i}$ whose associated $Y(B_{i},v')$ has edges of $Y$ of different sign then the case follows case 2(i). We shall consider the case where there is a $C(B_{i},v')$ whose associated $Y(B_{i},v')$ contains edges of $Y$ of different sign. By hypothesis $C_{B_{i}}^{*}$ determines $v_{0}$ in $\Sigma$ and corresponds to a bond in $\Sigma.(B_{i} \cup Y) |Y$ consisting of half-edges and the positive edges of $Y$ with an endvertex in $C(B_{i},v')$. We consider the case where $C_{B_{0}}^{*}=Y^{+}(B_{0},v_{0})$ and $C_{B_{i}}^{*}=Y(B_{i},v_{0}) \cup Y^{+}(B_{i},v')$ and all the others follow similarly. Then \eqref{eq:1} and \eqref{eq:3} become:
\begin{equation} \label{eq:6}
Y^{+}(B_{0},v_{0}) \cup Y(B_{1},v_{1})=Y
\end{equation}
\begin{equation} \label{eq:7}
Y(B_{i},v_{0}) \cup Y^{+}(B_{i},v') \cup Y(B_{1},v_{1})=Y
\end{equation}

By assumption there is a negative edge $e$ of $Y$ and a positive edge $h$ of $Y$ with an endvertex in $C(B_{i},v')$. By \eqref{eq:6} $e$ has its other endvertex in $C(B_{1},v_{1})$. Since $\Sigma$ is 2-connected by \eqref{eq:6} there is a positive edge $d$ of $Y$ with endvertices in $F(B_{1},v_{1})$ and $C(B_{0},v_{0})$ and another edge of $Y$ with endvertices in $F(B_{0},v_{0})$ and $C(B_{1},v_{1})$.

\begin{figure}[hbt]
\begin{center}
\psfrag{S1}{\footnotesize $\Sigma_1$}
\psfrag{S2}{\footnotesize $\Sigma_2$}
\psfrag{B0}{\footnotesize $B_0$}
\psfrag{B1}{\footnotesize $B_1$}
\psfrag{Bi}{\footnotesize $B_i$}
\psfrag{v0}{\tiny $v_0$}
\psfrag{v1}{\tiny $v_1$}
\psfrag{Y}{\footnotesize $Y$}
\psfrag{C1}{\tiny $C(B_1,v_1)$}
\psfrag{F0}{\tiny $F(B_0,v_0)$}
\psfrag{F1}{\tiny $F(B_1,v_1)$}
\psfrag{e}{\footnotesize $e$}
\psfrag{h}{\footnotesize $h$}
\psfrag{d}{\footnotesize $d$}
\includegraphics*[scale=0.31]{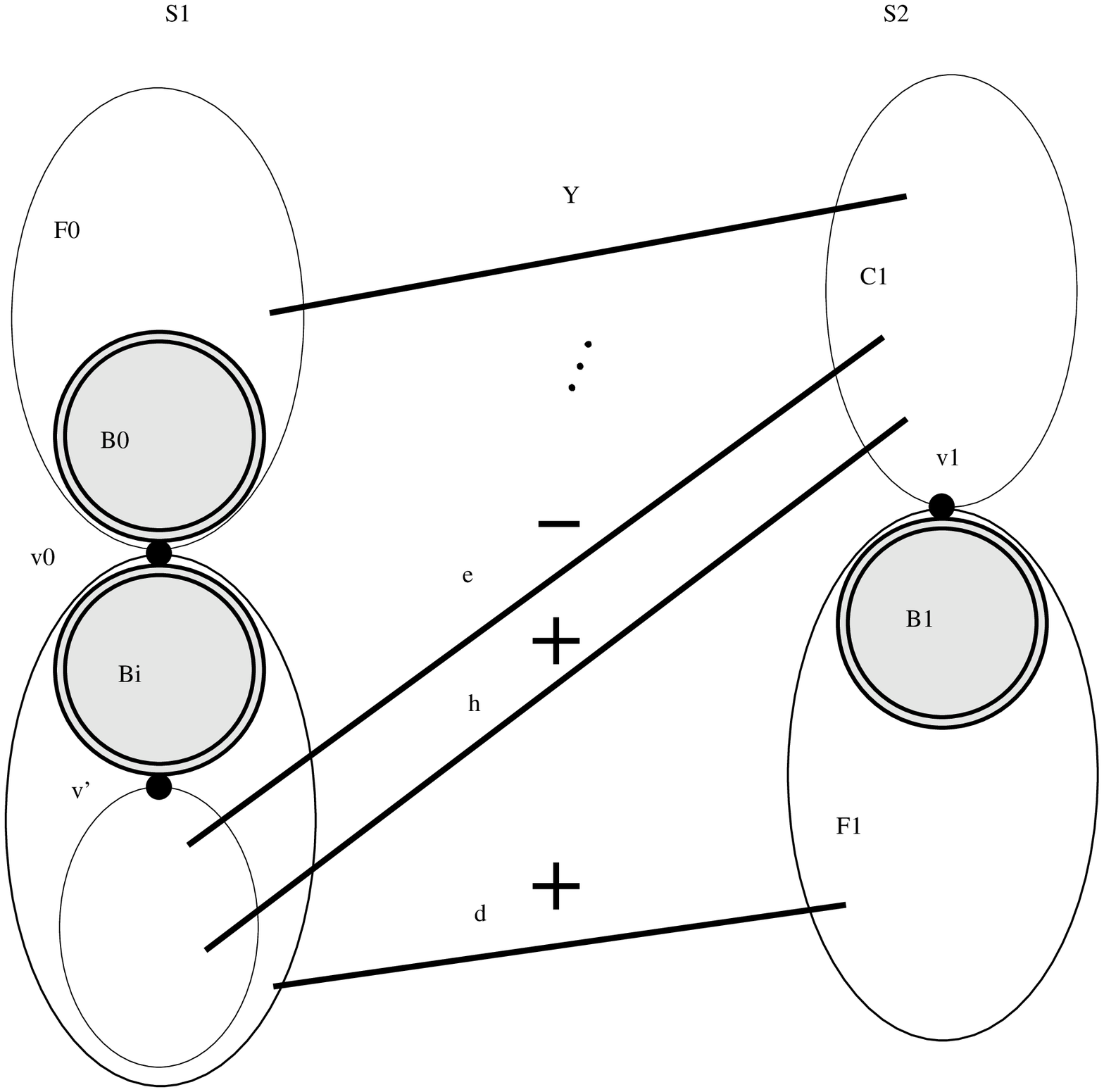}
\end{center} 
\caption{Case 2a.}
\label{fig_star_case2a}
\end{figure}

If $h$ has its other endvertex in $C(B_{1},v_{1})$, $e$ and $h$ are edges of a negative cycle because of planarity. The negative cycle implies the existence of a negative face $F$. $F$ is other than the negative face which is contained in the unbalanced bridge $B_{0}$ and other than the outer face. Hence $F$ constitutes a third negative face which is a contradiction since $\Sigma$ is cylindrical signed graph (see Figure~\ref{fig_star_case2a}). Otherwise, $h$ has its other endvertex in $F(B_{1},v_{1})$. If $h$ is distinct from $d$ then we reach a contradiction since $F$ constitutes a third negative face.

\begin{figure}[hbt]
\begin{center}
\psfrag{S1}{\footnotesize $\Sigma_1$}
\psfrag{S2}{\footnotesize $\Sigma_2$}
\psfrag{B0}{\footnotesize $B_0$}
\psfrag{B1}{\footnotesize $B_1$}
\psfrag{Bi}{\footnotesize $B_i$}
\psfrag{v0}{\tiny $v_0$}
\psfrag{v1}{\tiny $v_1$}
\psfrag{v'}{\tiny $v'$}
\psfrag{Y}{\footnotesize $Y$}
\psfrag{C1}{\tiny $C(B_1,v_1)$}
\psfrag{F0}{\tiny $F(B_0,v_0)$}
\psfrag{F1}{\tiny $F(B_1,v_1)$}
\psfrag{e}{\footnotesize $e$}
\psfrag{h}{\footnotesize $h$}
\includegraphics*[scale=0.31]{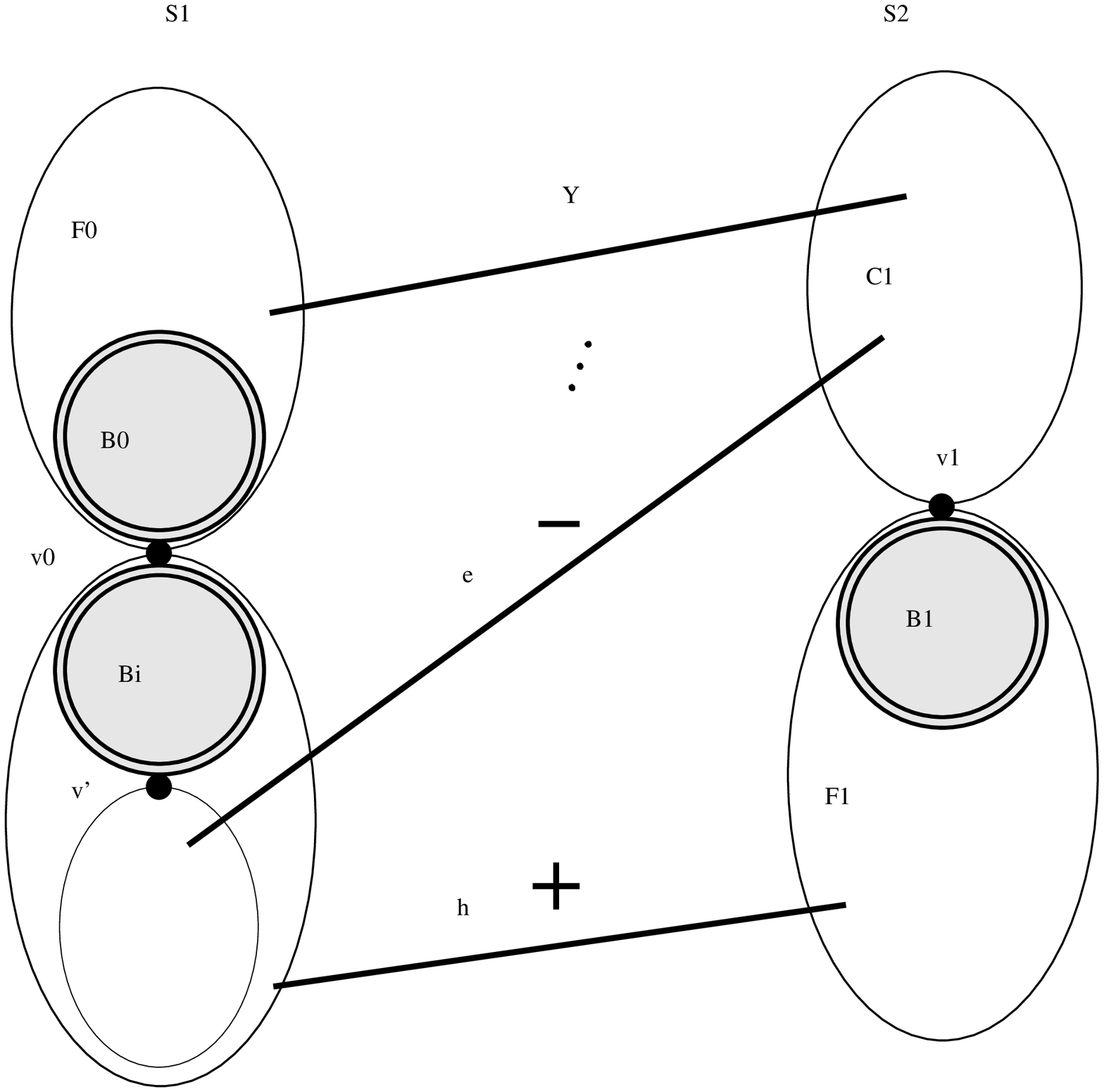}
\end{center} 
\caption{Case 2b.}
\label{fig_star_case2b}
\end{figure}

Assume that $h$ coincides with $d$ (see Figure~\ref{fig_star_case2b}). If $C(B_{i},v')$ is a vertex then there is a 2-separation $\{T(v_{1}), E(\Sigma) \dof T(v_{1})\}$ with $T(v_{1})=F(B_{1},v_{1}) \cup Y(B_{1},v_{1}) \cup C(B_{i},v')$. By twisting about $v_{1}$ and $C(B_{i},v')$ we create a new balanced component $H=C(B_{1},v_{1}) < \Sigma_{2}$, contradiction. Otherwise, $C(B_{i},v')$ consists of at least one balanced bridge, say $B$. $e$ and $h$ belong to different cocircuits in $\mathcal{C}^{*}(M(\Sigma.(B \cup {Y}) | Y))$. The remaining edges of $Y$ are partitioned into at least two cocircuits in $\mathcal{C}^{*}(M(\Sigma.(B_{1} \cup {Y}) | Y))$. Therefore, there is no pair of avoiding cocircuits for $B$ and $B_{1}$ implying that $B$ and $B_{1}$ are non avoiding, contradiction.

\noindent
\underline{\bf{Case 3: $w=v_{0}$ and $v \neq v_{1}$ for every $B_{i}$}}

\noindent
(i) $Y(B_{0},v_{0})$ consists of edges of the same sign i.e., $Y(B_{0},v_{0})=Y^{+}(B_{0},v_{0})$
\newline

Then there is no balanced component $C(B_{i},v')$ with $v'$ vertex of attachment of $B_{i}$ whose associated $Y(B_{i},v')$ contains edges of $Y$ with different sign. By assumption $C_{B_{i}}^{*}$ determines $v_{0}$ in $\Sigma$ for every $B_{i}$ and by definition $C_{B_{i}}^{*}$ corresponds to a set of half-edges in $\Sigma.(B_{i} \cup {Y}) | Y$. Thus $C_{B_{i}}^{*}=Y(B_{i},v_{0})$. Then \eqref{eq:1} and \eqref{eq:3} become:
\begin{equation} \label{eq_8}
Y^{+}(B_{0},v_{0}) \cup Y(B_{1},v_{1})=Y
\end{equation}
\begin{equation} \label{eq_9}
Y(B_{i},v_{0}) \cup Y(B_{1},v)=Y
\end{equation}

By \eqref{eq_8} there are no edges of $Y$ with endvertices in $F(B_{0},v_{0})$ and $F(B_{1},v_{1})$. Moreover, by \eqref{eq_9} there are no edges of $Y$ with 
endvertices in $F(B_{i},v_{0})$ and $F(B_{1},v)$ for every $B_{i}$. $Q(v)$ will denote the group of balanced bridges $B_{i}$ such that \eqref{eq_9} holds. 

\begin{figure}[hbt]
\begin{center}
\psfrag{S1}{\footnotesize $\Sigma_1$}
\psfrag{S2}{\footnotesize $\Sigma_2$}
\psfrag{B0}{\footnotesize $B_0$}
\psfrag{B1}{\footnotesize $B_1$}
\psfrag{Bi}{\footnotesize $B_i$}
\psfrag{v0}{\tiny $v_0$}
\psfrag{v1}{\tiny $v_1$}
\psfrag{v'}{\tiny $v'$}
\psfrag{Y}{\footnotesize $Y$}
\psfrag{C1}{\tiny $C(B_1,v_1)$}
\psfrag{F0}{\tiny $F(B_0,v_0)$}
\psfrag{F1}{\tiny $F(B_1,v_1)$}
\psfrag{Fi}{\tiny $F(B_{i},v_0)$}
\includegraphics*[scale=0.31]{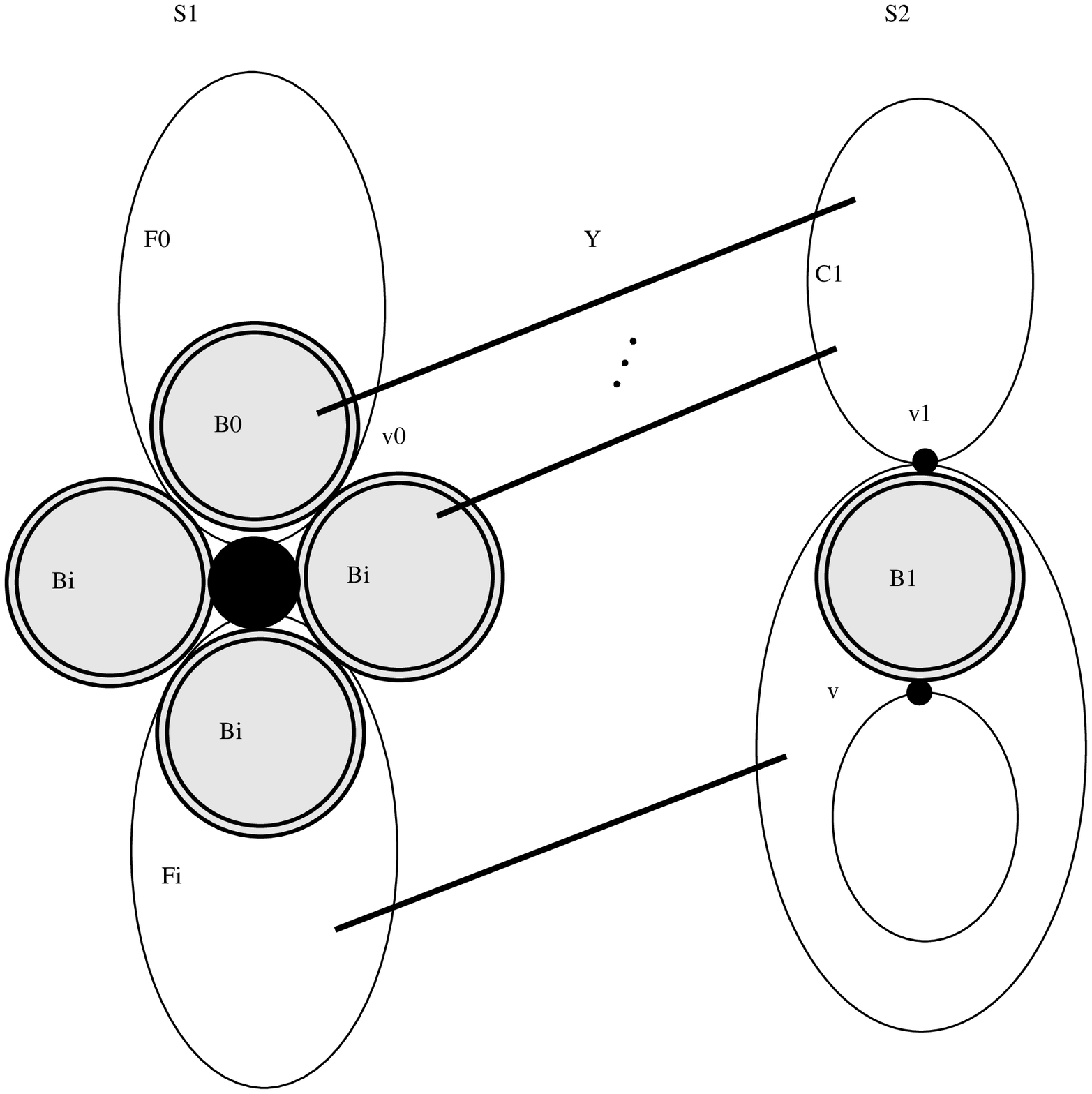}
\end{center} 
\caption{Case 3.}
\label{fig_star_case3}
\end{figure}

Then there is a 2-separation $\{T(v), E(\Sigma) \dof T(v)\}$ with $T(v)=C(B_{1},v) \cup Y(B_{1},v) \cup (\underset{B_i \in Q(v)} {\bigcup}F(B_{i},v_{0}))$. By twisting about $v_{0},v_{1}$ we create a new balanced component $H$ where $E(H)=E(\Sigma_{2})-E(B_{1})< E(\Sigma_{2})$, contradiction (see Figure~\ref{fig_star_case3}).

(ii) $Y(B_{0},v_{0})$ consists of edges of different sign i.e., $Y(B_{0},v_{0})=Y^{+}(B_{0},v_{0}) \cup Y^{-}(B_{0},v_{0})$
\\

If there is no balanced component $C(B_{i},v')$, $v' \in V(B_{i})$ of $B_{i}$ which contains the endvertices of edges of $Y$ of different sign then this case follows case 3(i). Otherwise, we will consider the case where $C_{B_{i}}^{*}$ corresponds to a bond in $\Sigma.(B_{i} \cup {Y}) | Y))$ consisting of half-edges and the positive edges of $Y$ with an endvertex in $C(B_{i},v')$ i.e., $C_{B_{i}}^{*}=Y(B_{i},v_{0}) \cup Y^{+}(B_{i},v')$. Then \eqref{eq:1} and \eqref{eq:3} become 
\begin{equation*} \tag{\ref{eq:6}}
Y^{+}(B_{0},v_{0}) \cup Y(B_{1},v_{1})=Y
\end{equation*}
\begin{equation} \label{eq:8}
Y(B_{i},v_{0}) \cup Y^{+}(B_{i},v') \cup Y(B_{1},v)=Y
\end{equation}

\begin{figure}[hbtp]
\begin{center}
\psfrag{S1}{\footnotesize $\Sigma_1$}
\psfrag{S2}{\footnotesize $\Sigma_2$}
\psfrag{B0}{\footnotesize $B_0$}
\psfrag{B1}{\footnotesize $B_1$}
\psfrag{Bi}{\footnotesize $B_i$}
\psfrag{v0}{\tiny $v_0$}
\psfrag{v1}{\tiny $v_1$}
\psfrag{v'}{\tiny $v'$}
\psfrag{v}{\tiny $v$}
\psfrag{Y}{\footnotesize $Y$}
\psfrag{C1}{\tiny $C(B_1,v_1)$}
\psfrag{F0}{\tiny $F(B_0,v_0)$}
\psfrag{F1}{\tiny $F(B_1,v_1)$}
\psfrag{Fi}{\tiny $F(B_{i},v_0)$}
\includegraphics*[scale=0.31]{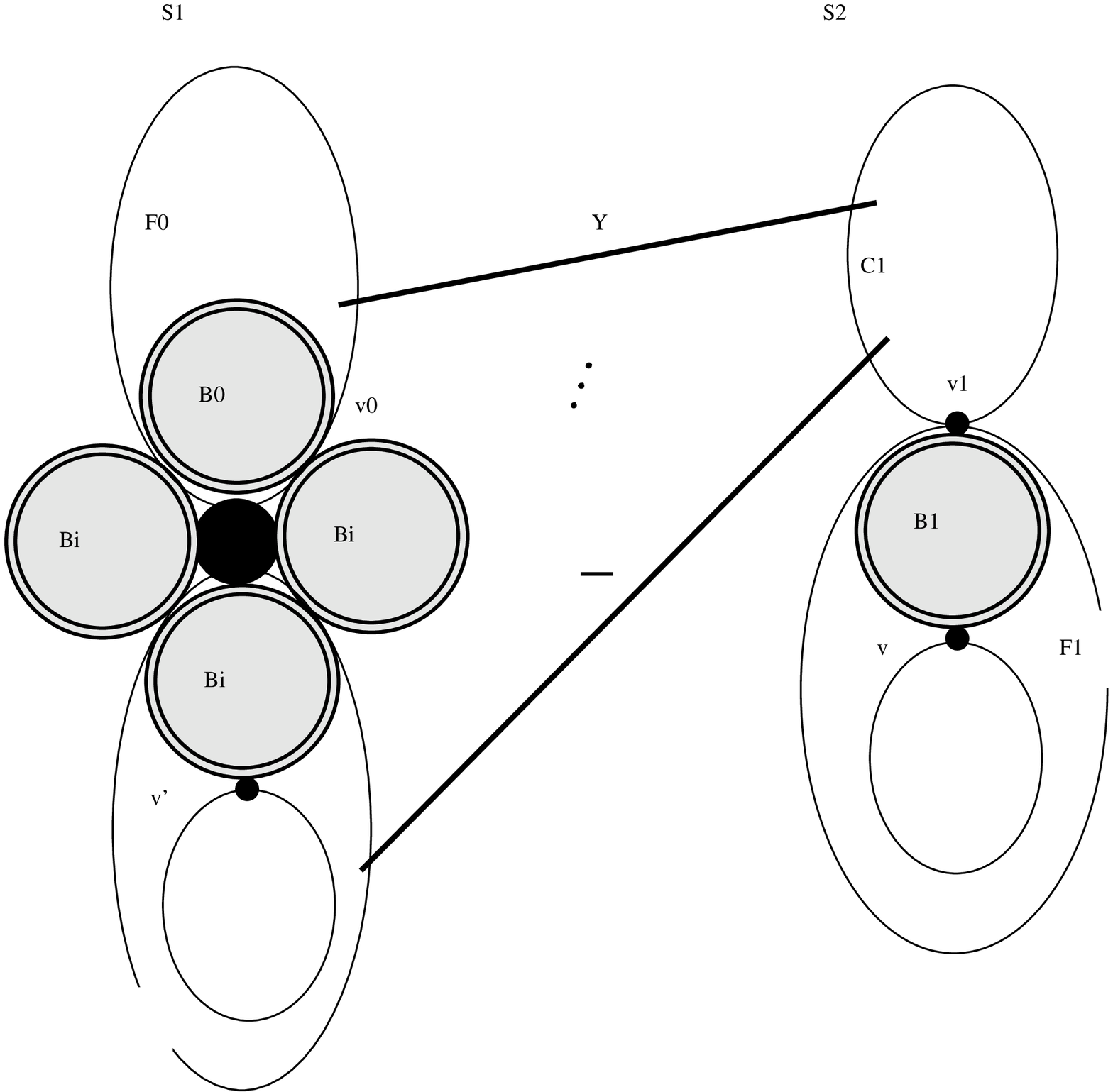}
\end{center} 
\caption{Case 3a.}
\label{fig_star_case3a}
\end{figure}

By hypothesis there is a negative edge of $Y$ with an endvertex in $C(B_{i},v')$. By \eqref{eq:6} this edge has its other endvertex in $C(B_{1},v_{1})$ which contradicts \eqref{eq:8} since $C(B_{i},v') \subseteq F(B_{i},v_{0}) \subseteq C(B_{0},v_{0})$ (see Figure~\ref{fig_star_case3a}).

To complete the proof it must be shown that the twistings which are defined by the 2-separations in the above cases form a new signed graph each time whose signed-graphic matroid is $M$. Since $\Sigma$ is a planar embedding of a cylindrical signed graph whose signed-graphic matroid is quaternary and nonbinary it has two negative faces. $\Sigma$ has a negative outface thus there is one negative inner face. This inner face can belong only to one of the twisting parts which are defined by a 2-separation. Therefore, the twisting part which does not contain the negative inner face does not contain negative cycles. Thus it is balanced and the twisting leave the matroid unchanged.

\end{proof}

Nongraphic cocircuits of a signed-graphic matroid correspond to either an unbalancing bond or to a double bond in their signed-graphic representations. The last results of this section constitute an attempt to separate an unbalancing bond from a double bond in matroidal terms. For the moment this is only possible for the class of signed-graphic matroids (see theorem~\ref{characterisationub}). 

\begin{definition}
$\mathcal{U}$-\textbf{cocircuit} $Y$ is a cocircuit of a connected signed-graphic matroid $M$ such that for any two elements $e,f$ of $Y$ with $r(\{e,f\})=2$ there is an exact connected 2-separation $(A, B\cup \{e,f\})$ of $M \dof (Y-\{e,f\})$ such that $(A\cup \{e,f\}, B)$ is an exact connected 1 or 2-separation of $M \dof (Y-\{e,f\})$.
\end{definition}

\begin{proposition}
If $Y \in \mathcal{C}^{*}(M)$ and $e,f \in Y$ then $\{e,f\} \in \mathcal{C}^{*}(M \dof (Y-\{e,f\}))$. 
\end{proposition}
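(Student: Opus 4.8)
The plan is to dualize and reduce the claim to a standard fact about the behaviour of circuits under contraction; the signed-graphic structure of $M$ plays no role, so I would treat the statement as purely matroid-theoretic. Recall that deletion and contraction are dual, so $(M \dof X)^{*} = M^{*} \cof X$ for every $X \subseteq E(M)$, and that $Y$ is a cocircuit of $M$ exactly when $Y$ is a circuit of $M^{*}$. Writing $Z = Y - \{e,f\}$, the set $\{e,f\}$ is a cocircuit of $M \dof Z$ if and only if it is a circuit of $(M \dof Z)^{*} = M^{*} \cof Z$. Hence the proposition is equivalent to: \emph{if $Y$ is a circuit of the matroid $N := M^{*}$ and $Z = Y - \{e,f\}$ with $e \neq f$, then $\{e,f\}$ is a circuit of $N \cof Z$.}

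To prove this reduced statement I would invoke the standard description of contraction: the circuits of $N \cof Z$ are precisely the minimal nonempty members of the family $\{C - Z : C \in \mathcal{C}(N)\}$. Since $Y \in \mathcal{C}(N)$ and $Y - Z = \{e,f\}$, the set $\{e,f\}$ lies in this family, so it contains at least one circuit of $N \cof Z$; such a circuit is a nonempty subset of $\{e,f\}$ and therefore equals $\{e\}$, $\{f\}$, or $\{e,f\}$. It thus suffices to rule out the two singletons, i.e. to show that neither $e$ nor $f$ becomes a loop of $N \cof Z$.

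For the singleton $\{e\}$, being a loop of $N \cof Z$ is equivalent to $e \in \mathrm{cl}_{N}(Z)$, that is $r_{N}(Z \cup \{e\}) = r_{N}(Z)$. But $Z = Y - \{e,f\}$ and $Z \cup \{e\} = Y - \{f\}$ are both proper subsets of the circuit $Y$, hence independent in $N$, so $r_{N}(Z) = |Z|$ and $r_{N}(Z \cup \{e\}) = |Z| + 1 > r_{N}(Z)$. Therefore $e \notin \mathrm{cl}_{N}(Z)$ and $\{e\}$ is not a loop; the symmetric argument disposes of $f$. Consequently the unique circuit of $N \cof Z$ contained in $\{e,f\}$ is $\{e,f\}$ itself, and undualizing gives $\{e,f\} \in \mathcal{C}^{*}(M \dof Z)$, as required.

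I do not anticipate a real obstacle: the result follows directly from matroid duality together with the elementary characterization of circuits under contraction. The only step demanding a moment's care is the minimality check — verifying that contracting $Z$ does not collapse $e$ or $f$ into a loop — and this is precisely where the hypothesis that $Y$ is a circuit (so that every proper subset of $Y$ is independent) is used.
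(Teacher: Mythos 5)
Your proof is correct and takes essentially the same route as the paper: dualize via $(M \dof Z)^{*} = M^{*} \cof Z$ with $Z = Y - \{e,f\}$, describe the circuits of the contraction as the minimal nonempty sets $C - Z$ for $C \in \mathcal{C}(M^{*})$, and exclude the singletons $\{e\}$ and $\{f\}$ using the minimality of the circuit $Y$. Your closure/rank computation for the singleton exclusion is simply a more explicit rendering of the paper's terser observation that $\{e\}$ being a cocircuit of $M \dof Z$ would force a cocircuit of $M$ strictly inside $Y$, so the two arguments coincide in substance.
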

\begin{proof}
Since $\mathcal{C}^{*}(M \dof (Y-\{e,f\}))=\mathcal{C}(M^{*} \cof (Y-\{e,f\}))=\text{minimal nonempty} \\ \{C-(Y-\{e,f\}) : C \in \mathcal{C}(M^{*}(\Sigma) \}$, the cocircuits of $M \dof (Y-\{e,f\})$ are the minimal nonempty subsets of $C-(Y-\{e,f\})$ such that $C$ is a cocircuit of $M(\Sigma)$. Then $\{e,f\} \in \mathcal{C}^{*}(M \dof (Y-\{e,f\})$ since otherwise $\{e\}$ or $\{f\}$ is a cocircuit of $M \dof (Y-\{e,f\})$ implying that $Y-\{f\}$ or $Y-\{e\}$ is a cocircuit of $M$ which is a contradiction.
\end{proof}

The following theorem constitutes a characterisation of an unbalancing bond for the class of signed-graphic matroids that upon its deletion from a connected signed graph we obtain exactly one unbalanced connected component.

\begin{theorem} \label{characterisationub}
Let $M(\Sigma)$ be a connected signed-graphic matroid such that $\Sigma$ is connected jointless and unbalanced, then $Y$ is a $\mathcal{U}$-cocircuit of $M(\Sigma)$ if and only if $Y$ is an unbalancing bond in $\Sigma$ with one unbalanced connected component.
\end{theorem}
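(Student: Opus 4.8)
The plan is to prove the two implications separately, the main tools being the correspondence of Proposition~\ref{k-bisep} between exact connected $k$-separations of a signed-graphic matroid and $k$-biseparations of the signed graph, the identification of cocircuits with bonds from Theorem~\ref{thrm_bonds}, and the rank formula $r(S)=|V(\Sigma[S])|-b(\Sigma[S])$ (with $b(\cdot)$ the number of balanced components) as an auxiliary for reading off the order of a separation. Throughout I abbreviate $\Sigma'=\Sigma\dof(Y-\{e,f\})$, so that $M\dof(Y-\{e,f\})=M(\Sigma')$ by Theorem~\ref{th_minsig}; by the Proposition preceding the statement, $\{e,f\}$ is a cocircuit of $M(\Sigma')$, hence a bond of $\Sigma'$, so that deleting $e,f$ recovers $\Sigma\dof Y$ and splits off a balanced component.

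For the reverse implication, suppose $Y$ is an unbalancing bond whose deletion leaves a single unbalanced component. Then $Y$ is a cocircuit by Theorem~\ref{thrm_bonds}, and $\Sigma\dof Y=\Sigma_{1}\sqcup\Sigma_{2}$ with $\Sigma_{1}$ connected unbalanced, $\Sigma_{2}$ connected balanced, and $Y$ the edge cut between them; thus for $e,f\in Y$ the graph $\Sigma'$ is connected, unbalanced, and consists of $\Sigma_{1}$ and $\Sigma_{2}$ joined by $e$ and $f$. I would take the partition of $E(\Sigma')$ induced by this cut, placing $\{e,f\}$ on whichever of the two sides makes $(A,B\cup\{e,f\})$ an exact $2$-biseparation (whence a $2$-separation by Proposition~\ref{k-bisep}(1)), and then read off the type of $(A\cup\{e,f\},B)$. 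The verification splits into finitely many cases according to how many distinct endpoints $e,f$ have in $\Sigma_{2}$ and, when they share their $\Sigma_{1}$-endpoint, the sign of the cycle they close through $\Sigma_{2}$; in each case exactly one side-assignment forces the biseparation type ``$|V\cap V|=2$, exactly one side balanced'' for the first separation, while transporting $\{e,f\}$ across keeps the order at $2$ or lowers it to $1$ — which is exactly why the definition permits a $1$ or $2$-separation there. These are routine applications of $r(S)=|V|-b$; the one delicate point is a balanced component $\Sigma_{2}$ consisting of a single edge, which must be inspected separately.

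For the forward implication, assume $Y$ is a $\mathcal{U}$-cocircuit. Being a cocircuit it is a bond, so $\Sigma\dof Y$ has exactly one balanced component $\Sigma_{2}$ together with some unbalanced components, and it remains to exclude the three non-unbalancing possibilities — $Y$ a balancing bond, a star bond, or a double bond — and to force a single unbalanced component. The strategy is, for each forbidden configuration, to exhibit a concrete pair $e,f\in Y$ with $r(\{e,f\})=2$ for which the prescribed pair of separations cannot coexist, contradicting the defining property; here I would invoke Proposition~\ref{k-bisep}(2) to turn each exact connected separation of $M(\Sigma')$ into a biseparation of $\Sigma'$ (using that the hypothesis forces $\Sigma'$ connected and unbalanced) and then compare biseparation types. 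When two or more unbalanced components are present I would choose $e,f$ so that $\Sigma'$ is disconnected, observing that a disconnected $\Sigma'$ admits no connected $2$-separation; for a star bond the balanced side carries no edges, so the separation that must transport $\{e,f\}$ as half-edges cannot be exact of order $2$; and for a double bond I would select $e$ with both ends in $\Sigma_{2}$ and translate, through the biseparation trichotomy, the presence of such an interior edge into a vertex-intersection pattern forbidden by the two required separations.

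I expect the double-bond exclusion to be the main obstacle: there $\Sigma\dof Y$ has the same coarse shape as for an unbalancing bond — one balanced and some unbalanced components — and the sole distinction is an edge of $Y$ with both ends in $\Sigma_{2}$, so the contradiction must be extracted from a fine analysis of how $\{e,f\}$ can straddle the separation. A secondary difficulty is ensuring, in the multiple-component case, that a pair $e,f$ with $r(\{e,f\})=2$ realizing the desired disconnection actually exists, since the edges of $Y$ may be distributed unevenly among the components; handling this may force the choice of an $e,f$ that chains three pieces together, after which the resulting order-$2$ separation has to be ruled out directly.
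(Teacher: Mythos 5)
On the ground the paper's own proof actually covers, your plan coincides with it almost step for step: the same transfer between exact connected separations of $M(\Sigma')$ and biseparations of $\Sigma'$ via Proposition~\ref{k-bisep}, the same tailored pairs $e,f$ (one unbalancing-part edge plus one balancing-part edge to kill a double bond; edges into two different unbalanced components to kill multiplicity), the same trichotomy on $|V(A)\cap V(B\cup\{e,f\})|$, and for the converse the same explicit biseparations $(\Sigma_1,\Sigma_2\cup\{e,f\})$ and $(\Sigma_1\cup\{e,f\},\Sigma_2)$ split into cases by shared endpoints and signs. Your disconnected-$\Sigma'$ shortcut for three or more unbalanced components is a reformulation of the paper's Claim~\ref{exact}, and your fallback for unevenly distributed edges of $Y$ (``chains three pieces\dots ruled out directly'') is exactly what the paper's Cases 2.a--2.c do.

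The genuine gap is in the step you add beyond the paper: excluding balancing bonds. Your mechanism --- exhibit a pair $e,f$ for which the two required separations cannot coexist --- would fail there, because such a pair need not exist. Take $\Sigma$ to be the all-positive path $v_0v_1v_2v_3v_4$ with edges $p_1,\dots,p_4$ plus two negative chords $e=\{v_0,v_2\}$, $f=\{v_2,v_4\}$: the only circuit of $M(\Sigma)$ is the loose handcuff $E(\Sigma)$, so $M(\Sigma)\cong U_{5,6}$ is connected, and $Y=\{e,f\}$ is a cocircuit and a balancing bond. For the unique pair $(e,f)$, setting $A=\{p_3,p_4\}$ and $B=\{p_1,p_2\}$ makes $(A,B\cup\{e,f\})$ an exact connected $2$-separation whose transport $(A\cup\{e,f\},B)$ is again an exact connected $2$-separation (all four parts connected, ranks $2+4=4+2=6=r(M)+1$), so this balancing bond \emph{is} a $\mathcal{U}$-cocircuit and no choice of pair refutes it. The paper sidesteps this by never treating balancing or star bonds at all --- its forward direction tacitly works inside the unbalancing/double dichotomy invoked for nongraphic cocircuits just before the definition --- so the defect lies in the theorem's literal statement as much as in your plan, but within your proposal it is a step that cannot be carried out. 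Relatedly, the ``delicate point'' you flag in the converse is not a routine inspection but a second genuine obstruction: if $\Sigma_2$ is a single edge $g$, then in any candidate transport $(A\cup\{e,f\},B)$ the part $B$ must be connected and is forced to be $\{g\}$ (the endpoints of $g$ meet only $e,f$ in $\Sigma'$), and $(A\cup\{e,f\},\{g\})$ has rank sum $r(M(\Sigma'))+1$ with $|B|=1$, hence is neither an exact $1$-separation nor a legal $2$-separation; so the converse requires $|E(\Sigma_2)|\geq 2$, a hypothesis the paper's Case 3.a uses silently through the condition $\min\{|A|,|B|\}\geq 2$.
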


\begin{proof}
Suppose that $Y$ is a $\mathcal{U}$-cocircuit of $M(\Sigma)$ and a double bond in $\Sigma$. Hence $\Sigma \dof Y$ consists of one balanced component denoted by $\Sigma_{2}$ and one or more unbalanced components. Moreover, consider two elements $e,f$ of $Y$ with $r(\{e,f\})=2$ such that $e$ has one endvertex in some unbalanced connected component $S_{1}$ and the other in $\Sigma_{2}$ while $f$ has both endvertices in $\Sigma_{2}$ (see Figure~\ref{ef}). By definition of deletion in matroids if $Y \in \mathcal{C}^{*}(M(\Sigma))$ then $\{e,f\} \in \mathcal{C}^{*}(M(\Sigma) \dof (Y-\{e,f\}))$. Thus, $\{e,f\}$ is a double bond in $\Sigma \dof (Y-\{e,f\})$.  

\begin{figure}[hbtp]
\begin{center}
\mbox{
\psfrag{e}{\footnotesize $e$}
\psfrag{f}{\footnotesize $f$}
\psfrag{S2}{\footnotesize $\Sigma_{2}$}
\psfrag{S1}{\footnotesize $S_{1}$}
\psfrag{S3}{\footnotesize $S_{2}$}
\psfrag{S4}{\footnotesize $S_{3}$}
\includegraphics*[scale=0.30]{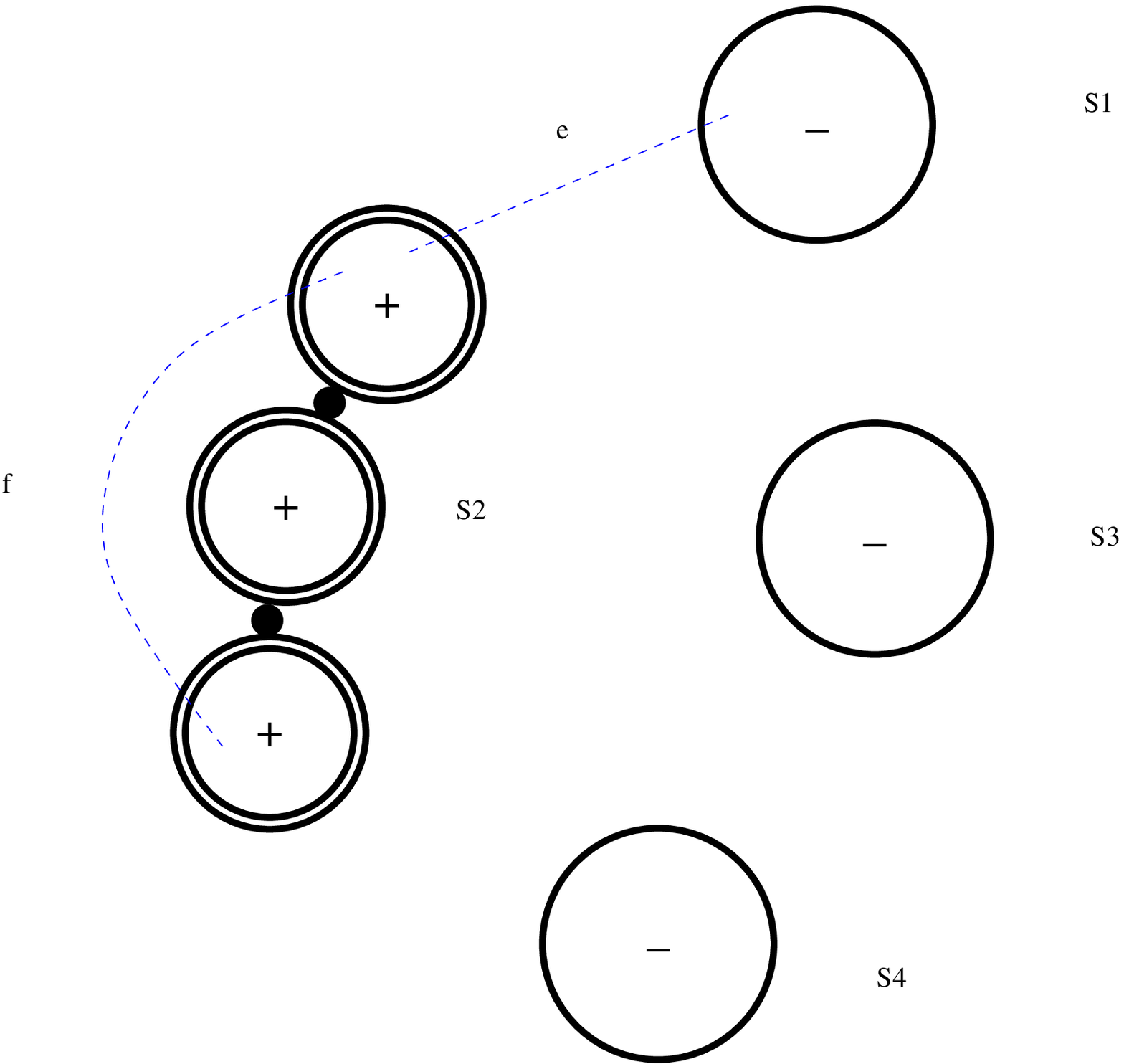}
}
\caption{$e,f$ in $\Sigma \dof (Y-\{e,f\})$}
\label{ef}
\end{center}
\end{figure}

We shall show that for $e,f$ there is no exact connected 2-separation $(A, B\cup \{e,f\})$ of $M(\Sigma) \dof (Y-\{e,f\})$ such that $(A\cup \{e,f\}, B)$ is an exact connected 1 or 2-separation of $M(\Sigma) \dof (Y-\{e,f\})$. Equivalently, for every exact connected 2-separation $(A, B\cup \{e,f\})$ of $M(\Sigma) \dof (Y-\{e,f\})$, $(A\cup \{e,f\}, B)$ is neither an exact connected 1 or 2-separation. Assume that there exists an exact connected 2-separation $(A, B\cup \{e,f\})$ of $M(\Sigma) \dof (Y-\{e,f\})$ since otherwise $Y$ is not a u-cocircuit. For convenience we shall denote $\Sigma \dof (Y-\{e,f\})$ as $\Sigma'$. 

\begin{claim}\label{exact}
If there exists an exact connected 2-separation $(A, B\cup \{e,f\})$ in $M(\Sigma')$ and $\Sigma'$ consists only of unbalanced connected components then $\Sigma'$ is connected.
\end{claim}
\begin{proof}
Suppose that $(A, B\cup \{e,f\})$ is an exact connected 2-separation of $M(\Sigma')$. Hence $\Sigma'[A ], \Sigma'[B \cup \{e,f\}]$ are connected, $r(A)+r(B\cup \{e,f\})=r(M(\Sigma'))+1$ (1) and $min\{|A |,|B\cup \{e,f\}|\} \geq 2$. By assumption $r(\Sigma')=|v(\Sigma')|$. Assume first that $\Sigma'[B \cup \{e,f\}]$ is balanced. Then $r(B \cup \{e,f\})=|v(\Sigma'[B \cup \{e,f\}])|-1$. If $ \Sigma'[A]$ is balanced, $r(A)=|v(\Sigma'[A])|-1$ and from (1) $|V(A) \cap V(B\cup \{e,f\})|=3$.Otherwise $\Sigma'[A]$ is unbalanced, $r(A)=|v(\Sigma'[A])|$ and from (1) $|V(A) \cap V(B\cup \{e,f\})|=2$. Now assume that $\Sigma'[B \cup \{e,f\}]$ is unbalanced. Hence $r(B \cup \{e,f\})=|v(\Sigma'[B \cup \{e,f\}])|$. If $ \Sigma'[A]$ is balanced then $r(A)=|v(\Sigma'[A])|-1$  and from (1) $|V(A) \cap V(B \cup \{e,f\})|=2$. Otherwise $\Sigma'[A]$ is unbalanced, $r(A)=|v(\Sigma'[A])|$ and from (1) $|V(A) \cap V(B \cup \{e,f\})|=1$. In all cases $\Sigma'[B \cup \{e,f\}], \Sigma'[A]$ have common vertices and are connected which implies that $\Sigma'$ is connected.
\end{proof}

By the above claim we deduce that $\Sigma \dof Y$ consists of one unbalanced component and by proposition~\ref{k-bisep} that $(A, B\cup \{e,f\})$ is a connected 2-biseparation of $\Sigma'$. In addition by definition of $2$-biseparation $(A, B\cup \{e,f\})$ satisfies one of the following three cases: \\ 
\noindent
\underline{{\bf Case 1.a:}} $|V(A) \cap V(B \cup \{e,f\})|=3$ and $\Sigma'[A], \Sigma'[B \cup \{e,f\}]$ are  both connected and balanced. \\
\noindent
$S_{1}$ is contained neither to $\Sigma'[A]$ nor to $\Sigma'[B \cup \{e,f\}]$. Thereby the edges of the negative cycles of $S_{1}$ are partitioned in $\Sigma'[A]$ and $\Sigma'[B \cup \{e,f\}]$. Since $e,f$ belong to the edges of $\Sigma'[B \cup \{e,f\}]$, the aforementioned partitioning creates at least two common vertices between $\Sigma'[A]$ and $\Sigma'[B \cup \{e,f\}]$. By the fact that $\Sigma_{2} \cup \{f\}$ is unbalanced there are two distinct paths in $\Sigma'$ from the endvertex of $e$ in $\Sigma_{2}$ to the endvertices of $f$ in $\Sigma_{2}$. Since $\Sigma'[B \cup \{e,f\}]$ is balanced there is one edge of these paths contained in $\Sigma'[A]$ implying that $|V(A) \cap V(B \cup \{e,f\})| \geq 4$ which is a contradiction.  \\
\noindent
\underline{{\bf Case 1.b:}} $|V(A) \cap V(B \cup \{e,f\})|=2$ and exactly one of $\Sigma'[A], \Sigma'[B \cup \{e,f\}]$ is balanced. 
\noindent
By the fact that $\Sigma'[\Sigma_{2} \cup \{f\}]$ is unbalanced, in $\Sigma'$ there are two distinct paths from the endvertex of $e$ in $\Sigma_{2}$ to the endvertices of $f$ in $\Sigma_{2}$. Moreover $\Sigma'[B \cup \{e,f\}]$ contains edges of $\Sigma_{2}$ since otherwise $|V(A) \cap V(B \cup \{e,f\})| \geq 3$. If $\Sigma'[B \cup \{e,f\}]$ is unbalanced then $\Sigma'[A]$ is balanced. Hence either $S_{1}$ is contained in $\Sigma'[B \cup \{e,f\}]$ or $\Sigma'[B \cup \{e,f\}]$ contains edges of $S_{1}$. In the first case $S_{1}$ is contained in $\Sigma'[B \cup \{e,f\}]$ implying that $\Sigma'[A]$ is a subgraph of $\Sigma_{2}$. Then $|V(A \cup \{e,f\}) \cap V(B) | \geq 4$. Furthermore $\Sigma'[A \cup \{e,f\}]$ is balanced and $\Sigma'[B]$ is unbalanced. By corollary~\ref{2-bic} $\Sigma'$ is vertically 2-biconnected. Thus $(A \cup \{e,f\}, B)$ is neither 1 or 2-biseparation in $\Sigma'$. Since $M(\Sigma')$ is connected, $(A \cup \{e,f\}, B)$ cannot be 1-separation. By corollary~\ref{k-bisep} $(A \cup \{e,f\}, B)$ is not an exact 2-separation in $M(\Sigma')$ with connected parts. If $(A \cup \{e,f\}, B)$ is a 2-separation in $M(\Sigma')$ it must be an exact 2-separation since $M(\Sigma')$ is not 3-connected. Thereby $(A \cup \{e,f\}, B)$ has not connected parts. Therefore $(A\cup \{e,f\}, B)$ is neither a connected 1 or 2-separation in $M(\Sigma')$. In the second case $\Sigma'[B \cup \{e,f\}]$ contains edges of $S_{1}$. By assumption $|V(A) \cap V(B \cup \{e,f\})|=2$ implying that $\Sigma_{2} \subseteq \Sigma'[B \cup \{e,f\}]$. Then $|V(A \cup \{e,f\}) \cap V(B) | \geq 5$. Therefore $(A \cup \{e,f\}, B)$ is neither 1 or 2-biseparation in $\Sigma'$. If $\Sigma'[B \cup \{e,f\}]$  is balanced, then $\Sigma'[A]$ is unbalanced. If $S_{1} \subseteq \Sigma'[A]$ then $\Sigma_{2}$ cannot be contained in $\Sigma'[B \cup \{e,f\}]$ and $|V(A) \cap V(B \cup \{e,f\})| \geq 3$ which is a contradiction. Hence there are edges of $S_{1}$ belonging to $\Sigma'[A]$. Since $|V(A) \cap V(B \cup \{e,f\})|=2$ it follows that $\Sigma_{2} \subseteq \Sigma'[B \cup \{e,f\}]$ and $\Sigma'[A] \subseteq S_{1}$. Therefore $|V(A \cup \{e,f\}) \cap V(B) | \geq 3$ and $(A \cup \{e,f\}, B)$ is neither 1 or 2-biseparation in $\Sigma'$. Using the same arguments as above $(A\cup \{e,f\}, B)$ is neither a connected 1 or 2-separation in $M(\Sigma')$.  \\
\noindent
\underline{{\bf Case 1.c:}} $|V(A) \cap V(B \cup \{e,f\})|=1$ and both $\Sigma'[A], \Sigma'[B \cup \{e,f\}]$ are unbalanced. 
\noindent
Since $e,f$ are edges of $\Sigma'[B \cup \{e,f\}]$ there are no edges of $\Sigma_{2}$ belonging to $\Sigma'[A]$ since otherwise $|V(A) \cap V(B \cup \{e,f\})| \geq 2$. Hence $\Sigma_{2}$ is contained to $\Sigma'[B \cup \{e,f\}]$. Moreover since $|V(A) \cap V(B \cup \{e,f\})|=1$, $S_{1} \subseteq \Sigma'[A]$. Therefore $(S_{1}, \Sigma_{2} \cup \{e,f\})$ is the specified 2-biseparation of $\Sigma'$ with connected parts. However $(S_{1} \cup \{e,f\}, \Sigma_{2})$ is neither 1 or 2-biseparation of $\Sigma'$ and by proposition \ref{k-bisep} $(A \cup \{e,f\}, B)$ is neither a connected 1 or 2-separation of $M(\Sigma')$. In all the above cases we reached a contradiction due to hypothesis that $Y$ is a $\mathcal{U}$-cocircuit. 

Assume that $Y$ is a $\mathcal{U}$-cocircuit of $M(\Sigma)$ and an unbalancing bond in $\Sigma$ such that $\Sigma \dof Y$ consists of one balanced component denoted also by $\Sigma_{2}$ and more than one unbalanced components. Let $S_{1},S_{2}$ be two of them. Consider two elements of $Y$ with $r(\{e,f\})=2$ such that $e$ has one endvertex in $\Sigma_{2}$ and the other in $S_{1}$ while $f$ has one endvertex in $\Sigma_{2}$ and the other in $S_{2}$. By definition of deletion in matroids if $Y \in \mathcal{C}^{*}(M(\Sigma))$ then $\{e,f\} \in \mathcal{C}^{*}(M(\Sigma'))$. Thus, $\{e,f\}$ is an unbalancing bond in both of the above cases in $\Sigma'$. Assume that there exists an exact connected 2-separation $(A, B\cup \{e,f\})$ of $M(\Sigma')$. Then by claim~\ref{exact}, $\Sigma'$ is connected. By corollary \ref{k-bisep} $(A, B\cup \{e,f\})$ is a connected 2-biseparation in $\Sigma'$. Thus $\Sigma \dof Y$ has exactly two unbalanced components $S_{1}, S_{2}$. By definition of 2-biseparation $(A, B\cup \{e,f\})$ satisfies one of the following three cases:\\
\noindent
\underline{{\bf Case 2.a:}} $|V(A) \cap V(B\cup \{e,f\})|=3$ and $ \Sigma'[A], \Sigma'[B \cup \{e,f\}]$ are both connected and balanced. \\
\noindent
Since $ \Sigma'[A], \Sigma'[B \cup \{e,f\}]$ are both connected and balanced, they must both contain edges of $S_{1}$ and $S_{2}$. Therefore $\Sigma'[A]$ is disconnected which is a contradiction. \\
\noindent
\underline{{\bf Case 2.b:}} $|V(A) \cap V(B\cup \{e,f\})|=2$ and exactly one of $ \Sigma'[A], \Sigma'[B \cup \{e,f\}]$ is balanced. \\
\noindent
Suppose that $\Sigma'[B\cup \{e,f\}]$ is balanced and that it contains edges of $S_{2}$. $\Sigma'[B\cup \{e,f\}]$ contains $e,f$ and $|V(A) \cap V(B\cup \{e,f\})|=2$ which imply that $S_{1} \subseteq \Sigma'[A]$ and $\Sigma_{2} \subseteq \Sigma'[B \cup \{e,f\}]$. Then $(A \cup \{e,f\}, B)$ is not connected 1 or 2-biseparation of $\Sigma'$. Thus $(A \cup \{e,f\}, B)$ is not exact connected 1 or 2-separation of $M(\Sigma')$. Otherwise $\Sigma'[B \cup \{e,f\}]$ does not contain edges of $S_{2}$. Since $S_{1}$ cannot be a subgraph of $\Sigma'[B\cup \{e,f\}]$, $\Sigma'[A]$ contains edges of $S_{1}$. It follows that $\Sigma'[A]$ is disconnected which is a contradiction. Suppose now that $\Sigma'[B \cup \{e,f\}]$ is unbalanced and wlog that it contains edges of $S_{2}$. Since $|V(A) \cap V(B \cup \{e,f\})|=2$ either $S_{1} \subseteq \Sigma'[B \cup \{e,f\}]$, $S_{2} \subseteq \Sigma'[B \cup \{e,f\}]$ and $\Sigma'[A] \subseteq \Sigma_{2}$ or $S_{1} \subseteq \Sigma'[B \cup \{e,f\}]$ and $\Sigma_{2} \subseteq \Sigma'[B \cup \{e,f\}]$ and $\Sigma'[A] \subseteq S_{2}$. In both cases $(A\cup \{e,f\}, B)$ is not connected 1 or 2-separation of $M(\Sigma')$. Otherwise $\Sigma'[B\cup \{e,f\}]$ does not contain edges of $S_{2}$ which is a contradiction since $\Sigma'[A]$ is balanced.\\
\noindent
\underline{{\bf Case 2.c:}} $|V(A) \cap V(B)\cup \{e,f\}|=1$ and both $\Sigma'[A], \Sigma'[B\cup \{e,f\}]$ are connected and unbalanced. \\
\noindent
Since both $\Sigma'[A], \Sigma'[B\cup \{e,f\}]$ are connected and unbalanced they must contain edges of $S_{1}$ and $S_{2}$, which is a contradiction since $\Sigma'[A]$ is disconnected. 

Conversely, assume that $Y$ is an unbalancing bond in $\Sigma$ with one unbalanced connected component. The unbalanced and the balanced component of $\Sigma'$ shall be denoted by $\Sigma_{1}$  and $\Sigma_{2}$ respectively. Moreover, consider two elements $e,f$ of $Y$ such that $r(\{e,f\})=2$. We shall show that for any two elements $e,f \in Y$ with $r(\{e,f\}=2$ there exists an exact connected 2-separation $(A, B\cup \{e,f\})$ of $M(\Sigma')$ such that $(A\cup \{e,f\}, B)$ is an exact connected 1 or 2-separation of $M(\Sigma')$. We encounter the following four cases: 

\begin{figure}[hbtp]
\begin{center}
\mbox{
\subfigure[Case 3.a]
{
\psfrag{e}{\footnotesize $e$}
\psfrag{f}{\footnotesize $f$}
\psfrag{S1}{\footnotesize $\Sigma_{1}$}
\psfrag{S2}{\footnotesize $\Sigma_{2}$}
\includegraphics*[scale=0.30]{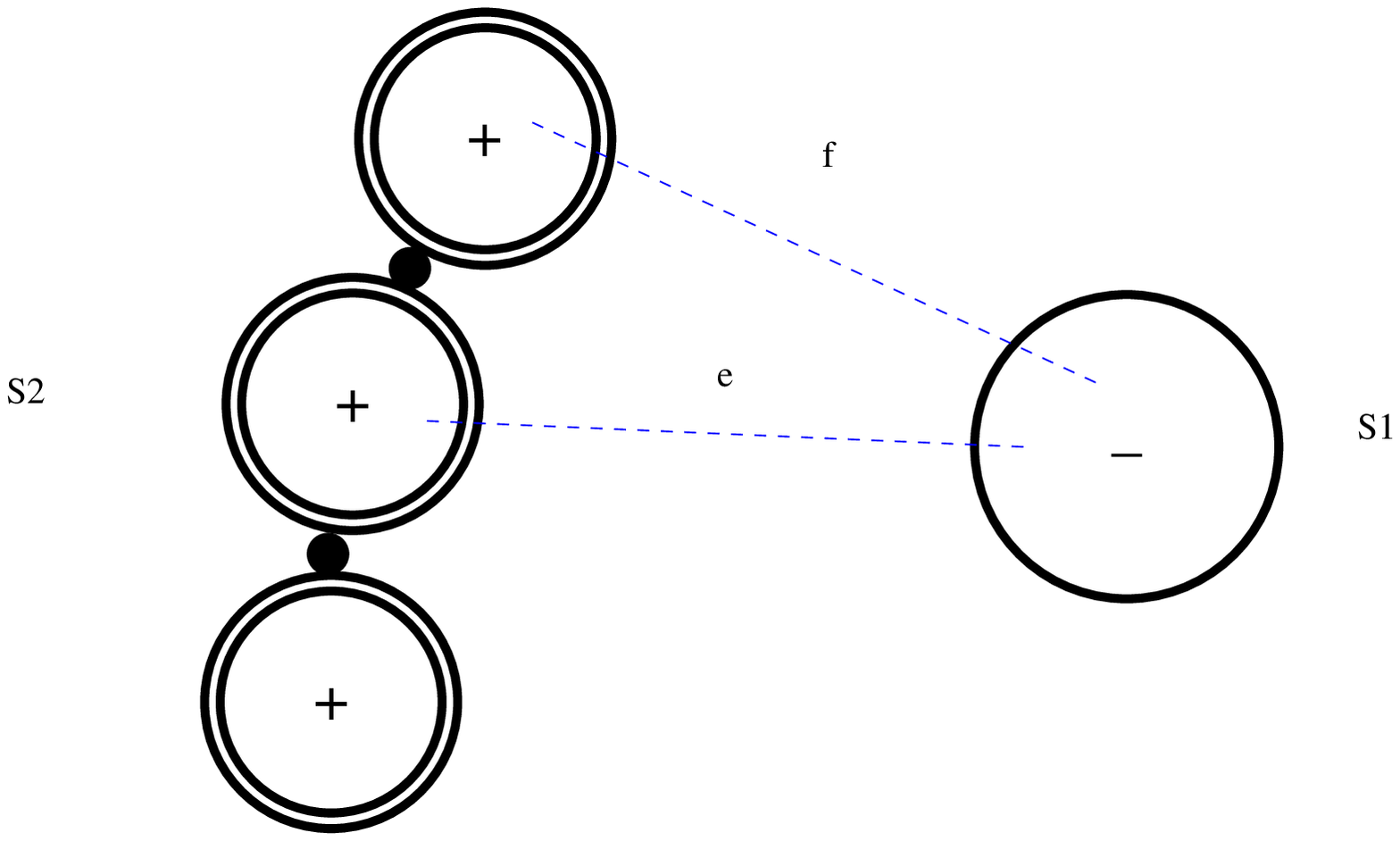}
}\quad
\subfigure[Case 3.b]
{
\psfrag{e}{\footnotesize $e$}
\psfrag{f}{\footnotesize $f$}
\psfrag{S1}{\footnotesize $\Sigma_{1}$}
\psfrag{S2}{\footnotesize $\Sigma_{2}$}
\includegraphics*[scale=0.30]{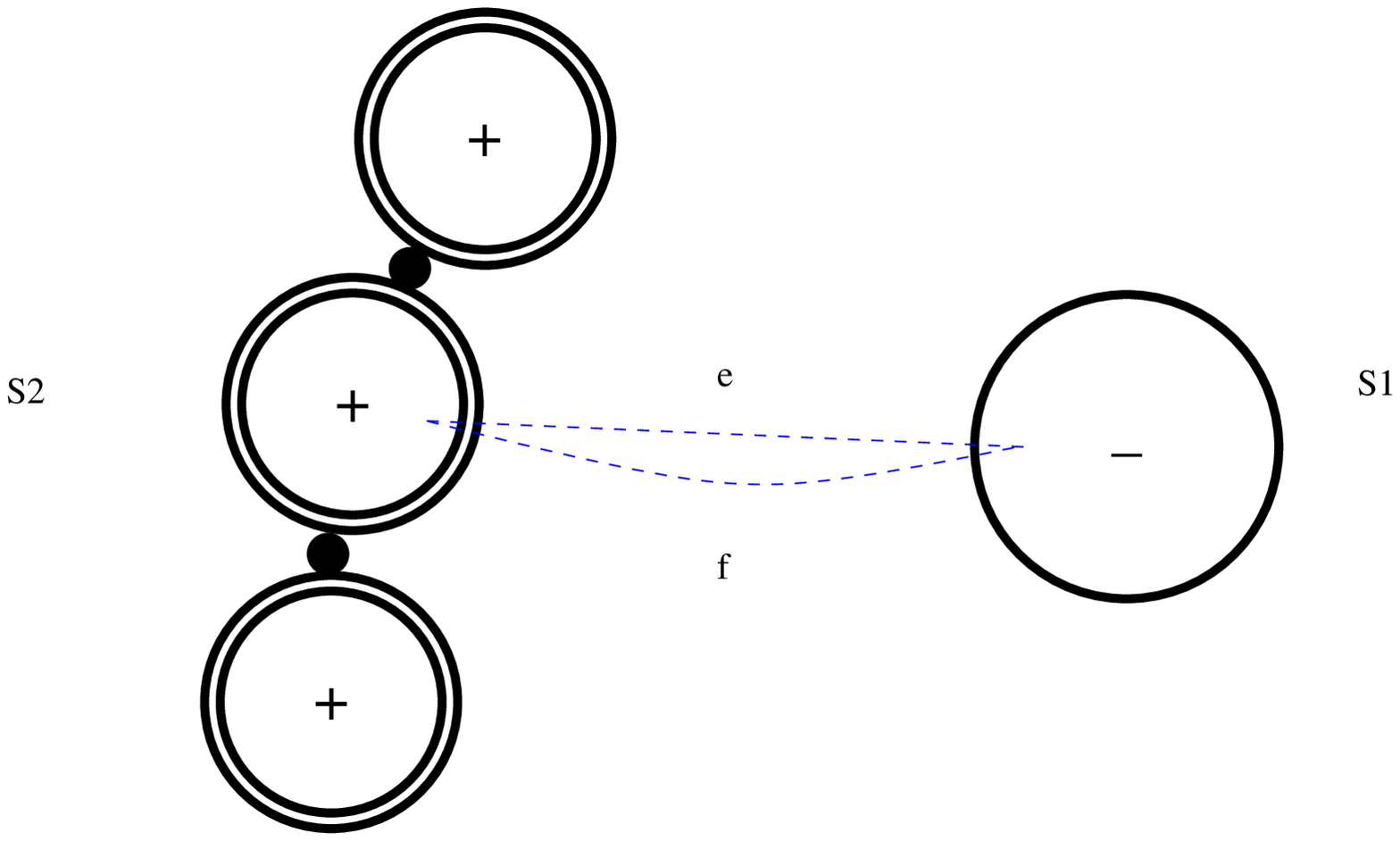}
}}
\caption{$e,f$ in $\Sigma \dof (Y-\{e,f\})$}
\label{efab}
\end{center}
\end{figure}

\noindent
\underline{{\bf Case 3.a:}} $e,f$ have distinct endvertices in $\Sigma_{1}$ and $\Sigma_{2}$ (see figure~\ref{efab}(a)). \\
\noindent
$(\Sigma_{1},\Sigma_{2} \cup \{e,f\})$ is a connected 2-biseparation in $\Sigma'$ independently of the sign of $e,f$ since $min\{|\Sigma_{1}|,|\Sigma_{2} \cup \{e,f\}|\} \geq 2$ and $|V(\Sigma_{1}) \cap V(\Sigma_{2} \cup \{e,f\})|=2$. By corollary~\ref{k-bisep}, $(\Sigma_{1},\Sigma_{2} \cup \{e,f\})$ is an exact connected 2-separation in $M(\Sigma')$. Then $(\Sigma_{1} \cup \{e,f\},\Sigma_{2})$ is a connected 2-biseparation in $\Sigma' $ and an exact connected 2-separation in $M(\Sigma')$. \\
\noindent
\underline{{\bf Case 3.b:}} $e,f$ have the same endvertex in $\Sigma_{1}$ and $\Sigma_{2}$ (see figure~\ref{efab}(b)). \\
\noindent
Since $r(\{e,f\})=2$, $e,f$ must have different sign. Then $(\Sigma_{1},\Sigma_{2} \cup \{e,f\})$ is a connected 2-biseparation in $\Sigma'$ since $min\{|\Sigma_{1}|,|\Sigma_{2} \cup \{e,f\}|\} \geq 2$ and $|V(\Sigma_{1}) \cap V(\Sigma_{2} \cup \{e,f\})|=1$.  By proposition~\ref{k-bisep} $, (\Sigma_{1},\Sigma_{2} \cup \{e,f\})$ is an exact connected 2-separation in $M(\Sigma')$. Then $(\Sigma_{1} \cup \{e,f\},\Sigma_{2})$ is a connected 1-biseparation in $\Sigma'$ and an exact connected 1-separation in $M(\Sigma')$. 

\begin{figure}[hbtp]
\begin{center}
\mbox{
\subfigure[Case 3.c]
{
\psfrag{e}{\footnotesize $e$}
\psfrag{f}{\footnotesize $f$}
\psfrag{S1}{\footnotesize $\Sigma_{1}$}
\psfrag{S2}{\footnotesize $\Sigma_{2}$}
\includegraphics*[scale=0.30]{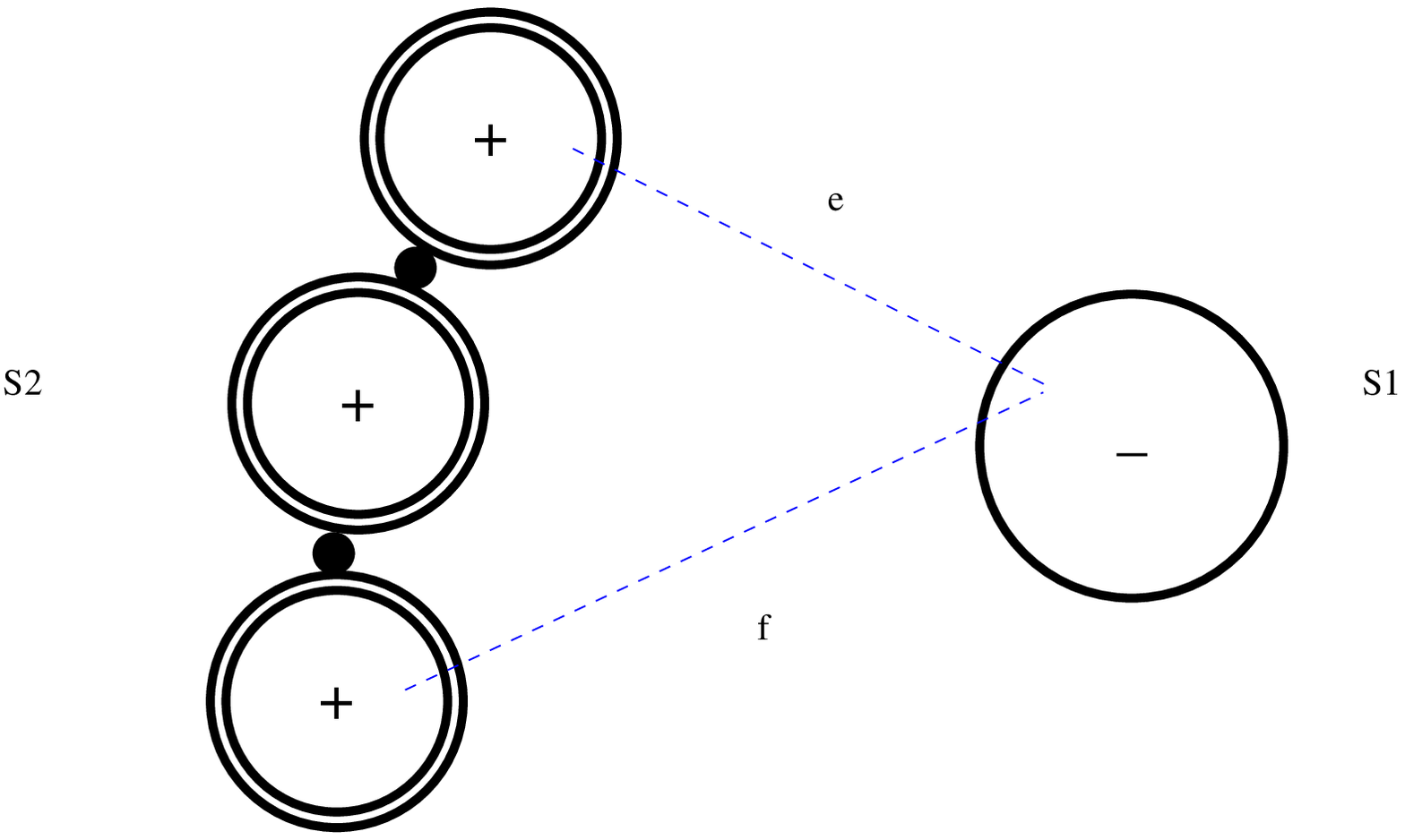}
}\quad
\subfigure[Case 3.d]
{
\psfrag{e}{\footnotesize $e$}
\psfrag{f}{\footnotesize $f$}
\psfrag{S1}{\footnotesize $\Sigma_{1}$}
\psfrag{S2}{\footnotesize $\Sigma_{2}$}
\includegraphics*[scale=0.30]{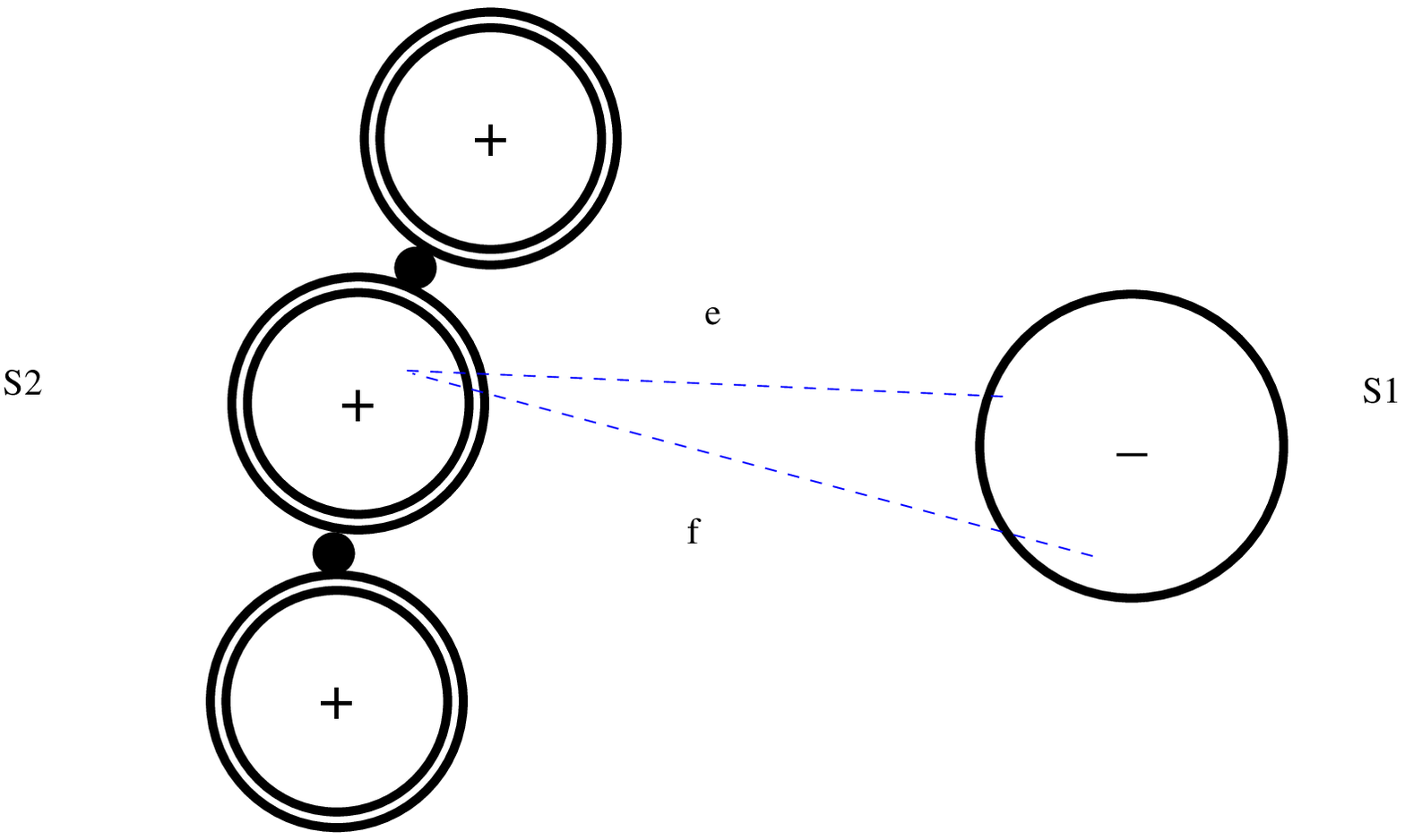}
}}
\caption{$e,f$ in $\Sigma \dof (Y-\{e,f\})$}
\label{efcd}
\end{center}
\end{figure}

\noindent
\underline{{\bf Case 3.c:}} $e,f$ have the same endvertex in $\Sigma_{1}$ but not in $\Sigma_{2}$ (see figure~\ref{efcd}(a)). \\
\noindent
If $e,f$ have different sign $(\Sigma_{1},\Sigma_{2} \cup \{e,f\})$ is a connected 2-biseparation in $\Sigma' $ since $\Sigma[\Sigma_{2} \cup \{e,f\}]$ is connected and unbalanced. Moreover, $(\Sigma_{1},\Sigma_{2} \cup \{e,f\})$ is an exact connected 2-separation in $M(\Sigma')$. Then $(\Sigma_{1} \cup \{e,f\},\Sigma_{2})$ is a connected 2-biseparation in $\Sigma'$ and an exact connected 2-separation in $M(\Sigma')$. Otherwise, $e,f$ have the same sign and $(\Sigma_{1} \cup \{e,f\},\Sigma_{2})$ is a connected 2-biseparation in $\Sigma'$ and an exact connected 2-separation in $M(\Sigma')$. Then $(\Sigma_{1},\Sigma_{2} \cup \{e,f\})$ is a connected 1-biseparation in $\Sigma'$ and an exact connected 1-separation in $M(\Sigma')$. \\
\noindent
\underline{{\bf Case 3.d:}} $e,f$ have the same endvertex in $\Sigma_{2}$ but not in $\Sigma_{1}$ (see figure~\ref{efcd}(b)). \\
\noindent
If $e,f$ have different sign $(\Sigma_{1},\Sigma_{2} \cup \{e,f\})$ is a connected 2-biseparation in $\Sigma' $ since $\Sigma[\Sigma_{2} \cup \{e,f\}]$ is connected, balanced and $|V(\Sigma_{1}) \cap V(\Sigma_{2} \cup \{e,f\})|=2$. Moreover, $(\Sigma_{1},\Sigma_{2} \cup \{e,f\})$ is an exact connected 2-separation in $M(\Sigma')$. Then $(\Sigma_{1} \cup \{e,f\},\Sigma_{2})$ is a connected 2-biseparation in $\Sigma \dof (Y-\{e,f\})$ and an exact connected 2-separation in $M(\Sigma')$. Otherwise, $e,f$ have the same sign and $(\Sigma_{1}, \Sigma_{2}\cup \{e,f\})$ is a connected 2-biseparation in $\Sigma'$ and an exact connected 2-separation in $M(\Sigma')$. Then $(\Sigma_{1}\cup \{e,f\}, \Sigma_{2})$ is a connected 1-biseparation in $\Sigma'$ and an exact connected 1-separation in $M(\Sigma')$. Therefore $Y$ is a $\mathcal{U}$-cocircuit of $M(\Sigma)$.

\end{proof}

\section{Signed-graphic matroid of $T_{6}$} \label{sec_T6}

\begin{lemma} \label{t6}
If $Y \in \mathcal{C}^{*}(M(T_{6}))$ is nongraphic and an unbalancing bond in $T_{6}$ then 
\begin{enumerate}[(i)]
\item[(i)]$Y$ is bridge-separable in $M(T_{6})$ and
\item[(ii)]$Y$ is a star of a vertex in $T_{6}$.
\end{enumerate}
\end{lemma}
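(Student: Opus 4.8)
The plan is to read this as a finite structural verification on the single fixed signed graph $T_6$, using its symmetry to collapse the case analysis. First I would fix an explicit labelled drawing of $T_6$ with its sign function, list its negative cycles, and record its automorphism group; because $T_6$ is highly symmetric, its vertex stars — and hence the cocircuits we must inspect — lie in only one or two orbits, so analysing one representative per orbit suffices. A key preliminary remark is that $T_6$ is not cylindrical (it is a genuinely exceptional case in Theorem~\ref{th_SliDe}), so the planarity-based machinery of Section~\ref{sec_structural_cylindrical_matroids}, and in particular Theorem~\ref{th_cygicr} and Corollary~\ref{Yclasses}, is unavailable; the argument must rest on Theorem~\ref{thrm_bonds}, Theorem~\ref{th_Zasl11}, Proposition~\ref{prop_sgn_graphic} and direct inspection.

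I would establish part (ii) first, as it makes part (i) routine. By Theorem~\ref{thrm_bonds} a cocircuit of $M(T_6)$ is a minimal edge set whose deletion creates a new balanced component, and the hypothesis that $Y$ is nongraphic means $M(T_6)\backslash Y = M(T_6\backslash Y)$ is nongraphic; by Proposition~\ref{prop_sgn_graphic} this forces some unbalanced component of $T_6\backslash Y$ to have no balancing vertex. Combined with minimality, these two constraints are rigid enough that, running through the bonds of $T_6$ up to symmetry, the balancing and double bonds are eliminated and the surviving nongraphic unbalancing bonds are seen to be exactly the sets of links meeting a single vertex $v$, i.e. $Y=E(v,V(T_6)-v)$. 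The key local step is to show that a candidate cocircuit touching two or more vertices either fails minimality or leaves $T_6\backslash Y$ with a balancing vertex (hence graphic), contradicting the hypothesis; where the identification with a literal vertex star needs a change of drawing, I would invoke the matroid-invariance of switching and twisting from Proposition~\ref{prop_samematroid}, exactly as in the preceding star theorem.

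For part (i), with $Y$ now identified with the star of $v$, I would compute the bridges of $Y$ in $M(T_6)$ as the separates of $T_6\backslash Y$ using Theorem~\ref{th_Zasl11}, namely the isolated vertex $v$ together with the blocks of $T_6\backslash v$. For each bridge $B$ I would form the minor $\Sigma.(B\cup Y)|Y$ and compute $\mathcal{C}^{*}(M(T_6).(B\cup Y)|Y)$ by hand: since every edge of $Y$ carries $v$ as a common endvertex, contracting the remaining edges turns the members of $Y(B,v)$ into half-edges or loops sharing a vertex, so that each relevant $Y(B,\cdot)$ lies inside a single cocircuit of the minor. I would then partition the bridges into two classes and verify, for any two bridges in the same class, that the two cocircuits $C_1^{*},C_2^{*}$ read off in this way satisfy $C_1^{*}\cup C_2^{*}=Y$, which is the avoiding condition; this is the $T_6$-analogue of the case analysis in Theorem~\ref{th_Ybridgeseparable}, carried out here by explicit computation rather than by counting negative faces.

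The main obstacle is the quaternary bookkeeping exhibited in the counterexample of Section~\ref{subsec_bonds_circuits}: for a non-binary matroid the family $\pi(M,B,Y)$ need neither partition $Y$ nor coincide with $\mathcal{C}^{*}(M.(B\cup Y)|Y)$, so the avoiding condition must be tested against the cocircuits of the minor and not against $\pi$. Concretely, I must keep track of the signs of the edges of $Y$ at each separate, since a contracted edge becomes a half-edge or a positive/negative loop according to its sign, and this in turn decides which unions of the $Y(B,\cdot)$ actually occur as cocircuits of $M(T_6).(B\cup Y)|Y$. Fixing the sign pattern correctly on the pair of vertex-disjoint negative cycles of $T_6$ is the crux: once it is pinned down, both the star characterisation of (ii) and the avoiding pairs of (i) follow by inspection.
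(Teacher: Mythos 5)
Your proposal is correct and takes essentially the same route as the paper: the paper's own proof is exactly such a finite verification on the fixed graph $T_{6}$, asserting that ``in all possible cases'' a nongraphic unbalancing bond is the star of a vertex (exhibiting representatives such as $Y=\{-3,3,6,2\}$ and $Y=\{3,6,4,-6\}$) and then checking bridge-separability directly by computing the bridges and their cocircuits. If anything, your plan is more systematic than the paper's terse argument, notably in the automorphism-orbit reduction of the case sweep and in the explicit warning that, since $M(T_6)$ is quaternary non-binary, avoidance must be tested against $\mathcal{C}^{*}(M(T_6).(B\cup Y)|Y)$ rather than against $\pi(M(T_6),B,Y)$.
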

\begin{proof}
Suppose that $Y$ is an unbalancing bond in $T_{6}$ such that the core is not a B-necklace. In all possible cases $Y$ is the star of a vertex. Consider for example $Y=\{-3,3,6,2\}$ or $Y=\{3,6,4,-6\}$ in figure \ref{T6}. If $Y$ is a double bond in $T_{6}$, consider $Y=\{1,2,5,-6,-4\}$ in figure \ref{T6}. Then the bridges of $Y$ are $B_{1}=\{-5\}, B_{2}=\{-1,-2,-3,4,6\}$. It is easy to check that in both cases $Y$is bridge-separable. 
\end{proof}

\begin{figure}[hbtp] 
\begin{center}
\includegraphics*[scale=0.65]{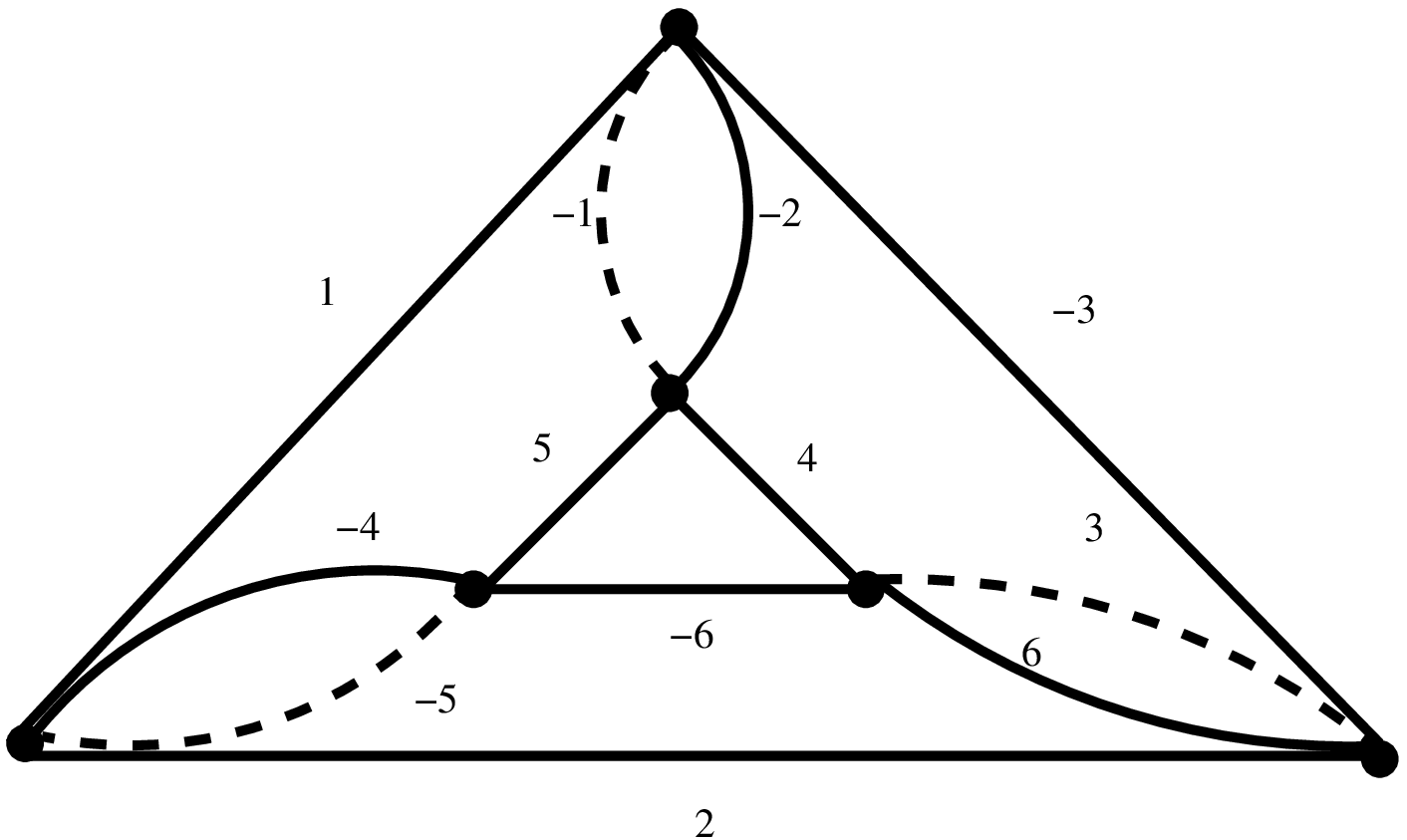}
\end{center} 
\caption{$T_{6}$}
\label{T6}
\end{figure}

\section{Decomposition of $GF(4), \neg GF(2)$ signed-graphic matroids} \label{sec_main_result}

\begin{proposition}
If $Y \in \mathcal{C^*}(M)$ then $Y \in \mathcal{C^*}(M \cto (U^{-} \cup Y))$.
\end{proposition}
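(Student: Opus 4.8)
The plan is to reduce the statement to an elementary duality argument, once the contraction set $E(M)-(U^-\cup Y)$ has been correctly identified. Recall from Corollary~\ref{Yclasses} that the bridges of $Y$ split into the two classes $U^-$ and $U^+$, and that a bridge of a cocircuit is an elementary separator of $M\dof Y$. Since the elementary separators partition the ground set of a matroid, the bridges of $Y$ partition $E(M)-Y$; and as $Y$ is bridge-separable, every bridge lies in $U^-$ or $U^+$. Hence, viewing $U^-$ and $U^+$ as the unions of the edge sets of their member bridges, we have $U^-\cup U^+=E(M)-Y$ and $U^-\cap U^+=\emptyset$. The first step is simply to record the resulting identity
\[
E(M)-(U^-\cup Y)=U^+ ,
\]
so that $M\cto(U^-\cup Y)=M/(E(M)-(U^-\cup Y))=M/U^+$.

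Next I would pass to the dual matroid. By the standard identity $(M/U^+)^*=M^*\dof U^+$, the cocircuits of $M\cto(U^-\cup Y)$ are exactly the circuits of $M^*\dof U^+$. Moreover, deletion preserves precisely those circuits that avoid the deleted set, so the circuits of $M^*\dof U^+$ are the circuits $C$ of $M^*$ with $C\cap U^+=\emptyset$, equivalently those contained in $E(M^*)-U^+=U^-\cup Y$.

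It then remains to check that $Y$ is such a circuit. By hypothesis $Y\in\mathcal{C}^*(M)$, which by duality means $Y\in\mathcal{C}(M^*)$; and $Y\cap U^+=\emptyset$ since $U^+\subseteq E(M)-Y$. Therefore $Y$ is a circuit of $M^*$ that avoids $U^+$, so $Y\in\mathcal{C}(M^*\dof U^+)=\mathcal{C}^*(M\cto(U^-\cup Y))$, which is exactly the desired conclusion.

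There is no real obstacle in this proof: it is a routine application of the relations $(M/X)^*=M^*\dof X$ and $\mathcal{C}(N\dof X)=\{\,C\in\mathcal{C}(N):C\cap X=\emptyset\,\}$. The only point needing care is the bookkeeping that identifies the contraction set $E(M)-(U^-\cup Y)$ with $U^+$, which rests on the bridges of $Y$ partitioning $E(M)-Y$ together with the defining property of the classes $U^-,U^+$; once that identity is in hand the result is immediate.
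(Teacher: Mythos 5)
Your proof is correct and takes essentially the same route as the paper's: both pass to the dual via $(M\cto(U^{-}\cup Y))^{*}=M^{*}\dto(U^{-}\cup Y)$, observe that the circuits of this restriction are exactly the circuits of $M^{*}$ contained in $U^{-}\cup Y$, and note that $Y$ is one of them. The only cosmetic difference is your explicit identification of the contracted set $E(M)-(U^{-}\cup Y)$ with $U^{+}$ via bridge-separability, which is harmless but unnecessary --- the paper's deletion-to formulation shows the argument goes through for an arbitrary class $U^{-}$.
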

\begin{proof}
The cocircuits of $M \cto (U^{-} \cup Y)$ are the circuits of $(M \cto (U^{-} \cup Y))^*=M^* \dto (U^{-} \cup Y)$, which are the cocircuits of $M$ contained in $U^{-} \cup Y$, so $Y$ is a cocircuit of $M \cto (U^{-} \cup Y)$.
\end{proof}

If $Y$ is a bridge-separable cocircuit of a matroid $M$ and $U^{-},U^{+}$ are the two classes of all avoiding bridges then we will call \emph{ $\mathcal{U}$-minor} the matroid $M \cto (U \cup Y)$, where $U \in \{U^{-},U^{+}\}$. The connectivity of $M$ is not inherited to its $\mathcal{U}$-minor matroids. This observation for signed-graphic matroids is stated in the next claim.

\begin{claim}
If $Y$ is a bridge-separable cocircuit of an internally 4-connected quaternary nonbinary matroid $M$ and $U^{-}$ is a class of all avoiding bridges then $M \cto (U^{-} \cup Y)$ is not internally 4-connected.
\end{claim}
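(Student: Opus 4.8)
The plan is to realise the $\mathcal{U}$-minor as a signed-graphic matroid on a \emph{collapsed} signed graph and then to produce a non-trivial separation of low order that survives the contraction. First I would use Corollary~\ref{Yclasses} to identify the two classes: $U^{-}=E(\Sigma_1)$ consists of the separates of the unbalanced component and $U^{+}=E(\Sigma_2)$ of the separates of the balanced component of $\Sigma\dof Y$. Then $M\cto(U^{-}\cup Y)=M(\Sigma)/E(\Sigma_2)=M(\Sigma')$ by Theorem~\ref{th_minsig}, where $\Sigma'=\Sigma/E(\Sigma_2)$ is obtained by contracting the connected balanced subgraph $\Sigma_2$. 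Since $\Sigma_2$ is balanced and connected, this identifies all of $V(\Sigma_2)$ into a single vertex $z$, leaves $\Sigma_1$ untouched, and turns $Y$ into the b-star of $z$ (the unique edge of the balancing part becoming a negative loop at $z$ in the double-bond case, by Proposition~\ref{uniqueedge}). Thus $\Sigma'=\Sigma_1\cup\{z\}$ with $Y=st_{\Sigma'}(z)$, and $\Sigma'$ is again a $2$-connected cylindrical signed graph with $M(\Sigma')$ non-binary.

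The key quantitative step is to locate where the contraction destroys connectivity. Writing $W$ for the set of vertices of $\Sigma_2$ meeting $Y$, a direct rank count (using $r=|V|$ for a connected unbalanced signed graph and $r=|V|-1$ for a connected balanced one) gives that $(E(\Sigma_2),\,U^{-}\cup Y)$ is a separation of $M$ of order $|W|$. As $M$ is internally $4$-connected and both sides of this separation are large (which we may assume, $U^{+}$ being a genuine non-trivial class), its order must be at least $4$; hence $Y$ attaches to at least four distinct vertices of $\Sigma_2$. Contracting $E(\Sigma_2)$ identifies precisely these $\ge 4$ vertices into the single vertex $z$. This is the mechanism by which internal $4$-connectivity is lost: a separation of $\Sigma$ that crossed $\Sigma_2$ at several vertices --- and therefore had order $\ge 4$ in $M$ --- now crosses $\Sigma'$ only at $z$, and so can have order at most $3$ in $M'$, while both of its parts remain large.

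To make this precise I would exhibit the surviving separation explicitly. Because $M(\Sigma')$ is non-binary, Theorem~\ref{th_cygicr} and Lemma~\ref{lem_PNCF} furnish two vertex-disjoint negative cycles of $\Sigma'$: one, $F_2$, inside the unique unbalanced separate $B_0$ of $\Sigma_1$ (not meeting $z$, by claim~\ref{connectedcomponents}), and one through $z$ via two oppositely signed edges of $Y$ (or the negative loop at $z$ in the double-bond case). I would then partition $E(\Sigma')$ into a part $A$ containing $F_2$ and a part $B$ containing $z$ together with its star, arranged along the cylindrical structure so that $A$ and $B$ share only $z$ together with at most two vertices of a $2$-vertex cut of $\Sigma_1$. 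Checking the three cases in the definition of a $k$-biseparation shows $|V(\Sigma'[A])\cap V(\Sigma'[B])|\le 3$, so that $\{A,B\}$ is a vertical $k$-biseparation with $k\le 3$; by Proposition~\ref{k-bisep}(1) this is a $k$-separation of $M(\Sigma')$ with $k\le 3$. Since both $A$ and $B$ contain at least four elements, $M'$ either fails to be $3$-connected (if $k\le 2$, via Corollary~\ref{2-bic}) or admits a non-minimal $3$-separation; in either case $M'$ is not internally $4$-connected.

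I expect the main obstacle to be the explicit construction and bookkeeping of this separation. The shared-vertex count --- and hence the order $k$ --- depends delicately on the balance of the two parts through the three-way case split in the definition of a biseparation, and one must guarantee simultaneously that both parts are non-trivial (at least four elements) and that the cut of $\Sigma_1$ is compatible with the way $Y$ distributes over $z$. The argument must also be run separately for an unbalancing bond and for a double bond (where the negative loop at $z$ supplies the second negative cycle directly, by Proposition~\ref{uniqueedge}), and one should treat the sub-case where $\Sigma_1$ is a single $2$-connected block against the case where it carries balanced bridges. The rank computation of the second paragraph is what makes the construction possible, since it certifies that at least four vertices of $\Sigma_2$ are collapsed into the single vertex $z$.
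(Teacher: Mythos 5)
First, a point of comparison: the paper does not actually prove this claim. It is stated as an observation (``the connectivity of $M$ is not inherited to its $\mathcal{U}$-minor matroids'') and is supported only by the example following it, in which $\Sigma$ is the cylindrical signed graph of Figure~\ref{cylint4}, $Y_{40}=\{3,6,-4,5,-9,-1\}$, the bridges are $B_{1}=\{4\}$ and $B_{2}$, and $M(\Sigma \cof 4)$ is merely $2$-connected. So your attempt cannot be matched against a paper proof; it must stand on its own, and as written it does not. The decisive gap is in your quantitative step: to conclude that the separation $(E(\Sigma_{2}),\,U^{-}\cup Y)$ of $M$ has order $|W|\geq 4$, and hence that at least four vertices of $\Sigma_{2}$ collapse into $z$, you need \emph{both} sides of that separation to be large, and you dispose of this with ``which we may assume, $U^{+}$ being a genuine non-trivial class.'' Nothing in the hypotheses grants this, and the paper's own illustrating example refutes it: there $U^{+}=B_{1}=\{4\}$ is a single-element bridge, so $|E(\Sigma_{2})|=1$, your rank count yields no lower bound on $|W|$, and your mechanism for the loss of internal $4$-connectivity is vacuous in precisely the situation the claim is meant to describe. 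The small-$U^{+}$ case, where $M\cto(U^{-}\cup Y)=M/U^{+}$ contracts at most three elements, is left entirely unhandled, and it is not automatic that contracting so few elements of an internally $4$-connected matroid destroys internal $4$-connectivity.

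The second hole is at the step you yourself flag as the ``main obstacle'': the explicit surviving separation. Your partition of $E(\Sigma')$ requires $A$ and $B$ to meet in $z$ together with ``at most two vertices of a $2$-vertex cut of $\Sigma_{1}$,'' but no such cut need exist --- if the underlying graph of $\Sigma_{1}$ is $4$-connected there is none --- and without it the claimed vertical biseparation of order $\leq 3$ with two large sides, to be fed into Proposition~\ref{k-bisep}, is never exhibited. Acknowledging the obstacle does not close it; the knowledge that $\geq 4$ vertices were identified into $z$ does not by itself produce a $3$-separation of $M(\Sigma')$ with both parts of size at least four. Beyond these two gaps there are scope problems: the claim is stated for an arbitrary internally $4$-connected quaternary nonbinary matroid $M$ and an arbitrary bridge-separable cocircuit $Y$, yet you immediately assume $M=M(\Sigma)$ with $\Sigma$ a $2$-connected jointless cylindrical representation (by Theorem~\ref{th_SliDe} the representation could instead be $T_{6}$ or have a balancing vertex) and you invoke Corollary~\ref{Yclasses}, claim~\ref{connectedcomponents}, and Propositions~\ref{uniqueedge} and~\ref{commonendvertex}, all of which carry extra hypotheses (jointless cylindrical $\Sigma$, $Y$ a nongraphic non-balancing bond or double bond) that the claim does not supply; likewise your assertions that $M(\Sigma')$ is non-binary and that the two negative cycles of $\Sigma'$ are vertex-disjoint are not established. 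In short, the proposal is a plausible plan for one sub-case (cylindrical representation with a large balanced side) but it fails exactly where the claim has content, including in the paper's own witness example.
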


Suppose that $Y$ is a bridge-separable cocircuit of an internally 4-connected quaternary nonbinary signed-graphic matroid $M$ with a jointless cylindrical signed-graphic representation. Consider for example the cylindrical signed graph $\Sigma$ in Figure~\ref{cylint4}. A bridge-separable nongraphic cocircuit of $M(\Sigma)$ is $Y_{40}=\{3,6,-4,5,-9,-1\}$. The bridges of $Y$ in $M(\Sigma)$ are $B_{1}=\{4\}$, $B_{2}=\{1,2,7,-2,-3,-5,-6,-7,-8\}$. Then $M(\Sigma \cof 4)$ is 2-connected.

\begin{lemma}
If $Y$ is a cocircuit of an internally 4-connected matroid $M$ and $U^{-}$ a class of all avoiding bridges of $Y$ in $M$ then $M \cto (U^{-} \cup Y)$ is connected.
\end{lemma}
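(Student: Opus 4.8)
The plan is to argue by contradiction, combining the internal $4$-connectivity of $M$ with the Proposition proved above (namely that $Y\in\mathcal{C}^{*}(M)$ implies $Y\in\mathcal{C}^{*}(M\cto(U^{-}\cup Y))$). Write $U^{+}:=E(M)\setminus(U^{-}\cup Y)$ for the union of the bridges in the complementary class, so that the $\mathcal{U}$-minor under consideration is $N:=M\cto(U^{-}\cup Y)=M/U^{+}$, with ground set $U^{-}\cup Y$. By that Proposition, $Y\in\mathcal{C}^{*}(N)$; this single fact is what I intend to exploit, since it pins down where the cocircuit $Y$ can sit if $N$ splits.

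First I would suppose, for contradiction, that $N$ is disconnected and write $N=N_{1}\oplus N_{2}$ with both summands non-null. Because the cocircuits of a direct sum are precisely the cocircuits of its summands, the cocircuit $Y$ must lie entirely inside one of them, say $Y\subseteq E(N_{1})$. Then $Z:=E(N_{2})$ is a non-empty separator of $N$ that is disjoint from $Y$; hence $Z\subseteq U^{-}$ and $r_{N}(Z)+r_{N}\bigl((U^{-}\cup Y)\setminus Z\bigr)=r(N)$.

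Next I would transfer this separator back to $M$. Using the contraction rank formula $r_{N}(X)=r_{M}(X\cup U^{+})-r_{M}(U^{+})$ together with the identity $\bigl((U^{-}\cup Y)\setminus Z\bigr)\cup U^{+}=E(M)\setminus Z$, the separator equation rearranges to $r_{M}(Z\cup U^{+})+r_{M}(E(M)\setminus Z)=r_{M}(E(M))+r_{M}(U^{+})$. Feeding this into the connectivity function of $M$ yields
\[
\lambda_{M}(Z)=r_{M}(Z)+r_{M}(E(M)\setminus Z)-r_{M}(E(M))=r_{M}(Z)+r_{M}(U^{+})-r_{M}(Z\cup U^{+}),
\]
i.e.\ $\lambda_{M}(Z)$ equals the local connectivity of $Z$ and $U^{+}$ in $M$. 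Moreover $Z\subseteq U^{-}\subseteq E(M)\setminus Y$, so $Z$ lies inside the hyperplane $E(M)\setminus Y$ (recall $Y$ is a cocircuit, so $E(M)\setminus Y$ is a hyperplane). Since $E(M)\setminus Z\supseteq Y$ is large, the partition $(Z,E(M)\setminus Z)$ cannot be a $1$- or $2$-separation of the (at least $3$-connected) matroid $M$ unless $Z$ is itself small; the displayed identity then forces $\lambda_{M}(Z)$ to be too small to come from a non-trivial, balanced separation of an internally $4$-connected matroid, squeezing $Z$ down to a single low-rank bridge.

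The main obstacle is exactly this last, degenerate case: a lone rank-one bridge can peel off as a separator of $M/U^{+}$ while respecting the $3$-connectivity of $M$ (this is precisely what happens for a binary $M$, e.g.\ $M(K_{4})$ with $Y$ a vertex star), so internal $4$-connectivity of $M$ \emph{alone} does not finish the argument. Ruling this out is where the hypothesis that $U^{-}$ is a \emph{full} avoiding class must enter: by Corollary~\ref{Yclasses} the class $U^{-}$ comprises all separates of the unbalanced component of $\Sigma\setminus Y$, and in particular it carries the unbalanced separate $B_{0}$, so the avoiding property of the class prevents any non-empty $Z\subseteq U^{-}$ from being rank-separated from $Y$ inside $N$. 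This is the step I expect to require the most care, and it is where the ambient quaternary non-binary (signed-graphic) structure, rather than bare matroid connectivity, does the work. With the degenerate case excluded, the separator $Z$ cannot exist, and therefore $N=M\cto(U^{-}\cup Y)$ is connected.
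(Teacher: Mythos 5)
Your reduction is correct as far as it goes, and in one respect it is sharper than the paper's own argument: by invoking the preceding proposition to place $Y\in\mathcal{C}^{*}(N)$ inside a single component of $N=M\cto(U^{-}\cup Y)$, you may take the separator $Z$ disjoint from $Y$ (the paper works with an arbitrary separator $S$, possibly meeting $Y$), and your computation $\lambda_{M}(Z)=r_{M}(Z)+r_{M}(U^{+})-r_{M}(Z\cup U^{+})$ is exact. The genuine gap is everything after that display. Internal $4$-connectivity excludes $1$- and $2$-separations and non-minimal $3$-separations, but it places no upper bound whatsoever on the local connectivity $r_{M}(Z)+r_{M}(U^{+})-r_{M}(Z\cup U^{+})$: if $Z$ lies largely in the closure of $U^{+}$ this quantity can be as large as $r_{M}(Z)$, and then your identity produces no forbidden separation at all. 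So the ``squeezing $Z$ down to a single low-rank bridge'' step is not a deduction, and the exclusion of the remaining degenerate case is only asserted. Worse, the tool you propose for it, Corollary~\ref{Yclasses}, concerns the separates of the unbalanced component of a signed graph $\Sigma\dof Y$; the lemma under review is about an \emph{arbitrary} internally $4$-connected matroid $M$, with no signed-graphic representation available, so that corollary cannot be invoked without silently weakening the statement. (Your $M(K_{4})$ aside is also wrong: with $Y$ a vertex star, $M\dof Y$ is a triangle, there is a single bridge, $U^{+}=\emptyset$, and $M\cto(U^{-}\cup Y)=M$ is connected.)

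To your credit, the point where you stall is exactly the point the paper's proof glosses over: it asserts $r_{M}(U^{+}\cup S)=r_{M}(U^{+})+r_{M}(S)$ on the grounds that the union of bases of the disjoint sets $U^{+}$ and $S$ is a basis of $M\dto(U^{+}\cup S)$ --- but that is precisely the skewness $r_{M}(S)+r_{M}(U^{+})-r_{M}(S\cup U^{+})=0$ that needs proof, and disjointness alone does not give it; so your diagnosis of the crux is sound even though your completion is not. What actually closes the argument, at the stated level of generality and using only connectivity of $M$, is the bridge-theoretic fact that every $Y$-component $M\cto(B\cup Y)$ of a connected matroid is connected (see \cite{Tutte:1965,Pitsoulis:2014}): since $N\dof Y$ is the direct sum of the bridges contained in $U^{-}$, your separator $Z$ is a union of bridges, so choosing a bridge $B\subseteq Z$ and using transitivity of contraction-to gives $M\cto(B\cup Y)=N\cto(B\cup Y)=\bigl(N_{1}\cto Y\bigr)\oplus\bigl(N_{2}\cto B\bigr)$, a direct sum with both parts non-empty and hence disconnected --- a contradiction. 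Neither the avoiding-class structure nor internal $4$-connectivity is needed for this; your instinct that the bridge structure of $U^{-}$ must do the work was right, but the relevant fact is the connectivity of $Y$-components, not avoidance.
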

\begin{proof}
Assume on the contrary that there is a separator $S \subseteq U^{-} \cup Y$ of $M \cto (U^{-} \cup Y)$. Then $r_{M \cto (U^{-} \cup Y)}(S) +r_{M \cto (U^{-} \cup Y)}((U^{-} \cup Y)-S)=r_{M \cto (U^{-} \cup Y)}(U^{-} \cup Y)$. Equivalently, $r_{M \cof U^{+}}(S) + r_{M \cof U^{+}}((U^{-} \cup Y) -S)=r_{M \cof U^{+}}(U^{-} \cup Y)$. By proposition 3.1.6 in \cite{Oxley:2011} since $U^{+} \subseteq E$ and $S \subseteq E-U^{+}$ and $(U^{-} \cup Y)-S \subseteq E-U^{+}$ it holds that $r_{M}(U^{+} \cup S)-r_{M}(U^{+})+ r_{M}(E-S)=r(E)$. Consider a basis $B_{S}$ of $M\dto S$ and a basis $B_{U^{+}}$ of $M\dto U^{+}$. Then $r_{M}(S)=|B_{S}|$ and $r_{M}(U^{+})=|B_{U^{+}}|$. By definition of bases of matroids $B_{S}\cup B_{U^{+}}$ is a basis of $M\dto (U^{+} \cup S)$. Therefore, $r_{M}(U^{+}\cup S)=|B_{S} \cup B_{U^{+}}|$. Since $S \subseteq U^{-} \cup Y$, $U^{+},S$ are disjoint sets of $E(M)-Y$ and thus of $E(M)$. Furthermore $B_{S} \cap B_{U^{+}}= \emptyset $ and $|B_{S} \cup B_{U^{+}}|=$ $ |B_{S}| + |B_{U^{+}}| $= $r_{M}(S)+ r_{M}(U^{+})$. Thus, $r_{M}(S)+ r_{M}(E-S)=r(E)$ a contradiction since $M$ is internally 4-connected.
\end{proof} 


\begin{proposition}
If $Y$ is a nongraphic bridge-separable cocircuit of $M$ with $U^{-}, U^{+}$ the two classes of all avoiding bridges of $Y$ and $M \cto (U^{+} \cup Y)$ is graphic then $Y$ is a nongraphic cocircuit of $M \cto (U^{-} \cup Y)$.
\end{proposition}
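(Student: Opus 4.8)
The plan is to reduce the whole statement to the behaviour of the deletion $M\dof Y$ under the splitting of the bridges of $Y$ into the two avoiding classes $U^-$ and $U^+$, and then to use that graphicness passes to and is controlled by direct summands. Throughout I would abbreviate $N^- = M\cto(U^-\cup Y)$ and $N^+ = M\cto(U^+\cup Y)$. By the Proposition immediately preceding this statement, $Y\in\mathcal{C}^*(N^-)$, so the phrase ``$Y$ is a nongraphic cocircuit of $N^-$'' is meaningful; in keeping with the hypothesis on $M$ (and with the binary decomposition result, Theorem~\ref{thrm_decomposition}), I read it as the assertion that $N^-\dof Y$ is nongraphic, and likewise ``$Y$ is a nongraphic cocircuit of $M$'' as the assertion that $M\dof Y$ is nongraphic.

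First I would record the structural identity on which everything rests. The bridges of $Y$ are precisely the connected components (elementary separators) of $M\dof Y$: indeed $Y$ is a circuit of $M^*$, its bridges are the components of $M^*/Y=(M\dof Y)^*$, and a set is a separator of a matroid exactly when it is a separator of its dual. Since bridge-separability assigns every bridge to exactly one of the classes and $U^-\sqcup U^+ = E(M)-Y$, each class is a union of components of $M\dof Y$ and hence itself a separator, giving
\begin{equation*}
M\dof Y \;=\; (M\dof Y)\dto U^- \,\oplus\, (M\dof Y)\dto U^+ .
\end{equation*}
This is the step I expect to be the main obstacle: the conclusion is pure bookkeeping once this decomposition is in place, so the real work is in confirming that the members of each avoiding class are separators of $M\dof Y$, which is exactly what lets a contraction of one class collapse to a restriction to the other.

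Next I would translate the two $\mathcal{U}$-minors into these summands. Because $U^- = E(M)-U^+-Y$, we have $N^+ = M/U^-$, so deletion and contraction commuting on the disjoint sets $Y$ and $U^-$ give
\begin{equation*}
N^+\dof Y \;=\; (M\dof Y)/U^- \;=\; (M\dof Y)\dto U^+ ,
\end{equation*}
the last equality being immediate from the displayed direct sum, since contracting the entire ground set $U^-$ of the first summand annihilates it and leaves the second. By the symmetric computation, $N^-\dof Y = (M\dof Y)\dto U^-$.

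Finally I would assemble the graphicness argument. Since $Y$ is a nongraphic cocircuit of $M$, the matroid $M\dof Y$ is nongraphic; as a direct sum is graphic if and only if each summand is graphic, at least one of $(M\dof Y)\dto U^-$ and $(M\dof Y)\dto U^+$ must be nongraphic. The hypothesis that $N^+$ is graphic forces its minor $N^+\dof Y = (M\dof Y)\dto U^+$ to be graphic, so the nongraphic summand can only be $(M\dof Y)\dto U^- = N^-\dof Y$. Hence $N^-\dof Y$ is nongraphic, that is, $Y$ is a nongraphic cocircuit of $M\cto(U^-\cup Y)$, as required.
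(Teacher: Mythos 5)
Your proof is correct, but it takes a genuinely different route from the paper's. The paper argues via the excluded-minor characterization of graphic matroids: since $M \dof Y$ is nongraphic, it contains a minor $H$ isomorphic to one of $F_7$, $F_7^{*}$, $M^{*}(K_5)$, $M^{*}(K_{3,3})$; because each of these is connected and $H$ avoids $Y$, $H$ must lie inside a single bridge of $Y$; the hypothesis that $M \cto (U^{+} \cup Y)$ is graphic rules out the bridges of $U^{+}$, so $H$ sits in a bridge of $U^{-}$, and since bridges are separators of $M \dof Y$, $H$ survives as a minor of $M \cto (U^{-} \cup Y) \dof Y$. You bypass excluded minors entirely: starting from the same key fact --- that the bridges are the elementary separators of $M \dof Y$, so each avoiding class, being a union of bridges, is itself a separator --- you obtain the direct sum $M \dof Y = (M\dof Y)\dto U^{-} \oplus (M\dof Y)\dto U^{+}$, identify $M\cto(U^{\pm}\cup Y)\dof Y$ with the respective summands (using that contracting a separator is the same as deleting it), and conclude from the fact that a direct sum is graphic if and only if each summand is. Your reading of ``nongraphic cocircuit'' as ``$M \dof Y$ is nongraphic'' matches the paper's own usage, and the separator fact you flag as the main obstacle is exactly the one-line observation the paper also invokes, so both proofs rest on the same foundation. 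What your version buys is elementarity and generality: it never appeals to Tutte's excluded minors or to their connectedness, and it works verbatim with ``graphic'' replaced by any class of matroids closed under minors and direct sums. What the paper's version buys is localization: it pinpoints the obstruction to graphicness inside one specific bridge of $U^{-}$, a finer piece of information in the spirit of the bridge theory the rest of the paper is built on.
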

\begin{proof}
$Y$ is a cocircuit of $M \cto (U^{-} \cup Y)$ and nongraphic by hypothesis. Then $M \dof Y$ contains a minor $H$ isomorphic to one of the excluded minors of the class of graphic matroids, $F_{7}, F_{7}^{*}, M^{*}(K_{5}), M^{*}(K_{3,3})$. Since each excluded minor is connected and $H$ contains no element of $Y$, then $H$ is contained in a bridge of $Y$ in $M$. For any bridge $B \in U^{-}$ of $Y$ in $M$, since $B$ is a separator of $M \dof Y$ then it is also a separator of $M \cto (U^{-} \cup Y) \dof Y$. Moreover $M \cto U^{+}$ is graphic which implies that $H$ is not contained in any bridge of $U^{+}$. Therefore $H$ is a minor of a bridge of $U^{-}$ and $M \cto (U^{-} \cup Y) \dof Y$ is nongraphic. 
\end{proof}

\emph{Unbalancing cocircuit} will be called the nongraphic bridge-separable cocircuit $Y$ of a quaternary nonbinary connected matroid $M$ so that exactly one of $M \cto (U^{-} \cup Y)$, $M \cto (U^{+} \cup Y)$ is non-graphic where $U^{-}, U^{+}$ are the two classes of all avoiding bridges of $Y$. A special case of an unbalancing cocircuit is the star cocircuit. \emph{Star cocircuit} is a nongraphic cocircuit $Y$ of a quaternary nonbinary connected matroid $M$ such that all bridges of $Y$ in $M$ are avoiding. Since $Y$ is nongraphic, it follows that $M \cto (U^{-} \cup Y)$ is non-graphic where $U^{-}$ is the class of all avoiding bridges of $Y$. A \emph{double cocircuit} is the nongraphic bridge-separable cocircuit $Y$ of a quaternary nonbinary connected matroid $M$ so that both $M \cto (U^{-} \cup Y)$, $M \cto (U^{+} \cup Y)$ are non-graphic where $U^{-}, U^{+}$ are the two classes of all avoiding bridges of $Y$. 

\begin{proposition} \label{nonbinary}
If $M(\Sigma)$ is a connected quaternary matroid and $Y$ is a double bond in $\Sigma$ then $M(\Sigma \cto (\Sigma_{2} \cup Y))$ is non-binary, where $\Sigma_{2}$ is the balanced component of $\Sigma \dof Y$.
\end{proposition}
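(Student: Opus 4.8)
The plan is to reduce the statement to producing two vertex-disjoint negative cycles in the contracted signed graph, and then to conclude non-binarity exactly as in the proof of Lemma~\ref{lem_cycyci}. By the definition of $\cto$ together with Theorem~\ref{th_minsig} we have $M(\Sigma\cto(\Sigma_2\cup Y))=M(\Sigma)\cto(\Sigma_2\cup Y)=M(\Sigma)/(E(\Sigma)-(E(\Sigma_2)\cup Y))$, so it suffices to analyse the signed graph $\Sigma':=\Sigma\cto(\Sigma_2\cup Y)$, whose matroid is precisely this minor, and to exhibit in $\Sigma'$ two vertex-disjoint negative cycles.

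First I would describe $\Sigma'$ explicitly. Write $\Sigma\dof Y=\Sigma_2\sqcup S_1\sqcup\cdots\sqcup S_n$, where $\Sigma_2$ is balanced and each $S_i$ is unbalanced; such components exist because a double bond is a non-balancing bond. The contracted set is $E(\Sigma)-(E(\Sigma_2)\cup Y)=\bigcup_i E(S_i)$, i.e.\ the edge set of each unbalanced component. Since each $S_i$ is connected and unbalanced and is vertex-disjoint from $\Sigma_2$, contracting $E(S_i)$ (via a spanning tree plus a negative chord, reduced to the contraction of a negative loop as in \S\ref{subsec_signed_graphs}) collapses $V(S_i)$ and deletes the resulting vertex, turning each edge of the unbalancing part of $Y$ that meets $S_i$ into a \emph{joint} at its endpoint in $V(\Sigma_2)$; the rank count $r(M(\Sigma)\cto(\Sigma_2\cup Y))=|V(\Sigma)|-\sum_i|V(S_i)|=|V(\Sigma_2)|$ confirms that $\Sigma'$ is a connected, unbalanced signed graph on the vertex set $V(\Sigma_2)$. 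Thus $\Sigma'$ is $\Sigma_2$ together with the balancing part of $Y$ (links with both ends in $V(\Sigma_2)$) and the joints just described. After switching (Proposition~\ref{prop_samematroid}) I may assume every edge of $\Sigma_2$ is positive. Because $Y$ is a bond, minimality (Theorem~\ref{thrm_bonds}) forces $\Sigma_2\cup\{e\}$ to be unbalanced for each balancing edge $e$; with $\Sigma_2$ positive this makes $e$ negative, and $e$ together with a positive $\Sigma_2$-path between its ends forms a negative cycle $C_1$ contained in $E(\Sigma_2)\cup Y$, which survives the contraction since none of its edges or vertices is contracted.

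It then remains to produce a second negative cycle of $\Sigma'$ vertex-disjoint from $C_1$. The unbalancing part of $Y$ is non-empty (the connected $\Sigma$ must attach the unbalanced components to $\Sigma_2$), so $\Sigma'$ carries at least one joint, and each joint is itself a one-edge negative cycle; if two joints lie at distinct vertices, or a joint lies at a vertex off $C_1$, we are done by the principle of Lemma~\ref{lem_cycyci}. \emph{The hard part is excluding the degenerate configuration in which every negative cycle of $\Sigma'$ passes through one common vertex} — equivalently, in which that vertex is a balancing vertex and $M(\Sigma')$ is graphic. I would rule this out through the connectivity of $M(\Sigma)$: by Corollary~\ref{2-bic} it yields vertical $2$-biconnectivity of $\Sigma$ (no balancing set of rank one), which forbids all unbalancing edges of $Y$ from being incident with a single $\Sigma_2$-vertex that also carries every balancing cycle, while Lemma~\ref{doubleinducesunbalancing} guarantees an accompanying unbalancing bond; together these spread the joints over $V(\Sigma_2)$ so that either two joints sit at distinct vertices, or the $\Sigma_2$-path defining $C_1$ can be rerouted to avoid the joint's vertex. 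Essentially all the work lies in this disjointness step; the two negative-cycle constructions are immediate, and once disjointness is secured, non-binarity of $M(\Sigma\cto(\Sigma_2\cup Y))$ follows at once.
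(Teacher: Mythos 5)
Your construction matches the paper's own proof almost line for line: the paper likewise notes that contracting the unbalanced components turns the unbalancing part of $Y$ into joints at vertices of $\Sigma_2$, that the balancing part is non-empty with all edges negative (after switching), and then concludes in a single sentence that $\Sigma\cto(\Sigma_2\cup Y)$ has $K_0$ as a minor, hence is non-binary. What you add, correctly, is the observation that non-binarity of a signed-graphic matroid requires two \emph{vertex-disjoint} negative cycles (as in Lemmata~\ref{lem_cybin} and~\ref{lem_cycyci}), so one must exclude the degenerate case in which every negative cycle of the contraction passes through a single balancing vertex. The paper's proof is silent on exactly this point, and you rightly identify it as the crux.

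However, your proposed resolution of that crux is a genuine gap, and not one that can be repaired: the degenerate configuration is realizable under precisely the stated hypotheses, so neither Corollary~\ref{2-bic} nor Lemma~\ref{doubleinducesunbalancing} can exclude it. Let $\Sigma_2$ be the positive path $x,u,y$, let $e=\{x,y\}$ be a negative link, and let $S_1$ be a triangle $s_1s_2s_3$ with exactly one negative edge $s_2s_3$, attached to $\Sigma_2$ by positive links $f_1=\{u,s_1\}$ and $f_2=\{u,s_2\}$. Then $Y=\{e,f_1,f_2\}$ is a double bond of $\Sigma$; the graph $\Sigma$ is cylindrical with exactly two negative faces, so $M(\Sigma)$ is quaternary by Theorem~\ref{th_SliDe}; and $M(\Sigma)$ is $2$-connected by Corollary~\ref{2-bic}, since $\Sigma$ is vertically $2$-biconnected (its only cut vertex is $u$ and both sides there are unbalanced, so no vertical $1$-biseparation arises), has no balanced loops, and every balancing set meets both vertex-disjoint negative triangles and hence has rank two. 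Yet $\Sigma\cto(\Sigma_2\cup Y)$ (contract the triangle $S_1$, so that $f_1,f_2$ become joints at $u$, via Theorem~\ref{th_minsig}) consists of the positive path $x,u,y$, the negative link $e$, and two joints at $u$: every negative cycle passes through $u$, so $u$ is a balancing vertex, and by Proposition~\ref{prop_sgn_graphic} its matroid is graphic, hence binary. So in your final step the joints cannot be ``spread over $V(\Sigma_2)$'' --- they may all sit on the one vertex lying on every balancing cycle, and the $\Sigma_2$-path defining $C_1$ cannot be rerouted around it. The missing step is therefore not a lemma you failed to find but a consequence of the statement's generality: closing it requires hypotheses strong enough to forbid such $2$-separated attachments (for instance the internally $4$-connected, jointless setting in which the paper later applies this proposition), and the paper's own proof, by asserting the $K_0$-minor without any vertex-disjointness check, glosses over the same difficulty that you made explicit.
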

\begin{proof}
$M(\Sigma)$ is a connected matroid implying that $\Sigma$ is a connected signed graph. Joints may appear both to the unbalanced components and to the balanced component of $\Sigma \dof Y$, $\Sigma_{2}$. Since we contract the unbalanced components in order to obtain $\Sigma \cto (\Sigma_{2} \cup Y)$, the edges of the unbalancing part of $Y$ become half-edges at their other endvertex in $\Sigma_{2}$. The balancing part of $Y$ may contain more than one elements which have all the same negative sign. Hence $\Sigma \cto (\Sigma_{2} \cup Y)$ has $K_{0}$ as a minor and $M(\Sigma \cto (\Sigma_{2} \cup Y))$ is non-binary.
\end{proof}

\begin{proposition}
If $M(\Sigma)$ is an internally 4-connected quaternary nonbinary matroid with a jointless signed-graphic representation $\Sigma$, then $Y$ is an \emph{unbalancing cocircuit} of $M(\Sigma)$ if and only if $Y$ is an unbalancing bond in $\Sigma$ with core not graphic (i.e., not B-necklace and not having a balancing vertex).
\end{proposition}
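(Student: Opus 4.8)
The plan is to reduce to the cylindrical case, set up one auxiliary equivalence, and then prove each implication by describing the two $\mathcal{U}$-minors $M(\Sigma)\cto(U^{-}\cup Y)$ and $M(\Sigma)\cto(U^{+}\cup Y)$ explicitly at the level of signed graphs. Since $M(\Sigma)$ is quaternary, non-binary, jointless and internally $4$-connected, Theorem~\ref{th_SliDe} leaves only two possibilities for $\Sigma$: the balancing-vertex and $k$-sum cases ($k\le 3$) are excluded because they force $M(\Sigma)$ to be binary or to admit a separation incompatible with internal $4$-connectivity, so $\Sigma$ is either cylindrical or isomorphic to $T_{6}$; the latter is disposed of by Lemma~\ref{t6}, and in what follows I assume $\Sigma$ cylindrical. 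Internal $4$-connectivity yields $3$-connectivity, so by Theorem~\ref{3biconnected} $\Sigma$ is vertically $3$-biconnected, hence $2$-connected, and Theorem~\ref{th_Ybridgeseparable} together with Corollary~\ref{Yclasses} apply. The auxiliary equivalence I will use is: for a non-balancing bond $Y$, the cocircuit $Y$ is nongraphic if and only if the core of $\Sigma\dof Y$ is not graphic. This follows from $M(\Sigma)\dof Y=M(\Sigma\dof Y)$ (Theorem~\ref{th_minsig}), the fact that by Claim~\ref{connectedcomponents} $\Sigma\dof Y$ splits into one balanced component $\Sigma_{2}$ (all of whose blocks are outer, hence graphic) and one unbalanced component $\Sigma_{1}$ whose core is the core of $\Sigma\dof Y$, and Theorem~\ref{th_Zasl11} with Proposition~\ref{prop_sgn_graphic}, which identify the core as the unique possibly non-graphic elementary separator and show it is graphic exactly when it is a B-necklace or has a balancing vertex.

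$(\Leftarrow)$ Suppose $Y$ is an unbalancing bond with non-graphic core. By Theorem~\ref{th_Ybridgeseparable} $Y$ is bridge-separable, and by Corollary~\ref{Yclasses} the class $U^{-}$ consists of the separates of $\Sigma_{1}$ and $U^{+}$ of those of $\Sigma_{2}$, so $U^{-}\cup Y=E(\Sigma_{1})\cup Y$ and $U^{+}\cup Y=E(\Sigma_{2})\cup Y$. Hence $M(\Sigma)\cto(U^{-}\cup Y)=M(\Sigma\cto(\Sigma_{1}\cup Y))$: contracting the balanced $\Sigma_{2}$ retains $\Sigma_{1}$ (deleting the remaining edges of $Y$ recovers $M(\Sigma_{1})$ as a minor), and since the core of $\Sigma_{1}$ is not graphic this minor is nongraphic by the auxiliary equivalence. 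Dually, $M(\Sigma)\cto(U^{+}\cup Y)=M(\Sigma\cto(\Sigma_{2}\cup Y))$: because $Y$ is unbalancing no edge of $Y$ has both ends in $\Sigma_{2}$, so — exactly as in the proof of Proposition~\ref{nonbinary} — contracting the unbalanced $\Sigma_{1}$ turns every edge of $Y$ into a joint attached to $\Sigma_{2}$, and the resulting signed graph is balanced apart from joints, hence graphic by Proposition~\ref{prop_sgn_graphic}(ii). Exactly one of the two $\mathcal{U}$-minors is nongraphic, so $Y$ is an unbalancing cocircuit.

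$(\Rightarrow)$ Conversely, let $Y$ be an unbalancing cocircuit. By Theorem~\ref{thrm_bonds} $Y$ is a bond, and being nongraphic it is not balancing; by the auxiliary equivalence its core is not graphic. It remains to exclude the double-bond case. If $Y$ were a double bond, then Proposition~\ref{nonbinary} would give that $M(\Sigma)\cto(U^{+}\cup Y)=M(\Sigma\cto(\Sigma_{2}\cup Y))$ is non-binary, hence nongraphic, while the computation of the previous paragraph shows $M(\Sigma)\cto(U^{-}\cup Y)$ still retains $\Sigma_{1}$ and therefore its non-graphic core, so it too is nongraphic; thus both $\mathcal{U}$-minors would be nongraphic, making $Y$ a double cocircuit and contradicting the definition of an unbalancing cocircuit. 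Hence $Y$ is an unbalancing bond with non-graphic core. The degenerate star-bond case, in which $\Sigma_{2}$ is edgeless and $U^{+}=\emptyset$, corresponds to the star cocircuit, a special unbalancing cocircuit, and is covered by the same computation.

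The main obstacle is the precise description of the two contracted signed graphs: one must verify that contracting the \emph{unbalanced} component $\Sigma_{1}$ converts the edges of $Y$ into joints on $\Sigma_{2}$ (so Proposition~\ref{prop_sgn_graphic}(ii) can be invoked), whereas contracting the \emph{balanced} component $\Sigma_{2}$ leaves $\Sigma_{1}$, and hence its core, present as a minor. Coupled with the clean equivalence between $Y$ being nongraphic and the core failing to be a B-necklace or to possess a balancing vertex, this is exactly the place where the unbalancing-versus-double dichotomy is matched to the exactly-one-versus-both nongraphic $\mathcal{U}$-minors, with Proposition~\ref{nonbinary} and the ``no $Y$-edge inside $\Sigma_{2}$'' property doing the decisive work.
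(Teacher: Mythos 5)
Your proposal is correct and takes essentially the same approach as the paper's own proof: reduce to the cylindrical/$T_{6}$ dichotomy, use Claim~\ref{connectedcomponents} together with Theorem~\ref{th_Ybridgeseparable} and Corollary~\ref{Yclasses} to identify $U^{-},U^{+}$ with the separates of the unbalanced and balanced components of $\Sigma\dof Y$, show $M\cto(U^{+}\cup Y)$ graphic because contracting $\Sigma_{1}$ leaves only joints as negative cycles (Proposition~\ref{prop_sgn_graphic}(ii)) and $M\cto(U^{-}\cup Y)$ nongraphic because it retains $\Sigma_{1}$ and its nongraphic core, and then rule out the double-bond case in the converse by showing both $\mathcal{U}$-minors would be nongraphic. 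The single, harmless deviation is that in the double-bond exclusion you cite Proposition~\ref{nonbinary} to make the $U^{+}$-side nongraphic, whereas the paper invokes Proposition~\ref{uniqueedge} and exhibits the extra negative cycle (the $K_{0}$ minor) directly --- your shortcut is slightly more economical and reaches the identical contradiction.
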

\begin{proof}
By hypothesis $\Sigma$ is either cylindrical signed graph or isomorphic to $T_{6}$. Assume that $Y$ is an unbalancing bond in $\Sigma$ such that the core is not a B-necklace and not having a balancing vertex. Hence $Y$ is nongraphic. By proposition \ref{connectedcomponents} $\Sigma \dof Y$ consists of one unbalanced component $\Sigma_{1}$ and one balanced component $\Sigma_{2}$. By theorem \ref{th_Ybridgeseparable} and \ref{t6} $Y$ is bridge-separable. Moreover, $M(\Sigma \cto (\Sigma_{1} \cup Y))$ is signed-graphic not graphic. Either $\Sigma$ is cylindrical signed graph or isomorphic to $T_{6}$, to obtain $\Sigma \cto (\Sigma_{2} \cup Y)$ we contract the unbalanced component and all edges of $Y$ become half-edges at their other endvertex. The only negative cycles in $\Sigma \cto (\Sigma_{2} \cup Y)$ are joints. Thus $M(\Sigma \cto (\Sigma_{2} \cup Y))$ is graphic. For the converse, suppose that $Y$ is an unbalancing cocircuit of $M(\Sigma)$ and a double bond in $\Sigma$. We will also denote the unbalanced component by $\Sigma_{1}$ and the balanced component of $\Sigma \dof Y$ by $\Sigma_{2}$. By proposition \ref{uniqueedge} the balancing part of $Y$ consists of one edge $e$ with both endvertices at $\Sigma_{2}$. The edges of the unbalancing part of $Y$ become half-edges at their other endvertex in $\Sigma_{2}$, since we contract $\Sigma_{1}$. Apart from joints, $\Sigma \cto (\Sigma_{2} \cup Y)$ contains another negative cycle consisting of $e$.Therefore $\Sigma \cto (\Sigma_{2} \cup Y)$ contracts to $K_{0}$. Thereby $M(\Sigma \cto (\Sigma_{2} \cup Y))$ is not graphic. Furthermore $M(\Sigma \cto (\Sigma_{1} \cup Y))$ is also signed-graphic, not graphic which leads to a contradiction.
\end{proof}

\begin{corollary}
If $M(\Sigma)$ is an internally 4-connected quaternary nonbinary matroid with a jointless signed-graphic representation $\Sigma$, then $Y$ is an \emph{star cocircuit} of $M(\Sigma)$ if and only if $Y$ is a star bond in $\Sigma$ with core not graphic (i.e., not B-necklace and not having a balancing vertex).
\end{corollary}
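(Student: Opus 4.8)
The plan is to run the same argument as in the preceding proposition, but to track the single extra feature that distinguishes a star cocircuit among unbalancing cocircuits, namely that the class $U^{+}$ of avoiding bridges is empty. By Corollary~\ref{Yclasses}, for a bridge-separable cocircuit of a $2$-connected jointless cylindrical $\Sigma$ the class $U^{+}$ consists exactly of the separates of the balanced component $\Sigma_{2}$ of $\Sigma\dof Y$, so ``$U^{+}=\emptyset$'' is equivalent to ``$\Sigma_{2}$ is edgeless'', which is precisely the defining feature of a star bond. Throughout I would use that, by Theorem~\ref{th_SliDe}, a jointless representation of an internally $4$-connected quaternary non-binary matroid is either cylindrical or isomorphic to $T_{6}$, and treat the $T_{6}$ case separately through Lemma~\ref{t6}.

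For the direction ``star cocircuit $\Rightarrow$ star bond with non-graphic core'', I would first note that a nongraphic $Y$ forces $M(\Sigma)\dof Y=M(\Sigma\dof Y)$ to be non-graphic, hence $\Sigma\dof Y$ is unbalanced and $Y$ is a non-balancing bond; by Claim~\ref{connectedcomponents}, $\Sigma\dof Y$ then splits into one unbalanced component $\Sigma_{1}$ (containing the core) and one balanced component $\Sigma_{2}$. If $\Sigma_{2}$ carried an edge it would contain a balanced separate $B^{+}$, while $\Sigma_{1}$ always contains the unbalanced separate $B_{0}$; I would argue that such $B^{+}$ and $B_{0}$ cannot be avoiding, contradicting the hypothesis that \emph{all} bridges of a star cocircuit are avoiding. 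Hence $\Sigma_{2}$ is edgeless and $Y$ is a star bond, and since $M(\Sigma\dof Y)=M(\Sigma_{1})$ is non-graphic, Proposition~\ref{prop_sgn_graphic} together with Theorem~\ref{th_Zasl11} give that the core is neither a B-necklace nor possesses a balancing vertex, i.e. is not graphic.

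For the converse I would start from a star bond $Y$ with non-graphic core. By Claim~\ref{connectedcomponents} the graph $\Sigma\dof Y$ consists of a unique unbalanced component $\Sigma_{1}$ carrying the core together with the balanced component, which for a star bond is a single vertex and hence edgeless. Consequently there are no balanced separates, so by Corollary~\ref{Yclasses} every bridge of $Y$ is a separate of $\Sigma_{1}$. The part of the case analysis in the proof of Theorem~\ref{th_Ybridgeseparable} dealing with two separates of $\Sigma_{1}$ (its Cases~1 and~2) does not use the presence of edges in $\Sigma_{2}$, so it applies verbatim and shows that any two such bridges are avoiding; thus every pair of bridges of $Y$ is avoiding. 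Since the core is not graphic, $M(\Sigma)\dof Y=M(\Sigma_{1})$ is non-graphic, so $Y$ is nongraphic, and therefore $Y$ is a star cocircuit. In the remaining case $\Sigma\cong T_{6}$, Lemma~\ref{t6} directly lists every nongraphic unbalancing bond as a star of a vertex whose bridges are avoiding, which settles both directions at once.

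The step I expect to be the main obstacle is the claim, used in the forward direction, that a balanced separate $B^{+}$ of $\Sigma_{2}$ and the unbalanced separate $B_{0}$ of $\Sigma_{1}$ are never avoiding when $\Sigma_{2}$ has an edge; equivalently, that ``all bridges avoiding'' is genuinely stronger than bridge-separability and holds precisely when $\Sigma_{2}$ is edgeless. Making this rigorous requires inspecting the cocircuit families $\mathcal{C}^{*}(M(\Sigma)\cto(B^{+}\cup Y)\dto Y)$ and $\mathcal{C}^{*}(M(\Sigma)\cto(B_{0}\cup Y)\dto Y)$ and checking that no member of the first together with a member of the second can exhaust $Y$--- intuitively because, after contracting a balanced separate and deleting to $Y$, the resulting bond is a set of half-edges confined to the attachment vertices of $B^{+}$, which cannot cover the edges of $Y$ meeting only $\Sigma_{1}$. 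A secondary point to verify carefully is that the $\Sigma_{1}$-separate cases of Theorem~\ref{th_Ybridgeseparable} remain valid once $\Sigma_{2}$ is a single vertex, since some subsidiary vertices of attachment invoked there rely on $2$-connectivity rather than on $\Sigma_{2}$ being nonempty.
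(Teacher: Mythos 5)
Your converse direction is essentially sound and agrees with what the paper intends: once the balanced component of $\Sigma\dof Y$ is a single vertex, every bridge of $Y$ is a separate of the unbalanced component $\Sigma_{1}$, the within-$\Sigma_{1}$ cases of Theorem~\ref{th_Ybridgeseparable} give pairwise avoidance, non-graphicity of the core makes $Y$ nongraphic, and Lemma~\ref{t6} handles $T_{6}$. The paper itself offers no separate proof of the corollary --- it is meant as the specialization of the preceding proposition on unbalancing cocircuits to the case where the class $U^{+}$ is empty --- so in this direction you are filling in roughly what the authors had in mind.

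The forward direction, however, contains a genuine gap at exactly the step you flag, and the gap is not merely a missing verification: your proposed repair is the wrong claim. You assert that a balanced separate $B^{+}$ of $\Sigma_{2}$ and the unbalanced separate $B_{0}$ can never be avoiding, so that ``all bridges avoiding'' forces $E(\Sigma_{2})=\emptyset$. Corollary~\ref{Yclasses} cannot deliver this: it only exhibits one admissible bipartition of the bridges and says nothing about cross-class pairs, and bridge-separability constrains same-class pairs only. Moreover your supporting intuition misdescribes the contracted graph: for an unbalancing bond every edge of $Y$ has one end in $\Sigma_{2}$, so in $\Sigma\cto(B^{+}\cup Y)\dto Y$ the sets $Y(B^{+},v)$ of half-edges \emph{partition all of} $Y$, and on the $B_{0}$ side the cocircuits of $M(\Sigma)\cto(B_{0}\cup Y)\dto Y$ can be large (compare the worked example in \S2.4, where such cocircuits are far from single parallel classes); nothing prevents one set from each side from exhausting $Y$. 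Indeed, the paper's theorem on $\mathcal{U}$-cocircuits with no two overlapping bridges exists precisely because all-avoiding does \emph{not} by itself make $\Sigma_{2}$ edgeless in a given representation: its conclusion is only that \emph{some} representation, chosen with $|E(\Sigma_{2})|$ minimal and modified by twistings through a sequence of $2$-separations, realizes $Y$ as a star of a vertex. The correct route to your statement is to re-run that case analysis under internal $4$-connectivity: the $2$-separations $\{T(v_{1}),E(\Sigma)\dof T(v_{1})\}$ arising in its Cases 1--3 are $2$-biseparations with one balanced part, hence by Proposition~\ref{k-bisep} they would yield $2$-separations of $M(\Sigma)$, contradicting internal $4$-connectivity outright; so every case with $|E(\Sigma_{2})|>0$ ends in either that contradiction or a non-avoiding pair, forcing $Y$ to be a star bond in the given $\Sigma$. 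You supply neither this argument nor any substitute, so as written the forward implication is unproved.
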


\begin{proposition}
If $M$ is an internally 4-connected quaternary non-binary matroid, $Y$ is an unbalancing cocircuit of $M$ and $M \cto (U^{-} \cup Y)=M(\overline{\Sigma})$ where $U^{-}$ a class of all avoiding bridges of $Y$ and $\overline{\Sigma}$ jointless signed graph, then $Y$ is an unbalancing bond in $\overline{\Sigma}$.
\end{proposition}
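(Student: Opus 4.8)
The plan is to pin down the precise type of bond that the cocircuit $Y$ induces in the jointless representation $\overline{\Sigma}$, and then to eliminate every possibility except \emph{unbalancing}.

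First I would collect the facts that come for free. The earlier lemma stating that $M\cto(U^{-}\cup Y)$ is connected when $Y$ is a cocircuit of an internally $4$-connected matroid gives that $M(\overline{\Sigma})$ is connected, hence $\overline{\Sigma}$ is connected. By the definition of an unbalancing cocircuit exactly one of the two $\mathcal{U}$-minors is non-graphic, and the convention fixing $U^{-}$ as the non-graphic class (as in the definition of a star cocircuit) makes $M(\overline{\Sigma})=M\cto(U^{-}\cup Y)$ non-graphic; since a balanced signed graph has a graphic matroid by Proposition~\ref{prop_sgn_graphic}, $\overline{\Sigma}$ must be unbalanced. Finally, the earlier proposition that $Y\in\mathcal{C}^{*}(M)$ implies $Y\in\mathcal{C}^{*}(M\cto(U^{-}\cup Y))$ yields $Y\in\mathcal{C}^{*}(M(\overline{\Sigma}))$, so by Zaslavsky's description of cocircuits (Theorem~\ref{thrm_bonds}) $Y$ is a bond of $\overline{\Sigma}$.

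Next I would upgrade ``bond'' to the right type. Because $Y$ is an unbalancing cocircuit, the complementary minor $M\cto(U^{+}\cup Y)$ is graphic; feeding this into the earlier proposition (if $M\cto(U^{+}\cup Y)$ is graphic then $Y$ is a nongraphic cocircuit of $M\cto(U^{-}\cup Y)$) shows that $Y$ is a \emph{nongraphic} cocircuit of $M(\overline{\Sigma})$, i.e. $M(\overline{\Sigma})\dof Y$ is non-graphic. Invoking the structural dichotomy that a nongraphic cocircuit of a signed-graphic matroid is realized either as an unbalancing bond or as a double bond, I conclude that $Y$ is one of these two in $\overline{\Sigma}$.

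The crux, and the step I expect to be hardest, is to exclude the double bond. Suppose $Y$ were a double bond of $\overline{\Sigma}$, with balanced component $\overline{\Sigma}_{2}$ of $\overline{\Sigma}\dof Y$. Since $M(\overline{\Sigma})$ is connected and quaternary, Proposition~\ref{nonbinary} forces $M(\overline{\Sigma}\cto(\overline{\Sigma}_{2}\cup Y))$ to be non-binary, hence non-graphic, so the side of $Y$ keeping the balanced separates is non-graphic; the side keeping the unbalanced separates is non-graphic as well. I would then show that these two sides are exactly the two avoiding classes of $Y$ inside $M(\overline{\Sigma})$: a contraction computation identifies the bridges of $Y$ in $M(\overline{\Sigma})$ with the $U^{-}$-bridges of $M$ and shows that the avoiding relation is preserved by the contraction of $U^{+}$, while Corollary~\ref{Yclasses} applied to the cylindrical embedding (available through Theorem~\ref{th_cygicr} and controlled by Proposition~\ref{uniqueedge}) places the unbalanced and balanced separates of a double bond in \emph{different} avoiding classes. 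Having both classes non-graphic would make $Y$ a double cocircuit, i.e. with \emph{both} $\mathcal{U}$-minors non-graphic; transporting this conclusion back to $M$ contradicts the hypothesis that $Y$ is an unbalancing cocircuit, for which exactly one $\mathcal{U}$-minor is non-graphic. This contradiction removes the double bond and leaves $Y$ an unbalancing bond of $\overline{\Sigma}$.

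The genuinely delicate bookkeeping is the triangle of correspondences between the avoiding classes of $Y$ in $M$, the bridges that survive in the contracted minor $M(\overline{\Sigma})$, and the balanced versus unbalanced separates of $\overline{\Sigma}$; in particular one must check that contracting $U^{+}$ keeps $Y$ a cocircuit whose class-and-bridge structure actually detects a double bond. I expect this identification to carry the whole argument, with the cylindrical structural results supplying the geometric control needed to match the two geometric sides of a putative double bond to the matroidal two-class splitting and thence to the pair of $\mathcal{U}$-minors of $M$.
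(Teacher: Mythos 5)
Your proposal is correct and takes essentially the same route as the paper's own proof: both deduce from the graphicness of $M \cto (U^{+} \cup Y)$ that $Y$ is a nongraphic cocircuit of $M(\overline{\Sigma})$, suppose $Y$ is a double bond with balanced component $\overline{\Sigma}_{2}$, invoke Proposition~\ref{nonbinary} to make the balanced side $M(\overline{\Sigma}_{2} \cup Y)$ non-binary (hence non-graphic) alongside the already non-graphic unbalanced side, and derive a contradiction with the exactly-one-nongraphic property of an unbalancing cocircuit. The bridge-class bookkeeping you flag as the delicate step is left equally implicit in the paper, which simply posits classes $U_{1}, U_{2}$ of bridges of $U^{-}$ contracting to the two sides, so your sketch is no less complete than the published argument.
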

\begin{proof}
By hypothesis $Y$ is a nongraphic bridge-separable cocircuit of $M$. Let $U^{-},U^{+}$ be the two classes of all avoiding bridges of $Y$. Assume without loss of generality that $M \cto (U^{+} \cup Y)$ is graphic. Then $Y$ is a nongraphic cocircuit of $M \cto (U^{-} \cup Y)$. Moreover $Y$ is an unbalancing cocircuit of   $M \cto (U^{-} \cup Y)$ and specifically a star cocircuit. Suppose that $Y$ is a double bond in $\overline{\Sigma}$ and $\overline{\Sigma_{1}}$, $\overline{\Sigma_{2}}$ are the unbalanced and the balanced component of $\overline{\Sigma} \dof Y$ respectively. Furthermore assume that $U_{1},U_{2}$ are two classes of bridges of $U^{-}$ such that $M \cto (U_{1} \cup Y)=M(\overline{\Sigma_{1}} \cup Y)$ and $M \cto (U_{2} \cup Y)=M(\overline{\Sigma_{2}} \cup Y)$. Since $Y$ is a nongraphic cocircuit of $M(\overline{\Sigma})$, $M(\overline{\Sigma_{1}} \cup Y)$ is non-graphic and by proposition \ref{nonbinary} $M(\overline{\Sigma_{2}} \cup Y)$ is non-graphic. This is a contradiction to $Y$ being an unbalancing cocircuit of $M \cto (U^{-} \cup Y)$. 
\end{proof}

\subsection{2-sum} \label{subsec_2_sum}

Throughout this section we consider a connected matroid $M$ which has an exact 2-separation $(X_{1},X_{2})$. Hence $M=M_{1} \oplus_{2} M_{2}$ where $M_{1},M_{2}$ matroids with ground sets $E(M_{1})=X_{1} \cup z$ and $E(M_{2})=X_{2} \cup z$. Thus the ground set of $M$ is $E(M)=(E(M_{1} \cup E(M_{2}))-z$ and $E(M_{1}) \cap E(M_{2})=z$. We consider $M_{1}$ as an one element extension of $M \dto X_{1}$ with $z$.

The elements of a cocircuit $Y$ of $M$ may be partitioned to $M_{1},M_{2}$. Then the elements of $Y$ in $M_{1}$ with $z$ constitute a cocircuit of $M_{1}$. Respectively for $M_{2}$. If $Y$ is contained in one of $M_{1}$ or $M_{2}$ then $Y$ is a cocircuit of this matroid. This is stated in the following lemma.

\begin{lemma}
If $M=M_{1} \oplus_{2} M_{2}$ and $(Y_{1},Y_{2})$ is a partition of $Y \in \mathcal{C}^{*}(M)$ such that $Y_{1} \subseteq E(M_{1})$ and $Y_{2} \subseteq E(M_{2})$ then one of the following holds:
\begin{enumerate}
\item[(i)] both $Y_{i}$ are non-empty and $Y_{i} \cup z \in \mathcal{C}^{*}(M_{i})$ ,
\item[(ii)] $Y \subseteq E(M_{i})$, where $i=1$ or $2$ and $Y \in \mathcal{C}^{*}(M_{i})$.
\end{enumerate}
\end{lemma}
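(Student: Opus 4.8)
The plan is to reduce the statement to the standard description of the circuits of a 2-sum together with matroid duality. Since $M = M_1 \oplus_2 M_2$, the 2-sum commutes with duality, so $M^* = M_1^* \oplus_2 M_2^*$ with the same basepoint $z$ (see~\cite{Oxley:2011}). Because the cocircuits of $M$ are precisely the circuits of $M^*$, it suffices to translate the known circuit structure of a 2-sum through this dual. Recall that the partition $(Y_1,Y_2)$ is in fact forced, since $E(M) = X_1 \cup X_2$ is a disjoint union and hence $Y_i = Y \cap X_i$.

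Recall (Oxley~\cite{Oxley:2011}) that the circuits of a 2-sum $N_1 \oplus_2 N_2$ with basepoint $z$ are of exactly three kinds: the circuits of $N_1$ avoiding $z$, the circuits of $N_2$ avoiding $z$, and the sets $(C_1 \cup C_2) - z$ where $C_i$ is a circuit of $N_i$ containing $z$. Applying this to $M^* = M_1^* \oplus_2 M_2^*$ and dualizing back, the cocircuits of $M$ fall into three classes: (a) the cocircuits of $M_1$ not containing $z$, which lie entirely in $X_1$; (b) the cocircuits of $M_2$ not containing $z$, which lie entirely in $X_2$; and (c) the sets $(D_1 \cup D_2) - z$ where $D_i \in \mathcal{C}^{*}(M_i)$ with $z \in D_i$. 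First I would record the degeneracy-exclusion that drives everything: since the 2-sum is defined, $z$ is neither a loop nor a coloop of either $M_i$, so $\{z\}$ is never a cocircuit of $M_i$; hence for a class-(c) cocircuit both $D_1 - z$ and $D_2 - z$ are non-empty, and every class-(c) cocircuit meets both $X_1$ and $X_2$. Consequently a cocircuit of $M$ lies wholly inside one side if and only if it is of type (a) or (b), and otherwise it is of type (c).

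With this dichotomy in hand the case split is immediate. If one part, say $Y_2$, is empty then $Y = Y_1 \subseteq X_1$, so $Y$ is a type-(a) cocircuit and $Y \in \mathcal{C}^{*}(M_1)$, which is conclusion (ii); the symmetric argument handles $Y_1 = \emptyset$. If instead both $Y_1$ and $Y_2$ are non-empty, then $Y$ meets both $X_1$ and $X_2$, so it must be a type-(c) cocircuit $Y = (D_1 \cup D_2) - z$ with $D_i \in \mathcal{C}^{*}(M_i)$ and $z \in D_i$; then $Y_i = D_i - z$, whence $Y_i \cup z = D_i \in \mathcal{C}^{*}(M_i)$, which is conclusion (i).

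The routine bookkeeping (checking that the three circuit types dualize as claimed) is straightforward, so the only point genuinely requiring care is the degeneracy exclusion: it is exactly the hypothesis built into the definition of the 2-sum, namely that $z$ is neither a loop nor a coloop of $M_1$ or $M_2$, that prevents a mixed cocircuit from collapsing onto a single side, and this is what forces the clean separation between conclusions (i) and (ii). An alternative that avoids invoking the dual 2-sum is to argue directly from the rank formula for the 2-sum together with the identity $M \dto X_1 = M_1 \dof z$; but the duality route above is the shortest and I would present that one.
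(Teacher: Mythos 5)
Your proof is correct and follows essentially the same route as the paper: both arguments pass to the dual, where $M^{*}=M_{1}^{*}\oplus_{2}M_{2}^{*}$, and then read off the conclusion from the standard circuit description of a $2$-sum (the paper states this description in the form $\mathcal{C}(M_{1})=\mathcal{C}(M\dto X_{1})\cup\{(C\cap X_{1})\cup z\}$ and applies it to $M^{*}$, which is the same fact you use in the forward direction). Your explicit remark that $z$ is neither a loop nor a coloop of either part, so a mixed-type cocircuit cannot collapse into one side, is a careful touch the paper leaves implicit, but the substance of the two proofs is identical.
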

\begin{proof}
For $(i)$ assume that both $Y_{i}$ are non-empty. By definition of $2$-sum, $\mathcal{C}(M_{1})=\mathcal{C}(M\dto X_{1}) \cup \{(C \cap X_{1}) \cup z:$ $C$ circuit of $M$ meeting both $X_{1},X_{2}\}$. Since $Y$ is a circuit of $M^{*}$ meeting both $X_{1},X_{2}$ then $(Y \cap X_{1}) \cup z \in \mathcal{C}(M^{*}_{1})$ and $Y_{1} \cup z \in \mathcal{C}^{*}(M_{1})$. Similarly $Y_{2} \cup z \in \mathcal{C}^{*}(M_{2})$.

For $(ii)$ wlog assume that $Y \subseteq E(M_{1})$, so $Y \subseteq X_{1}$. Thus $Y \in \mathcal{C}(M^{*} \dto X_{1})$. Since $M_{1}$ is obtained by extending $M \dto X_{1}$ by $z$, $ \mathcal{C}(M ^{*}\dto X_{1}) \subseteq \mathcal{C}(M^{*}_{1})$. Therefore $Y \in \mathcal{C}^{*}(M_{1})$.
\end{proof}

If the elements of a cocircuit $Y$ in $M$ are partitioned to $M_{1},M_{2}$, then the bridges of $Y$ in $M$ are partitioned also to $M_{1},M_{2}$. Otherwise $Y \subseteq E(M_{i})$, where $i=1$ or $2$ and the 2-separation of $M$ is in a bridge of $Y$.

\begin{lemma}
If $M=M_{1} \oplus_{2} M_{2}$ and $(Y_{1},Y_{2})$ is a partition of $Y \in \mathcal{C}^{*}(M)$ such that $Y_{1} \subseteq E(M_{1})$ and $Y_{2} \subseteq E(M_{2})$ then one of the following holds:
\begin{enumerate}
\item[(i)] both $Y_{i}$ are non-empty and the bridges of $Y_{i} \cup z$ in $M_{i}$, $i=1,2$ are the bridges of $Y$ in $M$ that are contained in $E(M_{i})$ ,
\item[(ii)] $Y \subseteq E(M_{i})$, where $i=1$ or $2$ and the bridges of $Y$ in $M_{i}$ are the bridges of $Y$ in $M$ apart from one bridge $B$ that contains $z$ and $B \oplus_{2} M_{2}$ is a bridge of $Y$ in $M$.
\end{enumerate}
\end{lemma}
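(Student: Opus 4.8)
The plan is to reduce everything to the characterisation of bridges as elementary separators: the bridges of a cocircuit $C^{*}$ of a matroid $N$ are exactly the connected components (elementary separators) of $N\setminus C^{*}$. I would combine this with the circuit description of the $2$-sum, namely that $\mathcal{C}(M)$ consists of the circuits of $M_{1}\setminus z$, the circuits of $M_{2}\setminus z$, and the sets $(C_{1}\setminus z)\cup(C_{2}\setminus z)$ with $C_{i}$ a circuit of $M_{i}$ containing $z$ (see~\cite{Oxley:2011}), together with the orthogonality property $|C\cap C^{*}|\neq 1$ recalled earlier. Throughout I use that $M|X_{1}=M_{1}\setminus z$, and hence $M|(X_{1}-Y_{1})=M_{1}|(X_{1}-Y_{1})$, and symmetrically for $X_{2}$.

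For part (i), where both $Y_{i}$ are non-empty, the preceding lemma gives $Y_{i}\cup z\in\mathcal{C}^{*}(M_{i})$, so the bridges of $Y_{i}\cup z$ in $M_{i}$ are the components of $M_{i}\setminus(Y_{i}\cup z)=M_{i}|(X_{i}-Y_{i})$, while the bridges of $Y$ in $M$ are the components of $M\setminus Y=M|((X_{1}-Y_{1})\cup(X_{2}-Y_{2}))$. The key step is to show that no circuit of $M\setminus Y$ meets both $X_{1}-Y_{1}$ and $X_{2}-Y_{2}$: such a crossing circuit would have the form $(C_{1}\setminus z)\cup(C_{2}\setminus z)$ with $z\in C_{i}$ and $C_{i}\setminus z\subseteq X_{i}-Y_{i}$, whence $C_{1}\cap(Y_{1}\cup z)=\{z\}$, contradicting the orthogonality of the circuit $C_{1}$ and the cocircuit $Y_{1}\cup z$ of $M_{1}$. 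Therefore $M\setminus Y$ is the direct sum of $M_{1}|(X_{1}-Y_{1})$ and $M_{2}|(X_{2}-Y_{2})$, and its components split exactly into the bridges of $Y_{1}\cup z$ in $M_{1}$ and those of $Y_{2}\cup z$ in $M_{2}$. Since no bridge of $Y$ in $M$ contains the basepoint $z$, being ``contained in $E(M_{i})$'' is the same as being ``contained in $X_{i}$'', and (i) follows.

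For part (ii), assume $Y\subseteq E(M_{1})$, so $Y\subseteq X_{1}$ and $Y\in\mathcal{C}^{*}(M_{1})$ by the preceding lemma. The bridges of $Y$ in $M_{1}$ are the components of $M_{1}\setminus Y=M_{1}|((X_{1}-Y)\cup z)$, and exactly one of these, call it $B$, contains $z$. I would then invoke the deletion--$2$-sum identity $M\setminus Y=(M_{1}\setminus Y)\oplus_{2}M_{2}$ (valid when $z$ is not a coloop of $M_{1}\setminus Y$; see~\cite{Oxley:2011}): every component of $M_{1}\setminus Y$ avoiding $z$ survives unchanged as a component of $M\setminus Y$, whereas $B$ is joined across the basepoint to the connected matroid $M_{2}$, forming the single component $B\oplus_{2}M_{2}$ on ground set $(B-z)\cup X_{2}$. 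Consequently the bridges of $Y$ in $M_{1}$ are precisely the bridges of $Y$ in $M$ other than $B\oplus_{2}M_{2}$, together with the one extra bridge $B$ containing $z$, and $B\oplus_{2}M_{2}$ is the corresponding bridge of $Y$ in $M$; this is exactly (ii).

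The main obstacle is the correct bookkeeping of the basepoint $z$ under deletion. In (i) the essential point is ruling out surviving crossing circuits, which the orthogonality argument settles cleanly. In (ii) the delicate point is the degenerate subcase in which $z$ becomes a coloop of $M_{1}\setminus Y$ (equivalently $B=\{z\}$): here the $2$-sum $M\setminus Y=(M_{1}\setminus Y)\oplus_{2}M_{2}$ reduces to the direct sum $M_{1}|(X_{1}-Y)\oplus(M_{2}\setminus z)$, and one must read $B\oplus_{2}M_{2}$ accordingly so that the $X_{2}$-part still appears as the bridge determined by $z$. This is the only place where extra care is needed, and it is purely a matter of interpreting the reduced $2$-sum.
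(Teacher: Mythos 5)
Your proposal follows essentially the same route as the paper's own proof. In (i) you establish exactly the paper's key claim that $X_{1}-Y_{1}$ and $X_{2}-Y_{2}$ are separators of $M\setminus Y$, so that $M\setminus Y=M_{1}\setminus(Y_{1}\cup z)\oplus M_{2}\setminus(Y_{2}\cup z)$, and in (ii) you use the same identity $(M_{1}\oplus_{2}M_{2})\setminus Y=(M_{1}\setminus Y)\oplus_{2}M_{2}$ that the paper obtains from $M_{1}\oplus_{2}M_{2}=P(M_{1},M_{2})\setminus z$ and the deletion rule for parallel connections (Oxley, Proposition~7.1.15). If anything, your justification of the separator claim in (i) --- a surviving crossing circuit would be $(C_{1}-z)\cup(C_{2}-z)$ with $C_{1}\cap(Y_{1}\cup z)=\{z\}$, violating orthogonality against the cocircuit $Y_{1}\cup z$ of $M_{1}$ --- is cleaner and more complete than the contradiction argument printed in the paper, which is garbled as written.

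The one place your proposal goes wrong is precisely the degenerate subcase you flag and then dismiss as ``purely a matter of interpreting the reduced $2$-sum''. If $z$ is a coloop of $M_{1}\setminus Y$ (equivalently $B=\{z\}$), then $M\setminus Y=\bigl(M_{1}|(X_{1}-Y)\bigr)\oplus(M_{2}\setminus z)$, and $M_{2}\setminus z$ need not be connected even though $M_{2}$ is; hence the $X_{2}$-part need not ``appear as the bridge determined by $z$'' --- it can split into several bridges, and then conclusion (ii) itself fails, so no reading of $B\oplus_{2}M_{2}$ rescues it. Concretely, take $M_{1}=M_{2}=U_{2,3}$ with $X_{1}=\{e,f\}$, $X_{2}=\{a,b\}$, so that $M=M_{1}\oplus_{2}M_{2}=U_{3,4}$, and let $Y=\{e,f\}\in\mathcal{C}^{*}(M)$ with $Y\subseteq E(M_{1})$. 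Then $Y\in\mathcal{C}^{*}(M_{1})$, the unique bridge of $Y$ in $M_{1}$ is $B=\{z\}$, but the bridges of $Y$ in $M$ are $\{a\}$ and $\{b\}$, neither of which is the $X_{2}$-part. So your argument (like the paper's proof, which silently ignores this case) is complete only under the additional hypothesis that $z$ is not a coloop of $M_{1}\setminus Y$, i.e.\ that some circuit of $M$ meeting both $X_{1}$ and $X_{2}$ avoids $Y$. You correctly isolated the needed hypothesis; the error is in asserting that the excluded case is harmless rather than recognizing that it must genuinely be excluded.
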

\begin{proof}
For $(i)$ assume that both $Y_{i}$ are non-empty. We shall show that $X_{i}-Y_{i}$ , $i=1,2$ is a separator of $M \dof Y$. Assume on the contrary that there exists a circuit $C \in \mathcal{C}(M \dof Y)$ such that $C \cap (X_{1}-Y_{1}) \neq \emptyset$ and $ C \cap (X_{2}-Y_{2}) \neq \emptyset$. Since $(E(M)-Y)-(X_{1}-Y_{1})=X_{2}-Y_{2}$,  $E(M)-Y$ must be a separator of $M \dof Y$. Thus $M \dof Y$ is connected which is a contradiction. Hence $M \dof Y = M \dto (X_{1}-Y_{1})$ $\oplus$ $M \dto (X_{2}-Y_{2})$ $=M_{1} \dof (Y_{1} \cup z)$ $\oplus$ $M_{2} \dof (Y_{2} \cup z)$. Therefore every bridge of $Y$ in $M$ is contained either to $E(M_{1})$ or $E(M_{2})$.

For $(ii)$ wlog assume that $Y \subseteq E(M_{1})$, so $Y \subseteq X_{1}$. It is known that  $M_{1} \oplus_{2} M_{2}=P(M_{1},M_{2}) \dof z$ where $P(M_{1},M_{2})$ is denoted the matroid which is obtained by the parallel connection of $M_{1},M_{2}$. By [proposition 7.1.15 in \cite{Oxley:2011}], $e \in E(M_{1})-z$, $(M_{1} \oplus_{2} M_{2}) \dof e$$=P(M_{1},M_{2}) \dof p \dof e$$=P(M_{1},M_{2}) \dof e \dof p$$=P(M_{1} \dof e,M_{2}) \dof p$$ =(M_{1} \dof e)  \oplus_{2} M_{2}$. It follows that $(M_{1} \oplus_{2} M_{2}) \dof Y$$=(M_{1} \dof Y)  \oplus_{2} M_{2}$. The unique bridge of $Y$ in $M_{1}$ denoted by $B$ which is not bridge of $Y$ in $M$ contains $z$. $M_{1} \oplus_{2} M_{2}$ is connected if and only if $M_{1},M_{2}$ are connected matroids. Then $B \oplus_{2} M_{2}$ is a bridge of $Y$ in $M$ because it is minimal connected subset of $E(M)-Y$.
\end{proof}

We remind that star cocircuit is a cocircuit $Y$ of $M$ such that all bridges of $Y$ in $M$ are avoiding. 

\begin{lemma}
If $M=M_{1} \oplus_{2} M_{2}$ and $(Y_{1},Y_{2})$ is a partition of a star cocircuit $Y \in \mathcal{C}^{*}(M)$ such that $Y_{1} \subseteq E(M_{1})$ and $Y_{2} \subseteq E(M_{2})$ then one of the following holds:
\begin{enumerate}
\item[(i)] both $Y_{i}$ are non-empty and $Y_{i} \cup z$ is a star cocircuit of $M_{i}$ ,
\item[(ii)] $Y \subseteq E(M_{i})$, where $i=1$ or $2$ and $Y$ is a star cocircuit of $M_{i}$. 
\end{enumerate}
\end{lemma}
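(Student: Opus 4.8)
The plan is to strip the statement down to its combinatorial core. Being a star cocircuit means being a nongraphic cocircuit of a quaternary, non-binary, connected matroid all of whose bridges are pairwise avoiding; the ambient requirements pass to $M_{1},M_{2}$ because the $2$-sum components of a connected quaternary matroid are themselves connected and quaternary, while non-graphicness of the relevant cocircuit is inherited since the graphic excluded-minor certifying that $M\dof Y$ is nongraphic sits inside a single bridge of $Y$, which the bridge correspondence below carries into $M_{i}$. By the two preceding lemmas the cocircuit and bridge data transfer: in case (i), $Y_{i}\cup z\in\mathcal{C}^{*}(M_{i})$ and the bridges of $Y_{i}\cup z$ in $M_{i}$ are exactly the bridges of $Y$ in $M$ lying in $E(M_{i})$; in case (ii), $Y\in\mathcal{C}^{*}(M_{i})$ and the bridges of $Y$ in $M_{i}$ biject with those of $Y$ in $M$, the unique $z$-containing bridge $B$ corresponding to $B\oplus_{2}M_{2}$. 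So everything reduces to transferring the property that every pair of bridges is avoiding.

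The tool I would use throughout is the commutation of the $2$-sum with single-element operations, namely $(M_{1}\oplus_{2}M_{2})\dof e=(M_{1}\dof e)\oplus_{2}M_{2}$ and $(M_{1}\oplus_{2}M_{2})/e=(M_{1}/e)\oplus_{2}M_{2}$ for $e\in E(M_{1})-z$, together with the collapsing identities $(M_{1}\oplus_{2}M_{2})/(E(M_{2})-z)=M_{1}/z$ and $(M_{1}\oplus_{2}M_{2})\dof(E(M_{2})-z)=M_{1}\dof z$ (valid since $M_{2}$ is connected). Applying these to the $Y$-reductions $M\cto(B\cup Y)\dto Y$ expresses each reduction through the corresponding reduction inside $M_{i}$.

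Case (ii) is then immediate. For a bridge $B_{1}\neq B$ (so $B_{1}\subseteq X_{1}$), contracting away the whole $M_{2}$-side contracts the basepoint, and a short computation gives $M\cto(B_{1}\cup Y)\dto Y=M_{1}\cto(B_{1}\cup Y)\dto Y$; for the $z$-bridge one finds likewise $M\cto((B\oplus_{2}M_{2})\cup Y)\dto Y=M_{1}\cto(B\cup Y)\dto Y$. Hence the $Y$-reductions, and therefore their cocircuit families, coincide in $M$ and in $M_{1}$, so the avoiding pairs of $M$ transport verbatim along the bridge bijection and $Y$ is a star cocircuit of $M_{1}$.

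The real work, and where I expect the main obstacle, is case (i). Here the analogous computation gives $M\cto(B\cup Y)\dto Y=R_{1}^{B}\oplus_{2}N_{2}$, where $R_{1}^{B}:=M_{1}\cto(B\cup Y_{1}\cup z)\dto(Y_{1}\cup z)$ is precisely the reduction governing avoidance of $B$ inside $M_{1}$, and $N_{2}:=M_{2}\cto(Y_{2}\cup z)$ is independent of $B$. Given an avoiding pair $C^{*},C'^{*}$ with $C^{*}\cup C'^{*}=Y$, I would feed the decompositions $R_{1}^{B}\oplus_{2}N_{2}$ and $R_{1}^{B'}\oplus_{2}N_{2}$ into the first preceding lemma (cocircuits of a $2$-sum): whenever such a cocircuit meets both $Y_{1}$ and $Y_{2}$, adjoining $z$ to its trace on $Y_{1}$ produces a cocircuit of the relevant $R_{1}^{(\cdot)}$, and since $(C^{*}\cup C'^{*})\cap Y_{1}=Y_{1}$ these traces already cover $Y_{1}$. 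The obstacle is the degenerate subcase where one of $C^{*},C'^{*}$ lies entirely in $Y_{2}$, contributing no cocircuit on the $M_{1}$-side and leaving $z$ uncovered. I would dispose of this by supplying an arbitrary cocircuit of $R_{1}^{B}$ through $z$, which exists because $z$ is never a loop of $R_{1}^{B}$: the hyperplane $X_{1}-Y_{1}$ complementary to the cocircuit $Y_{1}\cup z$ of $M_{1}$ excludes $z$, so no contraction within it can pull $z$ into the closure of the empty set. The same argument (now using $Y_{2}\cup z\in\mathcal{C}^{*}(M_{2})$) shows $z$ survives as a genuine, non-degenerate basepoint of $N_{2}$, legitimising the $2$-sum decomposition of the reduction; checking this non-degeneracy carefully is the technical point that needs the most attention. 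Adjoining the supplementary cocircuit yields a pair with union $Y_{1}\cup z$, so $B,B'$ avoid in $M_{1}$, and as $B,B'$ were arbitrary, $Y_{1}\cup z$ (and symmetrically $Y_{2}\cup z$) is a star cocircuit.
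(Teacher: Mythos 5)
Your proposal is essentially correct, and it is worth saying plainly that it is substantially \emph{more} complete than the paper's own argument: the paper's proof of this lemma is truncated. It establishes the bridge correspondence, announces for case (i) that one must exhibit $H_{1}^{*}\in\mathcal{C}^{*}(M_{1}\cto(B_{1}\cup Y_{1}\cup z)\dto(Y_{1}\cup z))$ and $H_{2}^{*}\in\mathcal{C}^{*}(M_{1}\cto(B_{2}\cup Y_{1}\cup z)\dto(Y_{1}\cup z))$ with $H_{1}^{*}\cup H_{2}^{*}=Y_{1}\cup z$, and then stops; case (ii) is not treated at all. You supply exactly the missing mechanism. Your identity $M\cto(B\cup Y)\dto Y=R_{1}^{B}\oplus_{2}N_{2}$ is correct (contract $X_{1}-B-Y_{1}$ and $X_{2}-Y_{2}$, delete $B$, each operation commuting with the $2$-sum), it plugs into the section's first lemma on cocircuits of a $2$-sum, and the trace argument works: if both $C^{*},C'^{*}$ lay inside $Y_{1}$ then $Y_{2}=\emptyset$, contradicting case (i), so $z$ gets covered except in your flagged degenerate subcase, which your device of an arbitrary cocircuit of $R_{1}^{B}$ through $z$ (legitimate, since $z$ is not a loop by the hyperplane argument, which is sound) handles. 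Your case (ii) computation that the $Y$-reductions in $M$ and $M_{1}$ literally coincide, including $M\cto((B\oplus_{2}M_{2})\cup Y)\dto Y=M_{1}\cto(B\cup Y)\dto Y$ for the $z$-bridge, is also correct and settles that case outright.

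Two caveats. First, your non-degeneracy check is only half done: connectivity of $M_{2}$ and the hyperplane argument do make $z$ a genuine basepoint of $N_{2}$, but nothing excludes $z$ being a \emph{coloop} of $R_{1}^{B}$, and this can happen --- take $M_{1}=U_{3,4}$ on $\{e_{1},e_{2},e_{3},z\}$ with $Y_{1}\cup z=\{e_{1},z\}$ and $B=\{e_{2}\}$, giving $R_{1}^{B}\cong U_{2,2}$ with $z$ a coloop. In that case the reduction is not a $2$-sum but splits so that every cocircuit of $M\cto(B\cup Y)\dto Y$ lies wholly in $Y_{1}$ or wholly in $Y_{2}$, and ``adjoining $z$ to the trace'' is unavailable; the argument still closes, because an avoiding pair with union $Y$ then forces one cocircuit to equal $Y_{1}$, which is a cocircuit of $R_{1}^{B}$ outright, while $\{z\}$ is itself a cocircuit on the coloop side, so your existence device (which needs only that $z$ is not a loop) still produces the required pair --- but this subcase must be stated, since as written your classification tacitly assumes $z$ is non-degenerate in $R_{1}^{B}$. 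Second, your opening aside overreaches: the paper's operative definition here (restated just before the lemma) is merely ``cocircuit all of whose bridges are pairwise avoiding,'' which is all your proof needs; the claim that nongraphicness of $Y$ passes to both sides does not hold, since the excluded minor certifying that $M\dof Y$ is nongraphic sits inside a single bridge and hence on one side of the $2$-sum only, so do not lean on it.
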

\begin{proof}
For $(i)$ assume that both $Y_{i}$ are non-empty. We will show $(i)$ for $i=1$. The same arguments can be applied also for $i=2$.The bridges of $Y_{1} \cup z$ in $M_{1}$ are the bridges of $Y$ in $M$ contained in $E(M_{1})$. We shall show that any two bridges $B_{1},B_{2}$ of $Y$ in $M$ such that $B_{1},B_{2} \subseteq E(M_{1})$ are avoiding bridges of $Y_{1} \cup z$ in $M_{1}$. Specifically we shall prove that there exist $H_{1}^{*}\in{\mathcal{C}^{*}(M_{1} \cto (B_{1}\cup{Y_{1} \cup z})|{Y_{1} \cup z})}$ and $H_{2}^{*}\in{\mathcal{C}^{*}(M_{1} \cto (B_{2}\cup{Y_{1} \cup z})|{Y_{1} \cup z})}$ such that $H_{1}^{*} \cup H_{2}^{*}=Y_{1} \cup z$. 

\end{proof}


We formulate a decomposition characterisation for the class of quaternary non-binary signed-graphic matroids. The decomposition is performed by deleting a nongraphic u-cocircuit. 

\begin{theorem}
Let $M$ be an internally 4-connected quaternary nonbinary matroid and $Y$ be a nongraphic cocircuit of $M$. Then $M$ is signed-graphic with a jointless signed-graphic representation if and only if
\begin{enumerate}[(i)]
\item $Y$ is bridge-separable,
\item for the classes $U^{+},U^{-}$ of all avoiding bridges of $Y$, $M \cto (U^{+} \cup Y)$ is graphic and $M \cto (U^{-} \cup Y)$ is signed-graphic with a jointless signed-graphic representation.
\end{enumerate}
\end{theorem}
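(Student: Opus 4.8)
The plan is to prove both implications by first reducing, via Pagano's classification (Theorem~\ref{th_SliDe}), to the two essential cases treated in this paper: a $2$-connected jointless cylindrical signed graph and $T_6$. Since $M$ is internally $4$-connected it admits none of the low-order separations produced by the sums in part~(5) of Theorem~\ref{th_SliDe}, and since $M$ is non-binary it is non-graphic, so Proposition~\ref{prop_sgn_graphic} excludes the balancing-vertex case. Hence in the ``only if'' direction any jointless signed-graphic representation $\Sigma$ of $M$ is either $2$-connected jointless cylindrical or isomorphic to $T_6$, which is exactly where the structural lemmas of Sections~\ref{sec_structural_cylindrical_matroids}--\ref{sec_T6} apply.

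For the forward direction, a nongraphic cocircuit $Y$ of $M(\Sigma)$ is an unbalancing bond or a double bond of $\Sigma$; the proposition characterising unbalancing cocircuits shows that under internal $4$-connectivity the operative case is an unbalancing bond, so I would work with that and record the double-bond situation as the excluded \emph{double cocircuit} (for which both $\mathcal{U}$-minors are non-graphic). Bridge-separability of $Y$, giving~(i), then follows from Theorem~\ref{th_Ybridgeseparable} in the cylindrical case and from Lemma~\ref{t6} for $T_6$. For~(ii) I would invoke Corollary~\ref{Yclasses} to identify the two avoiding classes $U^-,U^+$ with the separates of the unbalanced component $\Sigma_1$ and the balanced component $\Sigma_2$ of $\Sigma\dof Y$, so that $M\cto(U^+\cup Y)=M(\Sigma\cto(\Sigma_2\cup Y))$ and $M\cto(U^-\cup Y)=M(\Sigma\cto(\Sigma_1\cup Y))$. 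Contracting the unbalanced component turns every edge of $Y$ into a joint, leaving a signed graph whose only negative cycles are joints, so $M\cto(U^+\cup Y)$ is graphic by Proposition~\ref{prop_sgn_graphic}; contracting the balanced component preserves the unbalanced core and yields a jointless signed-graphic matroid, giving the second half of~(ii).

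For the converse I would reconstruct a jointless signed graph representing $M$ by gluing the two representations supplied by~(ii) along $Y$. Because every bridge in $U^-$ is avoiding, $Y$ is a star cocircuit of the connected minor $M\cto(U^-\cup Y)$ with no two bridges overlapping, so the star theorem produces a $2$-connected cylindrical (or $T_6$) jointless signed graph $\overline{\Sigma}$ with $M(\overline{\Sigma})=M\cto(U^-\cup Y)$ in which $Y$ is the star of a single vertex $v$; likewise the graphic minor $M\cto(U^+\cup Y)=M(G)$ realises $Y$ as a bond of $G$. I would then replace the vertex $v$ of $\overline{\Sigma}$ by the balanced graph $G\dof Y$, reattaching each edge of $Y$ at the endvertex prescribed by $G$, to obtain a jointless signed graph $\Sigma$; since the edges of $Y$ remain links, no joints are created.

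The hard part will be verifying that this glued graph satisfies $M(\Sigma)=M$. By construction $M(\Sigma)$ and $M$ agree after contracting to either $U^-\cup Y$ or $U^+\cup Y$, so what remains is to show that the circuits crossing $Y$ are reproduced faithfully, i.e. that the attachment of each bridge to $Y$ is compatible on the two sides. This is precisely where bridge-separability and the avoiding condition enter: using Tutte's bridge framework and the $\pi(M,B,Y)$ machinery recalled in Section~\ref{subsec_bonds_circuits}, I would argue that, since the bridges split into the two avoiding classes, the sign pattern imposed on $Y$ by the unbalanced side and by the balanced side can be reconciled after switching, so that the positive-cycle and handcuff circuits of $M(\Sigma)$ coincide with those of $M$. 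The $2$-sum lemmas of Section~\ref{subsec_2_sum} would let me reduce this bookkeeping to the individual bridges, while orthogonality ($|C\cap C^{*}|\neq 1$) forces the unique consistent placement of each crossing edge; completing this verification, and disposing of the $T_6$ case by the explicit computation in Lemma~\ref{t6}, finishes the proof.
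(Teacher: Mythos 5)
Your forward direction reproduces the paper's argument almost exactly (the reduction via Theorem~\ref{th_SliDe}, bridge-separability from Theorem~\ref{th_Ybridgeseparable} and Lemma~\ref{t6}, the identification of the classes $U^{-},U^{+}$ with the separates of the unbalanced and balanced components via Corollary~\ref{Yclasses}, and the contraction computations giving (ii)), but you inherit and then gloss over a real mismatch. The theorem hypothesises only that $Y$ is a nongraphic cocircuit, whereas the paper's printed proof silently strengthens this to ``$Y$ is a $\mathcal{U}$-cocircuit'' so that Theorem~\ref{characterisationub} forces $Y$ to be an unbalancing bond. Your claim that internal $4$-connectivity makes the unbalancing bond ``the operative case'' is not supported by the characterisation of unbalancing cocircuits: that result says nothing that excludes nongraphic double bonds, and Proposition~\ref{commonendvertex} is proved precisely for an internally $4$-connected $M(\Sigma)$ with $Y$ a nongraphic double bond, so the paper itself treats that situation as live (the only contrary evidence it offers is the empirical remark ``from the examples we deduce\dots'', which is not a proof). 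If such a $Y$ exists, then by Proposition~\ref{nonbinary} both $\mathcal{U}$-minors are nongraphic, condition (ii) fails, and the forward implication as stated collapses for that cocircuit; recording the situation as ``the excluded double cocircuit'' attaches a name to the problem without removing it.

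On the converse you should know that the paper's own proof is unfinished: it breaks off after observing that $Y$ is a cocircuit of each $\mathcal{U}$-minor, never constructs a signed graph representing $M$, never proves $M=M(\Sigma)$, and the $T_{6}$ branch of the forward direction is a one-sentence stub. Your gluing plan---use the (unlabelled) star theorem to realize $Y$ as the star of a vertex $v$ in a representation $\overline{\Sigma}$ of $M \cto (U^{-}\cup Y)$, realize $M \cto (U^{+}\cup Y)$ as $M(G)$ with $Y$ a bond of $G$, and splice $G \dof Y$ in at $v$---is the natural Tutte-style route, the analogue of Theorem~\ref{thrm_decomposition}, and it goes strictly further than the printed proof. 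But as written it has the same hole: the decisive verification that the circuits crossing $Y$ are reproduced is only promised (``I would argue\dots''), and the appeal to the star theorem leaves its hypotheses unchecked. You must show that $Y$ is a $\mathcal{U}$-cocircuit of the minor (not merely a cocircuit), that no two of its bridges overlap there (pairwise avoidance within $U^{-}$ does transfer, since $M \cto (U^{-}\cup Y) \cto (B\cup Y)=M \cto (B\cup Y)$ and the bridges of $Y$ in the minor are exactly the members of $U^{-}$, but you never say so), that the minor is connected (the paper derives this from internal $4$-connectivity; cite that lemma), and that all of its signed-graphic representations are cylindrical, with $T_{6}$ handled separately. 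Moreover, the $2$-sum lemmas you lean on for the bridge bookkeeping are themselves incomplete in the paper (the last proof there stops mid-sentence), so they cannot simply be cited. In short: where the paper's proof is complete, your proposal matches it; where the paper is incomplete, your proposal is a plausible and well-aimed plan, but the central identification $M(\Sigma)=M$ remains unproven in both.
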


\begin{proof}
Assume that there is a jointless signed-graph $\Sigma$ such that $M=M(\Sigma)$. By theorem~\ref{th_SliDe}, $\Sigma$ is either cylindrical or isomorphic to $T_{6}$. Assume first that $\Sigma$ is cylindrical. Since $M(\Sigma)$ is internally 4-connected, there are no 1,2-separations in $M(\Sigma)$ but there are minimal 3-separations. Then $\Sigma$ is 3-biconnected and there are only 3-biseparations whose exactly one part has rank 3. Thus $\Sigma$ is 2-connected. By hypothesis $Y$ is a $\mathcal{U}$-cocircuit of $M(\Sigma)$. Then $Y$ is an unbalancing bond in $\Sigma$ and $\Sigma \dof Y$ consists of one unbalanced and one balanced component. By theorem~\ref{th_Ybridgeseparable} $Y$ is bridge-separable in $M(\Sigma)$. By corollary~\ref{Yclasses} let $U^{-}$ be the class of all avoiding bridges which correspond to the separates of the unbalanced component and let $U^{+}$ be the other class of all avoiding bridges which correspond to the separates of the balanced component. Thus $M\cto (U^{+} \cup Y)=M(\Sigma)\cto (U^{+} \cup Y)=M(\Sigma \cto (U^{+} \cup Y))$ is graphic since $\Sigma \cto (U^{+} \cup Y)$ is joint unbalanced. Moreover, $M\cto (U^{-} \cup Y)=M(\Sigma)\cto (U^{-} \cup Y)=M(\Sigma \cto (U^{-} \cup Y))$ is signed-graphic as a minor of $M(\Sigma)$. Specifically $\Sigma \cto (U^{-} \cup Y)$ is cylindrical and $M.(U^{-} \cup Y)$ has a jointless signed-graphic representation. Assume that $\Sigma$ is isomorphic to $T_{6}$. 

Conversely, assume that conditions (i) and (ii) hold. If there is one bridge of $Y$ in $M$, then $M=M \cto (U^{-} \cup Y)$ is signed-graphic since otherwise $M$ is graphic which is a contradiction. Otherwise, there are at least two bridges of $Y$ and since $Y$ is bridge-separable they can be partitioned into two classes $U^{+},U^{-}$ such that all the members of the same class are avoiding. By assumption $M.(U^{+} \cup Y)$ is graphic and $M.(U^{-} \cup Y)$ is signed-graphic with a jointless signed-graphic representation. Any $Y \in \mathcal{C^*}(M)$ is also a cocircuit of $M \cto(U \cup Y)$ where $U \in \{U^{+},U^{-} \}$.  The cocircuits of $M \cto (U \cup Y)$ are the circuits of $M \dto (U \cup Y)$ which are the cocircuits of $M$ which are contained in $U \cup Y$. 


\section{Figures} \label{sec_figures}
In the following figures, a solid line will depict an edge of positive sign while a dashed line will depict an edge of negative sign. In Figure~\ref{cylint4} all double bonds of the cylindrical signed graph are graphic cocircuits in the corresponding signed-graphic matroid. The core of the deletion of a double bond of the cylindrical signed graph is either a B-necklace or has a balancing vertex.

\begin{figure}[h]
\begin{center}
\includegraphics*[scale=0.45]{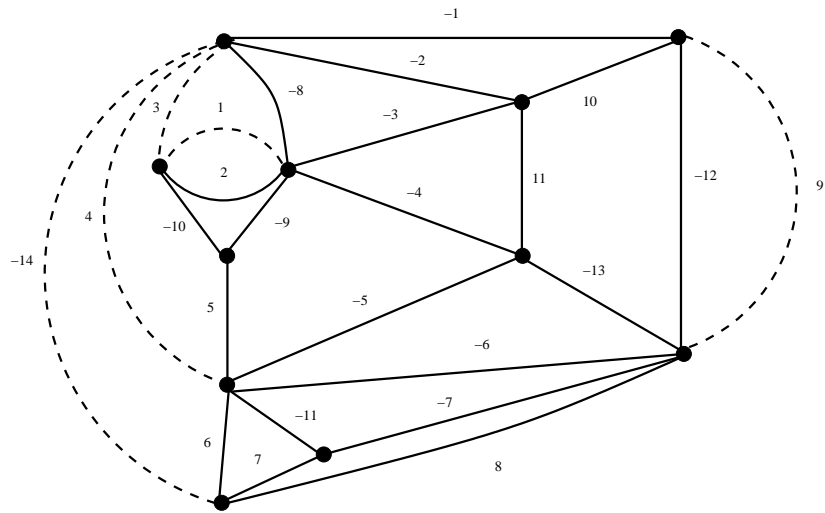}
\end{center} 
\caption{Cylindrical signed graph whose $M(\Sigma)$ is 3-connected, cyl3conn}
\label{cyl3conn}
\end{figure}

\begin{figure}[h]
\begin{center}
\includegraphics*[scale=0.45]{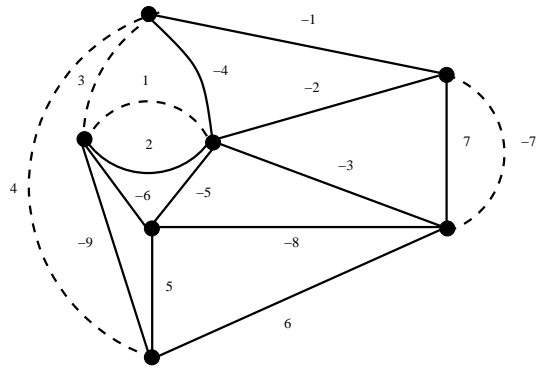}
\end{center} 
\caption{Cylindrical signed graph whose $M(\Sigma)$ is internally 4-connected, cylint4}
\label{cylint4}
\end{figure}

\begin{example}
$Y_{146}=\{8,10,-6,-7,9,-13,-1\}$ is a double bond of the cylindrical signed graph in Figure~\ref{cyl2conn} and a nongraphic cocircuit of its associated signed-graphic matroid. The bridges of $Y$ in $M(\Sigma)$ are $B_{1}=\{1,2,3,4,5,11,-2,-3,-4,-5,-8,-9,-10\}$, $B_{2}=\{6,7,-11\}$, $B_{3}=\{-12\}$. The sets of cocircuits of matroids $M(\Sigma) \cto (B_{i} \cup Y) \dto Y), i=1,2,3$, for each $B_{i}$ are the following. \\
\noindent
$\mathcal{C}^{*}(M \cto (B_{1} \cup Y) \dto Y)=\{\{-13\}, \{-1\},\{-6,-7,8\},\{9\},\{10\}\}$  \\
$\mathcal{C}^{*}(M \cto (B_{2} \cup Y) \dto Y)=\{-7\},\{8\},\{-1,-6,9,-13,10\}\} $ \\
$\mathcal{C}^{*}(M \cto (B_{2} \cup Y) \dto Y)=\{8,-6,-7,-13,9\},\{-1,8,-6,-7,-13,10\},\{9,10,-1\}\}$. \\
We observe that all bridges are avoiding with each other.
\end{example}
\end{proof}

\begin{figure}[h]
\begin{center}
\includegraphics*[scale=0.45]{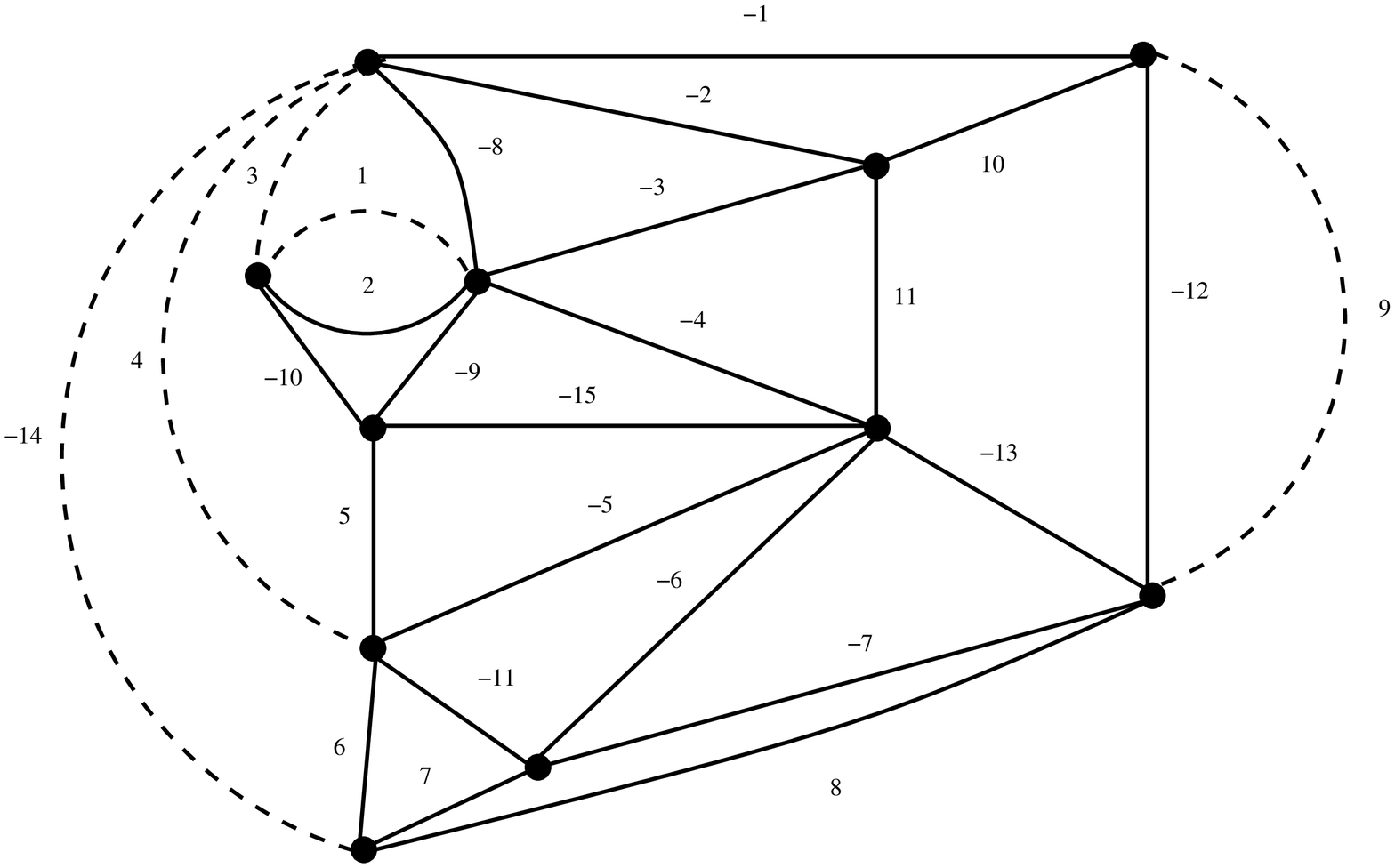}
\end{center} 
\caption{Cylindrical signed graph whose $M(\Sigma)$ is internally 4-connected, cyli4}
\label{cyli4}
\end{figure}

\bibliographystyle{plain}

\end{document}